\documentclass[a4paper, 11pt, reqno]{amsart}
\usepackage[utf8]{inputenc}
\usepackage{subfiles}

\usepackage{latexsym,amsmath,amssymb,amsfonts}
\usepackage{dsfont}
\usepackage{color}
\usepackage{graphicx}
\usepackage{float}
\usepackage{esint}
\usepackage{bm}
\usepackage{mathtools}
\usepackage[top=1in, bottom=1.25in, left=1in, right=1in]{geometry}
\usepackage{enumitem}
\usepackage{hyperref}
\usepackage{cleveref}
\usepackage{bbm}
\usepackage{tikz}                        
\usepackage{soul}   
\usepackage{calrsfs}
\usepackage{parskip}
\usepackage{comment}
\usepackage{shuffle}
\usepackage{nicefrac}
\usepackage{enumitem}

\newcommand{\R}{{\mathbb{R}}}

\newcommand{\N}{{\mathbb{N}}}

\newcommand{\wkst}{\stackrel{*}{\rightharpoonup}}

\newcommand{\seq}[1]{\{#1_m\}_{m\in\N}}
\newcommand{\inprod}[2]{\left\langle #1, #2 \right\rangle}
\newcommand{\alpharough}{\mathcal{C}^\alpha([0,T];E)}
\newcommand{\alphageom}{\mathcal{C}^{\alpha}_g ([0,T];E)}

\newcommand{\BR}{\left\{ \mathbf{x} \in \mathcal{C}^{\alpha} ([0,T];E ) \mid \|\mathbf{x}\|_{\mathcal{C}^{\alpha}} \leq R\right\}}
\newcommand{\BRn}[1]{\left\{ \mathbf{x} \in \mathcal{C}^{\alpha,{#1}} ([0,T];E ) \mid \|\mathbf{ x}\|_{\mathcal{C}^{\alpha,{#1}}} \leq R\right\}}
\newcommand{\BRg}{\left\{ \mathbf{x} \in \mathcal{C}_g^{\alpha} ([0,T];E ) \mid \|\mathbf{x}\|_{\mathcal{C}^{\alpha}} \leq R\right\}}
\newcommand{\BRgn}[1]{\left\{ \mathbf{x} \in \mathcal{C}_g^{\alpha,#1} ([0,T];E ) \mid \|\mathbf{x}\|_{\mathcal{C}^{\alpha,#1}} \leq R\right\}}

\DeclareMathOperator{\id}{id}
\DeclareMathOperator{\supp}{supp}

\NewDocumentCommand{\varnorm}{sO{}m}{%
  \IfBooleanTF{#1}{
    \left|\opnormkern\left|\opnormkern\left|
    #3
    \right|\opnormkern\right|\opnormkern\right|
  }{
    \mathopen{#2|\opnormkern #2|\opnormkern #2|}
    #3
    \mathclose{#2|\opnormkern #2|\opnormkern #2|}
  }%
}
\newcommand{\opnormkern}{\mkern-1.5mu\relax}%

\newcommand*\D{\mathop{}\!\mathrm{d}}

\theoremstyle{plain}
\newtheorem{theorem}{Theorem}[section]
\newtheorem{lemma}[theorem]{Lemma}
\newtheorem{proposition}[theorem]{Proposition}
\newtheorem{propdef}[theorem]{Proposition/Definition}
\newtheorem{thmdef}[theorem]{Theorem/Definition}
\newtheorem{corollary}[theorem]{Corollary}

\theoremstyle{definition}
\newtheorem{definition}[theorem]{Definition}
\newtheorem{example}[theorem]{Example}
\newtheorem{setting}[theorem]{Setting}
\newtheorem{assumption}[theorem]{Assumption}

\theoremstyle{remark}
\newtheorem{remark}[theorem]{Remark}

\title{Universal approximation by signatures for infinite-dimensional rough paths}
\author{Sonja Cox, Asma Khedher, Thijs Maessen}

\begin{document}
\begin{abstract}
We establish universal approximation theorems for infinite-dimensional geometric rough paths, i.e., we show that continuous functions on the space of infinite-dimensional weakly geometric H\"older continuous rough paths can be approximated by functions that are linear in the signature of the path. The underlying topology determining continuity and compactness can be either the norm topology or the weak-$^*$ topology. Whereas considerably more effort is required to obtain the universal approximation theorem with respect to the weak-$^*$ topology, this setting ensures uniform approximation on norm-bounded sets. The motivation for establishing universal approximation theorems lies in the desire to approximate quantities derived from the solution of a stochastic partial differential equation. More specifically, our universal approximation theorems form the foundations of a novel approach to e.g.\ pricing of forward rates within the Heath--Jarrow--Morton--Musiela framework.
\end{abstract}
\maketitle

\section{Introduction}

The goal of this work is to establish {\it a universal approximation theorem} (UAT) for {\it infinite-dimensional  weakly geometric H\"older continuous rough paths}. More specifically, we show that continuous functions of such paths can be approximated uniformly on compact sets by functions that are linear in the \emph{signature} of the path.  
The signature of a path $x\colon [0,T]\rightarrow E$ is an infinite sequence of iterated integrals over the interval \([0,T]\) (see, e.g., \cite{lyons_differential_2007}). 
The theory of rough paths provides a rigorous framework for defining these iterated integrals unambiguously for paths with low regularity, e.g.\ solutions to stochastic differential equations (see, e.g.,~\cite{friz_multidimensional_2010, friz_course_2020}). Moreover, the notion of \emph{weakly geometric} rough paths ensures that the product of two linear functionals on the signature is again a linear functional on the signature, rendering the set of linear functionals an algebra -- a key element in the Stone--Weierstrass theorem which is the back-bone of our universal approximation theorems. 

Broadly speaking, UATs can be viewed as applications of the Stone-Weierstrass theorem tailored to a specific setting; they provide a way to approximate continuous functions uniformly on compacta using an algebra of simpler functions. Such results are of interest for several reasons: the simpler functions may be easier to manipulate mathematically and/or may be computationally more tractable. Indeed, one of the major applications of UATs lies in the field of machine learning, where UATs guarantee that feedforward neural networks can approximate continuous function uniformly on compact sets. Before discussing such applications in more detail, we briefly discuss the available literature regarding UATs for signatures of paths.

UATs for continuous functions of the signature of \emph{finite-dimensional} paths are established e.g.\ in~\cite[Proposition 4.5]{lyons_non-parametric_2020} and~\cite[Theorem 2.12]{cuchiero_signature-based_2023} for continuous semimartingales, and in~\cite{Cuchiero_Primavera_2025} for stochastic processes with c\`adl\`ag paths. A global UAT, i.e., going beyond the usual approximation on compact sets, is proven in \cite[Chapter 5]{cuchiero_global_2023} using weighted spaces of continuous functions of signatures. These results underpin powerful applications in machine learning; signatures have been used for feature extraction in time series \cite{gyurko2013extracting}, sparse regression methods \cite{graham2013sparse}, and data-driven prediction and learning of path-dependent functionals \cite{levin2013learning, Liao_2024, Bayraktar_2024}.

Another motivation for studying UATs in the rough path setting is their connection to stochastic differential equations (SDEs). Specifically, under suitable smoothness assumptions, the rough differential equation (RDE) driven by an (Itô/Stratonovich) Brownian rough path yields a strong solution of the corresponding (Itô/Stratonovich) SDE (see \cite[Theorem 9.1]{friz_course_2020} or \cite[Theorem 17.3]{friz_multidimensional_2010}). Furthermore, under similar smoothness conditions, the solutions of the RDE can be expressed as a continuous map on the underlying rough path (see \cite[Theorem 8.5]{friz_course_2020} or \cite[Theorem 10.26]{friz_multidimensional_2010}).
This implies that UATs can be used to uniformly approximate solutions of SDEs as functions of the underlying Brownian rough path. 
Building on this perspective, several applications in mathematical finance replace classical SDE models with signature models, in which the coefficients are taken as linear functionals of the signature of an underlying process, typically a semimartingale, referred to as the primer process. Such formulations are particularly well-suited for applying UATs for signatures, for example in mathematical finance for computing option prices, and have been explored in the finite-dimensional setting in a range of works, see, e.g., \cite{Bayer_2023, cuchiero_signature-based_2023, Kalsi_2020, Lyons_Nejad_2019, arribas2020sig, lyons_non-parametric_2020, AbiJaber_2025, Cuchiero_Gazzani_2025} and the references therein. A preliminary UAT in the infinite-dimensional setting is provided in~\cite[Theorems 21 and 26]{chevyrev_signature_2022}.

A final motivation for studying UATs for continuous functions of the signature is that such UATs provide conditions under which the expectation of the signature of a stochastic rough path fully characterizes its distribution, see e.g.~\cite{chevyrev_signature_2022}, and~\cite{chevyrev_characteristic_2016} for a similar result. Adapting this reasoning, our main results imply that the distribution of a stochastic rough path is fully characterized by the expectation of certain \emph{linear functionals} of the signature -- specifically, expectations of elements in the algebra~\eqref{eq: algebra} below. 

However, the key motivation for this work is the desire to extend the aforementioned approach regarding the approximation of SDEs to stochastic \emph{partial} differential equations (SPDEs). This requires us to consider paths that take values in an infinite-dimensional (Banach) space. This broader scope is motivated by applications in mathematical finance. For example, in the bond and commodity markets, the dynamics of future prices is modeled within the Heath--Jarrow--Morton (HJM) framework by means of an SPDE \cite{benth_representation_2014, Benth_kruhner_2015, cox_karbach-2022, cox_infinitedimensional_2022,Cox_cuchiero_2024, he2025pricing, Cuchiero_Dipersio_2025, benth2025measure}. Approximating such SPDEs by linear functionals on the signature of a rough Brownian path provides a powerful tool: it enables both efficient calibration to market data and the computation of option prices in this setting. We believe that our work provides the first full-fledged and thorough analysis of UATs for infinite-dimensional signatures.

Before presenting our main results let us briefly dwell on some key concepts. Signatures of paths taking values in an (infinite-dimensional) Banach space $E$ are studied in~\cite{lyons_differential_2007}. Recall that the $n$-times iterated integral of an $\R^d$-valued path takes values in $\R^{n\times d}$. However, when we consider a path taking values in a Banach space $E$, the $n$-times iterated integral of an $E$-valued path takes values in a \emph{topological tensor space} $\prescript{n}{j=1}{\bigotimes}_{\|\cdot \|_n}E =\colon E^{\otimes n}$, and there is some freedom regarding the choice of the tensor norm $\| \cdot \|_n$. Following~\cite[Definition 1.25]{lyons_differential_2007} we identify so-called \emph{admissible} families of tensor norms, see also Definition~\ref{def: admissible crossnorms} below. Examples of admissible families of tensor norms include:
\begin{enumerate}
    \item $E$ is a Hilbert space and the tensor norms are the Hilbert tensor norms,
    \item $E$ is a Banach space and the tensor norms are either the projective tensor norms or the injective tensor norms.
\end{enumerate} 

For $\alpha\in (0,1)$, $T>0$, and $E$ a real Banach space endowed with an admissible family of tensor norms, we let $\mathcal{C}^{\alpha}([0,T],E)$ denote the space of $\alpha$-H\"older continuous $E$-valued rough paths, see also Section~\ref{sec: holder norms and rough paths} below. Moreover, let 
\begin{equation} 
S(\cdot)_{[0,T]}\colon \mathcal{C}^{\alpha}([0,T],E) \rightarrow \bigoplus_{n=0}^\infty E^{\otimes n} 
\end{equation} 
be the \emph{signature map} (also known as the \emph{Lyons lift}); i.e., $S(\cdot)_{[0,T]}$ maps a H\"older continuous $E$-valued rough path $\mathbf{x}$ to $S(\mathbf{x})_{[0,T]}$, the sequence of $n$-times iterated integrals of $\mathbf{x}$ over $[0,T]$, $n\in \N$ (see also Section~\ref{sec: Lyons lift}). Finally, we let $(E^*)^{\otimes_a n}$ denote the algebraic $n$-tensor product of $E^*$, the dual of $E$ (see Appendix~\ref{app: tensor spaces}) and assume the convention that $(E^*)^{\otimes_a 0} =\R$.

Our starting point is the following theorem, which is essentially a direct consequence of the Stone-Weierstrass theorem (see also Theorem~\ref{thm: general setup of uat} below):
\begin{theorem}\label{thm: general setup of uat intro}
Let $E$ be a real Banach space, 
let $\tau$ be a Hausdorff topology on $\mathcal{C}^{\alpha}([0,T];E)$, let $K$ be a $\tau$-compact set, and let $D$ be a subspace of $\bigoplus_{n=0}^\infty (E^*)^{\otimes_a n}$ such that
\begin{enumerate}
    \item\label{it:continuous}  $K \ni\mathbf{x} \mapsto \inprod{S(\mathbf{x})_{0,T}}{l}$ is a $\tau$-continuous map for any $l\in D$,
    \item\label{it:point-sep2}$S(\mathbf{x})_{0,T} \neq S(\mathbf{x}')_{0,T}$ whenever $\mathbf{x}\neq \mathbf{x}'$, with $\mathbf{x},\mathbf{x}'\in K$,
    \item\label{it:one_in_algebra} $\mathbf{1} = (1,0,\ldots) \in D$,
    \item\label{it:point-sep1} $D$ separates points in $\bigoplus_{n=0}^\infty E^{\otimes n} $,
    \item\label{it:algebra} for all $l,l' \in D$ there exists an $l''\in D$ such that 
    \[
    \forall\, \mathbf{x}\in K \colon \inprod{S(\mathbf{x})_{0,T}}{l}\inprod{S(\mathbf{x})_{0,T}}{l'} = \inprod{S(\mathbf{x})_{0,T}}{l''}.
    \]
\end{enumerate}
Then for any $\tau$-continuous function $f\colon K \to \R$ and for all $\epsilon>0$, there exists $l \in D$ such that \[
    \sup_{\mathbf{x}\in K} | \inprod{S(\mathbf{x})_{0,T}}{l} - f(\mathbf{x}) | < \epsilon.
    \]
\end{theorem}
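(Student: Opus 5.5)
The plan is to apply the classical Stone--Weierstrass theorem on the compact Hausdorff space $(K,\tau|_K)$. Let
\[
\mathcal{A} := \bigl\{ K\ni \mathbf{x} \mapsto \inprod{S(\mathbf{x})_{0,T}}{l} \;:\; l \in D \bigr\} \subseteq C(K;\R).
\]
Note that $K$ with the subspace topology is compact, and it is Hausdorff because $\tau$ is Hausdorff. Hypothesis~\eqref{it:continuous} gives exactly $\mathcal{A}\subseteq C(K;\R)$.

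First, $\mathcal{A}$ is a linear subspace. Since $D$ is a subspace and the pairing $\inprod{\cdot}{\cdot}$ is bilinear, we have $\inprod{S(\mathbf{x})_{0,T}}{a l + b l'} = a\inprod{S(\mathbf{x})_{0,T}}{l} + b\inprod{S(\mathbf{x})_{0,T}}{l'}$. Second, $\mathcal{A}$ is closed under pointwise multiplication by hypothesis~\eqref{it:algebra}, so $\mathcal{A}$ is a subalgebra. Third, $\mathcal{A}$ contains the constants. Since $\mathbf{1}\in D$ by~\eqref{it:one_in_algebra}, and the zeroth level of the signature is $1$, we get $\inprod{S(\mathbf{x})_{0,T}}{\mathbf{1}} = 1$ for all $\mathbf{x}$.

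Fourth, $\mathcal{A}$ separates points of $K$. Take $\mathbf{x}\neq\mathbf{x}'$ in $K$. By~\eqref{it:point-sep2} the elements $S(\mathbf{x})_{0,T}$ and $S(\mathbf{x}')_{0,T}$ of $\bigoplus_n E^{\otimes n}$ (or of its completion, wherever the signature lives) are distinct. By~\eqref{it:point-sep1} there is an $l\in D$ with $\inprod{S(\mathbf{x})_{0,T}}{l}\neq\inprod{S(\mathbf{x}')_{0,T}}{l}$.

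With these four properties, the real Stone--Weierstrass theorem says $\mathcal{A}$ is dense in $(C(K;\R),\|\cdot\|_\infty)$. Given a $\tau$-continuous $f$ and $\epsilon>0$, this yields the required $l\in D$.

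The only step needing care is making sense of the pairing $\inprod{S(\mathbf{x})_{0,T}}{l}$. The signature has infinitely many nonzero levels, while $l\in\bigoplus_n (E^*)^{\otimes_a n}$ has only finitely many. The pairing is therefore a finite sum of level-wise pairings $\inprod{\pi_n S(\mathbf{x})_{0,T}}{l_n}$. Each of these is well defined because an algebraic tensor of functionals $\phi_1\otimes\cdots\otimes\phi_n$ extends to a bounded functional on $E^{\otimes n}$; this holds for admissible (in particular reasonable cross-) norms, as recalled in the appendix on tensor spaces. With that observation, bilinearity and the identity $\inprod{S(\mathbf{x})_{0,T}}{\mathbf{1}}=1$ are immediate, and the remainder of the argument is purely formal.
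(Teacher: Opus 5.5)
Your proof is correct and matches the paper's argument: restrict $\tau$ to the compact Hausdorff space $K$, use (1), (5), (3), and the combination of (2) and (4) to obtain a point-separating subalgebra of $C(K;\R)$ containing the constants (the paper only uses the weaker non-vanishing condition), and conclude by Stone--Weierstrass. One small correction to your closing remark: in the paper an admissible family is only a symmetric strong crossnorm family, so boundedness of $\phi_1\otimes\cdots\otimes\phi_n$ on $E^{\otimes n}$ is an extra reasonableness assumption; this does not affect your proof, because hypotheses (1) and (5) already presuppose that the pairing is well defined.
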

In other words, Theorem~\ref{thm: general setup of uat intro} ensures that we can approximate continuous functions on certain compact subsets of $\mathcal{C}^{\alpha}([0,T],E)$ by elements of the following algebra:
\begin{equation}\label{eq: algebra}
\mathcal{A} = \{ \langle S(\cdot)_{[0,T]} , l \rangle \colon l\in D \}.
\end{equation}
We briefly discuss Assumptions~\eqref{it:continuous}-\eqref{it:algebra}. Assumption~\eqref{it:one_in_algebra} simply ensures that $\mathcal{A}$ is non-vanishing, whereas Assumption~\eqref{it:continuous} ensures that the elements of $\mathcal{A}$ are in fact continuous maps.
Assumption~\eqref{it:algebra} ensures that $\mathcal{A}$ is indeed an algebra. This property is satisfied if we restrict ourselves to so-called \emph{weakly geometric} rough paths, see Section~\ref{sec: weakly geometric}, i.e., $K\subset \mathcal{C}^{\alpha}_g([0,T];E)$. 
Assumptions~\eqref{it:point-sep2} and~\eqref{it:point-sep1} guarantee that $\mathcal{A}$ separates points in $K$. As $D\subseteq \oplus_{n=0}^{\infty} (E^*)^{\otimes_a n}$, we typically require that $E$ has the approximation property to ensure Assumption~\eqref{it:point-sep1}. Assumption~\eqref{it:point-sep2} is satisfied if one considers time-extended paths, see Section~\ref{ssec: time-extended rough paths}. 

The key issue now is that \emph{we have not yet specified the topology $\tau$} in Theorem~\ref{thm: general setup of uat intro}. Indeed, we employ Theorem~\ref{thm: general setup of uat intro} to obtain a universal approximation theorem for the following two cases:
\begin{enumerate}[label=(\Roman*)]
    \item\label{it:normtopology} $\tau$ is the topology induced by the $\alpha$-H\"older metric $\varrho_{\alpha}^{\text{hom}}$, see Section~\ref{sec: norm-compact UAT},
    \item\label{it:weak*topology} $\tau$ is the so-called $i^*$-topology, a topology inherited from a weak$^*$-topology on a suitably chosen H\"older space, see Section~\ref{sec: norm-bounded UAT}.
\end{enumerate}
The universal approximation theorem in Case~\ref{it:normtopology} follows immediately from Theorem~\ref{thm: general setup of uat intro}, the considerations regarding Assumptions~\eqref{it:continuous}-\eqref{it:algebra} above, and the fact that the signature map is $\varrho_{\alpha}^{\text{hom}}$-continuous, see~\cite[Theorem 3.10]{lyons_differential_2007}.\par 
Case~\ref{it:weak*topology} requires significantly more work -- Sections~\ref{sec: predual of holder}--\ref{sec: i star continuity lyons lift} deal with the proof of the UAT for this case. However, this case has the great advantage that the Banach-Alaoglu theorem ensures that \textbf{$\varrho_{\alpha}^{\textnormal{hom}}$-bounded sets are relatively compact in the $i^*$ topology}, i.e., compact sets are easier to identify. Here we would like to stress that while the H\"older space $C^{\alpha}([0,T],\R^d)$ embeds compactly into $C^{\beta}([0,T],\R^d)$ whenever $\alpha>\beta$, this is no longer true if we replace $\R^d$ by an infinite-dimensional space $E$.

Regarding the proof for Case~\ref{it:weak*topology} a few remarks are at hand: firstly, the space $\mathcal{C}^{\alpha}([0,T];E)$ is not a linear space (iterated integrals do not commute with pointwise summation). Thus, there is no weak-$^*$ topology on $\mathcal{C}^{\alpha}([0,T];E)$, instead, we embed $\mathcal{C}^{\alpha}([0,T];E)$ into a (linear) H\"older space and inherit the weak-$^*$ topology from there; we refer to the resulting topology as the $i^*$-topology. Sections~\ref{sec: predual of holder} and~\ref{sec: weak star topology on alpha rough} deal with this construction, as well as providing an explicit representation of the pre-dual of a H\"older space. Note that the construction of the $i^*$ topology requires that the topological tensor spaces $E^{\otimes n}$ allow for a predual, in particular, it generally does not make sense to consider injective tensor norms in this setting. Finally, a proof that the signature map $S(\cdot)_{[0,T]}$ is continuous with respect to the $i^*$ topology can be found in \cite{cuchiero_global_2023} \emph{for the case $E=\R^d$}. This proof does not extend directly to the infinite-dimensional setting (this is related to $C^{\alpha}([0,T],E)$ \emph{not} embedding compactly into $C^{\beta}([0,T],E)$), but we can reduce to the finite-dimensional setting using a carefully chosen `projection'. For details we refer to Section~\ref{sec: i star continuity lyons lift}.

When compared to analogous results in finite-dimensional setting, in particular to the results in~\cite{cuchiero_global_2023}, there are several challenges we had to overcome to arrive at our UATs. We already mentioned the need to work with topological tensor spaces, which considerably raises the level of technicality. Additionally in infinite dimensions the two characterizations of group-like elements, which allow us to define weakly geometric paths, are not neccessarily equivalent. Nevertheless, \cite{cuchiero_global_2023} was still an important inspiration, although some objects are treated differently: in \cite{cuchiero_global_2023} (weakly) geometric rough paths are defined as Hölder continuous paths taking values in the group-like elements under the Carnot--Carath\'eodory metric, and an alternative construction of the predual $C^{\alpha}([0,T],E)$ is given. The Carnot--Carath\'eodory metric is not finite on all elements of $G(E)$ in infinite dimensions (see  Remark~\ref{rem:cc_problems}), and as such would need to be replaced by an alternative homogenuous norm (see e.g. \cite[Definition 7.34]{friz_multidimensional_2010}). We believe our approach to be more direct.

\subsection{Outlook} A natural direction for future research, motivated by the finite-dimensional case, is to investigate under what circumstances an SPDE can be interpreted as a random differential equation (RDE) driven by the (Stratonovich) rough path associated with an infinite-dimensional Brownian motion or an infinite-dimensional Ornstein--Uhlenbeck process. One can then study whether the solution map of the RDE depends continuously (with respect to the relevant topology) on the underlying rough path. Establishing such continuity would enable us to apply our UAT, for example to approximate models within the Heath–Jarrow–Morton–Musiela framework and to explore optimal control problems in energy markets. In particular, approximating functions of the solution of the SPDE by a linear functional of the signature of an infinite-dimensional rough path would allow us to treat quadratic-type optimal control problems, analogously to the finite-dimensional case in \cite{lyons2020non}. Our UATs also provide a justification for constructing infinite-dimensional volatility models for which the volatility operator is taken to be a linear map of the signature of an infinite-dimensional Brownian rough path.

Another direction for future research is to extend the UAT for norm-bounded sets of geometric rough paths, to a global UAT over the entire rough path space. This could be achieved by exploiting weighted function spaces to control growth at infinity developed in~\cite[Section 5]{cuchiero_global_2023}. A global UAT would ensure that these approximations remain valid uniformly for all input paths, rather than only on high-probability or truncated sets, thereby strengthening the theoretical foundations of the potential applications. In particular, this would provide global approximation guarantees for the applications discussed above: approximating models within the HJMM framework and  constructing operator-valued stochastic volatility models.

\subsection{Structure of the article}

In Section~\ref{sec: intro to signatures} we provide a brief introduction to infinite-dimensional rough paths and their signatures. Having introduced all relevant concepts, we present our main results -- the universal approximation theorems (UATs) -- in Section~\ref{sec: UATs}. The remaining sections contain the ingredients necessary to prove the UAT with respect to the weak-$^*$ topology: as the space of H\"older continuous geometric rough paths is not a linear space (the sums of two paths violates Chen's identity), we embed this space into a H\"older space and inherit the weak-$^*$ topology via this embedding -- this is explained in Section~\ref{sec: weak star topology on alpha rough}, and a characterization of the predual is given in Section~\ref{sec: predual of holder}. Finally, Section~\ref{sec: i star continuity lyons lift} deals with the weak-$^*$ continuity of the Lyons lift; the mapping that builds the signature of a given rough path.
Relevant aspects of topological tensor spaces are gathered in Appendices~\ref{sec: tensor spaces and norms}--~\ref{app: tensor spaces}.

\subsection{Notation}
We set $\N=\{0,1,\ldots\}$ and $\N_{>0}= \{1,2,\ldots\}$. 
The algebraic dual of a vector space $V$ is denoted by $V^*$, and we denote the topological dual of a Banach space $(E,\|\cdot\|_{E})$ by $(E^*,\|\cdot\|_{E^*})$. 
Let $X,Y$ be Banach spaces, the $\mathcal{L}(X,Y)$ denotes the Banach space of bounded linear operators from $X$ to $Y$ endowed with the operator norm; we set $\mathcal{L}(X):=\mathcal{L}(X,X)$.
For $\{x_{\lambda}\}_{\lambda \in I}$ a net in a Banach space $E$ that allows for a predual, we write $x_\lambda\wkst x \in E$ to indicate that the net converges in the weak-$^*$ topology to $x$.\par

The tensor algebra $T((E))$ over a Banach space $E$, and related algebras $T_1((E)))$, $T_0((E))$ and $T_a(F)$, are defined in Section~\ref{ssec: tensor algebra}. We use $\hat{\otimes}$ to denote the product on the tensor algebra, see Definition~\ref{def: tensor algebra}.

For $n,m\in \N$ we let $\text{Sh}(n,m)$ denote the set of all $(n,m)$-shuffles, see Definition~\ref{def: shuffle}.

For the definition of $\mathcal{C}^{\alpha,n} ([0,T];E )$ and $\mathcal{C}^{\alpha} ([0,T];E )$, spaces of H\"older continuous multiplicative functionals, see page~\pageref{def: holder multiplicative functionals}. For the definition of $\mathcal{C}_g^{\alpha,n} ([0,T];E )$ and $\mathcal{C}_g^{\alpha} ([0,T];E )$, spaces of \emph{weakly geometric} H\"older continuous multiplicative functionals, see Section~\ref{sec: weakly geometric}.

The $i^*$-topology is introduced in Definition~\ref{def: i* star topology}.

\section{A brief introduction to signatures and rough paths in infinite dimensions}\label{sec: intro to signatures}
Signatures and rough paths are closely related concepts, and iterated integrals play an important role in both. To explain this in more detail we begin by introducing iterated integrals of a smooth, real-valued path $x:[0,T]\rightarrow \R$. The $1$-times iterated integral of $x$ over the interval $[s,t]$ is given by $\mathbf{x}_{s,t}^{(1)} = \int_s^t \D x_r = x_t - x_s$, $0\leq s\leq t \leq T$. For $k \geq 2$, the $k$-times iterated integral of $x$ over the interval $[s,t]$ is given by
\begin{equation}
\mathbf{x}_{s,t}^{(k)} = \int_{s}^{t} \int_s^{t_k} \ldots \int_s^{t_{2}} \D x_{t_{1}} \ldots \D x_{t_{k}},\quad 0\leq s\leq t \leq T.
\end{equation}
The theory of signatures revolves around the fact that properties of the whole path $x$ are efficiently encoded by its \emph{signature}; the $\R$-valued sequence $\{\mathbf{x}_{0,T}^{(k)}\}_{k\in \N}$.\par 
The theory of rough paths concerns the situation when $x$ is not smooth enough to unambiguously define the iterated integrals, in particular, in the situation that $x$ is not differentiable, but still $\alpha$-H\"older continuous for some $\alpha \in (0,1)$. In this case, it has been shown that if one can somehow \emph{prescribe} the iterated integrals over the intervals $[s,t]$ up to order $\lfloor \nicefrac{1}{\alpha} \rfloor$ \emph{for all} $0\leq s \leq t\leq T$, then this suffices to unambiguously obtain the higher order iterated integrals. This process is known as the \emph{Lyons lift} (see~\cite[Theorem 3.7]{lyons_differential_2007}); it paves the way for once again encoding properties of the whole path in terms of its signature. \par 
Note that prescribing the iterated integrals is more natural than it may seem at first sight: for example, in the setting of continuous semimartingales (which are typically $\alpha$-H\"older continuous for all $\alpha \in [0,\frac{1}{2})$ but no smoother than that), the $1$-times iterated integral is provided by It\^o calculus (and can be defined both in the It\^o and the Stratonovich sense). \par 
If we now turn to an $\R^d$-valued path $x\colon [0,T]\rightarrow \R^d$, we see that $(\R^{d})^{\otimes k} (=\R^{kd})$ is the natural state space for $\mathbf{x}^{(k)}_{s,t}$, the $k$-times iterated integral over $[s,t]$: we have 
\begin{equation}\label{eq: iterated integrals in tensor form}
\mathbf{x}^{(k)}_{s,t} = \int_{s}^{t} \int_s^{t_1} \ldots \int_s^{t_{k}} \D x_{t_{k+1}} \otimes \ldots \otimes \D x_{t_{1}}
\end{equation}
Extending this idea to paths taking values in an infinite-dimensional real Banach space $E$, we are confronted with the fact that integration involves taking limits: while $(\R^d)^{\otimes k}$ is complete under any tensor norm, this is no longer the case for the algebraic tensor space $\prescript{k+1}{j=1}{\bigotimes}_{a} E$. Thus, the natural state space for the $k$-times iterated integral is a topological tensor space $\prescript{k+1}{j=1}{\bigotimes}_{\| \cdot \|} E $. Part of the challenge in the infinite-dimensional setting is that different tensor norms give rise to different topological tensor spaces, and one must identify which tensor norms are suitable. \par 
In this paper, we deal with the theory of signatures applied to infinite-dimensional stochastic differential equations (and stochastic partial differential equations). In particular, we are applying the theory of signatures to infinite-dimensional rough paths. As such, we begin by providing a short introduction to infinite-dimensional signatures and rough paths. The reader who is unfamiliar with infinite-dimensional topological tensor spaces is referred to Appendix~\ref{sec: tensor spaces and norms} for an overview. In Section~\ref{ssec: tensor algebra} we introduce the relevant structures for defining signatures; Section~\ref{sec: holder norms and rough paths} discusses the relevant structures for rough paths. The Lyons lift is discussed in Section~\ref{sec: Lyons lift}. In Section~\ref{sec: weakly geometric} we discuss weakly geometric rough paths, roughly speaking these are rough paths that respect the product rule. Finally, in Section~\ref{ssec: time-extended rough paths} we discuss so-called time-extended rough paths. Overal, we provide only the absolute minimum needed to present our results, more detailed introductions to the topic can be found in e.g.~\cite{friz_course_2020, friz_multidimensional_2010, lyons_differential_2007}.

\subsection{The tensor algebra and multiplicative functionals}\label{ssec: tensor algebra}

The natural structure for infinite-dimensional signatures is the \emph{tensor algebra}, see Definition \ref{def: tensor algebra} below. The algebraic structure is designed to encapture \emph{Chen's relation for iterated integrals}. To grasp this relation, observe that if $x\colon [0,T]\rightarrow\R^d$ is smooth, then for all $r,s,t\in [0,T]$ one has:
\begin{equation}\label{eq:Chensimple}
\mathbf{x}^{(2)}_{s,t} = \mathbf{x}^{(2)}_{s,r} + \mathbf{x}^{(1)}_{s,r}\otimes \mathbf{x}^{(1)}_{r,t} + \mathbf{x}^{(2)}_{r,t} . 
\end{equation}
Analogous identities hold for higher order iterated integrals, and Chen's relation provides a description of these identities in terms of the tensor algebraic relations. The elements of the tensor algebra satisfying Chen's relation are called \emph{multiplicative functionals}, see Definition~\ref{def: multiplicative functional} below. 

As mentioned above, if $E$ is a real Banach space and $x\colon [0,T]\rightarrow E$ is smooth, then the $k$-times iterated integral of $x$ takes  values in a topological tensor space $\prescript{k}{j=1}{\bigotimes}_{\| \cdot \|} E$ (see Definition~\ref{def: topological tensor space}). Some technicalities must be taken into account when choosing the tensor norm: following~\cite[Definition 1.25]{lyons_differential_2007} we assume we are dealing\footnote{Technically, the assumptions in~\cite{lyons_differential_2007} are slightly weaker, although this does not seem to have any consequences in practice. We refer to Remark~\ref{rem: note on lyons definition of admissible} for details.}

\begin{definition}\label{def: admissible crossnorms}
Let $E$ be a real Banach space and for $n\in \N_{> 0}$ let $\|\cdot \|_{n}$ be a norm on the algebraic tensor space $E^{\otimes_a n}$ (see Definition~\ref{def: algebraic tensor space}). We say that $\{\| \cdot \|_{n}\}_{n\in \N_{>0}}$ is a \emph{admissible family of tensor norms for $E$} if it is a symmetric strong crossnorm family, and we set $E^{\otimes n} := \prescript{n}{i=1}{\bigotimes}_{\| \cdot \|_n} E$, $n\in \N_{>0}$. We adopt the convention that $E^{\otimes 0} = \R$.
\end{definition}

\begin{remark}
In what follows, whenever testing against elements in the dual plays a role, we typically also need that the tensor norms are \emph{reasonable} (see Definition \ref{def: reasonable crossnorm d geq 3}) or even the stronger condition of being \emph{strongly uniform} (see Definition \ref{def:strong-unif-crossnorm}). Whenever these additional conditions are needed this is stated explicitely.
\end{remark}

\begin{definition}[Tensor algebra, see e.g.\ Definition 2.4 in \cite{lyons_differential_2007}]\label{def: tensor algebra}
     Let $E$ be a real Banach space and let $\{\| \cdot \|_{n}\}_{n\in \N_{>0}}$ be an admissible family of tensor norms for $E$. Then, \emph{the space of formal series of tensors} $T((E))$ is defined as the following space of sequences:
    \[
    T((E)) \coloneqq \{ \mathbf{a} \mid \mathbf{a} = (\mathbf{a}^{(0)}, \mathbf{a}^{(1)},\ldots) \text{ with } \mathbf{a}^{(n)}\in E^{\otimes n} \text{ for all } n\in \N \}.
    \]
    Let $\mathbf{a} = (\mathbf{a}^{(0)}, \mathbf{a}^{(1)},\ldots)$ and $\mathbf{b} = (\mathbf{b}^{(0)}, \mathbf{b}^{(1)},\ldots)$ be formal series of tensors. We define their addition and product by

    \[
    \mathbf{a} + \mathbf{b} \coloneqq (\mathbf{a}^{(0)}+\mathbf{b}^{(0)},\mathbf{a}^{(1)} +\mathbf{b}^{(1)}, \ldots),
    \]
    \begin{subequations}
    and
    \begin{align}
    \mathbf{a} \hat \otimes \mathbf{b} \coloneqq (\mathbf{c}^{(0)},\mathbf{c}^{(1)},\ldots),
    \end{align}
    where for all $n\geq 0$
    \begin{align}
    \mathbf{c}^{(n)}  = \sum_{i=0}^n \mathbf{a}^{(i)} \otimes \mathbf{b}^{(n-i)}.
    \end{align}
    \end{subequations}
    (Note that we set $c \otimes \mathbf{x} = \mathbf{x} \otimes c = c\mathbf{x} $ for all $c\in \R$, $\mathbf{x}\in E^{\otimes n}$, $n\in \N$.)
    We write respectively $\mathbf{0} \coloneqq (0, 0, \ldots)$ and $\mathbf{1} = (1,0,0,\ldots)$ for the addition and product unit elements. The space $T((E))$ endowed with addition and product defined above is called a \emph{tensor algebra} (over $E$).
\end{definition}

Let $E$ be a real Banach space, let $x\colon [0,T]\rightarrow E$ be smooth, and let $s,t\in [0,T]$. Setting $\mathbf{x}^{(0)}_{s,t}   = 1$ (we will address this convention shortly) and defining $\mathbf{x}^{(n)}_{s,t}$, $n\in \N_{>0}$, to be the $n$-times iterated integral of $x$ over $[s,t]$ (see~\eqref{eq: iterated integrals in tensor form}), we obtain that  $\{\mathbf{x}^{(n)}_{s,t}\}_{n\in \N}$ is an element of the tensor algebra (the fact that the involved tensor norms are crossnorms implies that the iterated integrals exist).

However, there are situations when we only want to consider the first $n$ iterated integrals, to this end we introduce the \emph{truncated tensor algebra}:

\begin{definition}[Truncated tensor algebra, see e.g.\ Definition 2.5 in \cite{lyons_differential_2007}]\label{def:truncated-topological-tensor}
    Let $E$ be a real Banach space and let $\{\| \cdot \|_{n}\}_{n\in \N_{>0}}$ be an admissible family of tensor norms for $E$. Then we define $T^{(n)}(E)$, the \emph{truncated tensor algebra of order $n$}, to be the following space of sequences:
    \[
    T_{}^{(n)}(E) \coloneqq \{ \mathbf{a} \mid \mathbf{a} = (\mathbf{a}^{(0)}, \mathbf{a}^{(1)},\ldots, \mathbf{a}^{(n)}) \text{ with } \mathbf{a}^{(i)}\in E^{\otimes i} \text{ for all } 0 \leq i\leq n \}.
    \]
    Alternatively, we can define the ideal $B_n (E)$ as
    \[
    B_n(E) = \{ \mathbf{a} \in T((E)) \mid \mathbf{a}^{(0)} = \ldots = \mathbf{a}^{(n)} = 0 \}, 
    \]
    and then set $T^{(n)}(E) = T((E)) / B_n(E)$. The truncated tensor algebra inherits the algebra structure from the tensor algebra. 
\end{definition}
\begin{remark}\label{rem: banach space structure of truncated tensor algebra}
    The truncated tensor algebra $T^{(n)}(E)$ can also be interpreted as a direct sum:
    \[
    T^{(n)}(E) = \R \oplus E \ldots \oplus E^{\otimes n}.
    \]
    This interpretation immediately allows us to endow $T^{(n)}(E)$ with a norm that turns it into a Banach space.
\end{remark}

The algebraic structure introduced above allows us to efficiently encode the above-mentioned \emph{Chen's relation}. 

\begin{definition}[Multiplicative functionals, see e.g.\ Definition 3.1 in~\cite{lyons_differential_2007}]\label{def: multiplicative functional}
Let $E$ be a real Banach space, let $\{\| \cdot \|_{n}\}_{n\in \N_{>0}}$ be an admissible family of tensor norms for $E$, and let $\mathbf{x}\colon [0,T]^2 \rightarrow T((E))$ (or $\mathbf{x}\colon [0,T]^2 \rightarrow T^{(n)}((E))$). Then $\mathbf{x}$ is called a \emph{multiplicative functional} if it satisfies Chen's relation, i.e., if 
\begin{equation}\label{eq: Chen final?}
    \mathbf{x}_{s,t} = \mathbf{x}_{s,u} \hat\otimes \mathbf{x}_{u,t}, \quad  s,u,t \in [0,T].
\end{equation}
\end{definition}

Note that Chen's relation requires 
$\mathbf{x}^{(0)}\equiv 1$, this motivates the following definition: 

\begin{definition}\label{def: tensor algebra with 1}
    Let $E$ be a real Banach space and let $\{\| \cdot \|_{n}\}_{n\in \N_{>0}}$ be an admissible family of tensor norms for $E$. We define
    \[
    T_1((E)) \coloneqq \{ \mathbf{a} \in T((E)) \mid \mathbf{a}^{(0)} = 1 \}    \]
    and
    \[
    T^{(n)}_1((E)) \coloneqq \{ \mathbf{a} \in  T^{(n)}(E)\mid \mathbf{a}^{(0)} = 1 \} .   \]
\end{definition}

A consequence of Chen's relation is that for a multiplicative functional $\mathbf{x}$ and $s,t \in [0,T]$

\[
\mathbf{x}_{t,s} = (\mathbf{x}_{s,t})^{-1},
\]

where the inverse is with respect to the tensor product. This inverse exists for all elements in $T_1((E))$ or $T_1^{(n)}(E)$ and can be explicitly constructed:
\begin{proposition}[From Section 2.2.1 of \cite{lyons_differential_2007}]\label{prop: group structure}
Let $E$ be a real Banach space and let $\{\| \cdot \|_{n}\}_{n\in \N_{>0}}$ be an admissible family of tensor norms for $E$. Let $\mathbf{a}$ be an element of $T_1((E))$ or $T_1^{(n)}(E)$, then the inverse of $\mathbf{a}$ with respect to the tensor product is given by
    \begin{equation}\label{eq: explicit inverse tensor algebra element}
    \mathbf{a}^{-1} = \sum_{i=0}^\infty (\mathbf{1} - \mathbf{a})^{\hat\otimes i}.
    \end{equation} 
\end{proposition}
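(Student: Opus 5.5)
The plan is to verify directly that the series in \eqref{eq: explicit inverse tensor algebra element} is well defined and is a two-sided inverse. Set $\mathbf{b} := \mathbf{1} - \mathbf{a}$. Since $\mathbf{a}^{(0)} = 1$, we have $\mathbf{b}^{(0)} = 0$. From the definition of $\hat\otimes$ in Definition~\ref{def: tensor algebra} it follows by induction on $i$ that
\[
(\mathbf{b}^{\hat\otimes i})^{(n)} = 0 \quad\text{for all } n < i,
\]
because each term $\mathbf{b}^{(k)}\otimes(\mathbf{b}^{\hat\otimes (i-1)})^{(n-k)}$ vanishes when $k=0$ or $n-k<i-1$. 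Consequently, for each fixed degree $n$, the $n$-th component of $\sum_{i\ge 0}\mathbf{b}^{\hat\otimes i}$ is the finite sum $\sum_{i=0}^n (\mathbf{b}^{\hat\otimes i})^{(n)} \in E^{\otimes n}$. The series therefore defines an element of $T((E))$ componentwise, and no convergence or topological issue arises. Each summand lies in $E^{\otimes n}$ because the crossnorm property makes $\otimes\colon E^{\otimes k}\times E^{\otimes (n-k)}\to E^{\otimes n}$ well defined and bounded. In the truncated case $T_1^{(n)}(E)$ the sum is simply finite, since $\mathbf{b}^{\hat\otimes i}=0$ in the quotient by $B_n(E)$ for $i>n$.

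The next step is the telescoping identity. Write $S_N := \sum_{i=0}^N \mathbf{b}^{\hat\otimes i}$. Using bilinearity and associativity of $\hat\otimes$, which follow from the corresponding properties of the tensor product on the topological tensor spaces, we get
\[
\mathbf{a}\hat\otimes S_N = (\mathbf{1}-\mathbf{b})\hat\otimes S_N = \mathbf{1} - \mathbf{b}^{\hat\otimes(N+1)},
\]
and the same identity holds for $S_N\hat\otimes\mathbf{a}$.

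Finally we pass to the limit degreewise. Fix $n$. For $N\ge n$, the degree-$n$ component of $\mathbf{a}\hat\otimes S_N$ coincides with that of $\mathbf{a}\hat\otimes \mathbf{a}^{-1}$, because it only involves components of $S_N$ of degree at most $n$, and these have stabilized. The degree-$n$ component of $\mathbf{b}^{\hat\otimes(N+1)}$ vanishes for $N\ge n$. Hence $\mathbf{a}\hat\otimes\mathbf{a}^{-1}=\mathbf{1}$ in every degree, and symmetrically $\mathbf{a}^{-1}\hat\otimes \mathbf{a} = \mathbf{1}$.

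The only mildly delicate point is justifying associativity of $\hat\otimes$. This reduces to the associativity of the tensor product $(E^{\otimes i}\otimes E^{\otimes j})\otimes E^{\otimes k} = E^{\otimes i}\otimes(E^{\otimes j}\otimes E^{\otimes k})$ inside $E^{\otimes(i+j+k)}$. I would check this on elementary tensors of the algebraic tensor product and then extend by density and continuity, which is guaranteed by the crossnorm bound $\|x\otimes y\|_{i+j}\le\|x\|_i\|y\|_j$ of the admissible family.
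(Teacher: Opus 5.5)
Your proposal is correct and follows the paper's own argument. The paper likewise observes that each degree of $\sum_i \mathbf{b}^{\hat\otimes i}$ is a finite sum and then checks $(\mathbf{1}-\mathbf{b})\hat\otimes\sum_i\mathbf{b}^{\hat\otimes i}=\bigl(\sum_i\mathbf{b}^{\hat\otimes i}\bigr)\hat\otimes(\mathbf{1}-\mathbf{b})=\mathbf{1}$; you add, explicitly, the partial-sum telescoping, the degreewise stabilization, and the associativity of $\hat\otimes$ (from the strong crossnorm property), all of which the paper leaves implicit.
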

\begin{proof}
    First note that the infinite sum of Equation \eqref{eq: explicit inverse tensor algebra element} is well defined, as each term in the sequence is a sum of only finitely many non-zero elements.
    Writing $\mathbf{b} \coloneqq (\mathbf{1}- \mathbf{a})$, we get this is equivalent to 
    \[
   ( \mathbf{1}-\mathbf{b} ) \hat \otimes\sum_{i=0}^\infty \mathbf{b}^{\hat\otimes i } = \left(\sum_{i=0}^\infty \mathbf{b}^{\hat\otimes i }\right) \hat\otimes( \mathbf{1}-\mathbf{b} ) = \mathbf{1}
    \]
\end{proof}

In what follows we shall also use the algebraic tensor algebra; we view it as a natural space for functionals acting on the tensor algebra. The key difference between the algebraic tensor algebra and the tensor algebra from Definition~\ref{def: tensor algebra} is that the algebraic tensor algebra is the direct sum of the \emph{algebraic} tensor spaces, i.e., the vector spaces of linear combinations of simple tensors (without a topology).  

\begin{definition}\label{def: algebraic tensor algebra}
    Let $F$ be a vector space. The \emph{algebraic tensor algebra} $T_a(F)$ and the \emph{truncated algebraic tensor space} $T^{(n)}_a(F)$ are given by
    \[
    T_a(F) \coloneqq  \bigoplus_{i=0}^\infty F^{\otimes_a i}\quad \text{and} \quad T_a^{(n)}(F) \coloneqq  \bigoplus_{i=0}^n F^{\otimes_a i},
    \]
    where $F^{\otimes_a i}$ denotes the $i$-th algebraic tensor power of $F$.
\end{definition}

Note that if $E$ is a Banach space, $\{\| \cdot \|_{n}\}_{n\in \N_{>0}}$ an admissible family of tensor norms for $E$, and $F\subseteq E^*$, then elements of $T_a(F)$ represent elements of the (algebraic) dual of $T((E))$. Indeed, we set
    \[
    \inprod{\mathbf{x}}{\mathbf{y}} \coloneqq \inprod{\mathbf{x}^{(i)}}{\mathbf{y}},  \quad \mathbf{y}\in F^{\otimes_a i},\mathbf{x} \in T((E)) 
    \]
(the fact that the tensor norms are crossnorms ensure that the above is well-defined). The definition extends to $\mathbf{y}\in T_a(F)$ by linearity.

\subsection{\texorpdfstring{$\alpha$-}{}H\"older norms and the \texorpdfstring{$\varrho^{\text{hom}}_{\alpha}$}{H\"older}-metric}\label{sec: holder norms and rough paths}

In order to interpret a multiplicative functional as a rough path one must impose regularity conditions. These regularity conditions are typically expressed either in terms of $p$-variation or in terms of $\alpha$-H\"older continuity; in our work we use the latter.

\begin{definition}[$\alpha$-Hölder continuity, see e.g.\ Definition 5.1 in~\cite{friz_multidimensional_2010}]\label{def: normal alpha holder}
    Let $0< \alpha \leq 1$, and let $(E,\|\cdot\|)$ be a real Banach  space. For any path $x \colon [s,t] \to E $ we define the \emph{$\alpha$-Hölder coefficient }
    \begin{equation}\label{eq: alpha holder continuity}
        \|x\|_{C^{\alpha}} \coloneqq \sup_{s\leq u < v \leq t} \frac{\|x_u - x_v\|}{|v-u|^\alpha}.
    \end{equation}
    If a path has a finite $\alpha$-Hölder coefficient, it is said to be \emph{$\alpha$-Hölder continuous}. The vector space of all such functions is denoted by $C^\alpha ([s,t];E)$. Moreover, we define the space $C_0^\alpha ([s,t];E)$ as
    \[
    C_0^\alpha ([s,t];E) \coloneqq \{ x \in C^\alpha ([s,t];E) \mid x(s) = 0 \}.
    \]
    On this space, the $\alpha$-Hölder coefficient is a norm.
\end{definition}
One could also define these concepts respectively for $\alpha>1$, but this would have no real use, since functions belonging to such a class would be constant, see \cite[Proposition 5.2]{friz_multidimensional_2010}. 

There are essentially two ways of extending the definition of $\alpha$-Hölder continuity to multiplicative functionals. Either one defines a notion of these concepts directly for higher order terms of the signature (see, e.g.,~\cite{friz_course_2020}), or one defines a new metric on the truncated tensor algebra $T^{(n)}(E)$ and then considers H\"older continuity with respect to this new metric (see, e.g.,~\cite{cuchiero_global_2023}). Both approaches give rise to the same set of functionals and the associated H\"older coefficients can be shown to be equivalent (see, e.g., the introduction of ~\cite[Chapter 8]{friz_multidimensional_2010}, although only the finite dimensional case is discussed, the argument remains). In this work, we take the first approach.

\begin{definition}[Homogeneous $\alpha$-Hölder metric, see e.g.\ Definition 2.1~\cite{friz_course_2020}]\label{def: homogeneous alpha holder}
Let $E$ be a real Banach space and let $\{\| \cdot \|_{n}\}_{n\in \N_{>0}}$ be an admissible family of tensor norms for $E$. Let $\mathbf{x},\mathbf{y}\colon [0,T]^2\rightarrow T_1^{(n)}(E)$ be multiplicative functionals (see Definition~\ref{def: multiplicative functional}), and let $0<\alpha\leq 1$. Then the \emph{homogeneous $\alpha$-Hölder metric} ${\varrho}^{\text{hom}}_{\alpha}(\mathbf{x},\mathbf y)$ of order $n$ is given by
  \[
  \varrho^{\text{hom}}_{\alpha}(\mathbf{x},\mathbf{y}) \coloneqq \max_{1\leq i \leq n} \sup_{u,v\in [0,T], u \neq v} \left( \frac{\|\mathbf x^{(i)}_{u,v} - \mathbf y^{(i)}_{u,v}\|_i}{|v-u|^{i \alpha}}\right)^{\nicefrac{1}{i}}.
  \]
  A multiplicative functional $x$ is said to be \emph{$\alpha$-Hölder continuous} if its \emph{homogeneous $\alpha$-Hölder `norm'}\footnote{The terminology is somewhat of a misnomer as the set of multiplicative functionals is not a linear space.} $\|\mathbf{x}\|_{\mathcal{C}^{\alpha}}$ is finite, where 
  \[
  \|\mathbf{x}\|_{\mathcal{C}^{\alpha}} \coloneqq \max_{1\leq i \leq n} \sup_{u,v\in [0,T], u \neq v} \left( \frac{\|\mathbf x^{(i)}_{u,v}\|_i}{|v-u|^{i \alpha}}\right)^{\nicefrac{1}{i}}.
  \] 
  The set of all these $\alpha$-Hölder continuous multiplicative functionals of order $n$ is given by 
  \[
   \mathcal{C}^{\alpha,n} ([0,T];E ) \coloneqq \{ \mathbf{x} \colon [0,T]^2 \to T^{(n)}(E) \mid \mathbf{x} \text { is a multiplicative functional and } \|\mathbf{x}\|_{\mathcal{C}^\alpha} < \infty\}. \label{def: holder multiplicative functionals}
  \]
  Unless stated otherwise, we assume this space to be equipped with the topology induced by the metric ${\varrho}^{\text{hom}}_{\alpha}$. In the special case that $n = \lfloor\nicefrac{1}{\alpha }\rfloor$, $n$ is omitted (see Section \ref{sec: Lyons lift} for why this case is special) and the space is denoted by 
  \[
  \mathcal{C}^{\alpha} ([0,T];E ) \coloneqq \mathcal{C}^{\alpha,\lfloor\nicefrac{1}{\alpha}\rfloor} ([0,T];E ) .
  \] \label{def: natural holder multiplicative functionals}
\end{definition}

\subsection{The Lyons lift and the signature}\label{sec: Lyons lift}
As mentioned in the introduction of this section, the Lyons lift provides a method for lifting a multiplicative functional $\mathbf{x} \in \alpharough $ to a multiplicative functional in $\mathcal{C}^{\alpha, n} ([0,T];E)$ for $n > \lfloor \nicefrac{1}{\alpha}\rfloor$:

\begin{thmdef}\label{thm: Lyons lift}
Let $E$ be a real Banach space and let $\{\| \cdot \|_{n}\}_{n\in \N_{>0}}$ be an admissible family of tensor norms for $E$. Let $0<\alpha \leq 1$, let $n\in \N_{>0}$ with $n\geq \lfloor \nicefrac{1}{\alpha} \rfloor$,  and let $\pi\colon T^{(n)}(E) \to T^{\left(\lfloor\nicefrac{1}{\alpha}\rfloor\right)}(E)$ be the natural projection. Then there exists a unique $\varrho^{\text{hom}}_{\alpha}$ continuous map
    \[
    S^{n}\colon \mathcal{C}^{\alpha}([0,T];E) \to  \mathcal{C}^{\alpha,n}([0,T];E),
    \]
    such that $\pi (S^n(\mathbf{x})_{s,t}) = \mathbf{x}_{s,t} $ for all $s,t \in [0,T]$ and all $\mathbf{x}\in \mathcal{C}^\alpha([0,T];E)$. Furthermore, there exists a $C>0$ such that $\|S^n(\mathbf{x})\|_\mathcal{C^\alpha} \leq C \|\mathbf{x}\|_\mathcal{C^\alpha}$, for all $\mathbf{x}\in \mathcal{C}^{\alpha}([0,T];E)$. The map $S^{n}$ is called the \emph{Lyons map}.
\end{thmdef}
\begin{proof}
This is essentially \cite[Theorem 3.7]{lyons_differential_2007}, although a different notation is used there. One sees this by taking $\omega(s,t) = C'(t-s)^\alpha$ for the control $\omega$ used in \cite[Theorem 3.7]{lyons_differential_2007}, where $C'$ is a constant depending on $\mathbf{x} \in \alpharough$.
\end{proof}

As the construction of the Lyons' lift is important for Section \ref{sec: i star continuity lyons lift}, we sketch it here. The construction uses induction. Indeed, for $n = \lfloor \nicefrac{1}{\alpha} \rfloor$, the map is just the identity. Assuming that the map $S^n$ exists, the map $S^{n+1}$ is constructed as follows: first, for $\mathbf{x} \in \alpharough$, one defines $\tilde{\mathbf{x}} \colon [0,T]^2 \rightarrow T^{(n+1)}(E)$ as follows:
\begin{equation}
\tilde{\mathbf{x}}:=(S^{n} (\mathbf{x}),0),
\end{equation}
where $0\in E^{\otimes (n+1)}$. 
For $s<t$ and $D=(t_1,\ldots,t_N)$ a partition of $[s,t]$ we define
\[
\tilde {\mathbf{x}}_{s,t}^D \coloneqq   \tilde {\mathbf{x}}_{s,t_{1}} \hat\otimes \ldots \hat\otimes \tilde {\mathbf{x}}_{t_{N},t};
\]
the Lyons lift of $\mathbf{x}$ at $(s,t)$ is given by the limit
\[
S^{n+1}(\mathbf{x})_{s,t} \coloneqq \lim_{|D|\to 0 } \tilde {\mathbf{x}}_{s,t}^D .
\]
The construction of $\mathbf{x}_{s,t}$ when $s>t$ is analogous.
See the proof of \cite[Theorem 3.7]{lyons_differential_2007} for details why this converges to the desired mapping. 

\begin{remark}\label{rem: lyons lift same notation}
    The construction of the Lyons lift of Theorem/Definition \ref{thm: Lyons lift} also allows one to define analogous maps $S^n \colon \mathcal{C}^{\alpha,k}([0,T];E) \to  \mathcal{C}^{\alpha,n}([0,T];E)$, for $\lfloor \frac{1}{\alpha} \rfloor<k < n$. Although these maps have a different domain, they can be considered as the `same' map, and so the same notation will be used for them. In particular, for any $\mathbf{x} \in \mathcal{C}^{\alpha,k}([0,T]; E)$, we have
\[
S^n(\mathbf{x})  = S^n (\pi (\mathbf{x})),
\]
where $\pi$ is the extension of the natural projection $\pi\colon T^{(k)}(E) \to T^{\lfloor \frac{1}{\alpha}\rfloor}(E)$ to $\pi\colon\mathcal{C}^{\alpha,k}([0,T];E)\to\alpharough$. 
\end{remark}

As we can lift a path in $\alpharough$ to $ \mathcal{C}^{\alpha,n}([0,T;]E)$ for arbitrary high $n$, we can also lift it to take values in the tensor algebra $T((E))$. In this case we denote the map by $S$. The object $S(\mathbf{x}) $ is still a multiplicative functional (see Theorem 3.7 of\cite{lyons_differential_2007}). Its main use is found in considering $S(\mathbf{x})_{0,T}$, an object knows as \emph{the signature of $\mathbf{x}$}:

\begin{definition}[Signature]\label{def:signature}
Let $E$ be a real Banach space and let $\{\| \cdot \|_{n}\}_{n\in \N_{>0}}$ be an admissible family of tensor norms for $E$. Let $0<\alpha \leq 1$, let $\mathbf{x} \in \alpharough$, and let $s,t\in [0,T]$. Then the object $S(\mathbf{x})_{s,t}\in T((E))$ is called the \emph{signature of $\mathbf{x}$ at $(s,t)$}.
\end{definition}
The signature $S(\mathbf{x})_{0,T}$ contains a lot of information on the multiplicative functional $\mathbf{x}$, but it does not fully describe it. For example, if a path goes to some point, and then goes back in exactly the same way, its signature is $\mathbf{1}$. In particular, the signature of such a path is indistinguishable from that of a path that remains constant. Another example is that two multiplicative functionals that are reparameterizations of each other have the same signature. These two cases are examples of so-called tree-like equivalences. It has been proven that the map $\mathbf{x}\mapsto S(\mathbf{x})$ is injective upto tree-like equivalences, see 
\cite[Theorem 1.1]{boedihardjo_signature_2016}. Tree-like equivalences can sometimes be eliminated by considering time-extended rough paths, see Section~\ref{ssec: time-extended rough paths}.

\subsection{Weakly geometric rough paths}\label{sec: weakly geometric}

When constructing a rough path `from scratch', one typically prescribes the iterated integral using e.g.\ It\^o or Stratonovich stochastic calculus. An advantage of the latter is that the resulting calculus satisfies the classical chain rule, e.g., $d[ W(t)^2] = 2W(t)\circ dW(t)$. For rough paths, the concept of `satisfying the classical chain rule' can be encoded formally by testing against shuffled functionals: one requires $\mathbf{x}\in T((E))$ to satisfy
    \begin{equation}\label{eq: weak group like formulation intro}
    \inprod{\mathbf{x}}{\mathbf{y}} \inprod{\mathbf{x}}{\mathbf{y}'} = \inprod{\mathbf{x}}{\mathbf{y}\shuffle \mathbf{y}'} ,\quad \mathbf{y}, \mathbf{y}' \in T_a(E^*)
    \end{equation}
(see Definition~\ref{def: shuffle} for the definition of the shuffle product $\shuffle$, and see Remark~\ref{def: shuffle inductive (is remark)} below for details). The celebrated Chen's theorem states that for \emph{finite-dimensional rough paths}, property~\eqref{eq: weak group like formulation intro} is equivalent to imposing that $\mathbf{x}_{s,t}$ is `group-like' for all $s,t\in [0,T]$, and the paths that satisfy these equivalent conditions are called \emph{weakly geometric} (see Definitions~\ref{def: group like elements strong} and~\ref{def: weakly geometric rough paths} below; roughly speaking, $\mathbf{x}\in T_1((E))$ is called `group-like' if there exists an $X$ in the free Lie algebra generated by $E$ such that $\mathbf{x} = \exp(X)$). A consequence of Chen's theorem is that tools and techniques from Lie theory can be used to analyze weakly geometric paths. For the sake of completeness, we note that the concept of \emph{geometric rough paths} can also be found in the literature; these are rough paths that can be obtained as the $\|\cdot \|_{\mathcal{C}^{\alpha}}$-limit of lifted differentiable paths. However, this concept does not play a significant role in our work.

Let us briefly reflect on the relevance of weakly geometric rough paths in the setting of this paper. For a start, property~\eqref{eq: weak group like formulation intro} lies at the heart of our approximation result, as it essentially implies that \emph{polynomials} of functionals applied to lower order terms of the rough path can be expressed as \emph{linear functionals} of higher-order terms. Unfortunately, it is not clear in the infinite-dimensional setting whether satisfying condition \eqref{eq: weak group like formulation intro} is equivalent to being group-like, yet many results in the literature concerning weakly geometric rough paths in fact rely on the `group-like' property. Thus an infinite-dimensional analogue of Chen's theorem is desirable: indeed, we prove that the two notions are equivalent if the state space $E$ satisfies the approximation property and the crossnorms involved are strongly uniform (see Definition~\ref{def:strong-unif-crossnorm} and Proposition~\ref{prop: equality of group like definitions} below).

We begin by introducing the concept of `group-like' elements. In doing so, we borrow language from the theory of Lie groups and Lie algebras (such as `exponential map' and `bracket'). However, we are working in the infinite-dimensional setting, and we do not want to get into the technicalities of infinite-dimensional Lie groups. Thus, we refrain from rigorously identifying an underlying Lie algebra/Lie group structure and only highlight the properties of the involved spaces that are of relevance for our work (but see \cite[Section 2]{grong_geometric_2022} for a more rigorous treatment). 

Let $E$ be a real Banach space and let $\{\| \cdot \|_{n}\}_{n\in \N_{>0}}$ be an admissible family of tensor norms for $E$. Let $T_0((E))$ denote the elements $\mathbf{x}$ of $T((E))$ with $\mathbf{x}^{0}=0$, and let $T_0^{(n)}(E)$ be defined analogously. We think of $T_0((E))$ as being the `Lie algebra' associated with the `Lie group' $T_1((E))$, the `Lie bracket' being given by 
\begin{equation}
[\mathbf{x},\mathbf{x}'] = \mathbf{x}\hat{\otimes}\mathbf{x}' - \mathbf{x}\hat{\otimes} \mathbf{x}',\qquad \mathbf{x},\mathbf{x}'\in T_0((E)).
\end{equation}
Note that while the Lie group and Lie algebra structure is not made rigorous, the bracket $[\cdot , \cdot] \colon T_0((E)) \times T_0((E)) \rightarrow T_0((E))$ is a well-defined alternating bi-linear map satisfying the Jacobi-identity. Moroever, we can define the `exponential map' $\exp\colon T_0((E)) \rightarrow T_1((E))$ which formally describes the flow in $T_1((E))$ starting in $\mathbf{1}\in T_1((E))$ in the direction $\mathbf{x}\in T_0((E))$:

\begin{definition}[Exponential mapping and logarithmic mapping]\label{def: exponential and logarithmic mapping}
Let $E$ be a real Banach space and let $\{\| \cdot \|_{n}\}_{n\in \N_{>0}}$ be an admissible family of tensor norms for $E$. We define the exponential mapping $\exp \colon T_0((E)) \to T_1((E))$ by its classical power expansion:
    \begin{equation}
        \exp (\mathbf{x}) \coloneqq \sum_{n=0} ^\infty\frac{\mathbf{x}^{\hat \otimes n}}{n!},
    \end{equation}
    for any $\mathbf{x} \in T_0((E))$, which is well defined as each term in the sequence is a sum of finitely many non-zero elements. Similarly, we define the logarithmic mapping $\log \colon T_1((E)) \to T_0((E))$ as
    \begin{equation}
        \log (\mathbf{x})  \coloneqq \sum_{n=1}^\infty \frac{(-1)^n}{n} ( 1 - \mathbf{x})^{\hat \otimes n},
    \end{equation}
     for any $\mathbf{x} \in T_1((E))$, which is again well defined as each term in the sequence is a sum of finitely many non-zero elements. For the truncated tensor algebra $T^{(n)}(E)$ the exponential and logarithmic mapping are defined analogously.
\end{definition}

It is possible to verify that the exponential and logarithmic mappings defined above are each-others inverse. The smallest subalgebra of the `Lie algebra' $T_0((E))$ that contains $E$ (when interpreted as a subspace of $T((E))$ is called the space of Lie series:

\begin{definition}[Lie series, see e.g.\ Definition 2.7 in \cite{cass_integration_2016}]\label{def: Lie Series}
Let $E$ be a real Banach space, let $\{\| \cdot \|_{n}\}_{n\in \N_{>0}}$ be an admissible family of tensor norms for $E$. For any $n,m\in \N_{>0}$ and any $\mathbf{x}\in E^{\otimes n},\mathbf{x}' \in E^{\otimes m}$, we define the Lie bracket 
    \[
    [\mathbf{x} , \mathbf{x}'] = \mathbf{x} \otimes \mathbf{x}' - \mathbf{x}' \otimes \mathbf{x}.
    \]
    Furthermore, for $n\geq 2$, we define the space $\mathcal{M}^{(n)}(E)$ to be
    \[
    \mathcal{M}^{(n)}(E)\coloneqq \overline{\text{span} \{ [x_1, [x_2, \ldots, [ x_{n-1}, x_n ]]] \mid x_i \in E \}}.
    \]
    We also set
    \[
    \mathcal{M}^{(1)}(E)\coloneqq E \quad \text{ and } \mathcal{M}^{(0)}(E)\coloneqq \{0 \} .
    \]
    We define the space of Lie series $\text{Lie}(E)$ [or $\text{Lie}_n(E)$] to be the space of elements $\mathbf{x} \in T_0((E))$ [or $\mathbf{x} \in T_0^{(n)}(E)$] such that for any $m\in \N$ [or $0\leq m \leq n$], we have that $\mathbf{x}^{(m)} \in  \mathcal{M}^{(n)}(E)$.
\end{definition}

We now identify group-like elements as being the image of a Lie series under the exponential mapping: 

\begin{definition}[Group-like elements, see e.g.\ Definition 2.9 in~\cite{cass_integration_2016}]\label{def: group like elements strong}    
Let $E$ be a real Banach space and let $\{\| \cdot \|_{n}\}_{n\in \N_{>0}}$ be an admissible family of tensor norms for $E$. We define the set of \emph{group-like elements} to be the image of the set of Lie series under the exponential mapping.
    \begin{equation}
        G(E) \coloneqq \exp ( \text{Lie} (E)).
    \end{equation}
    Similarly, for $n\in \N$ we set
    \[
    G^{n}(E) \coloneqq \exp ( \text{Lie}_n(E)).
    \]
\end{definition}

\begin{remark}\label{remark: group-like closed under multiplication}
It is a consequence of the fact that $\exp|_{T_0^{(n)}(E)}\colon T_0^{(n)}(E) \rightarrow T_1^{(n)}(E)$ is a diffeomorphism (\cite[Theorem 2.4]{cass_integration_2016}) and the Campbell--Baker--Hausdorff--Dynkin formula that if $\mathfrak{g}\subseteq T_0^{(n)}$ is a closed Lie subalgebra, then $\exp(\mathfrak{g})$ is a closed subgroup of $T_1^{(n)}$ (with respect to the tensor product), see \cite[Corollary 2.6]{cass_integration_2016}. In particular, $G^{n}(E)$ is a closed group, and thus $G(E)$ is a closed group.
\end{remark}

As announced in the introduction of this section, the concept of `satisfying the classical chain rule' can be encoded by testing against shuffled functionals. We now make this statement more precise.

\begin{definition}[Shuffle product, see e.g.\ Section 2.2.3 of~\cite{lyons_differential_2007}]\label{def: shuffle}
    Let $F$ be a vector space. Then, for each $n,m\in \N$, we define the \emph{shuffle product} $\shuffle \colon F^{\otimes_a n} \times F^{\otimes_a m}\to F^{\otimes_a (n+m)}$ as 
    \[
    \mathbf{y} \shuffle \mathbf{y}' \coloneqq \sum_{\sigma\in \text{Sh}(n,m)}P_{\sigma^{-1}} ( \mathbf{y} \otimes \mathbf{y}'),
    \]
    where $\mathbf{y}\in F^{\otimes_a n}$, $\mathbf{y}' \in F^{\otimes_a m}$, $P_\sigma$ is the permutation map defined as in Definition \ref{def: permutations on alg tensor space}, and $\text{Sh}(n,m)$ is the set of all $(n,m)$-shuffles, that is, all permutations $\sigma$ of $\{1,\ldots,n+m\}$ such that $\sigma(1) < \ldots < \sigma(n)$ and $\sigma(n+1) < \ldots < \sigma(n+m)$. 
    Note that the shuffle product extends to a bilinear map $\shuffle \colon T_a(F) \times T_a(F) \to T_a(F)$.
\end{definition}
\begin{remark}[See Definition 2.2 in \cite{cuchiero_signature-based_2023}]\label{def: shuffle inductive (is remark)}
    An alternative but equivalent definition of the shuffle product is given inductively: first, for $c,c'\in \R$, $\mathbf{y}\in ^{\otimes_a n}$, $n\in\N$ we set \begin{align*}
        c \shuffle c' &:= cc',& \mathbf{y}\shuffle c = c \shuffle \mathbf{y} & := c\mathbf{y}.
    \end{align*} 
    Next, for $\mathbf{y}_1\in F^{\otimes_a n}$,  $\mathbf{y}'_1\in F^{\otimes_a m}$, $y_2, y_2'\in F$, $n,m\in \N$, we set
    \begin{equation}\label{eq: shuffle prod definition}
    (\mathbf{y}_1 \otimes y_2) \shuffle (\mathbf{y}_1'\otimes y_2') 
    \coloneqq 
    ((\mathbf{y}_1\otimes y_2)\shuffle \mathbf{y}_1') \otimes  y_2' +  (\mathbf{y}_1 \shuffle (\mathbf{y}_1'\otimes y_2')) \otimes  y_2.
    \end{equation}
By induction and multilinear extension, these rules define the shuffle product $\shuffle \colon F^{\otimes_a n} \times F^{\otimes_a m} \rightarrow F^{\otimes_a (n+m)}$ for all $n,m\in\N$.
\end{remark}
The shuffle product allows us to define elements that we call \emph{weakly} group-like. When $E$ is finite-dimensional, the notions of group-like and weakly group-like coincide (this is known as Chen's theorem), hence the term `weakly group-like' is not used elsewhere in the literature. In the infinite-dimensional setting it is not clear whether these notions coincide in general, but Proposition~\ref{prop: equality of group like definitions} provides conditions under which this is the case.

\begin{definition}[Weakly group-like elements, see Definition 2.18 in \cite{lyons_differential_2007}]\label{def: weak group like elements dual rep}
Let $E$ be a real Banach space and let $\{\| \cdot \|_{n}\}_{n\in \N_{>0}}$ be an admissible family of tensor norms for $E$. Then we define the space of \emph{weakly group-like elements} $G_w(E)$ as the set of $\mathbf{x}\in T((E))$ such that
    \begin{equation}\label{eq: weak group like formulation}
    \inprod{\mathbf{x}}{\mathbf{y}} \inprod{\mathbf{x}}{\mathbf{y}'} = \inprod{\mathbf{x}}{\mathbf{y}\shuffle \mathbf{y}'} ,
    \end{equation}
    for all $\mathbf{y}, \mathbf{y}' \in T_a(E^*)$. Similarly, for $n\in \N$, we define $G^{(n)}_w(E)$ as the set of elements $\mathbf{x} \in T^{(n)}(E)$ such that Equation \eqref{eq: weak group like formulation} holds for all $\mathbf{y}, \mathbf{y}' \in T^{(n)}(E)$ such that $\deg (\mathbf{y}) + \deg(\mathbf{y}') \leq n$, where $\deg(\mathbf{z})$ stands for the highest order of $\mathbf{z}$ that is non-zero.
\end{definition} 

\begin{proposition}\label{prop: equality of group like definitions}
Let $E$ be a real Banach space and let $\{\| \cdot \|_{n}\}_{n\in \N_{>0}}$ be an admissible family of tensor norms for $E$. Then $G(E) \subseteq G_w(E)$, and, for any $n\in \N$, $G^{(n)}(E) \subseteq G_w^{(n)}(E)$. Furthermore, if $E$ has the approximation property, and if the norms over $E^{\otimes n}$ are strongly uniform crossnorms, we have an equality
    \[
    G_w(E) = G(E),
    \]
    and similarly, for any $n\in \N$, $G^{(n)}(E) = G_w^{(n)}(E)$.
\end{proposition}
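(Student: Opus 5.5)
Both inclusions $G(E)\subseteq G_w(E)$ and $G^{(n)}(E)\subseteq G_w^{(n)}(E)$ go by the classical Hopf-algebraic argument, carried out degree by degree so that only finitely many tensor levels appear. Fix $\mathbf{y}\in E^{*\otimes_a p}$ and $\mathbf{y}'\in E^{*\otimes_a q}$; both sides of \eqref{eq: weak group like formulation} only involve levels up to $p+q$, so we may work in $T^{(N)}(E)$ with $N=p+q$.

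\emph{Step 1: the shuffle identity for $\exp$ of primitive elements.} The key algebraic fact is that for simple tensors,
\[
\inprod{\mathbf{a}\otimes\mathbf{b}}{\mathbf{y}\shuffle\mathbf{y}'}=\sum\inprod{\mathbf{a}_{(1)}\otimes \mathbf{b}_{(1)}}{\mathbf{y}}\inprod{\mathbf{a}_{(2)}\otimes\mathbf{b}_{(2)}}{\mathbf{y}'},
\]
where $\Delta$ is the deshuffle coproduct, which is multiplicative for $\otimes$. A Lie element $L\in\mathcal{M}^{(k)}$ that lies in the algebraic span of brackets is primitive: $\Delta L=L\otimes 1+1\otimes L$. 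This follows by induction on bracket length, since $x\in E$ is primitive and commutators of primitives are primitive. Consequently $\exp(L)$ is grouplike, $\Delta\exp L=\exp L\otimes\exp L$, which dualizes to \eqref{eq: weak group like formulation}. For this I need the pairing $\inprod{\cdot}{\mathbf{y}}$ to be continuous on $E^{\otimes k}$, which holds since $\mathbf{y}\in E^{*\otimes_a k}$ is a finite sum of products of functionals. Continuity here requires reasonableness of the crossnorm; if only an admissible norm is assumed, I will verify that symmetric strong crossnorms dominate the injective norm, so that elementary functionals are bounded.

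\emph{Step 2: passing to closures.} A general $X\in\mathrm{Lie}(E)$ has components in the closures $\mathcal{M}^{(k)}$. I approximate each of $X^{(1)},\dots,X^{(N)}$ by finite sums of brackets $X_j$. The truncated map $\exp$ on $T^{(N)}$ is polynomial and hence continuous, and both sides of \eqref{eq: weak group like formulation} are continuous in $\mathbf{x}\in T^{(N)}$ by Step 1's continuity. Step 1 holds for the $X_j$, so taking the limit gives it for $X$. This proves both inclusions.

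\emph{Step 3: the converse under the approximation property and strong uniformity.} Let $\mathbf{x}\in G_w(E)$ and set $X=\log\mathbf{x}$. I must show $X^{(k)}\in\mathcal{M}^{(k)}(E)$. In finite dimension this is Chen's/Ree's theorem: the shuffle relations imply that $\log\mathbf{x}$ is annihilated by all nontrivial shuffles $\mathbf{y}\shuffle\mathbf{y}'$, with $\deg\mathbf{y},\deg\mathbf{y}'\geq1$. Lie elements are exactly the annihilator of these shuffles, which is the Ree characterization. My plan is to reduce to finite dimensions by finite-rank operators. The approximation property gives finite-rank $P_\lambda\in\mathcal{L}(E)$ with $P_\lambda\to\mathrm{id}$ uniformly on compacts. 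Strong uniformity ensures $\|P_\lambda^{\otimes k}\|\le\|P_\lambda\|^k$, so $P_\lambda^{\otimes k}\to\mathrm{id}$ pointwise on $E^{\otimes k}$. Pointwise convergence holds first on simple tensors, by compactness of finite sets, and then by density, using uniform boundedness, which I will arrange by choosing $P_\lambda$ with bounded norms or by using compact-set convergence on the relevant compact set directly. The induced algebra morphism $\Gamma_\lambda=\bigoplus_k P_\lambda^{\otimes k}$ maps $G_w(E)$ into $G_w(P_\lambda E)$, since pulling back by $P_\lambda^*$ respects the shuffle product. In the finite-dimensional space $P_\lambda E$, Chen's theorem gives $\log\Gamma_\lambda\mathbf{x}=\Gamma_\lambda X$ with levels in $\mathcal{M}^{(k)}(P_\lambda E)\subseteq\mathcal{M}^{(k)}(E)$. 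Letting $\lambda\to$ the limit, $\Gamma_\lambda X^{(k)}\to X^{(k)}$, and closedness of $\mathcal{M}^{(k)}(E)$ yields $X\in\mathrm{Lie}(E)$. The truncated version is identical, using the truncated Chen theorem.

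\emph{Main obstacle.} The delicate step is the convergence $P_\lambda^{\otimes k}X^{(k)}\to X^{(k)}$ for a general element of the completed tensor space. The approximation property gives only compact-uniform convergence, without norm bounds. I would first try to use the fact that $X^{(k)}$ can be written as $\sum_i \lambda_i\, x^i_1\otimes\dots\otimes x^i_k$ with $\lambda_i\to0$ and bounded factors, so the factors lie in a compact set $K$. Then $\|P_\lambda^{\otimes k}(x_1\otimes\cdots\otimes x_k)-x_1\otimes\cdots\otimes x_k\|$ is controlled uniformly on $K$ via crossnorm telescoping. If that representation is unavailable for the given norm, I would instead use density together with strong uniformity.
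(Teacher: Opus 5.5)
\textbf{The gap is in Step~3, at the point you flag yourself.} You need $P_\lambda^{\otimes k}X^{(k)}\to X^{(k)}$ in norm in $E^{\otimes k}$, for a general element of the completed space, and neither of your fallbacks delivers it.

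\emph{Density plus strong uniformity.} The approximation property only gives finite-rank $P_\lambda$ that converge to the identity uniformly on compacta, with no control on $\|P_\lambda\|$. The density argument needs $\sup_\lambda\|P_\lambda\|<\infty$, which is the bounded approximation property, and that is strictly stronger than AP (Figiel--Johnson). Moreover, $\Gamma_\lambda$ must use the same $P_\lambda$ in every slot in order to be an algebra morphism carrying $G_w(E)$ into $G_w(P_\lambda E)$. As a result, the natural telescoping or density estimates for $\|P_\lambda^{\otimes k}u-u\|_k$ with non-algebraic $u$ pick up powers of $\|P_\lambda\|$.

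\emph{Grothendieck's representation.} This needs $\sum_i|\lambda_i|<\infty$ with factors tending to zero; that is what yields a compact set and a summable error. Your version, ``$\lambda_i\to0$ with bounded factors'', gives neither. Such a representation exists only for elements in the image of the projective completion, and for a general strongly uniform family $E^{\otimes k}$ can be strictly larger than that image (e.g.\ injective tensors of $\ell_2$ that are not trace class).

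So, as written, Step~3 proves the converse only for the projective norm, or when $E$ has the bounded approximation property. It does not cover the stated hypotheses.

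\textbf{The missing idea.} You should never approximate $X^{(k)}$ in norm. Instead, reduce membership in the closed space $\mathcal{M}^{(k)}(E)$ to an identity that can be \emph{tested} against $(E^*)^{\otimes_a k}$.

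The paper does this via Geng's characterization: $\mathbf{x}$ is group-like iff $\mathbf{x}^{(m)}\otimes\mathbf{x}^{(l)}=\sum_{\sigma\in\text{Sh}(m,l)}P_\sigma(\mathbf{x}^{(m+l)})$, which is an equality of two vectors in $E^{\otimes(m+l)}$. Weak group-likeness says exactly that their difference is annihilated by all $\mathbf{y}\otimes\mathbf{y}'$. The only analytic input is then Corollary~\ref{cor: point separating small}, point separation by $(E^*)^{\otimes_a n}$. That corollary is where AP and strong uniformity enter, through injectivity of $E^{\otimes n}\to E^{\otimes_\epsilon n}$.

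You can repair your own route the same way:
\begin{enumerate}
\item Take the Dynkin--Specht--Wever map $\theta_k(x_1\otimes\cdots\otimes x_k)=[x_1,[x_2,\ldots,[x_{k-1},x_k]]]$. It is a signed sum of $2^{k-1}$ permutation operators, so by symmetry it is bounded on $E^{\otimes k}$, with range in $\mathcal{M}^{(k)}(E)$.
\item Your observation that $\log\mathbf{x}$ annihilates nontrivial shuffles is purely algebraic, so it holds in infinite dimensions.
\item Finite-dimensional Ree, applied on the dual side, shows that $\theta_k^{\top}\mathbf{y}-k\mathbf{y}$ is a sum of nontrivial shuffles.
\item Together these show that $\theta_kX^{(k)}-kX^{(k)}$ is killed by $(E^*)^{\otimes_a k}$. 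Point separation then gives $X^{(k)}=\tfrac1k\theta_kX^{(k)}\in\mathcal{M}^{(k)}(E)$.
\end{enumerate}

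\textbf{Steps~1--2.} These are a fine self-contained replacement for citing Geng in the inclusion $G\subseteq G_w$. Your side claim that symmetric strong crossnorms automatically dominate the injective norm is unproven. However, the paper likewise takes the pairing with $T_a(E^*)$ for granted at that point, so this is not specific to your route.
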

\begin{proof}
    See Appendix \ref{sec: group-like elements}.
\end{proof}
\begin{remark}\label{rem:cc_problems}
   It does not hold true that all group-like elements in infinite dimensions can be represented as the lift of continuous finite-variation paths as is the case in the finite-dimensional setting. As a consequence, the Carnot--Carathéodory metric is not finite for all group-like elements. For more details, we refer the reader to \cite{grong_geometric_2022}.
\end{remark}
Having identified group-like elements, we can define weakly geometric rough paths.
\begin{definition}[Weakly geometric rough paths, see e.g.\ Definition 3.14 in~\cite{lyons_differential_2007}]\label{def: weakly geometric rough paths} Let $E$ be a real Banach space, let $\{\| \cdot \|_{n}\}_{n\in \N_{>0}}$ be an admissible family of tensor norms for $E$, let $n\in \N$ and let $\alpha \in (0,1]$. Then the space of \emph{weakly geometric rough paths} $\mathcal{C}^{\alpha,n}_g ( [0,T] ; E)$ is the set of multiplicative functionals $\mathbf{x} \in \mathcal{C}^{\alpha,n} ( [0,T] ; E)$ such that for all $s,t \in [0,T]$ we have $\mathbf{x}_{s,t} \in G^{(n)}(E)$ (see Definition \ref{def: group like elements strong}). If $n =\lfloor \nicefrac{1}{\alpha}\rfloor$, we drop the $n$ and denote this space by $\mathcal{C}^{\alpha}_g ( [0,T] ; E)$
\end{definition}

The Lyons lift $S^n$ of a weakly geometric rough path (see Definition~\ref{thm: Lyons lift} is still weakly geometric, which is shown in \cite[Corollary 3.9]{cass_integration_2016}:
\begin{theorem}[Corollary 3.9 in \cite{cass_integration_2016}]\label{thm: preservation of weakly geometric under lyons lift} Let $E$ be a real Banach space, let $\{\| \cdot \|_{n}\}_{n\in \N_{>0}}$ be an admissible family of tensor norms for $E$, let $\alpha \in (0,1]$ and let $n\in \N$, $n\geq \lfloor \nicefrac{1}{\alpha} \rfloor$. Then the Lyons lift $S^n$ maps weakly geometric rough paths to weakly geometric rough paths:
    \[
    S^n( \mathcal{C}_g^\alpha([0,T];E)) \subseteq \mathcal{C}^{\alpha,n}_g([0,T];E).
    \]
\end{theorem}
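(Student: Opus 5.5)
The plan is an induction on $n$, following the inductive construction of the Lyons lift sketched after Theorem/Definition~\ref{thm: Lyons lift}. For $n=\lfloor \nicefrac{1}{\alpha}\rfloor$ the map $S^n$ is the identity and there is nothing to prove. Suppose now that $S^n(\mathbf{x})_{s,t}\in G^{n}(E)$ for all $s,t$, and write $\tilde{\mathbf{x}}=(S^n(\mathbf{x}),0)$. We must show that $S^{n+1}(\mathbf{x})_{s,t}=\lim_{|D|\to 0}\tilde{\mathbf{x}}^D_{s,t}$ lies in $G^{n+1}(E)=\exp(\mathrm{Lie}_{n+1}(E))$. By Remark~\ref{remark: group-like closed under multiplication}, $G^{n+1}(E)$ is a \emph{closed group}. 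Hence it suffices to approximate $S^{n+1}(\mathbf{x})_{s,t}$ by finite products of elements of $G^{n+1}(E)$, and then pass to the limit in the norm of $T^{(n+1)}(E)$.

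The issue is that $\tilde{\mathbf{x}}_{u,v}$ itself need not be group-like at level $n+1$, because its top component $0$ is arbitrary. So I will replace it with a group-like element that has the same projection. Since $S^n(\mathbf{x})_{u,v}\in G^n(E)$, we have $S^n(\mathbf{x})_{u,v}=\exp_n(\ell_{u,v})$ with $\ell_{u,v}\in\mathrm{Lie}_n(E)$. Viewing $\ell_{u,v}$ as an element of $\mathrm{Lie}_{n+1}(E)$ with zero top component, set $\hat{\mathbf{x}}_{u,v}:=\exp_{n+1}(\ell_{u,v})\in G^{n+1}(E)$. Its first $n$ levels agree with those of $\tilde{\mathbf{x}}_{u,v}$. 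Its level-$(n+1)$ term is a finite sum of tensor products of the components $\ell^{(i)}_{u,v}$ with $i\le n$, and the sum of the degrees in each product is $n+1$.

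The main step is a bound of the form $\|\hat{\mathbf{x}}^{(n+1)}_{u,v}\|_{n+1}\le C|v-u|^{(n+1)\alpha}$. Each $\ell^{(i)}_{u,v}$ is a polynomial in $S^n(\mathbf{x})^{(j)}_{u,v}$, $j\le i$, with homogeneous total degree $i$, obtained from the logarithm series. The crossnorm property and the Hölder bound $\|S^n(\mathbf{x})^{(j)}_{u,v}\|\le C|v-u|^{j\alpha}$ from Theorem/Definition~\ref{thm: Lyons lift} then give $\|\ell^{(i)}_{u,v}\|\le C|v-u|^{i\alpha}$, and hence the required bound. Because $(n+1)\alpha>1$, the standard argument in the proof of \cite[Theorem 3.7]{lyons_differential_2007} applies. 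A family that is ``almost multiplicative'' with a top-level perturbation of order $|v-u|^{(n+1)\alpha}$ has the same limit of partition products. Concretely, one telescopes
\[
\tilde{\mathbf{x}}^D_{s,t}-\hat{\mathbf{x}}^D_{s,t}=\sum_k \tilde{\mathbf{x}}_{s,t_{k}}\hat\otimes\cdots\hat\otimes(\tilde{\mathbf{x}}_{t_k,t_{k+1}}-\hat{\mathbf{x}}_{t_k,t_{k+1}})\hat\otimes\hat{\mathbf{x}}_{t_{k+1},t}\cdots.
\]
The difference lives only in level $n+1$, so each summand reduces to $\hat{\mathbf{x}}^{(n+1)}_{t_k,t_{k+1}}$. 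The total is therefore bounded by $C\sum_k|t_{k+1}-t_k|^{(n+1)\alpha}\to 0$ as $|D|\to0$.

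It follows that $S^{n+1}(\mathbf{x})_{s,t}=\lim_{|D|\to 0}\hat{\mathbf{x}}^D_{s,t}$. Each $\hat{\mathbf{x}}^D_{s,t}$ is a finite product of elements of $G^{n+1}(E)$, so it lies in $G^{n+1}(E)$. Closedness then gives $S^{n+1}(\mathbf{x})_{s,t}\in G^{n+1}(E)$ for $s<t$. The case $s>t$ follows from $\mathbf{x}_{t,s}=\mathbf{x}_{s,t}^{-1}$ and the group property, and $s=t$ is trivial. The expected main obstacle is the uniform estimate on the top level of $\exp_{n+1}(\ell_{u,v})$, together with making sure the closures in the definition of $\mathcal{M}^{(k)}(E)$ cause no trouble. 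The latter holds because $\ell_{u,v}$ is already in $\mathrm{Lie}_n(E)$, and embedding with a zero top level keeps it in $\mathrm{Lie}_{n+1}(E)$.
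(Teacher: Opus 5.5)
Your argument is correct. The paper gives no proof of its own here: it simply imports the statement as Corollary~3.9 of \cite{cass_integration_2016}.

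What you have written is therefore a self-contained replacement for that citation. It uses only ingredients the paper already records: the inductive construction of $S^{n+1}$ from $(S^n(\mathbf{x}),0)$ sketched after Theorem/Definition~\ref{thm: Lyons lift}, and the fact that $G^{n+1}(E)$ is a closed group (Remark~\ref{remark: group-like closed under multiplication}).

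Your key device is the standard one: replace the non-group-like $(S^n(\mathbf{x})_{u,v},0)$ by $\exp_{n+1}(\log_n S^n(\mathbf{x})_{u,v})$, then pass to the limit inside a closed subgroup. The alternative would be to compare the lift in one shot with $\exp_n(\log \mathbf{x}_{u,v})$, starting from level $\lfloor\nicefrac{1}{\alpha}\rfloor$. Your level-by-level induction has an advantage over that: the two families differ only in the top component. The comparison of partition products then reduces to the elementary bound $\sum_k C|t_{k+1}-t_k|^{(n+1)\alpha}\le C|D|^{(n+1)\alpha-1}|t-s|\to 0$. As a result you never need the uniqueness part of Lyons' extension theorem for almost multiplicative functionals. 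Your route also makes explicit that only the strong crossnorm property and the inequality $(n+1)\alpha>1$ are used, which matches the bare admissibility hypothesis of the statement.

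One small fix is needed in your telescoping identity. The prefix should be the partition product $\tilde{\mathbf{x}}_{s,t_1}\hat\otimes\cdots\hat\otimes\tilde{\mathbf{x}}_{t_{k-1},t_k}$, not $\tilde{\mathbf{x}}_{s,t_k}$, since $\tilde{\mathbf{x}}$ is not multiplicative at level $n+1$. This is harmless: the middle factor lives purely in degree $n+1$, so each summand equals $-\hat{\mathbf{x}}^{(n+1)}_{t_k,t_{k+1}}$ regardless of the prefix and suffix.
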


\subsection{Time-extended rough paths}\label{ssec: time-extended rough paths}
As mentioned in the end of Section \ref{sec: Lyons lift}, the signature of a path contains most of the relevant information of the multiplicative functional, but it does not describe it completely, as it cannot distinguish between so-called tree-like equivalent paths. In \cite{cuchiero_signature-based_2023} the time-extended signature is invoked to deal with this problem (see Lemma 2.6 in~\cite{cuchiero_signature-based_2023}). The idea is that a rough path $\mathbf{x}$ with corresponding path $t\mapsto x(t)$ can be extended to a rough path $\tilde{ \mathbf{x}}$ with corresponding path $t\mapsto (t,x(t))$, see Proposition~\ref{prop: time extension} below. The signature of such a time-extended rough path \emph{is} a complete characterization of the path, see Proposition~\ref{prop: when there is a time component then unique signature} below. In other words, the map $\mathbf{x} \mapsto S(\tilde{\mathbf{x}})$ is injective. For simplicity we only consider multiplicative functionals of $\alpha$-Hölder regularity for $\alpha > \frac{1}{3}$.\par 
The time extensions of rough paths on a Banach space $E$ are themselves rough paths with values in $\R \oplus E$; where the latter is a Banach space when endowed e.g.\ with the norm
\begin{equation}\label{eq: R plus E norm}
\varnorm{(t,x)} = \varnorm{t,x}_1 := \left( |t|^2 + \| x \|^2 \right)^{\frac{1}{2}}, \quad (t,x) \in \R \oplus E.
\end{equation}
Thus, to define such a time extension we also require an admissible family of tensor norms $\{\varnorm{\,\cdot\,}_n\}_{n\in \N_{>0}}$ on $\R \oplus E$. Moreover, the two families of tensor norms must be compatible in the following sense: 

\begin{assumption}\label{ass: time-extended}
Let $E$ be a real Banach space and let $\{\| \cdot \|_{n}\}_{n\in \N_{>0}}$ be an admissible family of tensor norms for $E$. Let $\R\oplus E$ be the Banach space endowed with the norm~\eqref{eq: R plus E norm} and let $\{\varnorm{\,\cdot\,}_{n}\}_{n\in \N_{>0}}$ be an admissible family of tensor norms for $E$.
Let  $j\colon (\R \oplus E) \otimes_a (\R \oplus E) \rightarrow \R \oplus E \oplus E \oplus (E \otimes_a E)$ 
be defined by $j((t,x) \otimes (s,y)) = (ts, ty, sx, x\otimes y)$ for $t,s\in \R$ and $x,y\in E$ (and extended to the whole domain by linearity). Assume that $j$ extends to a homeomorphism 
$j \colon(\R \oplus E) \otimes (\R \oplus E) \rightarrow \R \oplus E \oplus E \oplus (E \otimes E)$. 
\end{assumption}

\begin{remark}
As we are working with $\alpha > \frac{1}{3}$ we only need compatibility of $\|\cdot \|_1, \|\cdot \|_2$, and $\varnorm{\,\cdot\,}_2$, for $\alpha \leq \frac{1}{3}$ one would need analogous compatibility statements for the norms  $\|\cdot \|_1, \ldots, \|\cdot \|_{\lfloor \nicefrac{1}{\alpha}\rfloor}$, and $\varnorm{\,\cdot\,}_2,\ldots, \varnorm{\,\cdot\,}_{\lfloor \nicefrac{1}{\alpha}\rfloor}$.
\end{remark}

Unfortunately, there is no obvious canonical way to construct a compatible family of tensor norms on $\R \oplus E$ from a family of tensor norms on $E$ that is guaranteed to preserve the strongly uniform crossnorm property which is needed e.g.\ in Theorem~\ref{thm: norm-compact uat}; see Section \ref{sec: time extension1 of Banach spaces}. However, the tensor norms that are of key interest satisfy the desired compatibility condition: 

\begin{lemma}\label{lem : standard tensor norms give compatible norms}
Let $E$ be a real Banach space, and let $\R \oplus E$ be endowed with the norm~\eqref{eq: R plus E norm}. Then, if the families of tensor norms for both $E$ and $\R \oplus E$ are given by the projective [injective] tensor norm, Assumption \ref{ass: time-extended} is satisfied. Furthermore, Assumption \ref{ass: time-extended} is also satisfied if $E$ is a Hilbert space, and the families of tensor norms for both $E$ and $\R \oplus E$ are given by the Hilbert tensor norm.
\end{lemma}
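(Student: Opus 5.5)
The plan is to show that in each of the three cases the map $j$ is bounded with a bounded inverse on the algebraic tensor product. Then $j$ extends by continuity to a linear homeomorphism between the completions. First we fix a norm on the target. The space $\R \oplus E \oplus E \oplus (E\otimes E)$ is a finite direct sum, so all the usual product norms ($\ell^1$, $\ell^2$, $\ell^\infty$) are equivalent, and we may pick whichever is most convenient. Likewise, the norm \eqref{eq: R plus E norm} on $\R\oplus E$ is equivalent to $|t|+\|x\|$ and to $\max(|t|,\|x\|)$. Since both projective and injective tensor norms change only by constants under equivalent renormings of the factors, we may use whichever equivalent norm on $\R\oplus E$ suits each estimate.

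\emph{Projective case.} Bound $j$ on a representation $\sum_k (t_k,x_k)\otimes(s_k,y_k)$. Each of the four components is controlled termwise by $\sum_k |t_k||s_k|$, $\sum_k |t_k|\|y_k\|$, $\sum_k |s_k|\|x_k\|$ and $\sum_k\|x_k\|\|y_k\|$ respectively, each at most $\sum_k\varnorm{(t_k,x_k)}\varnorm{(s_k,y_k)}$. Taking the infimum over representations gives $\|j(z)\|\le 4\varnorm{z}_{\pi}$. For the inverse, note that a general element of the target has the form $(a,u,v,w)$, with $w=\sum_k x_k\otimes y_k$. Its preimage is
\[
a\,(1,0)\otimes(1,0)+(1,0)\otimes(0,u)+(0,v)\otimes(1,0)+\textstyle\sum_k (0,x_k)\otimes(0,y_k).
\]
Its projective norm is at most $|a|+\|u\|+\|v\|+\sum_k\|x_k\|\|y_k\|$, and taking the infimum over representations of $w$ gives the bound on $j^{-1}$.

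\emph{Injective case.} We use duality: $(\R\oplus E)^*=\R\oplus E^*$, and the injective norm is the supremum of $|\langle z,\phi\otimes\psi\rangle|$ over unit functionals. Write $\phi=(\lambda,f)$ and $\psi=(\mu,g)$. Then $\langle z,\phi\otimes\psi\rangle$ splits as $\lambda\mu a+\lambda g(u)+\mu f(v)+(f\otimes g)(w)$, where $j(z)=(a,u,v,w)$. The upper bound on $\varnorm{z}_\varepsilon$ follows because each component is tested against a norm-one functional, e.g.\ $g(u)$ with $\|g\|\le1$. The lower bound comes from specialising: $\lambda=\mu=0$ recovers $\|w\|_\varepsilon$; $\lambda=1,f=0,\mu=0$ recovers $\|u\|$; and similarly for $a$ and $v$. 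Hence each component of $j(z)$ is bounded by $\varnorm{z}_\varepsilon$.

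\emph{Hilbert case.} Here $\R\oplus E$ is a Hilbert space, being an orthogonal direct sum. The Hilbert tensor product of orthogonal direct sums is the orthogonal direct sum of the pairwise tensor products. Concretely, choose orthonormal bases $\{e_0\}\cup\{e_i\}$, so that $j$ maps an orthonormal basis onto an orthonormal basis of $\R\oplus E\oplus E\oplus(E\otimes E)$ with the $\ell^2$ direct-sum norm. Thus $j$ is an isometry, and it is surjective on dense subspaces.

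The main obstacle is mild. In the projective case we need that taking the infimum on the target side is compatible with the chosen preimage. In the injective case we need that the extended map is onto the completed space. Both follow because bounded-below, bounded maps with dense range extend to isomorphisms of the completions.
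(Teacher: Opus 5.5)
Your proposal is correct. The Hilbert case is the same as the paper's: you take the orthonormal basis $\{e_0\}\cup\{e_i\}$ of $\R\oplus E$, so that $j$ is an isometry onto the $\ell^2$ direct sum. The projective and injective cases, however, follow a different and more elementary route.

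\textbf{The paper's route.} It renorms $\R\oplus E$ with the equivalent norms $|t|+\|x\|$ and $\max(|t|,\|x\|)$ respectively. It then identifies the tensor norm exactly, as $|t|+\|x\|+\|y\|+\|z\|_\pi$ and $\max(|t|,\|x\|,\|y\|,\|z\|_\epsilon)$ respectively. This uses the extremal characterisations: the projective norm is the largest crossnorm, and the injective norm is the smallest reasonable crossnorm. That in turn requires verifying that the candidate expressions are (reasonable) crossnorms, which the paper only sketches.

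\textbf{Your route.} You work directly from the defining formulas. For the projective norm you use termwise estimates on representations together with an explicit preimage. For the injective norm you use the supremum over product functionals $(\lambda,f)\otimes(\mu,g)$ with specific test functionals. This gives two-sided bounds rather than isometries. (With the renorming $|t|+\|x\|$ and the $\ell^1$ norm on the target, your projective bounds are in fact equalities.) An isomorphism is all Assumption~\ref{ass: time-extended} requires, and your argument needs no renorming. Your duality computation in the injective case is also more transparent and complete than the paper's ``analogous'' extremal argument.

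\textbf{One point to make explicit.} In the projective case, the bound $\|z\|_\pi\le C\,\|j(z)\|$ needs your formula to give \emph{the} preimage. Define $k(a,u,v,w):=a\,(1,0)\otimes(1,0)+(1,0)\otimes(0,u)+(0,v)\otimes(1,0)+(\iota\otimes\iota)(w)$, where $\iota\colon E\to\R\oplus E$ is the inclusion. You need $k\circ j=\mathrm{id}$, not only $j\circ k=\mathrm{id}$. This follows immediately by expanding $(t,x)\otimes(s,y)=(t(1,0)+(0,x))\otimes(s(1,0)+(0,y))$ bilinearly.

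\textbf{A minor remark on the Hilbert case.} An elementary tensor $x\otimes y$ is generally an infinite combination of basis tensors. The quickest rigorous form of ``basis onto basis'' is therefore to check $\langle j\xi,j\eta\rangle=\langle \xi,\eta\rangle$ directly on elementary tensors, which is a one-line expansion.
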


See Section~\ref{sec: proof of standard tensor norms give compatible norms} for a proof. The following proposition ensures that the time-extended rough path can be constructed:

\begin{proposition}[Time-extended rough paths, see e.g.\ \cite{cuchiero_signature-based_2023}]\label{prop: time extension}
Let $\frac{1}{3}<\alpha \leq 1$ and let Assumption \ref{ass: time-extended} hold. Then for any $\mathbf{x} \in \alpharough$ (where we endow the tensor products of $E$ with $\{\|\cdot\|_n\}_{n\in \N_{>0}}$, we can define the time-extended rough path $\hat{\mathbf{x}}_{s,t} \in \mathcal{C} ^\alpha ([0,T] ; \R \oplus E )$ (where we endow the tensor products of $\R \oplus E$ with $\{\varnorm{\cdot }_n\}_{n\in \N_{>0}}$) such that for $\pi$ the natural projection (which exists by virtue of Assumption \ref{ass: time-extended})
      \[\pi: ( \R \oplus E, (\R \oplus E)^{\otimes 2})\to (E ,E^{\otimes 2}),\]
      and for any $s,t\in[0,T]$ we have
      \[
         \pi(\hat {\mathbf{x}}_{s,t}) = \mathbf{x}_{s,t},
      \] 
      and
      \[
      \inprod{\hat{\mathbf{x}}_{s,t}}{ (1, 0 )} = t - s, 
      \]
      where we interpret $(1,0)$ as an element in $(\R \oplus E)^*$ in the obvious way. Furthermore, there exists a constant $C\in (0,\infty)$ depending only on $T$ and $\alpha$ such that 
      \begin{equation}\label{eq: time-extended hoelder bound}
          \|{\mathbf{x}}_{s,t}  \|_{\mathcal{C}^\alpha}
          \leq 
          \|\hat{\mathbf{x}}_{s,t}  \|_{\mathcal{C}^\alpha} \leq C \max (\|{\mathbf{x}}_{s,t}  \|_{\mathcal{C}^\alpha},1).
      \end{equation}
\end{proposition}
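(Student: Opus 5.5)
We construct $\hat{\mathbf{x}}$ explicitly and then verify Chen's relation and the H\"older bound. Since $\alpha>\frac13$, we have $\lfloor 1/\alpha\rfloor\in\{1,2\}$. The case $\lfloor 1/\alpha\rfloor=1$ is trivial: set $\hat{\mathbf{x}}^{(1)}_{s,t}=(t-s,\mathbf{x}^{(1)}_{s,t})$. We therefore focus on $\alpha\in(\frac13,\frac12]$.

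Write $x_{s,t}:=\mathbf{x}^{(1)}_{s,t}$. We define
\[
\hat{\mathbf{x}}^{(1)}_{s,t}=(t-s,\,x_{s,t}),\qquad
\hat{\mathbf{x}}^{(2)}_{s,t}=j^{-1}\Bigl(\tfrac{(t-s)^2}{2},\ \int_s^t x_{s,r}\,\D r,\ \int_s^t (r-s)\,\D x_r,\ \mathbf{x}^{(2)}_{s,t}\Bigr).
\]
The $j^{-1}$ is legitimate by Assumption~\ref{ass: time-extended}. The second component $\int_s^t x_{s,r}\,\D r$ is a Bochner integral of a continuous $E$-valued function. The third is defined by integration by parts, $\int_s^t (r-s)\,\D x_r := (t-s)x_{s,t}-\int_s^t x_{s,r}\,\D r$; equivalently, it is a Young integral, since $1+\alpha>1$. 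The component placement in $j$ follows the convention $j((t,x)\otimes(s,y))=(ts,ty,sx,x\otimes y)$, i.e.\ the "time $\otimes$ space" slot carries $\int \D r\otimes \D x$, and the slots are matched accordingly.

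The projection property $\pi(\hat{\mathbf{x}}_{s,t})=\mathbf{x}_{s,t}$ and the identity $\langle\hat{\mathbf{x}}_{s,t},(1,0)\rangle=t-s$ then hold by construction.

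Chen's relation at level one is additivity of increments. At level two we must check
\[
\hat{\mathbf{x}}^{(2)}_{s,t}=\hat{\mathbf{x}}^{(2)}_{s,u}+\hat{\mathbf{x}}^{(2)}_{u,t}+\hat{\mathbf{x}}^{(1)}_{s,u}\otimes\hat{\mathbf{x}}^{(1)}_{u,t}.
\]
Applying $j$ reduces this to four componentwise identities:
\begin{itemize}
\item the scalar component is $\tfrac{(t-s)^2}{2}$, an elementary identity;
\item the $E\otimes E$ component is exactly Chen's relation for $\mathbf{x}$;
\item the two mixed components follow from $x_{s,r}=x_{s,u}+x_{u,r}$ and linearity of the integrals.
\end{itemize}
Checking the mixed terms against the $j$ convention is routine bookkeeping. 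If one also wants $\hat{\mathbf{x}}$ to be weakly geometric whenever $\mathbf{x}$ is, the mixed terms sum to $(t-s)x_{s,t}$, which is precisely the symmetric-part condition; this is not needed for the statement.

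For the bound \eqref{eq: time-extended hoelder bound}, the lower inequality follows because $\pi$ is a contraction: at level one directly from \eqref{eq: R plus E norm}, and at level two provided the projection $(\R\oplus E)^{\otimes2}\to E^{\otimes2}$ has norm at most one. For the projective, injective and Hilbert norms this holds since $\pi$ is the tensor square of a norm-one projection. In general this step is the main obstacle, since Assumption~\ref{ass: time-extended} only gives a homeomorphism, so we would absorb the resulting constant or argue via the norm-one property of admissible crossnorms (symmetric strong crossnorms satisfy $\|P\otimes P\|\le\|P\|^2$ for uniform families).

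For the upper inequality:
\begin{itemize}
\item $|t-s|\le T^{1-\alpha}|t-s|^\alpha$ controls the level-one term.
\item At level two, continuity of $j^{-1}$ gives a constant $c_j$ such that $\varnorm{\hat{\mathbf{x}}^{(2)}_{s,t}}_2$ is at most $c_j$ times the sum of the component norms.
\item The component norms are bounded by $|t-s|^2$, then $\|x\|_{C^\alpha}|t-s|^{1+\alpha}$ (twice), then $\|\mathbf{x}\|_{\mathcal{C}^\alpha}^2|t-s|^{2\alpha}$.
\end{itemize}
Dividing by $|t-s|^{2\alpha}$, taking square roots, and using $\|\mathbf{x}\|^{1/2}\le\max(\|\mathbf{x}\|,1)$ gives $C\max(\|\mathbf{x}\|_{\mathcal{C}^\alpha},1)$. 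This constant $C$ depends on $T$ and $\alpha$, and also on the compatibility constant of $j$, which we regard as fixed by the choice of norms.
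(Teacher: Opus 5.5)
Your route is the paper's. Level one is $(t-s,x_{s,t})$. Level two fills the four slots of $j$ with $\tfrac12(t-s)^2$, the two mixed time--space integrals and $\mathbf{x}^{(2)}_{s,t}$. Chen's relation is checked slot by slot from additivity, and the upper bound comes from continuity of $j^{-1}$. However, your displayed definition puts the two mixed integrals in the wrong slots, and the bookkeeping you call routine is exactly where it fails.

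With $j((a,x)\otimes(b,y))=(ab,ay,bx,x\otimes y)$, the cross term $\hat{\mathbf{x}}^{(1)}_{s,u}\otimes\hat{\mathbf{x}}^{(1)}_{u,t}=(u-s,x_{s,u})\otimes(t-u,x_{u,t})$ contributes $(u-s)x_{u,t}$ to the second slot and $(t-u)x_{s,u}$ to the third. On the other hand:
\begin{itemize}
\item $A_{s,t}:=\int_s^t(r-s)\,\D x_r$ satisfies $A_{s,t}=A_{s,u}+A_{u,t}+(u-s)x_{u,t}$;
\item $B_{s,t}:=\int_s^t x_{s,r}\,\D r$ satisfies $B_{s,t}=B_{s,u}+B_{u,t}+(t-u)x_{s,u}$.
\end{itemize}
So Chen's relation forces $A$ into the second slot and $B$ into the third, which is precisely the paper's choice. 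This is also what the iterated integral of $r\mapsto(r,x_r)$ gives for smooth $x$, since the first tensor factor carries the earlier time.

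With your assignment, both mixed slots violate Chen's relation unless $(u-s)x_{u,t}=(t-u)x_{s,u}$ for all $s<u<t$. This already fails for $x_r=r^2e$ with $e\neq0$, $s=0$, $u=\tfrac T2$, $t=T$, where the two sides are $\tfrac38T^3e$ and $\tfrac18T^3e$. Your sentence that the time$\otimes$space slot carries $\int\D r\otimes\D x$ is correct; the displayed formula contradicts it.

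Swapping the two entries repairs the construction. Nothing else in your argument changes, since the bounds, the projection identity and the time component do not distinguish the two mixed slots. Defining $A$ by integration by parts instead of citing Young integration, and treating $\lfloor 1/\alpha\rfloor=1$ separately, are harmless and slightly cleaner than the paper.

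Your caveats on the estimate are legitimate and go beyond the paper, which simply calls the bound immediate. The constant $C$ necessarily depends on $\|j^{-1}\|$. The left inequality at level two needs $\|\mathbf{x}^{(2)}_{s,t}\|_2\le\varnorm{\hat{\mathbf{x}}^{(2)}_{s,t}}_2$, which Assumption~\ref{ass: time-extended} alone does not provide.

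Your suggested general remedy does not close this, however. Admissibility (a symmetric strong crossnorm family) does not give $\|P\otimes P\|\le\|P\|^2$. Even for uniform families, you would additionally need $\varnorm{\cdot}_2$ to restrict to $\|\cdot\|_2$ on $E\otimes_a E$. In general that inequality therefore holds only up to a constant. With constant one it holds in the projective, injective and Hilbert cases, as you say.
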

\begin{proof}
    For the first order component of $\hat{\mathbf{x}}$, we set
    \[
    \hat{\mathbf{x}}_{s,t} ^{(1)}\coloneqq (t-s,\mathbf{x}_{s,t}).
    \]
     For the second component, note that by \cite[Theorem 1.16]{lyons_differential_2007}, we have that the Young integrals
    \begin{equation}\label{eq: young integrals in time-extended case}
     \int_{s}^t (u - s) dx_u \quad \text{ and } \quad \int_{s}^t (x_u - x_s) du,\quad s,t \in [0,T],
    \end{equation}
    are well defined and furthermore bounded by $C (t-s)^{1+\alpha}$, for some $C>0$ depending only on $\alpha$ and $T$. Therefore, we set
    \[
    \hat{\mathbf{x}}^{(2)}_{s,t} = \left ( \tfrac{1}{2} (t-s)^2,\int_{s}^t (u - s) dx_u, \int_{s}^t (x_u - x _s) du, \mathbf{x}^{(2)}_{s,t} \right)\quad s,t\in [0,T],
    \]
    which can be interpreted as an $(\R\oplus E)\otimes (\R\oplus E)$-valued process by Assumption~\ref{ass: time-extended}, moreover, the bound~\eqref{eq: time-extended hoelder bound} is immediate. Finally, note that the map $\hat {\mathbf{x}} \colon [0,T]^2 \to T^{(2)}(E)$ is a multiplicative functional by the additivity of the Young integral. 
\end{proof}

The following proposition ensures that the signature (see Definition~\ref{def:signature}) of a time-extended rough path uniquely identifies the path.
We provide an alternative to the proof of the analogous result for finite-dimensional paths (\cite[Lemma 2.6]{cuchiero_signature-based_2023}); indeed, for that proof to carry over to the infinite-dimensional setting one would need to assume that $\prescript{n}{j=1} \otimes_a E^*$ separates points in $E^{\otimes n}$, $n\in \N$. Our proof relies on the fact that the signature of a rough path is trivial if and only if it is tree-like~\cite{HamblyLyons:2010,boedihardjo_signature_2016}.

\begin{proposition}\label{prop: when there is a time component then unique signature}
Let $E$ be a real Banach space and let $\{\| \cdot \|_{n}\}_{n\in \N_{>0}}$ be an admissible family of tensor norms for $E$. Assume moreover that $\| \cdot \|_{m+n}$ is a reasonable crossnorm with respect to $E^{\otimes m} \otimes_a E^{\otimes n}$ for all $n,m\in \N$ (see Definition~\ref{def: reasonable crossnorm d geq 3}). Let $0< \alpha \leq 1$, $\phi \in E^*$ and let $A$ be any subset of $\alphageom$ such that for all $s,t \in [0,T]$ and $\mathbf{x}\in A$ it holds that
    \begin{equation}\label{eq: aux tree like uniqueness beta}
     \inprod{\mathbf{x}_{s,t}}{\phi} = t-s.  
    \end{equation}
    Then for $\mathbf{x},\mathbf{x}' \in A$ it holds that $S(\mathbf{x})_{0,T} = S(\mathbf{x}')_{0,T}$ if and only if $\mathbf{x} = \mathbf{x}'$.
\end{proposition}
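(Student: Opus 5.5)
The `if' direction is immediate: if $\mathbf{x}=\mathbf{x}'$, the Lyons lifts coincide, so $S(\mathbf{x})_{0,T}=S(\mathbf{x}')_{0,T}$. For the converse I would follow the hint preceding the statement and reduce to the characterisation of trivial signatures by tree-like paths \cite{HamblyLyons:2010,boedihardjo_signature_2016}.

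\textbf{Step 1: concatenate with the time reversal.} Assume $S(\mathbf{x})_{0,T}=S(\mathbf{x}')_{0,T}$. Build the concatenation $\mathbf{z}$ on $[0,2T]$ of $\mathbf{x}$ with the time reversal of $\mathbf{x}'$, namely $\mathbf{z}_{s,t}=\mathbf{x}'_{2T-s,2T-t}$ for $s,t\in[T,2T]$, extended to mixed $s,t$ via Chen's relation. This $\mathbf{z}$ is again a weakly geometric $\alpha$-H\"older rough path. Group-likeness is preserved under products and inverses by Remark~\ref{remark: group-like closed under multiplication}, and the H\"older bound across the junction $T$ follows from Chen's relation together with the crossnorm property. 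By Chen's relation for the lift,
\[
S(\mathbf{z})_{0,2T}=S(\mathbf{x})_{0,T}\hat\otimes S(\mathbf{x}')_{0,T}^{-1}=\mathbf{1}.
\]

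\textbf{Step 2: pass to tree-likeness.} By \cite{boedihardjo_signature_2016}, which holds for weakly geometric rough paths in Banach spaces under the reasonable-crossnorm hypothesis (this is presumably why that hypothesis is imposed), $\mathbf{z}$ is tree-like. So there is a real tree / height function $h\colon[0,2T]\to[0,\infty)$ with $h(0)=h(2T)=0$ such that for all $s<t$ the increment $\mathbf{z}_{s,t}$ is controlled by $h(s)+h(t)-2\inf_{[s,t]}h$. In particular, $\mathbf{z}_{s,t}$ depends only on the points of the tree: if $\inf_{[s,t]}h=h(s)=h(t)$ then $\mathbf{z}_{s,t}=\mathbf{1}$.

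\textbf{Step 3: use the time component.} Now test the tree-like structure against $\phi$. Define
\[
\tau(t):=\inprod{\mathbf{z}_{0,t}}{\phi}.
\]
By \eqref{eq: aux tree like uniqueness beta}, $\tau(t)=t$ on $[0,T]$ and $\tau(t)=2T-t$ on $[T,2T]$. The scalar path $\tau$ is a projection of $\mathbf{z}$, so it is itself tree-like with respect to the same $h$. Any point $t\in(0,T)$ must be identified in the tree with some $t'\in(T,2T)$ at which $h(t)=h(t')$ and $h\geq h(t)$ on $[t,t']$; otherwise the tree-like bound applied to the strictly monotone $\tau$ on $[0,T]$ is violated. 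This identification forces $\tau(t)=\tau(t')$, i.e.\ $t'=2T-t$, and also $\mathbf{z}_{t,2T-t}=\mathbf{1}$. Combining this with $\mathbf{z}_{0,2T-t}=\mathbf{z}_{0,t}\hat\otimes\mathbf{z}_{t,2T-t}$ gives $\mathbf{x}_{0,t}=\mathbf{z}_{0,t}=\mathbf{z}_{0,2T-t}=\mathbf{x}'_{0,t}$ for all $t$. Chen's relation then yields $\mathbf{x}_{s,t}=\mathbf{x}_{0,s}^{-1}\hat\otimes\mathbf{x}_{0,t}=\mathbf{x}'_{s,t}$.

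\textbf{Main obstacle.} The hardest point is Step 3: making precise that a strictly monotone coordinate prevents any nontrivial cancellation except between $t$ and $2T-t$. I would try to argue directly with the real-tree quotient map $[0,2T]\to\mathcal{T}_h$. The map $t\mapsto\tau(t)$ factors through the tree, and since $\tau$ is injective on $[0,T]$ the quotient restricted to $[0,T]$ is injective; the same holds on $[T,2T]$. A loop in a tree traversed injectively on each half must retrace, which gives the identification $t\sim 2T-t$, and the factorisation of $\mathbf{z}_{0,\cdot}$ through the tree then finishes the argument. A secondary technical point is checking that \cite{boedihardjo_signature_2016} indeed provides this factorisation, rather than only an $h$-bound, in the present Banach-space setting.
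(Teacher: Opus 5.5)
Your proposal is correct and is essentially the paper's proof: concatenate $\mathbf{x}$ with the reversal of $\mathbf{x}'$, invoke \cite{boedihardjo_signature_2016} to factor $\mathbf{z}_{0,\cdot}=\psi\circ\varphi$ through an $\R$-tree with $\varphi(0)=\varphi(2T)$, use the $\phi$-coordinate to get injectivity of $\varphi$ on $[0,T]$, and conclude by uniqueness of arcs in $\R$-trees (that theorem gives the factorisation directly, so your secondary worry is moot). The only difference is cosmetic: the paper argues by contradiction that $\varphi(t)\notin\varphi([T,2T])$ whenever $\mathbf{x}_{0,t}\neq\mathbf{x}'_{0,t}$, whereas you identify $t\sim 2T-t$ directly; the height-function sentence in your Step 3 is not itself an argument, so the arc-uniqueness reasoning from your final paragraph is the part to write out.
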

\begin{proof}

Take any two elements $\mathbf{x},\mathbf{x}' \in A$ such that $S(\mathbf{x})_{0,T}= S(\mathbf{x}')_{0,T}$, and define 
$\mathbf{z} \in \mathcal{C}_g^{\alpha}([0,2T];E)$ by $\mathbf{z}=\pi_{\lfloor 1/\alpha\rfloor} \tilde{\mathbf{z}}$, where $\pi_{{\lfloor 1/\alpha\rfloor}} \colon T((E)) \to T^{\lfloor\frac{1}{\alpha}\rfloor}(E) $ is the natural projection and where
\begin{equation*}
\tilde{\mathbf{z}}_{0,t}
=
\begin{cases}
S(\mathbf{x})_{0,t},& t\in [0,T];\\
S(\mathbf{x})_{0,T}\hat{\otimes} S(\mathbf{x}')_{T,2T-t}, & t\in (T,2T],
\end{cases}
\end{equation*}
this completely defines $(\mathbf{z}_{s,t})_{s,t\in [0,T]}$ thanks to Chen's identity. From Remark~\ref{remark: group-like closed under multiplication} we obtain that $\mathbf{z}$ is weakly geometric, and the H\"older regularity of $\mathbf{z}$ is easily verified. By the uniqueness of the Lyons lift (see Theorem \ref{thm: Lyons lift}), one finds that similarly that $S(\mathbf{z})_{0,t} = \hat{\mathbf{z}}_{0,t}$, for all $t\in[0.2T]$.\par 
In particular, it holds that $S(\mathbf{z} )_{0,2T} = \mathbf{1}$. It follows from \cite[Theorem 1.1]{boedihardjo_signature_2016} that $\mathbf{z} $ is tree-like, i.e., there exists an $\R$-tree $\tau$ and continuous maps $\varphi\colon [0,2T]\to \tau, \psi \colon \tau \to T^{(\lfloor \nicefrac{1}{\alpha}\rfloor)}(E)$, such that 
    \begin{equation}\label{eq: aux tree like property}
    \mathbf{z}_{0,t} = (\psi \circ \varphi) (t), \quad t\in [0,2T].
    \end{equation}
The fact that $\tau$ is an $\R$-tree implies in particular (see \cite[Definition 2.1]{bestvina_-trees_2001}) that if $\varphi|_{[0,T]}$ is an embedding (i.e., injective), then $\varphi([0,T]) \subseteq \varphi([T,2T])$ (as $\{\varphi(0),\varphi(T)\} \subseteq \varphi([T,2T])$). It follows from~\eqref{eq: aux tree like uniqueness beta}, the fact that $\mathbf{z}_{0,t}=\mathbf{x}_{0,t}$ for $t\in [0,T]$, and~\eqref{eq: aux tree like property} that $\varphi|_{[0,T]}$ is an embedding. We will now prove that if $\mathbf{x}\neq \mathbf{x}'$, then $\varphi([0,T]) \not\subseteq \varphi([T,2T])$, from which we conclude that $\mathbf{x}=\mathbf{x}'$.

Indeed, if $\mathbf{x} \neq \mathbf{x}'$, then (again by Chen's identity and the fact that $\mathbf{x}_{0,T} =\mathbf{x}'_{0,T}$) there exists a $t\in(0,T)$ such that $\mathbf{x}_{0,t} \neq \mathbf{x}'_{0,t}$. Moreover, by Equation \eqref{eq: aux tree like uniqueness beta}, for all $s\in [0,T]$ with $ s\neq t$, we have that $\mathbf{x}_{0,t} \neq \mathbf{x}'_{0,s}$.
  In particular, $\mathbf{x}_{t,T} \hat{\otimes} \mathbf{x}_{T,s}' \neq \mathbf{1}$ for all $s\in [0,T]$, from which it follows that for all $s\in [T,2T]$ we have 
  \begin{equation}
    \mathbf{z}_{0,s} = \mathbf{x}_{0,t}\hat{\otimes} \mathbf{x}_{t,T} \hat{\otimes} \mathbf{x}'_{T,2T-s} \neq \mathbf{x}_{0,t} =
    \mathbf{z}_{0,t}.
  \end{equation}
In particular, 
    \begin{equation}\label{eq: aux tree thing to be contradicted}
        \varphi(t) \neq \varphi(s), \quad s\in [T,2T],
    \end{equation}
and thus $\varphi([0,T])\not\subseteq \varphi([T,2T])$.
\end{proof}

\section{Main results: universal approximation theorems for Banach space valued geometric rough paths}\label{sec: UATs}

Having provided all necessary concepts in the previous section, we are now ready to formulate our main results: universal approximation theorems for weakly geometric multiplicative functions $\mathbf{x}\in \mathcal{C}_g^{\alpha}([0,T];E)$, where $E$ is a Banach space. More specifically, in Section~\ref{sec: abstract UAT} we establish an abstract universal approximation theorem for compact subsets of $\mathcal{C}_g^{\alpha}([0,T];E)$, see Theorem~\ref{thm: general setup of uat} (here `compact' refers to a topology that is yet to be specified!). Theorem~\ref{thm: general setup of uat} is essentially a direct consequence of the Stone--Weierstrass theorem. In Section~\ref{sec: norm-compact UAT} we demonstrate how Theorem~\ref{thm: general setup of uat} gives rise to an explicit uniform approximation result on norm-compact sets in $\mathcal{C}^{\alpha}_g([0,T];E)$, see Theorem~\ref{thm: norm-compact uat} and Corollary~\ref{cor: norm-compact uat}. In Section~\ref{sec: norm-bounded UAT} we again employ Theorem~\ref{thm: general setup of uat} to obtain a universal approximation theorem on $\mathcal{C}^{\alpha}$ norm bounded sets (these sets are compact with respect to a weak$^*$-like topology), see Theorem~\ref{thm: weak star version uat} and Corollary~\ref{cor: weak star version uat but more applied} below. Note that while the proof of Theorems~\ref{thm: general setup of uat} and~\ref{thm: norm-compact uat} are relatively straight-forward, the proof of Theorem~\ref{thm: weak star version uat} requires all the results presented in Sections~\ref{sec: predual of holder}--\ref{sec: i star continuity lyons lift}.

\subsection{An abstract UAT for geometric rough paths based on Stone--Weierstrass}\label{sec: abstract UAT}

We begin by recalling the version of the Stone--Weierstrass theorem that is relevant to our setting. Note that given a topological space $(X,\tau_X)$, we denote the space of continuous functions from $X$ to $\R$ by $C(X;\R)$.

\begin{definition}\label{def:point-sep}[Point separating]
    Let $(X,\tau_X)$ be a topological space and let $\mathcal{A} \subset C(X ; \R)$. Then, $\mathcal{A}$ is called \emph{point separating} on $X$ when for every $x,y \in X $ there exists $a\in \mathcal{A}$ with $a(x) \neq a(y)$.
\end{definition}
 \begin{definition}\label{def:non-van}[Non-vanishing]
     Let $(X,\tau_X)$ be a topological space and let $\mathcal{A} \subset C(X ; \R)$. Then $\mathcal{A}$ is called \emph{non-vanishing} if for every $x\in X$ there exists $a\in\mathcal{A}$ with $a(x) \neq 0$.
 \end{definition}

\begin{theorem}[Stone--Weierstrass, see Theorem 5 in \cite{stone_generalized_1948}, and see also Theorem 3.2 in \cite{cuchiero_global_2023}]\label{thm: Stone--Weierstrass}
    Let $(X,\tau_X)$ be a compact Hausdorff space, and let $\mathcal{A}$ be a subalgebra of $C(X;\R)$. Then $\mathcal{A}$ is dense in $C(X;\tau_X)$ (with respect to uniform convergence) if and only if $\mathcal{A}$ is point separating and non-vanishing. 
\end{theorem}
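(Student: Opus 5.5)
This is the classical Stone--Weierstrass theorem for real-valued functions on a compact Hausdorff space, and I would prove it by the standard lattice argument.

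\emph{Necessity.} Suppose $\mathcal{A}$ is dense. If $\mathcal{A}$ failed to separate two points $x\neq y$, every uniform limit of elements of $\mathcal{A}$ would also take equal values at $x$ and $y$. But Urysohn's lemma, available since compact Hausdorff spaces are normal, gives a continuous $f$ with $f(x)\neq f(y)$, a contradiction. Similarly, if every $a\in\mathcal{A}$ vanished at some point $x$, the constant function $1$ could not be approximated. So both conditions are necessary.

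\emph{Sufficiency.} I would carry out four steps.

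First, replace $\mathcal{A}$ by its uniform closure $\bar{\mathcal{A}}$. This is still a subalgebra, and it suffices to show $\bar{\mathcal{A}}=C(X;\R)$.

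Second, show that $|f|\in\bar{\mathcal{A}}$ whenever $f\in\bar{\mathcal{A}}$. Set $M=\|f\|_\infty$. Approximate $t\mapsto|t|$ uniformly on $[-M,M]$ by polynomials $p_n$ with $p_n(0)=0$, for instance via the Taylor series of $\sqrt{1-s}$ or the iteration $q_{n+1}=q_n+\tfrac12(s-q_n^2)$. Then $p_n\circ f\in\bar{\mathcal{A}}$, because $p_n$ has no constant term and we do not assume $1\in\mathcal{A}$. It follows that $\bar{\mathcal{A}}$ is a lattice, using $\max(f,g)=\tfrac12(f+g+|f-g|)$ and $\min(f,g)=\tfrac12(f+g-|f-g|)$.

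Third, two-point interpolation, which is the main obstacle without a unit. For $x\neq y$ and reals $\alpha,\beta$, I claim there is $h\in\bar{\mathcal{A}}$ with $h(x)=\alpha$ and $h(y)=\beta$. Consider the linear map $\mathcal{A}\to\R^2$, $a\mapsto(a(x),a(y))$. Its image is a subalgebra of $\R^2$ under pointwise operations. By non-vanishing, the image is not contained in either coordinate axis. By point separation, it is not contained in the diagonal. The remaining proper subalgebras are the zero space, the two axes, the diagonal, and lines $\{(s,\lambda s)\}$ with $\lambda\neq0,1$. Such a line is not closed under squaring, since squaring sends $(1,\lambda)$ to $(1,\lambda^2)$. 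Therefore the image must be all of $\R^2$.

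Fourth, the compactness argument. Fix $f\in C(X;\R)$ and $\epsilon>0$. For each $x$ and each $y$, choose $h_{x,y}\in\bar{\mathcal{A}}$ agreeing with $f$ at $x$ and at $y$.
\begin{itemize}
\item Fix $x$. The open sets $\{z: h_{x,y}(z)<f(z)+\epsilon\}$, indexed by $y$, cover $X$, so finitely many suffice. The minimum $g_x$ of the corresponding $h_{x,y}$ lies in $\bar{\mathcal{A}}$, satisfies $g_x<f+\epsilon$ everywhere, and has $g_x(x)=f(x)$.
\item The open sets $\{z: g_x(z)>f(z)-\epsilon\}$, indexed by $x$, again cover $X$. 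Taking the maximum over a finite subcover gives $g\in\bar{\mathcal{A}}$ with $\|g-f\|_\infty<\epsilon$.
\end{itemize}
Hence $f\in\bar{\mathcal{A}}$, which completes the proof.
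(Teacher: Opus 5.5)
Your proof is correct. It is the classical lattice proof going back to Stone: the non-unital case is handled properly by the polynomials with $p_n(0)=0$ and by your classification of subalgebras of $\R^2$, which yields two-point interpolation. The paper gives no proof of its own and simply cites Stone's Theorem 5 and Theorem 3.2 of Cuchiero et al.

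The only detail you leave implicit is the diagonal case $y=x$ in your fourth step. It is harmless: non-vanishing lets you rescale some $a\in\mathcal{A}$ with $a(x)\neq 0$ to match $f(x)$. Alternatively, when $X$ has at least two points, $x$ already lies in every set $\{z: h_{x,y}(z)<f(z)+\epsilon\}$ with $y\neq x$, so you never need $h_{x,x}$.
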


The Stone--Weierstrass allows to provide a general format for universal approximation theorems on rough paths with signatures. More specifically, we take $X$ to be some subset of $\alphageom$ and assume that the algebra consists of linear functions on the signature; i.e., we assume there exists a $D\subseteq T_a(E^*)$ such that the algebra $\mathcal{A}$ is given by\begin{equation}\label{eq: standard version of linear functionals}
\mathcal{A} = \left\{ \alphageom\ni\mathbf{x} \mapsto \inprod{S(\mathbf{x})_{0,T}}{l} \colon l\in D \right\}
\end{equation} 

As $X\subseteq \alphageom$, by Theorem \ref{thm: preservation of weakly geometric under lyons lift} combined with Proposition \ref{prop: equality of group like definitions} (see also Definitions \ref{def: weak group like elements dual rep} and \ref{def: weakly geometric rough paths}) we have
\[
\inprod{S(\mathbf{x})_{0,T}}{l} \inprod{S(\mathbf{x})_{0,T}}{l'} = \inprod{S(\mathbf{x})_{0,T}}{l \shuffle l'}.
\]
Thus, $\mathcal{A}$ is indeed an algebra provided $D$ is a subspace that is closed under the shuffle product. We arrive at the following abstract uniform approximation theorem:

\begin{theorem}\label{thm: general setup of uat}
Let $E$ be a real Banach space and let $\{\| \cdot \|_{n}\}_{n\in \N_{>0}}$ be an admissible family of tensor norms on $E$. Let $\tau$ be a Hausdorff topology on $\alphageom$ and let $K$ be a $\tau$-compact set such that $S(\mathbf{x})_{0,T} \neq S(\mathbf{x}')_{0,T}$ whenever $\mathbf{x}\neq \mathbf{x}'$, with $\mathbf{x},\mathbf{x}'\in K$. Furthermore, let $D$ be a subspace of $T_a(E^*)$ 
satisfying the following properties:
\begin{enumerate}
    \item $\mathbf{1} \in D$,
    \item\label{item: general set up point separtion} $D$ separates points in $T((E))$,
    \item $D$ is closed under the shuffle product, and 
    \item\label{it:continuous signature} $K\ni\mathbf{x} \mapsto \inprod{S(\mathbf{x})_{0,T}}{l}$ is a $\tau$-continuous map for any $l\in D$.
\end{enumerate}
Then for any $\tau$-continuous function $f\colon \alphageom \to \R$ and for all $\epsilon>0$, there exists $l \in D$ such that \[
    \sup_{\mathbf{x}\in K} | \inprod{S(\mathbf{x})_{0,T}}{l} - f(\mathbf{x}) | < \epsilon.
    \]
\end{theorem}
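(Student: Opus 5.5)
The plan is to apply the Stone--Weierstrass theorem (Theorem~\ref{thm: Stone--Weierstrass}) on the compact Hausdorff space $X=K$, endowed with the subspace topology induced by $\tau$. Compactness of $K$ is assumed, and $K$ is Hausdorff because $\tau$ is. As candidate algebra I take the restrictions to $K$ of the functions in~\eqref{eq: standard version of linear functionals}:
\[
\mathcal{A}_K=\{K\ni \mathbf{x}\mapsto \inprod{S(\mathbf{x})_{0,T}}{l} \colon l\in D\}.
\]
By assumption~\eqref{it:continuous signature}, $\mathcal{A}_K\subseteq C(K;\R)$. The target $f$ is $\tau$-continuous on $\alphageom$, so $f|_K\in C(K;\R)$. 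Once density of $\mathcal{A}_K$ in $C(K;\R)$ is established, I pick $l\in D$ with $\sup_K|\inprod{S(\mathbf{x})_{0,T}}{l}-f(\mathbf{x})|<\epsilon$, which is the claim.

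First I check that $\mathcal{A}_K$ is a subalgebra. Linearity of $l\mapsto \inprod{S(\mathbf{x})_{0,T}}{l}$ together with $D$ being a subspace gives closure under linear combinations. For products I use that for $\mathbf{x}\in K\subseteq\alphageom$ the full signature $S(\mathbf{x})_{0,T}$ is weakly group-like. By Theorem~\ref{thm: preservation of weakly geometric under lyons lift}, each truncation $S^n(\mathbf{x})_{0,T}$ lies in $G^{(n)}(E)$. By the inclusion $G^{(n)}(E)\subseteq G_w^{(n)}(E)$ of Proposition~\ref{prop: equality of group like definitions}, which needs no extra hypotheses, the shuffle identity~\eqref{eq: weak group like formulation} holds for $l,l'$ with $\deg l+\deg l'\le n$. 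Since $l,l'\in T_a(E^*)$ have finite degree, I choose $n\ge \deg l+\deg l'$ and use that $S^n(\mathbf{x})$ is the projection of $S(\mathbf{x})$. This yields $\inprod{S(\mathbf{x})_{0,T}}{l}\inprod{S(\mathbf{x})_{0,T}}{l'}=\inprod{S(\mathbf{x})_{0,T}}{l\shuffle l'}$. Because $D$ is closed under $\shuffle$, the product lies in $\mathcal{A}_K$. The main care point here is the consistency of the truncated lifts: the degree-$k$ components of $S^n(\mathbf{x})$ must coincide for all $n\ge k$. This follows from the uniqueness statement in Theorem~\ref{thm: Lyons lift}.

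Non-vanishing is immediate from $\mathbf{1}\in D$, since $\inprod{S(\mathbf{x})_{0,T}}{\mathbf{1}}=S(\mathbf{x})^{(0)}_{0,T}=1\neq0$. For point separation, take $\mathbf{x}\neq\mathbf{x}'$ in $K$. The injectivity hypothesis on $K$ gives $S(\mathbf{x})_{0,T}\neq S(\mathbf{x}')_{0,T}$ in $T((E))$. Property~\eqref{item: general set up point separtion} then provides $l\in D$ with $\inprod{S(\mathbf{x})_{0,T}}{l}\neq\inprod{S(\mathbf{x}')_{0,T}}{l}$. With all hypotheses of Theorem~\ref{thm: Stone--Weierstrass} verified, density follows and the proof is complete.

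The argument is essentially bookkeeping. The only step needing genuine input from earlier results is the shuffle identity for the full signature, and this only requires the unconditional inclusion $G\subseteq G_w$; the approximation-property hypothesis of Proposition~\ref{prop: equality of group like definitions} is not needed.
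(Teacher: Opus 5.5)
Your proposal is correct and follows the paper's own proof: Stone--Weierstrass on $(K,\tau|_K)$ with the algebra of linear signature functionals indexed by $D$, closure under products from weak geometricity via Theorem~\ref{thm: preservation of weakly geometric under lyons lift} and the unconditional inclusion $G^{(n)}(E)\subseteq G^{(n)}_w(E)$, point separation from injectivity of the signature on $K$ together with $D$ separating points of $T((E))$, and non-vanishing from $\mathbf{1}\in D$. Your remarks on the consistency of the truncated lifts and on the approximation property being unnecessary make explicit what the paper's discussion preceding the theorem leaves implicit.
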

\begin{proof}
    This follows from the Stone--Weierstrass theorem. Indeed, we take $K$ as the compact set and $\tau|_K$ as the topology, which is Hausdorff as $\tau$ is Hausdorff. The algebra $\mathcal{A}$ is given by~\eqref{eq: standard version of linear functionals}, note that assumption~\eqref{it:continuous signature} above implies that $\mathcal{A}\subseteq C(K;\R)$, and the discussion preceding Theorem~\ref{thm: general setup of uat} implies that $\mathcal{A}$ is indeed an algebra. The assumption that $D$ is point separating on $T((E))$ and that each element of $K$ has a unique signature implies that $\mathcal{A}$ is point separating on $K$. The non-vanishing property is guaranteed by the fact that $\mathbf{1} \in D$; note that $\inprod{S(\mathbf{x})_{0,T}}{\mathbf{1}} = 1$ for all $\mathbf{x} \in \alphageom$. Thus the assertion indeed follows from~\ref{thm: Stone--Weierstrass}.
\end{proof}
\begin{remark}
    In this work the only linear functionals on the signature that are considered come from a subset of $T_a(E^*)$, where $E$ is the underlying Banach space. In theory one could also consider general functionals in $T(E)^*$, for which the separating points of $T((E))$ (see Item \ref{item: general set up point separtion} of Theorem \ref{thm: general setup of uat}) may be easier to prove. However, general elements of $T(E)^*$ are not so easily identified/parametrized; in applications it is preferable to work only with elements of $T_a(E^*)$. 
\end{remark}

\subsection{A UAT for norm-compact sets of geometric rough paths}\label{sec: norm-compact UAT}
Our first application of the abstract UAT proven in the previous section involves taking the norm topology on $\alphageom$, i.e., the topology arising from the metric $\varrho^{\text{hom}}_{\alpha}$ introduced in Section~\ref{sec: holder norms and rough paths}. This is in line with the approach typically taken in the finite-dimensional setting, see e.g.~\cite{cuchiero_signature-based_2023,Cuchiero_Primavera_2025,lyons_non-parametric_2020}.\par 
Regarding the assumptions in the theorem below: for the definition of an admissible family of tensor norms see Definition~\ref{def: admissible crossnorms}, for the definition of strongly uniform crossnorms see Definition~\ref{def:strong-unif-crossnorm}. Recall from Propositions~\ref{prop: properties of projective and injective} and~\ref{prop: properties hilbert norm} that the injective, projective, and Hilbert tensor norms each provide admissible family of tensor norms that are strongly uniform crossnorms. The assumption that the signature $S$ separates points in $K$ is satisfied when one considers time-extended paths, see Section~\ref{ssec: time-extended rough paths}. The algebraic tensor algebra $T_a(G)$ is defined in Definition~\ref{def: algebraic tensor algebra}.

\begin{theorem}\label{thm: norm-compact uat}
Let $E$ be a real Banach space satisfying the approximation property and let $\{\| \cdot \|_{n}\}_{n\in \N_{>0}}$ be an admissible family of tensor norms on $E$ that are moreover strongly uniform crossnorms. Let $G$ be a norm-dense subspace of $E^*$. 
Let $ 0 < \alpha \leq 1 $, $T>0$, let $K\subseteq \alphageom$ be $\varrho^{\text{hom}}_{\alpha}$-compact, and assume that $S(\mathbf{x})_{0,T} \neq S(\mathbf{x}')_{0,T}$ whenever $\mathbf{x}\neq \mathbf{x}'$, $\mathbf{x},\mathbf{x}'\in K$. Then for any $\varrho^{\text{hom}}_{\alpha}$ continuous function $f\colon \alphageom \to \R$ and for all $\epsilon>0$, there exists $l \in T_a(G)$ such that \[
    \sup_{\mathbf{x}\in K} | \inprod{S(\mathbf{x})_{0,T}}{l} - f(\mathbf{x}) | < \epsilon.
    \]
\end{theorem}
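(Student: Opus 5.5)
We apply Theorem~\ref{thm: general setup of uat} with $\tau$ the topology induced by $\varrho^{\text{hom}}_{\alpha}$ and $D = T_a(G)$. This $\tau$ is metric, hence Hausdorff, and injectivity of the signature on $K$ is assumed. It therefore remains to verify the four listed properties of $D$.

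Properties (1) and (3) are algebraic. We have $\mathbf{1}\in G^{\otimes_a 0}=\R\subseteq T_a(G)$. Moreover, $T_a(G)$ is a subspace of $T_a(E^*)$, and by Definition~\ref{def: shuffle} the shuffle of two elements of $G^{\otimes_a n}$ and $G^{\otimes_a m}$ is a sum of permuted simple tensors with entries in $G$, so it lies in $G^{\otimes_a (n+m)}$. The algebra identity used in Theorem~\ref{thm: general setup of uat} is available because $E$ has the approximation property and the norms are strongly uniform. Then Proposition~\ref{prop: equality of group like definitions} together with Theorem~\ref{thm: preservation of weakly geometric under lyons lift} gives $S(\mathbf{x})_{0,T}\in G_w(E)$, i.e. the shuffle identity holds for all functionals in $T_a(E^*)\supseteq T_a(G)$.

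Property (4), continuity, reduces to continuity of the Lyons lift. For $l\in T_a(G)$ there is $N$ with $l\in T^{(N)}_a(G)$, so $\inprod{S(\mathbf{x})_{0,T}}{l}=\inprod{S^N(\mathbf{x})_{0,T}}{l}$. By Theorem/Definition~\ref{thm: Lyons lift}, $S^N$ is $\varrho^{\text{hom}}_{\alpha}$-continuous into $\mathcal{C}^{\alpha,N}$, and evaluation at $(0,T)$ is continuous into $T^{(N)}(E)$. Each component $y\in G^{\otimes_a n}$ is a finite sum of simple tensors $\phi_1\otimes\cdots\otimes\phi_n$. Since the crossnorms are reasonable (strong uniformity implies this), each such simple tensor defines a bounded functional on $E^{\otimes n}$ of norm $\prod\|\phi_i\|$, so the composition is continuous.

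Property (2), point separation of $T((E))$ by $T_a(G)$, is the main obstacle. Since $\inprod{\mathbf{a}}{y}$ only sees one level, it suffices to show that $G^{\otimes_a n}$ separates points of $E^{\otimes n}$ for each $n$. We would first show that $(E^*)^{\otimes_a n}$ separates points of $E^{\otimes n}$, and then use density of $G$. For the first step we use the approximation property. For a nonzero $z\in E^{\otimes n}$, approximate $z$ by a simple tensor $z_0=\sum x^1_k\otimes\cdots\otimes x^n_k$ with $\|z-z_0\|_n<\delta$. Take a finite-rank operator $A=\sum_j \psi_j\otimes e_j$ close to the identity uniformly on the relevant compact set, so that $\|A^{\otimes n}z-z\|_n$ is small. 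Here we bound $\|A^{\otimes n}\|\leq\|A\|^n$ using the uniform crossnorm property, with $\|A\|$ bounded. Then $A^{\otimes n}z\ne0$ lies in the finite-dimensional space $F^{\otimes n}$, $F=\operatorname{span}\{e_j\}$, where a functional $\eta\in(F^*)^{\otimes n}$ with $\inprod{A^{\otimes n}z}{\eta}\neq0$ exists. Pulling back, $\eta\circ A^{\otimes n}\in (E^*)^{\otimes_a n}$ is of the form $\sum\psi_{j_1}\otimes\cdots\otimes\psi_{j_n}$ and separates $z$ from $0$. Finally, replace each $\psi_j\in E^*$ by a nearby element of $G$. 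The change in the pairing is small by multilinearity and the reasonable crossnorm bound, so for sufficiently close choices the pairing stays nonzero. Carefully combining the approximation-property estimate with strong uniformity, including the need for $A^{\otimes n}$ to be bounded on the completed tensor space, is the delicate part. With all four properties in hand, Theorem~\ref{thm: general setup of uat} yields the claim.
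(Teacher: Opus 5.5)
Your handling of properties (1), (3) and (4) matches the paper. So does your last step, moving each $\psi_j$ into $G$ via multilinearity and the bound $\|\phi_1\otimes\cdots\otimes\phi_n\|_n^*=\prod_i\|\phi_i\|$. The gap is in property (2): the claim that $(E^*)^{\otimes_a n}$ separates points of $E^{\otimes n}$.

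The approximation property gives, for each compact set, a finite-rank $A$ close to the identity there, but it gives no bound on $\|A\|$. A uniform bound is the \emph{bounded} approximation property, which is strictly stronger and not assumed. In
\[
\|A^{\otimes n}z-z\|_n\le \|A\|^n\|z-z_0\|_n+\|A^{\otimes n}z_0-z_0\|_n+\|z_0-z\|_n
\]
the algebraic tensor $z_0$, and hence $\delta$, must be fixed before $A$, because $A$ has to approximate the identity on the components of $z_0$. So the term $\|A\|^n\delta$ is not under control.

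You also cannot choose a compact set adapted to $z$ itself. That works for the projective norm, where every element of the completion can be written as $\sum_k\lambda_k x^1_k\otimes\cdots\otimes x^n_k$ with $(\lambda_k)\in\ell^1$ and null sequences $(x^i_k)_k$. For a general strongly uniform crossnorm (e.g.\ the Hilbert tensor norm), the completion contains elements with no such representation. Moreover, the existence of a finite-rank $A$ with $A^{\otimes n}z\neq 0$ is equivalent to the separation claim itself, so nothing else in your argument compensates. As written, your proof establishes the theorem only under the bounded approximation property.

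The ingredient you are missing is the one the paper uses: injectivity of the canonical contraction $\tilde J_n\colon E^{\otimes n}\to E^{\otimes_\epsilon n}$ (Proposition~\ref{prop: natural embedding injective tensor space is injective}). The paper obtains this for $n=2$ from a result in Defant--Floret that uses the approximation property. The general case follows by induction via $E^{\otimes n}=E\otimes_{\|\cdot\|_n}E^{\otimes(n-1)}$ and injectivity of $\mathrm{Id}\otimes\tilde J_{n-1}$ on injective tensor products. Once $\tilde J_n u\neq 0$, separation by some $\phi_1\otimes\cdots\otimes\phi_n$ with $\phi_i\in E^*$ is immediate from the definition of the injective norm (Lemma~\ref{lem: point separating of injective tensor space} and Corollary~\ref{cor: point separating small}). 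Density of $G$ then finishes the argument exactly as in your last step. This route never needs an operator-norm bound on approximating finite-rank maps, which is precisely what your argument cannot supply.
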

\begin{proof}
    This is a direct application of Theorem \ref{thm: general setup of uat} with $\tau$ the topology generated by $\varrho^{\text{hom}}_{\alpha}$ and $D = T_a(G)$. The assumption that $E$ has the approximation property and that the family of tensor norms is strongly uniform is needed to establish that $T_a(G)$ separates points in $T((E)))$, indeed, this follows by combining Proposition \ref{prop: the dual tensor algebra can actually separate points} and Lemma \ref{lem : point separating dense is enough} with Proposition \ref{prop: density in tensor space higher orders}. The set $T_a(G)$ is closed under the shuffle product and contains $\mathbf{1}$ by definition (see Definition~\ref{def: shuffle}). Moreover, Theorem \ref{thm: Lyons lift} (and the fact that we have an admissible family of tensor norms) implies that functions of the form 
    \[
    \alphageom\ni\mathbf{x} \mapsto \inprod{S(\mathbf{x})_{0,T}}{l}
    \]
    are continuous for all $l\in T_a(G)$, indeed, for all $l\in T_a(G)$ there exists an $n\in \N$ such that $l\in T_a^{(n)}(G)$ and thus 
$\inprod{S(\cdot)_{0,T}}{l} =  \inprod{S^n(\cdot)_{0,T}}{l}$. Thus all conditions of Theorem~\ref{thm: general setup of uat} are satisfied and the assertion follows.
    \end{proof}

\begin{remark}\label{rem: isnt it better to take a norm dense subset?}
There is a practical advantage in considering a norm-dense set $G$ in Theorem~\ref{thm: norm-compact uat} instead of simply taking $G=E^*$. Indeed, if $E$ is a separable Hilbert space and $\{ e_i\}_{i\in\N_{>0}}$ is an orthonormal basis for $E$, we can simply take $G$ to be the linear span of this basis. As a consequence, all our linear functionals are finite linear combinations of objects of the form $ e_{i_1} \otimes \ldots \otimes e_{i_n}$, $n\in \N_{>0}$. 
\end{remark}

Although $\varrho^{\text{hom}}_{\alpha}$-compact subsets of $\alphageom$ are not easily identified explicitly, tightness of a Borel measure on a Polish space allows us to obtain the following approximation statement for stochastic rough paths:

\begin{corollary}\label{cor: norm-compact uat} Let $E$ be a real Banach space satisfying the approximation property and let $\{\| \cdot \|_{n}\}_{n\in \N_{>0}}$ be an admissible family of tensor norms on $E$ that are moreover strongly uniform crossnorms and let $G$ be a norm-dense subspace of $E^*$.
Assume moreover that there exists a $\phi \in E^*$ and a (Borel) measurable set $A\subseteq \alphageom$ such that 
\begin{equation}\label{eq:time_extended process}
\forall\,\mathbf{x}\in A,\, \forall\, 0\leq s\leq t  \leq T\colon \quad \langle \mathbf{x}_{s,t}, \phi \rangle = t-s.
\end{equation}
Let $ 0 < \alpha \leq 1 $, $T>0$.
Let $(\Omega,\mathcal{F},\mathbb{P})$ be a probability space and let $\mathbf{X}\colon \Omega \rightarrow \alphageom$ be measurable and separably valued and assume moreover that $\mathbb{P}(\mathbf{X}\in A)=1$. 
Then for any $\varrho^{\textnormal{hom}}_{\alpha}$ continuous function $f\colon \alphageom \to \R$ and for all $\epsilon>0$, there exists an $l \in T_a(G)$ such that \[
   \mathbb{P}( |\inprod{S(\mathbf{X})_{0,T}}{l} - f(\mathbf{X}) | < \epsilon) \geq  1-\epsilon.
    \]
\end{corollary}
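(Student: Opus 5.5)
The plan is to combine tightness of the law of $\mathbf{X}$ with Theorem~\ref{thm: norm-compact uat}, applied to a compact set on which the signature is injective, using Proposition~\ref{prop: when there is a time component then unique signature}.

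\emph{Step 1: a Polish setting.} Since $\mathbf{X}$ is separably valued, there is a closed separable subset $Y\subseteq \alphageom$ with $\mathbb{P}(\mathbf{X}\in Y)=1$; for instance, take the closure of a separable set containing the range. The metric space $(\alphageom,\varrho^{\text{hom}}_\alpha)$ should be complete. This follows because Hölder bounds, Chen's relation and membership in the closed group $G^{(n)}(E)$ (Remark~\ref{remark: group-like closed under multiplication}) all pass to $\varrho^{\text{hom}}_\alpha$-limits. Hence $Y$ is Polish. The pushforward $\mu=\mathbb{P}\circ\mathbf{X}^{-1}$ restricted to $Y$ is a Borel probability measure on a Polish space, and is therefore tight (Ulam's theorem). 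Choose a $\varrho^{\text{hom}}_\alpha$-compact $K_0\subseteq Y$ with $\mu(K_0)\geq 1-\epsilon/2$.

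\emph{Step 2: making the signature injective.} The set $A$ is measurable with $\mu(A)=1$, but $K_0\cap A$ need not be compact. I would instead use inner regularity of $\mu$ on the Borel set $A\cap Y$: every Borel probability measure on a Polish space is inner regular by compact sets. This gives a compact $K\subseteq A\cap Y$ with $\mu(K)\geq 1-\epsilon$. Alternatively, observe that $A'=\{\mathbf{x}:\langle\mathbf{x}_{s,t},\phi\rangle=t-s\ \forall s,t\}$ is $\varrho^{\text{hom}}_\alpha$-closed, since the pairing with $\phi$ is continuous in the first level. One may then replace $A$ by $A'\supseteq A$ and take $K=K_0\cap A'$, which is compact. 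In either case, Proposition~\ref{prop: when there is a time component then unique signature} applies to $K$, because admissible strongly uniform crossnorms are reasonable with respect to the splittings $E^{\otimes m}\otimes_a E^{\otimes n}$. It follows that $S(\mathbf{x})_{0,T}\neq S(\mathbf{x}')_{0,T}$ for distinct $\mathbf{x},\mathbf{x}'\in K$.

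\emph{Step 3: conclusion.} Theorem~\ref{thm: norm-compact uat} now yields $l\in T_a(G)$ with $\sup_{\mathbf{x}\in K}|\langle S(\mathbf{x})_{0,T},l\rangle-f(\mathbf{x})|<\epsilon$. Therefore
\[
\mathbb{P}\big(|\langle S(\mathbf{X})_{0,T},l\rangle-f(\mathbf{X})|<\epsilon\big)\geq \mathbb{P}(\mathbf{X}\in K)\geq 1-\epsilon.
\]
Measurability of the event holds because the map $\mathbf{x}\mapsto \langle S(\mathbf{x})_{0,T},l\rangle - f(\mathbf{x})$ is continuous, so its composition with $\mathbf{X}$ is measurable.

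The main obstacle is Step 1, namely justifying completeness (or at least Polishness of a full-measure subset) of $\alphageom$ under $\varrho^{\text{hom}}_\alpha$. The delicate point is that the group-like property is stable under limits, which relies on the closedness of $G^{(n)}(E)$. A secondary technical point is checking that the reasonable-crossnorm hypothesis of Proposition~\ref{prop: when there is a time component then unique signature} follows from the standing assumptions on the tensor norms.
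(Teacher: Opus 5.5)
Your proof is correct and follows the same route as the paper. Both arguments use tightness of the law of $\mathbf{X}$ on a complete separable subspace, injectivity of the signature via Proposition~\ref{prop: when there is a time component then unique signature}, and an application of Theorem~\ref{thm: norm-compact uat} to the resulting compact set. Your Step~2 is more careful than the paper's argument, which uses injectivity on $A$ for a compact $K_\epsilon$ that need not lie in $A$; intersecting with the closed set $A'$, or using inner regularity on $A\cap Y$, closes this small gap. Your checks of completeness and of the reasonable-crossnorm hypothesis are also correct.
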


\begin{proof} 
Without loss of generality we can assume that $\mathbf{X}\in A$ for \emph{all} $\omega \in \Omega$. 
By assumption~\eqref{eq:time_extended process} and Proposition~\ref{prop: when there is a time component then unique signature} we have $S(\mathbf{x})\neq S(\mathbf{x}')$ whenever $\mathbf{x}\neq \mathbf{x}'$, $\mathbf{x},\mathbf{x}'\in A$. 
As $\mathbf{X}$ takes values in a separable subset of $\alphageom$ we can assume it takes values in a complete separable metric space. By Prohorov's Theorem (see \cite[Theorem 8.6.2]{bogachev_measure_2007}) this implies that the induced measure is tight. In other words, for all $\epsilon>0$ there exists a compact set $K_{\epsilon}\subseteq \alphageom$ such that $\mathbb{P}(\mathbf{X}\in K_{\epsilon})\geq 1-\epsilon$. One then applies Theorem~\ref{thm: norm-compact uat} to this set $K_{\epsilon}$.
\end{proof}

\begin{remark}
Recall that the time-extension introduced in Section~\ref{ssec: time-extended rough paths} provides an approach to construct paths that satisfy assumption~\eqref{eq:time_extended process}.
Regarding the assumption that $\mathbf{X}$ is separably valued: one approach could be to work in the context of \emph{geometric rough paths}, i.e., paths that can be approximated in the $\varrho^{\text{hom}}_{\alpha}$ metric by lifted differentiable paths, which is a separable space. It was proven in~\cite{grong_geometric_2022} that when $E$ is a Hilbert space, then $\alphageom$ embeds continuously in the space $\alpha'$-H\"older continuous geometric rough paths provided $\frac{1}{3} < \alpha'<\alpha$. Alternatively, one may be able to exploit the fact that the H\"older space $C^{\alpha}_0([0,T];E)$ is a separable subset of $C^{\alpha'}_0([0,T];E)$ whenever $\alpha>\alpha'$ and $E$ is separable.

\end{remark}

\begin{remark}
In Theorem~\ref{thm: norm-compact uat} we assumed that every element of $K$ has a unique signature. Alternatively, one can consider the quotient topology on $K$, see for example \cite[Section 1.4]{cass_lecture_2024}, where the equivalence classes are rough paths with the same signature. Functions that are continuous in the chosen topology on $K$ will give rise to continuous functions in the quotient topology as long as the functions map all elements of an equivalence class to the same object. 
\end{remark}

\subsection{A UAT for norm-bounded geometric rough paths}\label{sec: norm-bounded UAT}
The main disadvantage of the UAT for norm-compact sets, i.e., of Theorem~\ref{thm: norm-compact uat} above, is that it is not easy to identify norm-compact sets explicitly. In particular, Theorem~\ref{thm: norm-compact uat} typically cannot be applied `universally'; say to solutions of a whole class of SPDEs. To overcome this problem, we consider a weak$^*$-type topology on $\alphageom$ instead of the topology induced by $\varrho^{\textnormal{hom}}_{\alpha}$ and use the Banach-Alaoglu theorem to establish a UAT that applies to norm \emph{bounded} sets, see Theorem~\ref{thm: weak star version uat} below. However, there is a minor difficulty: due to the algebraic structure that is imposed on multiplicative functionals, $\mathcal{C}^{\alpha,n}([0,T];E)$ is \emph{not} a Banach space. However, $\mathcal{C}^{\alpha,n}([0,T];E)$ can be interpreted as a subset of the H\"older space $C_0^{\alpha}([0,T];T^{(n)}(E))$ (see Proposition~\ref{prop: inclusion of alpha rough into alpha holder} below). Thus, we can consider the topology that $\alphageom$ inherits from $C_0^{\alpha}([0,T];T^{(n)}(E))$, see Definition~\ref{def: i* star topology} below.

For this approach to work, we need $E$ to have a predual $F$, and we need tensor norms on the algebraic tensor space $F^{\otimes_a n}$ ($n\in \N_{>0}$) that align with the tensor norms on $E^{\otimes_a n}$. More specifically, we assume the following:
\begin{setting}\label{assumptions}
   Let $E$ be a real Banach space with predual $F$ and let $\{\| \cdot \|_{n}\}_{n\in \N_{>0}}$ and \mbox{$\{\| \cdot \|^*_{n}\}_{n\in \N_{>0}}$} be admissible families of tensor norms on respectively $E$ and $F$, such that for each $n \in \N_{>0}$, $F^{\otimes n}$ is the predual of $E^{\otimes n}$. 
\end{setting}
We identify two examples which fall under Setting~\ref{assumptions}:
 \begin{example}\label{ex:2}
     Let $E$ be a real Hilbert space, so that by the Riesz representation theorem we can identify $E^*$ with $E$.  Let $\|\cdot \|_n$ and $\| \cdot \|^*_n$ ($n\in \N_{>0}$) both be the Hilbert tensor norms (see Section~\ref{ssec: Hilbert tensors}). Proposition \ref{prop: properties hilbert norm} ensures that these tensor norms form admissible families. Moreover, as $E^{\otimes n}$ is a Hilbert space, we can identify it with $(E^{\otimes n})^*$. 
\end{example}
 \begin{example}\label{example: projective tensor norm Radon--Nikodym}\label{ex:1}
     Let $E$ be a real Banach space with predual $F$, and assume $E$ has the approximation property and the Radon--Nikodým property (see respectively Definitions \ref{def: approximation property} and \ref{def: Radon-nikodym}). Let $\| \cdot \|_n$ (the norm on $E^{\otimes_a n}$, $n\in \N_{>0}$) be the projective tensor norm, and let $\| \cdot \|^*_n$ (the norm on $F^{\otimes_a n}$, $n\in \N_{>0}$) be the injective tensor norm (see Section~\ref{ssec: projective and injective}). By Proposition \ref{prop: properties of projective and injective} these families of tensor norms are admissible. As $E$ is Radon--Nikod\'ym and satisfies the approximation property, Proposition \ref{prop: Radon--Nikodym} implies that $F^{\otimes n}$ is the predual of $E^{\otimes n}$, $n\in \N$. Recall that if $E$ is separable, then it has the Radon--Nikod\'ym property, see Proposition \ref{prop: when Radon--Niko}.
 \end{example}

\begin{proposition}\label{prop: inclusion of alpha rough into alpha holder}
Let $E$ be a real Banach space, let $\{\| \cdot \|_{n}\}_{n\in \N_{>0}}$ be an admissible family of tensor norms for $E$, and let $0< \alpha \leq 1$. Then there exists an (injective) inclusion map\[
   i\colon \mathcal{C}^{\alpha,n} ([0,T];E ) \hookrightarrow  C_0^{\alpha}([0,T];T^{(n)}(E)).
    \]
    Furthermore, there exists $C>0$ such that 
    \[
   \|i(\mathbf{x})\|_{C^\alpha} \leq C  \big(\|\mathbf{x}\|_{\mathcal{C^\alpha}} + 1\big)
    \]
\end{proposition}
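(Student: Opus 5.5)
The plan is to define the inclusion explicitly and then estimate level by level using Chen's relation. For $\mathbf{x}\in\mathcal{C}^{\alpha,n}([0,T];E)$ I set
\[
i(\mathbf{x})_t := \mathbf{x}_{0,t}-\mathbf{1}\in T^{(n)}(E),\qquad t\in[0,T].
\]
Then $i(\mathbf{x})_0=\mathbf{x}_{0,0}-\mathbf{1}=0$, since Chen's relation forces $\mathbf{x}_{0,0}=\mathbf{1}$.

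For injectivity I use Chen's relation together with Proposition~\ref{prop: group structure}. They give
\[
\mathbf{x}_{s,t}=\mathbf{x}_{0,s}^{-1}\hat\otimes\mathbf{x}_{0,t}=(\mathbf{1}+i(\mathbf{x})_s)^{-1}\hat\otimes(\mathbf{1}+i(\mathbf{x})_t).
\]
So $\mathbf{x}$ is completely determined by $t\mapsto i(\mathbf{x})_t$, and $i$ is injective.

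For the Hölder estimate I again use Chen's relation:
\[
i(\mathbf{x})_t-i(\mathbf{x})_s=\mathbf{x}_{0,s}\hat\otimes(\mathbf{x}_{s,t}-\mathbf{1}).
\]
Its $k$-th level is $\sum_{j=0}^{k-1}\mathbf{x}^{(j)}_{0,s}\otimes\mathbf{x}^{(k-j)}_{s,t}$. The tensor norms form an admissible family, in particular a crossnorm family, so each summand is bounded by $\|\mathbf{x}^{(j)}_{0,s}\|_j\|\mathbf{x}^{(k-j)}_{s,t}\|_{k-j}$. By the definition of $\|\cdot\|_{\mathcal{C}^\alpha}$ this is at most
\[
\|\mathbf{x}\|_{\mathcal{C}^\alpha}^{k}\,T^{j\alpha}\,|t-s|^{(k-j)\alpha}\le T^{(k-1)\alpha}\|\mathbf{x}\|_{\mathcal{C}^\alpha}^k|t-s|^{\alpha},
\]
using $k-j\ge1$ and $|t-s|\le T$. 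Summing over $j$ and $k\le n$ yields
\[
\|i(\mathbf{x})^{(k)}\|_{C^\alpha}\le C_{k}\|\mathbf{x}\|_{\mathcal{C}^\alpha}^k,\qquad 1\le k\le n,
\]
with $C_k$ depending only on $k,\alpha,T$. Since $\|\mathbf{x}\|_{\mathcal{C}^\alpha}<\infty$, this already shows that $i(\mathbf{x})$ indeed lies in $C_0^\alpha([0,T];T^{(n)}(E))$.

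The main obstacle is turning these level-wise bounds into the stated linear form $C(\|\mathbf{x}\|_{\mathcal{C}^\alpha}+1)$, because level $k$ naturally scales like the $k$-th power. I would proceed as follows. I measure the Hölder size of the $T^{(n)}(E)$-valued path in the same graded way in which $\|\cdot\|_{\mathcal{C}^\alpha}$ is built in Definition~\ref{def: homogeneous alpha holder}, namely
\[
\max_{1\le k\le n}\|i(\mathbf{x})^{(k)}\|_{C^\alpha}^{1/k},
\]
with the direct-sum structure of Remark~\ref{rem: banach space structure of truncated tensor algebra} taken levelwise. Taking $k$-th roots of the bound above then gives $\|i(\mathbf{x})\|_{C^\alpha}\le \max_k C_k^{1/k}\,\|\mathbf{x}\|_{\mathcal{C}^\alpha}$, which is dominated by $C(\|\mathbf{x}\|_{\mathcal{C}^\alpha}+1)$. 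The additive $1$ absorbs the contribution from the level-$0$ term and from the factor $T^{(k-1)\alpha}$ when $T\ge1$.

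I would state explicitly in the proof that this graded convention is the one used to read the inequality. For a fixed non-homogeneous Banach norm on the sum, the same computation only yields a bound of the form $C(\|\mathbf{x}\|_{\mathcal{C}^\alpha}+1)^n$. That weaker bound still suffices for the later use of the proposition, which is mapping $\varrho^{\text{hom}}_\alpha$-bounded sets into norm-bounded sets for Banach--Alaoglu.
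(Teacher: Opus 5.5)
Your construction and estimate follow the paper's route: the same map $i(\mathbf{x})_t=\mathbf{x}_{0,t}-\mathbf{1}$, the same Chen decomposition of $i(\mathbf{x})_t-i(\mathbf{x})_s$, and levelwise strong-crossnorm bounds. Your injectivity argument via $\mathbf{x}_{s,t}=\mathbf{x}_{0,s}^{-1}\hat\otimes\mathbf{x}_{0,t}$ fills in what the paper only calls clear. Your per-term bound $T^{(k-1)\alpha}\|\mathbf{x}\|_{\mathcal{C}^\alpha}^k|t-s|^\alpha$ is correct, and you are right that it does not yield a linear bound.

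In fact the linear bound is false for every genuine norm on $T^{(n)}(E)$, so no argument can produce it. Take $E=\R$, $T=1$, $\alpha=\tfrac12$ (so $n=2$), and the canonical lift of $x_t=\lambda t$, namely $\mathbf{x}_{s,t}=(1,\lambda(t-s),\tfrac12\lambda^2(t-s)^2)$. This path is weakly geometric with $\|\mathbf{x}\|_{\mathcal{C}^\alpha}=\lambda$. The second level of $i(\mathbf{x})$ is $t\mapsto\tfrac12\lambda^2t^2$, whose $\tfrac12$-H\"older seminorm is at least $\tfrac12\lambda^2$.

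The paper's own proof reaches $C(\|\mathbf{x}\|_{\mathcal{C}^\alpha}+1)$ only by asserting two bounds. The first is $\|\mathbf{x}^{(i)}_{0,s}\|_i\le C$ with $C$ independent of $\mathbf{x}$. The second is $\|\mathbf{x}^{(i-j)}_{s,t}\|_{i-j}\le C(\|\mathbf{x}\|_{\mathcal{C}^\alpha}+1)(t-s)^\alpha$. Both fail for large $\|\mathbf{x}\|_{\mathcal{C}^\alpha}$ (for $i\ge1$, respectively $i-j\ge2$), and your estimate is the correct replacement.

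Your graded reading does not repair the statement, though. The quantity $\max_k\|i(\mathbf{x})^{(k)}\|_{C^\alpha}^{1/k}$ is not homogeneous under scaling, so it is not a norm. Hence it is not the norm of the Banach space $C_0^\alpha([0,T];T^{(n)}(E))$ that Theorem~\ref{thm: duality of gen holder space} identifies with the dual of the Arens--Eells space.

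The honest fix is to restate the proposition as $\|i(\mathbf{x})\|_{C^\alpha}\le C(1+\|\mathbf{x}\|_{\mathcal{C}^\alpha})^n$, which is exactly what your computation gives with the sum norm. As you say, this weaker bound is all that is used later. Proposition~\ref{prop: compactness of rough alpha ball} only needs $i$ to map $\varrho^{\text{hom}}_\alpha$-bounded sets into norm balls, and Lemma~\ref{lem: weak star convergence tensor product} and Remark~\ref{rem: weak star topology on alpha rough} only need norm-boundedness.
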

The proof is postponed to Section~\ref{sec: weak star topology on alpha rough}. Note that Setting~\ref{assumptions} and Theorem \ref{thm: duality of gen holder space} below imply that $C^{\alpha}_0([0,T];T^{(n)}(E))$ has a predual. As we have just established that $\mathcal{C}^{\alpha,n}([0,T];E)$ can be viewed as a subset of $C_0^{\alpha}([0,T];T^{(n)}(E)) $, we obtain that the weak-$^*$ topology on $C_0^{\alpha}([0,T];T^{(n)}(E))$ induces a topology on $\mathcal{C}^{\alpha,n} ([0,T];E )$ that we refer to as the $i^*$-topology:

\begin{definition}[$i^*$ topology]\label{def: i* star topology}
    Assume Setting \ref{assumptions}, let $0 < \alpha \leq 1$, and let $n\in \N_{>0}$. Then, with the inclusion map $i$ defined in Proposition \ref{prop: inclusion of alpha rough into alpha holder} we interpret $ \mathcal{C}^{\alpha,n} ([0,T];E )$ as a subspace of $C_0^{\alpha}([0,T];T^{(n)}(E))$ and we define the $i^*$ topology on $ \mathcal{C}^{\alpha,n} ([0,T];E )$ as the subspace topology with respect to the weak-$^*$ topology on $C_0^{\alpha}([0,T];T^{(n)}(E))$. Equivalently, we define the $i^*$ topology to be the initial topology generated by all the functions $m \circ i$, where $m$ is in the predual of $C_0^{\alpha}([0,T];T^{(n)}(E))$. 
\end{definition}

The key advantage of the $i^*$ topology is that $\BR$ is compact in this topology. However, there is a pay-off: as the $i^*$ topology is weaker than the norm topology, the set of $i^*$-continuous functions is smaller than the set of $\varrho^{\textnormal{hom}}_{\alpha}$-continuous functions. For example, the mapping 
\begin{equation}\label{eq: function that is norm cont but not i continuous}
\mathbf{x}\mapsto \|\mathbf{x}^{(k)}\|_{0,T},
\end{equation}
is not $i^*$ continuous for any $1 \leq k \leq n$. Remarks \ref{rem: nets instead of sequences} and \ref{rem: weak star topology on alpha rough} below provide key insights into identifying $i^*$-continuous functions. 

We first give a more general version of the UAT in this topology, followed by corollaries that are applicable in the time-extended setting provided by Section~\ref{ssec: time-extended rough paths}.

\begin{theorem}\label{thm: weak star version uat}
    Assume Setting \ref{assumptions}, let $0<\alpha\leq 1$, $T,R>0$, and let $G$ be a norm dense subset of $F$. Let $A$ be an $i^*$ closed subset of $\BRg$ such that $S(\mathbf{x}) \neq S(\mathbf{x}')$ whenever  $\mathbf{x} \neq \mathbf{x}'$, $\mathbf{x},\mathbf{x}'\in A$. Then for every $i^*$-continuous function $f\colon A \to \R$ and every $\epsilon>0$, there exists $l\in T_a(G)$ such that
    \[
    \sup_{\mathbf{x}\in A} | \inprod{S(\mathbf{x})_{0,T}}{l} - f(\mathbf{x}) | < \epsilon.
    \]
    
\end{theorem}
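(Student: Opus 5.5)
The plan is to apply Theorem~\ref{thm: general setup of uat} with $\tau$ the $i^*$ topology, $K=A$ and $D=T_a(G)$. Here $G\subseteq F$ is viewed inside $E^*$ through the canonical embedding of the predual. Several hypotheses come for free. The $i^*$ topology is Hausdorff, since it is a subspace topology of a weak-$^*$ topology and $i$ is injective (Proposition~\ref{prop: inclusion of alpha rough into alpha holder}). Injectivity of the signature on $A$ is assumed. Finally, $T_a(G)$ contains $\mathbf{1}$ and is closed under $\shuffle$ because it is the algebraic tensor algebra of a subspace. (If $G$ is merely a norm-dense subset, replace it by its linear span; this only enlarges $D$, and we may then approximate elements of the span's tensor algebra.) So there are three points to verify.

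\textbf{Compactness of $A$.} By Setting~\ref{assumptions} and Theorem~\ref{thm: duality of gen holder space}, $C_0^\alpha([0,T];T^{(n)}(E))$ is a dual space. By Proposition~\ref{prop: inclusion of alpha rough into alpha holder}, $i$ maps $\BRg$ into a norm ball of radius $C(R+1)$, and that ball is weak-$^*$ compact by Banach--Alaoglu. I would then show that $i(\BRg)$ is weak-$^*$ closed. Weak-$^*$ convergence of a bounded net should imply pointwise weak-$^*$ convergence of $\mathbf{x}_{s,t}^{(k)}$ in $E^{\otimes k}$, obtained by testing against point-evaluation-type elements of the predual. Chen's relation, the bounds $\|\mathbf{x}^{(k)}_{s,t}\|\le R^k|t-s|^{k\alpha}$, and the group-like property then pass to the limit. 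The group-like property is expressed through the closed group $G^{(n)}(E)$, or equivalently via shuffle identities tested against $T_a(F)$, using Proposition~\ref{prop: equality of group like definitions} where needed. Care is needed with the products in Chen's relation: in finite level these are sums of tensor products of components, and joint weak-$^*$ continuity of products fails in general. I would handle this by expressing higher-level Chen identities componentwise and testing against elementary tensors from $F$, where the relevant pairings factor. Since $A$ is $i^*$-closed inside the compact set $\BRg$, it is compact.

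\textbf{Point separation by $D$.} Point separation of $T((E))$ by $T_a(G)$ follows as in Theorem~\ref{thm: norm-compact uat}. Since $F^{\otimes k}$ is a predual of $E^{\otimes k}$, it separates points of $E^{\otimes k}$. Moreover $G^{\otimes_a k}$ is dense in $F^{\otimes k}$ (Proposition~\ref{prop: density in tensor space higher orders} and Lemma~\ref{lem : point separating dense is enough}), so $G^{\otimes_a k}$ separates points of $E^{\otimes k}$ levelwise.

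\textbf{Continuity of $\mathbf{x}\mapsto\inprod{S(\mathbf{x})_{0,T}}{l}$ in the $i^*$ topology on $A$.} This is the main obstacle. It amounts to $i^*$-continuity of the Lyons lift $S^n$ on norm-bounded sets, which is the content of Section~\ref{sec: i star continuity lyons lift}; I would invoke that result. Its proof would approximate $S^{n+1}(\mathbf{x})_{0,T}$ tested against $l\in G^{\otimes_a(n+1)}$, uniformly over the $R$-ball, by the Riemann-type products $\tilde{\mathbf{x}}^D_{0,T}$. It uses the uniform estimate from the Lyons lift construction, which depends only on $R$. Each $\tilde{\mathbf{x}}^D_{0,T}$ tested against $l$ is a polynomial in finitely many pairings $\inprod{\mathbf{x}^{(k)}_{t_j,t_{j+1}}}{g}$ with $g\in G^{\otimes_a k}$, thanks to the elementary structure of $l$. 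Each such pairing is $i^*$-continuous. Hence the limit is uniform of continuous maps and therefore continuous on $A$. With this, Theorem~\ref{thm: general setup of uat} yields the claim.
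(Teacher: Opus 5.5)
Your proposal is correct and follows the paper's proof. Both apply Theorem~\ref{thm: general setup of uat} with $\tau=i^*$, $K=A$, $D=T_a(G)$, and take compactness from Banach--Alaoglu together with $i^*$-closedness of the weakly geometric ball (Propositions~\ref{prop: compactness of rough alpha ball} and~\ref{prop: compactness of rough alpha ball weak}, which handle products as you suggest, via Lemma~\ref{lem: weak star convergence tensor product} and shuffle identities tested against $T_a(F)$), point separation from density of $G^{\otimes_a k}$ in the predual $F^{\otimes k}$, and continuity from Proposition~\ref{prop: continuity of Lyons lift}.

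Your sketch of that last proposition differs from the paper's proof, which reduces to $\R^{n+1}$-valued paths through the map $\pi^{\mathbf{y}}$ of Lemma~\ref{lem:reduction to finite dimensions} and then uses an interpolation/Arzel\`a--Ascoli argument in finite dimensions. Your route is sound and more direct: the level-$(n+1)$ error $\sum_j S^{n+1}(\mathbf{x})^{(n+1)}_{t_{j-1},t_j}$ is at most $(CR)^{n+1}T|D|^{(n+1)\alpha-1}$ uniformly on the ball. You must, however, run it inductively over the levels, with $S^n(\mathbf{x})$ in place of $\mathbf{x}$ and on every subinterval $[u,v]$.
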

\begin{proof}
    This proof is an application of Theorem \ref{thm: general setup of uat}. Indeed, we first note that as $F^{\otimes n}$ is the predual of $E^{\otimes n}$, it separates points in $E^{\otimes n}$, $n\in \N_{>0}$. Thus the truncated algebraic tensor algebra $T^{(n)}(F)$ separates points in the truncated algebraic tensor algebra $T^{(n)}(E)$, $n\in \N_{>0}$. As we are dealing with an admissible family of tensor norms, Proposition \ref{prop: density in tensor space higher orders} implies that $T_a^{(n)}(G)$ is norm-dense in $T^{(n)}(F)$. Lemma \ref{lem : point separating dense is enough} thus implies that $T_a^{(n)}(G)$ separates points in $T^{(n)}(E)$. The set $T_a(G)$ is closed under the shuffle product and contains $\mathbf{1}$ by definition.
    
    The $i^*$ topology is Hausdorff as the inclusion map defined in Proposition $\ref{prop: inclusion of alpha rough into alpha holder}$ is injective. Proposition \ref{prop: compactness of rough alpha ball weak} implies that $\BR$ is $i^*$-compact, and given that $A$ is an $i^*$ closed subset of this compact set we conclude that $A$ is $i^*$-compact. Proposition \ref{prop: continuity of Lyons lift} implies that the linear functionals are continuous, indeed, for all $l\in T_a(G)$ there exists an $n\in \N$ such that $l\in T_a^{(n)}(G)$ and thus 
$\inprod{S(\cdot)_{0,T}}{l} =  \inprod{S^n(\cdot)_{0,T}}{l}$.
    Thus the assumptions of Theorem~\ref{thm: general setup of uat} are satisfied and the assertion follows.
\end{proof}

By combining Theorem~\ref{thm: weak star version uat} with Proposition~\ref{prop: when there is a time component then unique signature} one obtains the following corollary. The time-extended paths introduced in Section~\eqref{ssec: time-extended rough paths} provide a way to ensure that our objects of interest lie in the set $A$. 

\begin{corollary}\label{cor: weak star version uat but more applied}
Assume Setting \ref{assumptions}, let $0<\alpha\leq 1$, $T,R>0$, and let $G$ be a norm dense subset of $F$. Let $\phi\in F$ and consider
    \[
    A \coloneqq \left\{ \mathbf{x} \in \alphageom \, \,\,\big | \,\,\,  \inprod{\mathbf{x_{s,t}}}{ \phi} =  t-s \text{  and } \|\mathbf{x}\|_{\mathcal{C}^\alpha} \leq R\right\}.
    \]
    Then for any function $f\colon A \to \R$ that is continuous with respect to the $i^*$ topology and any $\epsilon>0$, there exists $l\in T_a(G)$ such that
    \[
    \sup_{\mathbf{x}\in A} | \inprod{S(\mathbf{x})_{0,T}}{l} - f(\mathbf{x}) | < \epsilon.
    \]
\end{corollary}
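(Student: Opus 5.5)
The plan is to deduce the corollary directly from Theorem~\ref{thm: weak star version uat}. Two hypotheses have to be checked: that $A$ is an $i^*$-closed subset of the ball $\BRg$, and that the signature is injective on $A$.

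\textbf{Injectivity.} Here $\phi\in F$, and $F$ embeds into $E^*$ because it is the predual of $E$. So $\phi$ can be regarded as an element of $E^*$, and the constraint $\inprod{\mathbf{x}_{s,t}}{\phi}=t-s$ is exactly condition~\eqref{eq: aux tree like uniqueness beta}. Proposition~\ref{prop: when there is a time component then unique signature} then says that $S(\mathbf{x})_{0,T}=S(\mathbf{x}')_{0,T}$ forces $\mathbf{x}=\mathbf{x}'$ for $\mathbf{x},\mathbf{x}'\in A$. That proposition needs the tensor norms $\|\cdot\|_{m+n}$ to be reasonable with respect to $E^{\otimes m}\otimes_a E^{\otimes n}$. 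I would verify this for the two motivating examples (Hilbert and projective norms), or else note it as an implicit standing assumption in Setting~\ref{assumptions}.

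\textbf{Closedness, membership part.} $A$ is the intersection of three sets: the ball $\BRg$ (more precisely, the weakly geometric paths of norm at most $R$), the set where $\inprod{\mathbf{x}_{s,t}}{\phi}=t-s$ for all $s,t$, and $\alphageom$ itself. For the middle set, Chen's relation gives $\mathbf{x}^{(1)}_{s,t}=\mathbf{x}^{(1)}_{0,t}-\mathbf{x}^{(1)}_{0,s}$, so it suffices to impose $\inprod{\mathbf{x}^{(1)}_{0,t}}{\phi}=t$ for every $t$. The map $\mathbf{x}\mapsto \inprod{i(\mathbf{x})(t)}{(0,\phi,0,\dots)}$ is point evaluation at $t$ composed with a predual element of $T^{(n)}(E)$. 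Point evaluation tested against a predual element should itself lie in the predual of $C_0^\alpha([0,T];T^{(n)}(E))$; this is where the explicit description of the predual in Theorem~\ref{thm: duality of gen holder space} is used. Hence each of these maps is $i^*$-continuous, and the constraint set is $i^*$-closed.

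\textbf{Closedness, main obstacle.} The harder step is showing that the weakly geometric $R$-ball is $i^*$-closed inside the $i^*$-compact set $\BR$ from Proposition~\ref{prop: compactness of rough alpha ball weak}. I would argue with nets. Let $\mathbf{x}_\lambda\to\mathbf{x}$ in $i^*$. Then $\mathbf{x}_{\lambda;s,t}\wkst \mathbf{x}_{s,t}$ in $T^{(n)}(E)$, again by testing point evaluations against predual elements.

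The group-like property is the delicate part, because it is nonlinear. My first attempt would use Proposition~\ref{prop: equality of group like definitions}, which lets me work with the weak (shuffle) characterisation under approximation-property and strong-uniformity assumptions. I would test the shuffle identity only against $\mathbf{y},\mathbf{y}'$ in $T_a(F)$ and then pass to the limit. In the products $\inprod{\mathbf{x}_\lambda}{\mathbf{y}}\inprod{\mathbf{x}_\lambda}{\mathbf{y}'}$ each factor converges, so the product converges, and the shuffle side converges because it is linear in $\mathbf{x}_\lambda$. Density of $T_a(F)$, together with $F^{\otimes n}$ separating points, then gives the identity for all of $T_a(F)$. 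From there I would need to recover group-likeness, for instance by using that $G^{(n)}(E)$ is closed (Remark~\ref{remark: group-like closed under multiplication}) and identifying $G^{(n)}_w$ through predual tests. If that route runs into trouble, the fallback is to add "$A$ is $i^*$-closed" as an explicit hypothesis, or to restrict to the examples in Setting~\ref{assumptions} where these characterisations coincide.

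\textbf{Conclusion.} The norm bound survives the limit by weak-$^*$ lower semicontinuity, already built into Proposition~\ref{prop: compactness of rough alpha ball weak}. Multiplicativity also survives, since each component of Chen's relation is linear in one factor with the other fixed. Once all of this is in place, Theorem~\ref{thm: weak star version uat} applies and yields the stated approximation.
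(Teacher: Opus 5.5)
Your proposal is correct and follows the paper's proof. You reduce, via Theorem~\ref{thm: weak star version uat} and Proposition~\ref{prop: when there is a time component then unique signature} (with $\phi\in F\subseteq E^*$), to the $i^*$-closedness of $A$, and you get the constraint $\inprod{\mathbf{x}_{s,t}}{\phi}=t-s$ in the limit by testing $i(\mathbf{x})$ against the molecules $m^{y}_{t,0}$ with $y=(0,\phi,0,\ldots)$, using Chen's relation at level one. The step you flag as the main obstacle needs no work and no fallback hypothesis: Proposition~\ref{prop: compactness of rough alpha ball weak}, which you cite, is exactly the statement that the weakly geometric ball $\BRg$ is $i^*$-compact, hence $i^*$-closed since the $i^*$ topology is Hausdorff. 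So, as in the paper, you only need to consider nets in $A$ whose limit already lies in $\BRg$.
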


\begin{proof}
    By Theorem \ref{thm: weak star version uat} and Proposition~\ref{prop: when there is a time component then unique signature}, this just comes down to showing that the set $A$ we defined is $i^*$ closed. To show this we take any net $\{ \mathbf{x}_\lambda\}_{\lambda\in I}$ in $A$ converging to some $\mathbf{x}$ in $\BRg$. Denoting $t\mapsto \mathbf{x}_{0,t}$ as $t\mapsto \mathbf{x}(t)$, we can use the characterization of convergence given in Remark \ref{rem: weak star topology on alpha rough} to conclude that $A$ is indeed closed. 
\end{proof}

We obtain the following corollary for sets of stochastic processes in $\alphageom$ that are bounded in expectation. This is to be compared with Corollary~\ref{cor: norm-compact uat}, which applies to a single process (and extends only to tight sets of processes).

\begin{corollary}\label{cor: weak star version uat for stochastic processes}
Assume Setting \ref{assumptions}, let $0<\alpha\leq 1$, $T>0$, and let $G$ be a norm dense subset of $F$. Let $\phi\in F$ and consider
    \[
    A \coloneqq \left\{ \mathbf{x} \in \alphageom \, \,\,\big | \,\,\,  \inprod{\mathbf{x_{s,t}}}{ \phi} =  t-s \text{  and } \|\mathbf{x}\|_{\mathcal{C}^\alpha} \leq R\right\}.
    \]
Let $(\Omega,\mathcal{F},\mathbb{P})$ be a probability space, let $J$ be an index set, and for all $j\in J$ let $\mathbf{X}_j\colon \Omega \rightarrow \alphageom$ be measurable and assume  
\begin{align}
\forall\, j\in J\colon \mathbb{P}(\mathbf{X}_j\in A) = 1 &&\text{ and }&& \sup_{j\in J} \mathbb{E} \| \mathbf{X}_j \|_{\mathcal{C}^{\alpha}} =: M< \infty.
\end{align}
Then for any function $f\colon \alphageom \to \R$ such that $f|_{\BR}$ is $i^*$ continuous for all $R>0$, and any $\epsilon>0$, there exists $l\in T_a(G)$ such that
    \[
    \sup_{j\in J} \mathbb{P} ( | \inprod{S(\mathbf{X}_j)_{0,T}}{l} - f(\mathbf{X}_j) | > \epsilon ) < 1- \epsilon.
    \]
\end{corollary}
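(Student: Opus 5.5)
The plan is to reduce to Corollary~\ref{cor: weak star version uat but more applied} on a large ball, using Markov's inequality to control the mass outside the ball. Fix $\epsilon>0$ and choose $R>0$ so large that $M/R<\epsilon$. Markov's inequality then gives, for every $j\in J$,
\[
\mathbb{P}(\|\mathbf{X}_j\|_{\mathcal{C}^\alpha}>R)\leq \frac{\mathbb{E}\|\mathbf{X}_j\|_{\mathcal{C}^\alpha}}{R}\leq \frac{M}{R}<\epsilon ,
\]
and this bound is uniform in $j$. Let $A_R$ denote the set $A$ of Corollary~\ref{cor: weak star version uat but more applied} with this $R$; it consists of the weakly geometric paths with $\inprod{\mathbf{x}_{s,t}}{\phi}=t-s$ and $\|\mathbf{x}\|_{\mathcal{C}^\alpha}\leq R$. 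Since $\mathbb{P}(\mathbf{X}_j\in A)=1$, where $A$ is the set in the statement (the time-extension condition, with the norm bound understood as the constraint defining $A_R$), we get $\mathbb{P}(\mathbf{X}_j\in A_R)\geq 1-\epsilon$ for all $j$.

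Next we apply Corollary~\ref{cor: weak star version uat but more applied} to $f|_{A_R}$. By hypothesis $f|_{\BR}$ is $i^*$-continuous. Restricting to the subset $A_R\subseteq\BR$ preserves continuity for the subspace topology, so $f|_{A_R}$ is $i^*$-continuous. The corollary therefore yields $l\in T_a(G)$ with
\[
\sup_{\mathbf{x}\in A_R}|\inprod{S(\mathbf{x})_{0,T}}{l}-f(\mathbf{x})|<\epsilon .
\]
Consequently, for every $j$ and every $\omega$ with $\mathbf{X}_j(\omega)\in A_R$, we have $|\inprod{S(\mathbf{X}_j)_{0,T}}{l}-f(\mathbf{X}_j)|<\epsilon$. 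This gives
\[
\mathbb{P}\big(|\inprod{S(\mathbf{X}_j)_{0,T}}{l}-f(\mathbf{X}_j)|>\epsilon\big)\leq\mathbb{P}(\mathbf{X}_j\notin A_R)<\epsilon
\]
uniformly in $j$, which is the desired approximation. The literal inequality ``$<1-\epsilon$'' in the statement then follows at once for $\epsilon<\tfrac12$ (and trivially after shrinking $\epsilon$ otherwise), since $\epsilon<1-\epsilon$ in that range.

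The only delicate point is measurability. We need the event $\{\mathbf{X}_j\in A_R\}$ to be measurable, or at least $\{\|\mathbf{X}_j\|_{\mathcal{C}^\alpha}\leq R\}$, which is implicit in the expectation hypothesis. We also need the event where the approximation fails to be contained in a measurable complement, and this containment argument handles it. Apart from this, the step needing the most care is checking that Corollary~\ref{cor: weak star version uat but more applied} applies with the same $\phi$ and $G$. Its proof relies on the $i^*$-closedness of $A_R$, which that corollary already provides, so nothing new is needed there.
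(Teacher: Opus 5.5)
Your argument is correct and is the paper's proof. Markov's inequality gives $\sup_{j}\mathbb{P}(\|\mathbf{X}_j\|_{\mathcal{C}^\alpha}>R)\leq M/R$, and Corollary~\ref{cor: weak star version uat but more applied} applied on the ball of radius $R$ finishes it. The paper takes $R=M/\epsilon$; your choice $M/R<\epsilon$ gives the strict bound $<\epsilon$ and avoids $R=0$ when $M=0$.

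One small caveat on your closing remark: the literal bound $<1-\epsilon$ in the statement is evidently a typo for $<\epsilon$ (or $\leq\epsilon$). It cannot hold at all when $\epsilon\geq 1$, so your \emph{shrinking} fix only covers $\epsilon<1$.
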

\begin{proof}
By Markov's inequality we have $\sup_{j\in J} \mathbb{P}( \|\mathbf{X}_j \|_{\mathcal{C}^{\alpha}} > \nicefrac{M}{\epsilon}) \leq \epsilon $. By applying Corollary~\ref{cor: weak star version uat but more applied} with $R=\nicefrac{M}{\epsilon}$ we arrive at the desired result.
\end{proof}

\begin{remark}\label{rem: nets instead of sequences}
As the weak-$^*$ topology is generally not metrizable, one typically needs to consider nets rather than sequences when establishing $i^*$ continuity. However, recall that if $X$ is a separable Banach space, then the weak-$^*$ topology on $X^*$ is metrizable on norm-bounded sets. By Remark~\ref{rem: AE is separable} we have that the predual of $C_0^{\alpha}([0,T],F)$ is separable whenever $F$ is separable; in this case the $i^*$ topology is metrizable on norm-bounded sets. 
\end{remark}

\section{The Banach space \texorpdfstring{$C_0^\alpha([0,T];E)$}{of H\"older continuous functions} and its predual}\label{sec: predual of holder}
Recall that $C_0^{\alpha}([0,T];E)$ (with $E$ a Banach space) is the space of $\alpha$-H\"older continuous $E$-valued functions starting in $0$, see Section~\ref{sec: holder norms and rough paths}. From e.g.~\cite[Proposition 2.3]{weaver_lipschitz_2018}, the proof of which also applies to the space $C_0^\alpha([0,T];E)$), we know that $C_0^\alpha([0,T];E)$ endowed with the $\alpha$-H\"older coefficient $\|\cdot \|_{C^{\alpha}}$ (see~\eqref{eq: alpha holder continuity}) is a Banach space whenever $E$ is a Banach space. In this section, we provide an explicit representation of the predual of $C_0^\alpha([0,T];E)$ in the case that $E$ has a predual; the space we construct is known as the Arens--Eells space and the construction is directly adapted from \cite[Section 3.1]{weaver_lipschitz_2018}, where it is provided for $C_0^\alpha([0,T];\R)$. Throughout this whole section, $E$ is a real Banach space that allows for a predual that is denoted by $F$.

As $C_0^\alpha([0,T];E)$ is a space of functions from the interval $[0,T]$ to $E$, it is to be expected that its predual can be represented by some space of functions from $[0,T]$ to $F$. The first step in the construction is identifying some `elementary' functions. We then proceed to define a norm on the elementary functions that pairs nicely with the $\alpha$-Hölder norm; the desired representation of the predual is then obtained by taking the completion of the set of elementary functions with respect to this norm. 

\begin{definition}\label{def: molecules}
    Let $F$ be a Banach space, and let $T> 0$. Then, a \emph{molecule} $m\colon [0,T] \to F$ is a function with finite support such that
    \[
    \sum_{t_i\in \supp m} m(t_i) = 0.
    \]
    The space of all molecules is denoted by $M([0,T];F)$. 
    For $t,s\in [0,T]$ and $y\in F$, we define $m_{t,s}^y \in M([0,T];F)$ as
    \begin{equation}\label{eq: explicit molecule definition}
    m_{t,s}^y = (\mathbf{1}_{\{t\}} -  \mathbf{1}_{\{s\}})y.
    \end{equation}
    The set of \emph{elementary molecules} is given by
    \begin{equation}\label{eq: def EM}
        EM([0,T];F)=\{ m_{t,s}^y \colon s,t\in [0,T],\, y\in F,\, \| y \|^* = 1\}.
    \end{equation}
\end{definition}
Any molecule can be written, although not uniquely, as a linear combination of elementary molecules, indeed, for all $m\in M([0,T];F)$ we have 
\[
m = \sum_{t_i\in \supp{m}}  m^{m(t_i)}_{t_i,0} =  \sum_{t_i\in \supp{m}} \|m(t_i)\|\,  m^{ \|m(t_i)\|^{-1}m(t_i)}_{t_i,0}.
\]
Inserting any $t \in (0,T]$ in the right hand-side gives $m(t)$ immediately. Inserting $t=0$ on the right-hand side gives $-\sum_{t_i\in (\supp{m}\setminus\{0\})}   m(t_i) $, which is equal to $m(0)$ by $\sum_{t_i \in \supp{m}} m(t) = 0$. 

Before we define the norm on $M([0,T];F)$, we should keep in the mind that $C_0^\alpha([0,T];E)$ should be the dual. A natural way for a $x\in C_0^\alpha([0,T];E)$ to act on a molecule $m$ would be
\begin{equation}\label{eq: def of how holder acts on ae}
\inprod{x}{m} \coloneqq \sum _{t\in [0,T]} \inprod{x(t)}{m(t)}.
\end{equation}
Note that if $m=\sum_{i=1}^{n} a_i m_{t_i,s_i}^{y_i}$, with $m_{t_i,s_i}^{y_i} \in EM([0,T];F)$, then
\begin{equation}\label{eq: Arens-Eells inequality}
|\inprod{x}{m}| \leq \sum_{i=1}^{n} |a_i\inprod{x(t_i) - x(s_i)}{y_i}| \leq \sum_{i=1}^{n} |a_i| \|x(t_i) - x(s_i)\| \leq \|x\|_{C^\alpha}\sum_{i=1}^{n} |a_i||t_i-s_i|^\alpha.
\end{equation}
This means that if we want the $\alpha$-Hölder norm to correspond to the dual norm, we should define the following norm.
\begin{propdef}\label{def: Arens--Eells}
    Let $F$ be a Banach space, let $T>0$ and let $0< \alpha \leq 1$. Then the mapping $\|\cdot\|_{\textnormal{\AE},\alpha} \colon M([0,T];F)\rightarrow [0,\infty)$ given by 
\begin{equation*}
\begin{aligned}
    & \|m\|_{\textnormal{\AE},\alpha} \\
    & = \inf \left \{ \sum_{i=1}^n |a_i| |t_i-s_i|^\alpha \, \, \, \bigg | \,\,\, m = \sum_{i=1}^n a_i m_{t_i,s_i}^{y_i} \text{ and } m_{t_i,s_i}^{y_i}\in EM([0,T];F) \text{ for all } 1\leq i \leq n \right \}
\end{aligned}
\end{equation*}
defines a norm. This norm is called the \emph{Arens--Eells norm} and the Banach space $\textnormal{\AE}_{\alpha}([0,T];F)$ obtained by taking the completion of $M([0,T];F)$ under this norm is called the \emph{Arens--Eells space}. 
\end{propdef}
\begin{proof}
    Firstly, the Arens--Eells norm is well defined, because any molecule can be written as linear combination of elementary molecules. The triangle inequality and homogeneity follow directly from the definition, and so we only have to show that $\|m\|_{\textnormal{\AE},\alpha} \neq 0$ when $m\neq0$.
    
    Take $m\in M([0,T];F)$ such that $m\neq 0$. Equation \eqref{eq: Arens-Eells inequality} implies that for any $x\in C_0^\alpha([0,T];E)$
    \[
    |\inprod{x}{m}| \leq \|x\|_{C^\alpha} \|m\|_{\textnormal{\AE},\alpha}.
    \]
    Therefore, it suffices to find $x\in C_0^\alpha([0,T];E)$ such that $\inprod{x}{m}\neq 0$. The molecule $m$ has finite support, and so there exists open interval $(r,s)\subset [0,T]$ containing exactly one element $t$ of the support. Let $g\in E$ be such that $\inprod{g}{m(t)} \neq 0$, and let $f\in C^{\infty}([0,T];\R)$ be such that $f(t)=1$ and $\supp(f)\subseteq (r,s)$. Then $\inprod{gf}{m} = \langle g,m(t)\rangle \neq 0$, and so we are done.
\end{proof}

\begin{remark}\label{rem: AE is separable}
Note that the space $\textnormal{\AE}_{\alpha}([0,T];F)$ is separable whenever $F$ is separable. Indeed, suppose $F$ is separable and let $A\subseteq F$ be countable and dense in $F$. Let $D=\{ k2^{-j} T \colon j\in \N,\, 1\leq k \leq 2^j \}$. Consider the set 
\begin{equation}
    \mathcal{D} = \left\{ \sum_{k=1}^{n} x_k \mathbf{1}_{\{s_k\}} \colon n\in \N,\, x_1,\ldots,x_n\in A,\, s_1,\ldots,s_n\in D,\, \sum_{k=1}^{n} x_n =0\right\}.
\end{equation}
Clearly $\mathcal{D}$ is countable; we claim moreover that $\mathcal{D}$ is dense in $M([0,T];F)$. By definition of the Arens--Eells space this implies that $\mathcal{D}$ is dense in $\textnormal{\AE}_{\alpha}([0,T];F)$. To verify the claim, let $m\in M$; $m=\sum_{k=1}^{n} y_k \mathbf{1}_{\{t_k\}}$ for some $y_1,\ldots,y_n\in F$ and $t_1,\ldots,t_n\in [0,T]$. Let $\epsilon>0$ be given and pick $x_1,\ldots,x_{n-1}\in D$ such that $\| x_i - y_i \| \leq \nicefrac{\epsilon}{2n}$ for all $1\leq i\leq n-1$. Set $x_n = - \sum_{i=1}^{n-1} x_i$. Let $s_1,\ldots,s_n$ be such that $|s_i - t_i|\leq (2\sum_{i=1}^{n} \| y_i \|)^{-1} \epsilon$. Finally, define $m_{\epsilon} \in \mathcal{D}$ by $m_{\epsilon} = \sum_{k=1}^{n} x_i \mathbf{1}_{\{s_i\}}$. By observing that 
\begin{equation*}
    m - m_{\epsilon} =
\sum_{k=1}^{n-1} (x_i - y_i) (\mathbf{1}_{\{t_i\}} - \mathbf{1}_{\{t_n\}}) +
\sum_{k=1}^{n} y_i (\mathbf{1}_{\{t_i\}} - \mathbf{1}_{\{s_i\}})
\end{equation*}
one readily verifies that $\| m - m_{\epsilon}\|_{\textnormal{\AE},\alpha} \leq \epsilon (T^{\alpha} + 1)$.
\end{remark}
Now that we have defined the Arens--Eells space, we want to show that its dual is $C_0^\alpha([0,T];E)$. We are also interested in understanding the weak$^*$ topology on $C_0^\alpha([0,T];E)$ generated by the Arens--Eells space. It turns out that on norm bounded subsets, this corresponds to pointwise weak$^*$ convergence. This is all summarized in the following theorem.
\begin{theorem}\label{thm: duality of gen holder space}
    Let $E$ be a real Banach space with a predual $F$, and let $0< \alpha \leq 1$. Then, $C_0^\alpha([0,T];E)$ is isomorphic to the dual of $\textnormal{\AE}_{\alpha}([0,T];F)$. Furthermore, an $\alpha$-Hölder norm bounded net $\{x_\lambda\}_{\lambda\in I}$ in $C_0^\alpha([0,T];E)$ converges to $x$ in the weak-$^*$ topology if and only if $x_\lambda(t)\wkst x(t)$ for all $t\in [0,T]$.
\end{theorem}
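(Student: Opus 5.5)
We construct the isometric isomorphism $\Phi\colon C_0^\alpha([0,T];E)\to \textnormal{\AE}_{\alpha}([0,T];F)^*$ through the pairing \eqref{eq: def of how holder acts on ae}, and then identify weak-$^*$ convergence on bounded sets.

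\emph{Step 1 (well-definedness and contractivity).} For $x\in C_0^\alpha([0,T];E)$ we set $\Phi(x)(m)=\inprod{x}{m}$ for $m\in M([0,T];F)$. This is linear in $m$. Taking the infimum over all representations in \eqref{eq: Arens-Eells inequality} gives $|\inprod{x}{m}|\le \|x\|_{C^\alpha}\|m\|_{\textnormal{\AE},\alpha}$, so $\Phi(x)$ extends uniquely to the completion with $\|\Phi(x)\|\le\|x\|_{C^\alpha}$. The map $\Phi$ is linear.

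\emph{Step 2 (isometry).} For $s\neq t$ and $\|y\|^*=1$ we have $\|m^y_{t,s}\|_{\textnormal{\AE},\alpha}\le |t-s|^\alpha$. Since $F$ is a predual of $E$, $\|x(t)-x(s)\|=\sup_{\|y\|^*=1}\inprod{x(t)-x(s)}{y}$. Hence $\|\Phi(x)\|\ge \sup_{s\neq t,\,\|y\|^*=1}\inprod{x}{m^y_{t,s}}/|t-s|^\alpha=\|x\|_{C^\alpha}$. In particular $\Phi$ is injective.

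\emph{Step 3 (surjectivity).} This is the main point. Given $\varphi\in \textnormal{\AE}_{\alpha}([0,T];F)^*$, fix $t$. The map $y\mapsto \varphi(m^y_{t,0})$ is linear on $F$ (since $m^{y+y'}_{t,0}=m^y_{t,0}+m^{y'}_{t,0}$) and bounded by $\|\varphi\|\,t^\alpha\|y\|^*$. It therefore defines an element $x(t)\in F^*=E$. We have $x(0)=0$. Moreover, $m^y_{t,s}=m^y_{t,0}-m^y_{s,0}$ gives $\inprod{x(t)-x(s)}{y}=\varphi(m^y_{t,s})\le\|\varphi\||t-s|^\alpha\|y\|^*$, so $\|x\|_{C^\alpha}\le\|\varphi\|$. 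Because every molecule is a finite linear combination of the $m^{y}_{t,0}$ (by the decomposition displayed after Definition~\ref{def: molecules}), $\Phi(x)$ and $\varphi$ agree on $M$. By density they agree everywhere. One point needs care here: the identification $F^*=E$ must be the one under which the pairing $\inprod{\cdot}{\cdot}$ is used throughout.

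\emph{Step 4 (weak-$^*$ characterization).} Suppose $x_\lambda\wkst x$. Testing against $m^y_{t,0}$ gives $\inprod{x_\lambda(t)}{y}\to\inprod{x(t)}{y}$ for all $y\in F$, that is, $x_\lambda(t)\wkst x(t)$; boundedness is not needed for this direction. Conversely, suppose $\sup_\lambda\|x_\lambda\|_{C^\alpha}\le R$ and pointwise weak-$^*$ convergence holds. Then $\inprod{x_\lambda}{m}\to\inprod{x}{m}$ for every molecule, since the pairing is a finite sum. For general $m\in\textnormal{\AE}_\alpha$, pick a molecule $m'$ with $\|m-m'\|_{\textnormal{\AE},\alpha}<\epsilon$ and use the $\epsilon/3$ estimate $|\inprod{x_\lambda-x}{m}|\le (R+\|x\|_{C^\alpha})\epsilon+|\inprod{x_\lambda-x}{m'}|$. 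Here $\|x\|_{C^\alpha}\le R$ by weak-$^*$ lower semicontinuity of the norm applied to the pairings with $m^y_{t,s}$.
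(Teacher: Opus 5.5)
Your proof is correct and follows essentially the paper's argument: your $\Phi$ is the paper's pairing map $L_2$, your reconstruction $x(t)=\varphi(m^{\cdot}_{t,0})$ in Step~3 is the paper's $L_1$, and the weak-$^*$ characterization is proved the same way, by testing against $m^y_{t,0}$ in one direction and by a density/$\epsilon/3$ argument in the other. The differences are cosmetic: you show isometry and surjectivity directly rather than exhibiting two mutually inverse contractions, and you carry out inline the density argument that the paper delegates to a lemma.
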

\begin{proof} 
    \textbf{Part 1: showing $C_0^\alpha([0,T];E)$ is isomorphic to the dual of $\textnormal{\AE}_{\alpha}([0,T];F)$}\\
    We will provide mappings 
    \begin{equation*}
        L_1\in L((\textnormal{\AE}_{\alpha}([0,T];F))^*, C_0^\alpha([0,T];E)) \text{ and }
        L_2 \in L(C_0^\alpha([0,T];E),(\textnormal{\AE}_{\alpha}([0,T];F))^*),
    \end{equation*} and then show that $L_1 L_2 = \operatorname{id}_{C_0^\alpha([0,T];E)}$ whereas $L_2 L_1 = \operatorname{id}_{(\textnormal{\AE}_{\alpha}([0,T];F))^*}$.\par 
    
    We first observe that for $t\in[0,T]$ and $\phi\in \left(\textnormal{\AE}_{\alpha}([0,T];F)\right)^*$ the mapping
    \[
    F\ni y \mapsto \inprod{\phi}{m^y_{t,0}}
    \]
    is an element of $E$ (interpreted as $F^*$). By the fact that $m^{y+ y'}_{t,0} =m^y_{t,0} + m^{y'}_{t,0} $ for any $y,y'\in F$, the map is indeed linear. Additionally, $\|m^y_{t,0}\|_{\textnormal{\AE},\alpha} \leq \|y\| t^\alpha$ for any $y\in F$ and $t\in[0,T]$, and so the map is continuous by the continuity of $\phi$.
    Next, we define
    \[
    \left(L_1 \phi\right) (t) \coloneqq \left(y \mapsto \inprod{\phi}{m^y_{t,0}}\right), \quad t\in [0,T],\, \phi\in \left(\textnormal{\AE}_{\alpha}([0,T];F)\right)^*.
    \]
    In order to prove that $L_1 \phi \in C_0^\alpha([0,T];E)$, fix a $\phi \in \left(\textnormal{\AE}_{\alpha}([0,T];F)\right)^*$. First note that $(L_1 \phi)(0) = 0$. Next, by definition, for $s,t \in [0,T]$ and $y\in F$,
    \[
    \big(\left(L_1 \phi\right) (t) - \left(L_1 \phi\right) (s)\big) (y)= \inprod{\phi}{m_{t,0}^y} - \inprod{\phi}{m_{s,0}^y} = \inprod{\phi}{m_{t,s}^y}.
    \]
    Because $\|m_{t,s}^y\|_{\textnormal{\AE},\alpha} \leq \|y\| (t-s)^\alpha$ and $\phi$ is continuous, we have (with $\| \cdot \|_{\textnormal{\AE}}^*$ the canonical norm on $\left(\textnormal{\AE}_{\alpha}([0,T];F)\right)^*$) that
    \[
    \|\left(L_1 \phi\right) (t) - \left(L_1 \phi\right) (s) \|_{C^\alpha} \leq (t-s)^\alpha \|\phi\|_{\textnormal{\AE},\alpha}^*,
    \]
    showing not only that $L_1 \phi$ is $\alpha$-Hölder continuous, but also that $\|L_1 \phi\|_{C^\alpha} \leq \|\phi\|_{\textnormal{\AE},\alpha}^*$. Thus, $L_1$ is bounded and $\|L_1\| \leq 1$.

    To define a map $L_2 \colon C_0^\alpha([0,T];E) \to  \left(\textnormal{\AE}_{\alpha}([0,T];F)\right)^*$, we start with defining how a $\alpha$-Hölder continuous functions act on the molecules. As in Equation \eqref{eq: def of how holder acts on ae}, we define, for $x\in C_0^\alpha([0,T];E)$, the linear map
    \begin{equation}\label{eq: def of T_2 AE proof}
        m \mapsto \inprod{x}{m} \coloneqq \sum _{t\in [0,T]} \inprod{x(t)}{m(t)},
    \end{equation}
    for any $m \in M([0,T];F)$. By Equation \eqref{eq: Arens-Eells inequality}, we have for $m \in M([0,T];F)$ \[
    |\inprod{x}{m}| \leq \|x\|_{C^\alpha} \|m\|_{\textnormal{\AE},\alpha},
    \]
    which implies that the map defined in Equation \eqref{eq: def of T_2 AE proof} can be uniquely extended to a continuous linear map on $\textnormal{\AE}_{\alpha}([0,T];F)$ with norm less or equal to $\|x\|_{C^\alpha} $. We define $L_2 \colon C_0^\alpha([0,T];E) \to  \left(\textnormal{\AE}_{\alpha}([0,T];F)\right)^*$, as the map that sends $x\in C_0^\alpha([0,T];E)$ to the extension of the map defined in \eqref{eq: def of T_2 AE proof}. Because $\|L_2 x\|_{\textnormal{\AE},\alpha}^* \leq\|x\|_{C^\alpha}  $, for $\alpha$-Hölder continuous $x$, we have that $\|L_2\| \leq 1$.

    It remains to show that the maps $L_1$ and $L_2$ are each others inverses. Take a $x\in C_0^\alpha([0,T];E)$. Then for any $t\in [0,T]$ we have
    \[
    (L_1 L_2x)(t) = \left( y\mapsto \inprod{L_2 x}{ m^y_{t,0}} \right) = \left( y\mapsto \inprod{x}{m^y_{t,0}} \right) = \left( y\mapsto \inprod{x(t)}{y}\right) = x(t),
    \]
    where the equalities follow by definition of respectively $L_1$ and $L_2$. This implies that $L_1 L_2 x =x$.
    
    Take a  $\phi \in \left(\textnormal{\AE}_{\alpha}([0,T];F)\right)^*$. For any $t\in[0,T]$ and $y \in F$, we have
    \begin{equation}\label{eq: ae proof aux 1}
        (L_2 L_1 \phi) (m^y_{t,0}) = \inprod{(L_1\phi) }{m^y_{t,0}}=\inprod{(L_1\phi) (t)}{y} = \inprod{\phi}{m^y_{t,0}},
    \end{equation}
    where again the equalities follow by the definition of respectively $L_2$ and $L_1$. Elementary molecules of the form $m^y_{t,0}$ span $M([0,T]:F)$ and so Equation \eqref{eq: ae proof aux 1} implies $L_2 L_1 \phi = \phi$.

    This shows that $L_1$ and $L_2$ are indeed each others inverses, so we have that $C_0^\alpha([0,T];E)$ is isomorphic to the dual of $\textnormal{\AE}_{\alpha}([0,T];F)$.

    \textbf{Part 2: showing weak$^*$ convergence corresponds to boundedness and pointwise convergence}\\
    From now we directly associate an element $x\in  C_0^\alpha([0,T];E) $ as an element of $\left(\textnormal{\AE}_{\alpha}([0,T];F)\right)^*$. We want to prove that a norm bounded net $\{x_\lambda\}_{\lambda\in I}$ in $C_0^\alpha([0,T];E)$ converges to $x\in C_0^\alpha([0,T];E)$ in the weak-$^*$ topology if and only if $x_\lambda (t) \wkst x(t)$ for all $t\in[0,T]$.

    For any $t\in [0,T]$ and $y\in F$ we must have
    \[
    \inprod{x_\lambda(t)}{y} = \inprod{x_\lambda}{m_{t,0}^y} \to \inprod{x}{m_{t,0}^y} = \inprod{x(t)}{y},
    \]
    and so $x_\lambda(t) \wkst x(t)$ for all $t\in[0,T]$.

    Next, assume $\{x_\lambda\}_{\lambda\in I}$ is bounded in $\alpha$-Hölder norm and $x_\lambda(t) \wkst x(t)$ for all $t\in[0,T]$. By the pointwise weak$^*$ convergence, we have for any $t\in[0,T]$ and $y\in F$
    \[
    \inprod{x_\lambda}{m_{t,0}^y}=\inprod{x_\lambda(t)}{y}   \to   \inprod{x(t)}{y}=\inprod{x}{m_{t,0}^y}.
    \]
    Any molecule $m$ is a linear combination of elementary molecules, so the above then gives us $\inprod{x_\lambda}{ m} \to\inprod{x}{ m}  $ for all $ m\in M([0,T];F)$. As molecules are dense in $\textnormal{\AE}_{\alpha}([0,T];F)$, we can apply Lemma \ref{lem: density implies generate predual}, which shows that $x_\lambda\wkst x$, which completes the proof. 
\end{proof}
\section{The \texorpdfstring{$i^*$}{weak-star} topology on  \texorpdfstring{$ \mathcal{C}^{\alpha,n} ([0,T];E )$}{the space of H\"older continuous functions}}\label{sec: weak star topology on alpha rough}

Recall from Section~\ref{sec: abstract UAT} that we defined the $i^*$ topology on $\alphageom$ as the topology induced by the weak$^*$ topology on $C_0^{\alpha}([0,T],T^{(n)}(E))$ through the natural embedding, see in particular Proposition~\ref{prop: inclusion of alpha rough into alpha holder} and Definition~\ref{def: i* star topology}. In this section we first provide the proof of Proposition~\ref{prop: inclusion of alpha rough into alpha holder} and then establish that norm-bounded sets in $\alphageom$ are indeed $i^*$ compact. 

\begin{proof}[Proof of Proposition~\ref{prop: inclusion of alpha rough into alpha holder}]
    We define the map $i$ by 
    \[
    i(\mathbf x)(s) \coloneqq \mathbf{x}_{0,s} - \mathbf{1},
    \]
    where $\mathbf{1} = (1,0,\ldots,0)$. 
    The fact that we have to subtract $\mathbf{1}$ in this map is awkward, but necessary to ensure that $i(\mathbf x)(0)=0$. Note that the map $i$ is clearly injective, provided it is well-defined. To show that $i$ is well-defined, we fix a $\mathbf{x}\in \mathcal{C}^{\alpha,n} ([0,T];E )$, and show that $ i(x)\in C_0^{\alpha}([0,T];T^{(n)}(E))$. As $ \mathbf{x}_{0,0}$ is equal to $(1, 0, \ldots,0)$, we only have to show that $i(\mathbf x)$ is indeed $\alpha$-Hölder continuous, which we show together with the second statement of the proposition. That is, we want to show there exists some $C$ such that for all $s,t\in[0,T]$
    \[
    \| \mathbf{x}_{0,t} - \mathbf{x}_{0,s} \|_{T^{(n)}(E)} \leq C \big(\|\mathbf{x}\|_{\mathcal{C^\alpha}} + 1\big)|t-s|^{\alpha},
    \]
    where the $\|\cdot\|_{T^{(n)}(E)}$ norm is given by $\|(\mathbf{x}^{(0)}, \ldots, \mathbf{x}^{(n)}\| = \|\mathbf{x}^{(0)}\|_\R + \ldots + \|\mathbf{x}^{(n)}\|_{n}$ (although any norm that respects the direct sum representation in Remark \ref{rem: banach space structure of truncated tensor algebra} would work). This is equivalent to showing that there exists some $C$ such that for all $1 \leq i \leq n$ and $s,t \in [0,T]$ we have
    \begin{equation}\label{eq:hoelderbound}
    \| \mathbf{x}^{(i)}_{0,t} - \mathbf{x}^{(i)}_{0,s} \|_i \leq C \big(\|\mathbf{x}\|_{\mathcal{C^\alpha}} + 1\big) |t-s|^{\alpha}.
    \end{equation}
    By Chen's relation (see Equation \eqref{eq: Chen final?}) we have $\mathbf{x}^{(i)}_{0,t} = \sum_{j=0}^{i} \mathbf{x}^{(j)}_{0,s} \otimes \mathbf x ^{(i-j)}_{s,t}$ for all $1\leq i \leq n$, so proving~\eqref{eq:hoelderbound} is equivalent to showing
    \begin{equation}\label{eq: aux eq for inclusion proof}
     \| \sum_{j=0}^{i-1} \mathbf x ^{(j)}_{0,s} \otimes \mathbf x ^{(i-j)}_{s,t}\|_{i} \leq C \big(\|\mathbf{x}\|_{\mathcal{C^\alpha}} + 1\big)  |t-s|^{\alpha}.
    \end{equation}
    By assumption, we have 
    \[
    \max_{1\leq i \leq n} \sup_{0\leq s<t \leq T}\frac{\|\mathbf{x}^{(i)}_{s,t}\|_i^{\nicefrac{1}{i}}}{ (t-s)^\alpha} =\|\mathbf{x}\|_{\mathcal{C}^\alpha},
    \]
    see Definition \ref{def: homogeneous alpha holder}. In other words, we have
    \[
    \| \mathbf x ^{(i)}_{s,t}\|_i \leq  \|\mathbf{x}\|^i_{\mathcal{C}^\alpha}(t-s)^{i \alpha},
    \]
    for any $1\leq i \leq n$ and $s,t \in [0,T]$. As a consequence, we have that there exists some $C>0$ (depending only on $n$, $T$, and $\alpha$), such that for all $0 \leq j <  i \leq n$,
    \[
    \| \mathbf x^{(i-j)}_{s,t}\|_{i-j} \leq C \big(\|\mathbf{x}\|_{\mathcal{C^\alpha}} + 1\big) (t-s)^{ \alpha} \quad \text{ and } \quad  \| \mathbf x^{(i)}_{0,s}\|_i \leq C.
    \]
    If we combine these two inequalities, we obtain Equation \eqref{eq: aux eq for inclusion proof}.
\end{proof}

To avoid too much notation, we sometimes omit the $i$ in notation. For example, for $m$ in the predual of $C_0^{\alpha}([0,T];T^{(n)}(E))$ and $\mathbf{x} \in  \mathcal{C}^{\alpha,n} ([0,T];E )$, we define
\begin{equation}\label{eq: def pairing i}
\inprod{\mathbf{x}}{m} \coloneqq \inprod{i (\mathbf{x})}{m}.
\end{equation}
A small word of warning however: given an elementary molecule of the form $m_{0,t}^{y}$, where $y$ is in the predual of $T^{(n)}(E)$ and $t\in[0,T]$ (see Equation \eqref{eq: explicit molecule definition}), one may be tempted to write
\[
\inprod{\mathbf{x}}{m_{0,t}^y} = \inprod{\mathbf{x}_{0,t}}{y},
\]
but note that when $y^{(0)} \neq 0$ this notation is confusing and better avoided. 

\begin{remark}\label{rem: weak star topology on alpha rough}
It follows from Theorem \ref{thm: duality of gen holder space} and Proposition \ref{prop: inclusion of alpha rough into alpha holder} that if $\{\mathbf{x}_\lambda\}_{\lambda \in I}$ is a net in $\mathcal{C}^{\alpha,n} ([0,T];E )$ that is bounded with respect to $\|\cdot \|_{\mathcal{C}^{\alpha}}$, then $\{\mathbf{x}_\lambda\}_{\lambda \in I}$ is Cauchy in the $i^*$-topology if and only if there exists a $x\in C^{\alpha}_0([0,T];T^{(n)}(E))$ such that  $i(\mathbf{x}^{}_\lambda)(t) \wkst x(t)$ for all $t\in [0,T]$. 
\end{remark}

Recall that our motivation for endowing the space $\mathcal{C}^{\alpha,n} ([0,T];E )$ with a `weak-$^*$ like' topology is that we want norm-bounded sets to be compact. This is indeed the case:

\begin{proposition}\label{prop: compactness of rough alpha ball}
    Assume the Setting \ref{assumptions}. Let $0< \alpha \leq 1$, let $n\in \N_{>0}$ and let $R>0$. Then \newline $\BRn{n}$ is compact in the $i^*$-topology. 
\end{proposition}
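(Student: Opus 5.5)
Via the inclusion $i$ of Proposition~\ref{prop: inclusion of alpha rough into alpha holder}, the set $B:=\BRn{n}$ is mapped into the norm-ball of radius $C(R+1)$ in $C_0^{\alpha}([0,T];T^{(n)}(E))$. By Setting~\ref{assumptions} and Theorem~\ref{thm: duality of gen holder space}, this space is the dual of $\textnormal{\AE}_{\alpha}([0,T];T^{(n)}(F))$, where $T^{(n)}(F)=\R\oplus F\oplus\cdots\oplus F^{\otimes n}$ is the predual of $T^{(n)}(E)$. So Banach--Alaoglu makes that ball weak-$^*$ compact. Since the $i^*$ topology on $B$ is the subspace topology, it suffices to show that $i(B)$ is weak-$^*$ closed in this ball. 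A closed subset of a compact set is compact, and compactness transfers back to $B$ because $i$ is a homeomorphism onto its image for the subspace topology.

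For closedness I take a net $\{\mathbf{x}_\lambda\}_{\lambda\in I}\subseteq B$ with $i(\mathbf{x}_\lambda)\wkst x$ in $C_0^{\alpha}([0,T];T^{(n)}(E))$. By Theorem~\ref{thm: duality of gen holder space}, as the net is norm bounded, this means $\mathbf{x}_{\lambda,0,t}\wkst x(t)+\mathbf{1}$ for every $t$, componentwise in each $E^{\otimes k}$. I define $\mathbf{x}_{0,t}:=x(t)+\mathbf{1}$ and $\mathbf{x}_{s,t}:=\mathbf{x}_{0,s}^{-1}\hat\otimes\mathbf{x}_{0,t}$, using Proposition~\ref{prop: group structure}; $\mathbf{x}^{(0)}\equiv 1$ is clear. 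It then remains to verify two things. First, $\mathbf{x}_{\lambda,s,t}\wkst\mathbf{x}_{s,t}$ for all $s,t$. Second, $\|\mathbf{x}^{(k)}_{s,t}\|_k\le R^k|t-s|^{k\alpha}$. The second follows from the first by weak-$^*$ lower semicontinuity of the norm on $E^{\otimes k}$, since $\|\mathbf{x}^{(k)}_{\lambda,s,t}\|_k\le R^k|t-s|^{k\alpha}$. Chen's relation holds by construction, so $\mathbf{x}\in B$ and $i(\mathbf{x})=x$.

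The main obstacle is the first point. The increment $\mathbf{x}_{\lambda,s,t}$ is a polynomial in the components of $\mathbf{x}_{\lambda,0,s}$ and $\mathbf{x}_{\lambda,0,t}$ built from tensor products, and tensor multiplication is not jointly weak-$^*$ continuous. I would avoid the issue by proving the convergence inductively on the level $k$. Chen gives
\[
\mathbf{x}^{(k)}_{\lambda,0,t}=\mathbf{x}^{(k)}_{\lambda,0,s}+\mathbf{x}^{(k)}_{\lambda,s,t}+\sum_{j=1}^{k-1}\mathbf{x}^{(j)}_{\lambda,0,s}\otimes\mathbf{x}^{(k-j)}_{\lambda,s,t},
\]
so it suffices that each cross term converges weak-$^*$ to $\mathbf{x}^{(j)}_{0,s}\otimes\mathbf{x}^{(k-j)}_{s,t}$. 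For a net, bounded by $R$ in $E^{\otimes j}$ (resp.\ $E^{\otimes(k-j)}$), I would test against elementary tensors $f\otimes g$ with $f\in F^{\otimes j}$, $g\in F^{\otimes(k-j)}$. Since the predual norms form an admissible family, these span a dense subspace of $F^{\otimes k}$, and by boundedness and Lemma~\ref{lem: density implies generate predual} it suffices to test against them. The pairing then factorises as $\langle a_\lambda,f\rangle\langle b_\lambda,g\rangle$, and a product of two convergent scalar nets converges. This factorisation requires the identification of $F^{\otimes k}$ with the predual to respect the splitting $F^{\otimes j}\otimes F^{\otimes(k-j)}$ and $E^{\otimes j}\otimes E^{\otimes(k-j)}$. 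I expect this to be the delicate technical point, settled by the crossnorm/duality compatibility in Setting~\ref{assumptions}, possibly via the appendix results on tensor spaces.

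Granting this, induction on $k$ (starting from $\mathbf{x}^{(1)}_{\lambda,s,t}=\mathbf{x}^{(1)}_{\lambda,0,t}-\mathbf{x}^{(1)}_{\lambda,0,s}$) yields weak-$^*$ convergence of $\mathbf{x}^{(k)}_{\lambda,s,t}$. By the same computation in the limit, its limit must equal $\mathbf{x}^{(k)}_{s,t}$. This gives closedness of $i(B)$ and hence the $i^*$-compactness of $B$.
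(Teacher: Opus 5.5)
Your proposal is correct and follows the paper's proof essentially step for step: Banach--Alaoglu on a $C^\alpha$-ball, weak-$^*$ closedness of $i(B)$ via nets, defining $\mathbf{x}_{s,t}=(x(s)+\mathbf{1})^{-1}\hat\otimes(x(t)+\mathbf{1})$, and concluding with lower semicontinuity of the norm. The only difference is cosmetic: you obtain $\mathbf{x}_{\lambda,s,t}\wkst\mathbf{x}_{s,t}$ by induction on the level via Chen's relation, whereas the paper expands the inverse into a universal polynomial in $\mathbf{x}_{0,s},\mathbf{x}_{0,t}$ and applies Lemma~\ref{lem: weak star convergence tensor product} repeatedly; that lemma is exactly the bounded-net factorisation argument you sketch, and the paper likewise treats the compatibility of the pairing with $F^{\otimes j}\otimes F^{\otimes(k-j)}$ as given by Setting~\ref{assumptions}.
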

\begin{proof}
For notational brevity we define $
    B_R(\mathcal{C}^{\alpha,n}(E))=\BRn{n}.$
    
    For this proof we use that closed sets of compact spaces are themselves compact. By the Banach--Alaoglu Theorem, norm closed balls of the form
    \[\left\{ \mathbf{x} \in C_0^{\alpha}([0,T];T^{(n)}(E)) \mid \|\mathbf{x}\|_{{C}^{\alpha}} \leq R'\right\}
    \]are weak-$^*$ compact. Therefore, it suffices to show that $i(B_R(\mathcal{C}^{\alpha,n}(E)))$ is a weak-$^*$ closed subset of such a norm closed $C^\alpha$ ball. The second part of Proposition \ref{prop: inclusion of alpha rough into alpha holder} tells us that indeed there exists $R'$ such that
    \[
    i(B_R(\mathcal{C}^{\alpha,n}(E))) \subseteq \left\{ \mathbf{x} \in C_0^{\alpha}([0,T];T^{(n)}(E)) \mid \|\mathbf{x}\|_{{C}^{\alpha}} \leq R'\right\},
    \]
    so only remains to show that $i(B_R(\mathcal{C}^{\alpha,n}(E))) $ is weak-$^*$ closed. In order to show this, we use that a set is closed if and only if the limit of any converging net in this set remains in this set. Assume that we have a net $\{\mathbf{x}_\lambda\}_{\lambda\in I}$ in $B_R(\mathcal{C}^{\alpha,n}(E))$ and that $i(\mathbf{x}_\lambda) \wkst x\in C_0^{\alpha}([0,T];T^{(n)}(E))$. In order to show that there exists an $\mathbf{x}\in \mathcal{C}^{\alpha,n}([0,T];E)$ such that $x=i(\mathbf{x})$, we define $\mathbf{x}_{s,t}$, for $s,t\in[0,T]$, to be
    \begin{equation}\label{eq: aux weak-$^*$ closedness of unit ball}
    \mathbf{x}_{s,t} \coloneqq (x(s) + \mathbf{1})^{-1} \hat{\otimes} (x(t)+\mathbf{1}).
    \end{equation}
    As this definition automatically guarantees Chen's relation, we only have to show that $\|\mathbf{x}\|_{\mathcal{C}^{\alpha,n}} \leq R$. By Definition \ref{def: homogeneous alpha holder}, this comes down to showing for any $s,t\in [0,T]$ and $1\leq i \leq n$ that\[
    \|\mathbf{x}_{s,t}^{(i)}\| \leq R^i (s-t)^{i\alpha} .
    \] 
    Fix $1<i\leq n$ and $s,t\in [0,T]$. Equation \eqref{eq: aux weak-$^*$ closedness of unit ball} (combined with Equation \eqref{eq: explicit inverse tensor algebra element}) implies that we can write $\mathbf{x}^{(i)}_{s,t}$ as a linear combination
    \[
    \begin{split}
     \mathbf{x}^{(i)}_{s,t} &= \sum_{i_1+ \ldots + i_{k} = i}c_{i_1,\ldots,i_{k}}\mathbf{x}^{(i_1)}_{0,s} \otimes \ldots \otimes \mathbf {x}^{(i_{k-1})}_{0,s} \otimes \mathbf{x}^{(i_{k})}_{0,t}\\&= \sum_{i_1+ \ldots + i_{k} = i}c_{i_1,\ldots,i_{k}}\mathbf{x}^{(i_1)}(s) \otimes \ldots \otimes \mathbf {x}^{(i_{k-1})}(s) \otimes \mathbf{x}^{(i_{k})}(t),
     \end{split}
    \]
    for suitably chosen constants $c_{i_1,\ldots,i_{k}}$ that are independent of $\mathbf{x}$. For $(\mathbf{x}_\lambda)_{s,t}^{(i)}$ we have an analogous expression. We recall (see Remark \ref{rem: weak star topology on alpha rough}) that weak-$^*$ convergence in $C_0^\alpha([0,T];T^{(n)}(E))$ means that $(\mathbf{x}^{(j)}_\lambda)(u) \wkst \mathbf{x}^{(j)}(u)$ for any $u\in [0,T]$ and $0\leq j \leq n$. This then implies that by repeated use of Lemma \ref{lem: weak star convergence tensor product} (where by Proposition \ref{prop: inclusion of alpha rough into alpha holder} the net is norm bounded)
    \[
    i(\mathbf{x}_\lambda)^{(i_1)}(s) \otimes \ldots \otimes i(\mathbf {x}_\lambda)^{(i_{k-1})}(s) \otimes i(\mathbf{x}_\lambda)^{(i_{k})}(t) \wkst   x^{(i_1)}(s) \otimes \ldots \otimes x^{(i_{k-1})}(s) \otimes x^{(i_{k})}(t),
    \]
    and so
    \begin{equation}\label{eq: aux alpha rough compactness}
    (\mathbf{x}_\lambda)_{s,t}^{(i)} \wkst \mathbf{x}_{s,t}^{(i)}.
    \end{equation}
    As the norm is lower semicontinuous under the weak-$^*$ convergence (see Lemma \ref{lem: lower semicontinouity of norm}) and as $\|\mathbf{x}_\lambda\|_{\mathcal{C}^{\alpha,n}} \leq R$, we find that
    \[
    \|\mathbf{x}^{(i)}_{s,t}\|  \leq \liminf_{\lambda \in I} \|(\mathbf{x}_\lambda)^{(i)}_{s,t}\| \leq R^i(t-s)^{i\alpha},   
    \]
    which shows that $\mathbf{x} \in B_R(\mathcal{C}^{\alpha,n}(E))$, concluding the proof.
    
\end{proof}

Proposition \ref{prop: compactness of rough alpha ball} states that the set $\BRn{n}$ is compact. We now prove that this also holds for the weakly geometric version  $\BRgn{n}$ (see Definition~\ref{def: weakly geometric rough paths}). 
\begin{proposition}\label{prop: compactness of rough alpha ball weak}
Assume Setting \ref{assumptions}, let $0< \alpha \leq 1$, $n\in \N_{>0}$, $R\in (0,\infty)$. Then $\BRgn{n}$ is compact in the $i^*$ topology. 
\end{proposition}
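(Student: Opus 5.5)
Since $\BRgn{n} = \BRn{n}\cap \mathcal{C}^{\alpha,n}_g([0,T];E)$ and Proposition~\ref{prop: compactness of rough alpha ball} shows $\BRn{n}$ is $i^*$-compact, it suffices to prove that $\BRgn{n}$ is $i^*$-closed in $\BRn{n}$. Closed subsets of compact spaces are compact, so this gives the claim. We argue with nets. Let $\{\mathbf{x}_\lambda\}_{\lambda\in I}\subseteq \BRgn{n}$ converge in the $i^*$ topology to some $\mathbf{x}\in\BRn{n}$. We must show $\mathbf{x}_{s,t}\in G^{(n)}(E)$ for all $s,t\in[0,T]$.

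The first step is to transfer the convergence to the increments $\mathbf{x}_{s,t}$. The net is $\|\cdot\|_{\mathcal{C}^\alpha}$-bounded, so Proposition~\ref{prop: inclusion of alpha rough into alpha holder} bounds $i(\mathbf{x}_\lambda)$ in $C^\alpha_0$. Theorem~\ref{thm: duality of gen holder space} then gives $(\mathbf{x}_\lambda)^{(j)}_{0,u}\wkst \mathbf{x}^{(j)}_{0,u}$ for every $u$ and $j$. Exactly as in the proof of Proposition~\ref{prop: compactness of rough alpha ball}, writing $\mathbf{x}_{s,t}=\mathbf{x}_{0,s}^{-1}\hat\otimes\mathbf{x}_{0,t}$ and repeatedly applying Lemma~\ref{lem: weak star convergence tensor product} yields $(\mathbf{x}_\lambda)^{(j)}_{s,t}\wkst \mathbf{x}^{(j)}_{s,t}$ in $E^{\otimes j}$ for all $s,t$ and $0\le j\le n$.

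The second step, and the main obstacle, is to show that $G^{(n)}(E)$ is weak-$^*$ closed on bounded sets. The cleanest route would be through the weak characterisation: if $G^{(n)}(E)=G^{(n)}_w(E)$, then the identity $\inprod{\mathbf{a}}{\mathbf{y}}\inprod{\mathbf{a}}{\mathbf{y}'}=\inprod{\mathbf{a}}{\mathbf{y}\shuffle\mathbf{y}'}$, tested against $\mathbf{y},\mathbf{y}'$ from the predual side $T_a(F)$, passes to weak-$^*$ limits. However, Proposition~\ref{prop: equality of group like definitions} requires the approximation property and strongly uniform norms, which Setting~\ref{assumptions} does not provide. I would therefore try instead to work directly with the logarithm. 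Since $\log$ on $T_1^{(n)}(E)$ is a finite polynomial in the components, Lemma~\ref{lem: weak star convergence tensor product} again gives $\log((\mathbf{x}_\lambda)_{s,t})^{(m)}\wkst \log(\mathbf{x}_{s,t})^{(m)}$ for each $m\le n$. It then remains to show that each $\mathcal{M}^{(m)}(E)$ is weak-$^*$ closed in $E^{\otimes m}$, or at least sequentially or boundedly closed, which suffices here because the nets are bounded. This is the delicate point, because a norm-closed subspace need not be weak-$^*$ closed. I would try to realise $\mathcal{M}^{(m)}(E)$ as the annihilator of a subset of $F^{\otimes m}$. For $m\ge2$ the natural candidate is the set of elements of $F^{\otimes m}$ annihilated by the Dynkin projection. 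Concretely, I would use that $\mathcal{M}^{(m)}(E)$ is the image, and also the fixed-point set, of the bounded map $\tfrac1m D_m$, where $D_m(x_1\otimes\cdots\otimes x_m)=[x_1,[\ldots,x_m]]$. This characterisation follows from the Dynkin–Specht–Wever lemma applied on algebraic tensors and extended by continuity. Symmetric strong crossnorms make the permutations $P_\sigma$, and hence $D_m$, bounded. If $D_m$ is the adjoint of the analogous operator on $F^{\otimes m}$, which I would check on simple tensors using the pairing of Setting~\ref{assumptions}, then $D_m$ is weak-$^*$ continuous. Its fixed-point set $\ker(\tfrac1m D_m-\id)$ is then weak-$^*$ closed.

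Granting this, $\log(\mathbf{x}_{s,t})^{(m)}\in\mathcal{M}^{(m)}(E)$ for all $m\le n$, so $\mathbf{x}_{s,t}\in\exp(\mathrm{Lie}_n(E))=G^{(n)}(E)$. Hence $\mathbf{x}\in\BRgn{n}$, which proves that $\BRgn{n}$ is $i^*$-closed and therefore compact.
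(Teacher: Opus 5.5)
Your proof is correct, but at the decisive step it takes a different route from the paper. The route you set aside is in fact the one the paper uses.

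In the proof of Proposition~\ref{prop: equality of group like definitions}, the approximation property and strong uniformity are used only through Corollary~\ref{cor: point separating small}. Their sole role is to ensure that the functionals $\mathbf{y}\otimes\mathbf{y}'$ separate points of $E^{\otimes(m+l)}$. Under Setting~\ref{assumptions} this comes for free: $F^{\otimes(m+l)}$ is a predual, hence separating, and $F^{\otimes_a(m+l)}$ is norm dense in it, so Lemma~\ref{lem : point separating dense is enough} applies. Consequently $G^{(n)}(E)$ is characterised by the shuffle identity tested against $T^{(n)}_a(F)$. The paper then simply passes that identity to the limit, using the weak-$^*$ convergence of the increments that you establish in your first step.

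Your argument avoids the shuffle characterisation, and with it Theorem~\ref{thm: group like iff xi geng group like}, altogether. You push the convergence through the polynomial map $\log$. You then show that each $\mathcal{M}^{(m)}(E)$ is weak-$^*$ closed, as the fixed-point set of the projection $\tfrac1m\bar D_m$. This is sound and needs only the symmetry of the norms on $E^{\otimes m}$ and $F^{\otimes m}$ together with the natural pairing. It works directly with $\exp(\mathrm{Lie}_n(E))$ at the truncated level, and it yields the independently useful fact that $\mathcal{M}^{(m)}(E)$ is weak-$^*$ closed, not merely closed under bounded limits. The price is the Dynkin--Specht--Wever lemma. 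The paper's proof is shorter given the machinery already in place, and it produces $G^{(n)}(E)=G^{(n)}_w(E)$ under Setting~\ref{assumptions} as a by-product.

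One detail needs adjusting when you check the adjoint. On simple tensors $\inprod{P_\sigma x}{y}=\inprod{x}{P_{\sigma^{-1}}y}$. Hence $\bar D_m=\sum_\sigma c_\sigma P_\sigma$, with $c_\sigma\in\{-1,0,1\}$ the coefficients of the expanded bracket, is the adjoint of $\sum_\sigma c_\sigma P_{\sigma^{-1}}$ on $F^{\otimes m}$. For $m\geq 3$ this is not the Dynkin operator on $F^{\otimes m}$. It is still bounded by the symmetry of $\|\cdot\|^*_m$, so the weak-$^*$ continuity you need still holds.
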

\begin{proof} 
For notational brevity we define $B_R(\mathcal{C}^{\alpha,n}_g(E)) := \BRgn{n}$; the set $B_R(\mathcal{C}^{\alpha,n}(E))$ is defined analogously. As 
     $B_R(\mathcal{C}^{\alpha,n}_g(E)) \subseteq B_R(\mathcal{C}^{\alpha,n}(E)),
     $ we can apply Proposition \ref{prop: compactness of rough alpha ball} so that we only have to show that the set $B_R(\mathcal{C}^{\alpha,n}_g(E)) $ is $i^*$-closed. For this, it suffices to show that whenever we have a net $\{\mathbf{x}_\lambda\}_{\lambda \in I}$ in this space that converges to $\mathbf x$, this $\mathbf x$ must also be in $B_R(\mathcal{C}^{\alpha,n}_g(E)) $.

    Take such a net converging to $\mathbf x$. Proposition \ref{prop: compactness of rough alpha ball} implies that $\mathbf x$ is in $B_R(\mathcal{C}^{\alpha,n}_g(E)) $ and so we only have to show that $\mathbf{x}$ takes values in $G^{(n)}(E)$ (see Definition~\ref{def: group like elements strong}).

    For this, we note that Lemma \ref{cor: point separating small} is also satisfied in Setting \ref{assumptions}, as a predual separates points on the dual, and as the algebraic tensor space forms a dense subset of the predual. This implies that the equality of Proposition \ref{prop: equality of group like definitions} is still valid when the uniform crossnorm and approximation property assumptions are replaced by the assumptions of Setting \ref{assumptions}. As a consequence,  $\mathbf{x}$ takes values in $G^{(n)}(E)$ when $\mathbf{x}_{s,t}$ satisfies the shuffle product for any linear functionals $y,y' \in T_a^{(n)}(F)$ and $s,t \in [0,T]$. In other words, we need
    \[
    \inprod{\mathbf{x}_{s,t}}{ y \shuffle y'} = \inprod{\mathbf{x}_{s,t}}{ y}\inprod{\mathbf{x}_{s,t}}{ \shuffle y'} .
    \]
    However, by the same arguments as in the proof of Proposition~\ref{prop: compactness of rough alpha ball}, we obtain that Equation \eqref{eq: aux alpha rough compactness} holds, and so
    \[
    \inprod{\mathbf{x}_{s,t}}{ y \shuffle y'} = \lim_{\lambda \in I}\inprod{(\mathbf{x}_\lambda)_{s,t}}{ y \shuffle y'} = \lim_{\lambda\in I}\inprod{(\mathbf{x}_\lambda)_{s,t}}{ y}\inprod{(\mathbf{x}_\lambda)_{s,t}}{  y'} =\inprod{\mathbf{x}_{s,t}}{ y}\inprod{\mathbf{x}_{s,t}}{  y'},
    \]
    where for the second equality we used that $\mathbf{x}_\lambda$ is weakly geometric. This shows that $\mathbf{x}_{s,t}$ is weakly geometric as well, and so the proof is finished. 
\end{proof}

\section{\texorpdfstring{$i^*$}{Weak-star} continuity of Lyons lift}\label{sec: i star continuity lyons lift}
A key ingredient of the proof of Theorem \ref{thm: weak star version uat} is that for all $l\in T_a(F)$, the mapping
\[
\mathbf{x} \mapsto \inprod{S(\mathbf{x})_{0,T}}{l},
\]
is $i^*$ continuous when restricted to sets that are bounded with respect to the ${\mathcal{C}^{\alpha}}$ norm. Here $i^*$ is the topology from Definition \ref{def: i* star topology}; note also that we continue to assume Setting~\ref{assumptions} throughout this section. While it is well-known that the Lyons lift is continuous with respect to the $\varrho^{\text{hom}}_{\alpha}$ metric (see \cite[Theorem 3.10]{lyons_differential_2007} or Section~\ref{sec: Lyons lift}), continuity with respect to the $i^*$ topology is not immediately clear and the goal of this section is to establish this. More specifically, we will prove the slightly stronger statement that that the Lyons lift $S^n$, $n\in \N_{>0}$, is $i^*$ to $i^*$ continuous on sets that are bounded with respect to the ${\mathcal{C}^{\alpha}}$ norm. The reason we prove a stronger statement is that the Lyons lift is defined via an induction step that involves the whole path, and we need to show that continuity is preserved in the induction step. 

\begin{proposition}[$i^*$ to $i^*$ continuity of Lyons lift]\label{prop: continuity of Lyons lift} Assume Setting~\ref{assumptions} and let $\alpha > 0$, $n\in \N_{>0}$ with $n\geq \lfloor\nicefrac{1}{\alpha}\rfloor$, $R>0$, and let $S^n\colon \alpharough  \to \mathcal C^{\alpha,n} ([0,T];E )$ be the Lyons lift (see Theorem \ref{thm: Lyons lift}). Then, the map $S^n$ restricted to $\BR$ is $i^*$ to $i^*$ continuous. In particular, maps of the form $\mathbf{x} \mapsto \inprod{\mathbf{x}_{0,T}}{y}\coloneqq \inprod{S^n(\mathbf{x})_{0,T}}{y}$, where $y\in T^{(n)}(F)$, are $i^*$ continuous when restricted to $\BR$. 
\end{proposition}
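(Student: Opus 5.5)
Throughout, $R>0$ is fixed and I work on $B:=\BR$. By Theorem/Definition~\ref{thm: Lyons lift}, $S^k(B)$ is contained in a $\|\cdot\|_{\mathcal{C}^\alpha}$-ball of radius $CR$ for every $k$. By Theorem~\ref{thm: duality of gen holder space} together with Proposition~\ref{prop: inclusion of alpha rough into alpha holder} (cf.\ Remark~\ref{rem: weak star topology on alpha rough}), convergence of a net in the $i^*$ topology on such norm-bounded sets is exactly pointwise weak-$^*$ convergence $\mathbf{x}^{(j)}_{\lambda;0,t}\wkst \mathbf{x}^{(j)}_{0,t}$ for all $t$ and all levels $j$. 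Chen's relation, the explicit inverse \eqref{eq: explicit inverse tensor algebra element}, and Lemma~\ref{lem: weak star convergence tensor product} show that this is equivalent to $\mathbf{x}_{\lambda;s,t}\wkst\mathbf{x}_{s,t}$ for all $s,t$, exactly as in the proof of Proposition~\ref{prop: compactness of rough alpha ball}.

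The proof goes by induction on $n$. The base case $n=\lfloor 1/\alpha\rfloor$ is trivial, since $S^n=\id$. For the induction step, assume $S^n$ is $i^*$-to-$i^*$ continuous on $B$. Let $\mathbf{x}_\lambda\to\mathbf{x}$ in $B$, and write $\mathbf{X}_\lambda=S^{n+1}(\mathbf{x}_\lambda)$ and $\mathbf{X}=S^{n+1}(\mathbf{x})$. Levels $\le n$ of $\mathbf{X}_\lambda$ converge by hypothesis. It remains to show $\langle\mathbf{X}^{(n+1)}_{\lambda;s,t},y\rangle\to\langle\mathbf{X}^{(n+1)}_{s,t},y\rangle$ for $y$ in the predual $F^{\otimes(n+1)}$. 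The bounded net already lies in a weak-$^*$ compact set, so testing against a dense set of $y$ suffices, and I may take $y\in F^{\otimes_a(n+1)}$, e.g.\ $y$ a simple tensor.

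The tool for the top level is the uniform sewing estimate from Lyons' construction. For any partition $D$ of $[s,t]$,
\[
\bigl\|\mathbf{X}^{(n+1)}_{s,t}-(\tilde{\mathbf{x}}^{D}_{s,t})^{(n+1)}\bigr\|_{n+1}\le C(R)\sum_{[u,v]\in D}|v-u|^{(n+1)\alpha},
\]
with $C(R)$ depending only on the $\mathcal{C}^\alpha$ bound. Since $(n+1)\alpha>1$, the right-hand side tends to $0$ as $|D|\to0$, uniformly over the net; this is the content of the proof of \cite[Theorem 3.7]{lyons_differential_2007} with the control $\omega(s,t)=(CR)^{1/\alpha}(t-s)$. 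For a fixed partition $D$, the level-$(n+1)$ part of $\tilde{\mathbf{x}}^D_{s,t}=\tilde{\mathbf{x}}_{s,t_1}\hat\otimes\cdots$ is a finite sum of tensor products of terms $S^n(\mathbf{x})^{(j)}_{u,v}$ with $j\le n$. By the induction hypothesis and repeated use of Lemma~\ref{lem: weak star convergence tensor product}, this part converges weak-$^*$ along the net. A standard $\epsilon/3$ argument then completes the step: choose $D$ fine enough that both sewing errors are below $\epsilon$ uniformly in $\lambda$, then choose $\lambda$ large.

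The main obstacle is the product step. Lemma~\ref{lem: weak star convergence tensor product} is needed for norm-bounded nets of tensors, and I must make sure it applies in Setting~\ref{assumptions}, where $E^{\otimes k}$ has predual $F^{\otimes k}$. The difficulty is that a product of two weak-$^*$ convergent nets need not converge in general; one factor has to be handled by density. The plan is as follows. For simple tensors $y=y_1\otimes y_2$, the pairing factors as $\langle a\otimes b,y_1\otimes y_2\rangle=\langle a,y_1\rangle\langle b,y_2\rangle$, which converges because each factor does. General $y$ is handled by density of $F^{\otimes_a}$ in $F^{\otimes}$ combined with uniform norm bounds. This reduction to finite-dimensional testing is exactly where the infinite-dimensional setting differs from \cite{cuchiero_global_2023}, since no compact embedding is available.

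The ``in particular'' statement follows immediately: take $m=m^{y}_{T,0}$, where $y\in T^{(n)}(F)$ has zero scalar component, and add the constant $y^{(0)}$ separately.
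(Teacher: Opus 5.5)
Your proof is correct, and it takes a genuinely different route from the paper's.

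\textbf{The paper's route.} The paper never works with the sewing structure in infinite dimensions. Lemma~\ref{lem:reduction of i* continuity} reduces the induction step to the scalar maps $\mathbf{x}\mapsto\langle S^{n+1}(\mathbf{x}),m^{y_1\otimes\cdots\otimes y_{n+1}}_{0,s}\rangle$. Lemma~\ref{lem:reduction to finite dimensions} then pushes these through an $i^*$-continuous projection $\pi^{\mathbf{y}}$ onto $\R^{n+1}$-valued rough paths that commutes with the Lyons lift. Finally, the case $E=\R^d$ is handled by showing that on $\mathcal{C}^\alpha$-balls the $i^*$ topology coincides with the $\varrho^{\text{hom}}_{\alpha'}$ topology for $\alpha'<\alpha$. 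This uses equicontinuity to upgrade pointwise to uniform convergence, then interpolation, with metrizability coming from a separable predual. The known $\varrho^{\text{hom}}_{\alpha'}$-continuity of the lift then applies.

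\textbf{Your route.} You show, in any dimension, that the new top level is a norm limit, uniform over the ball, of partition products. Each partition product is weak-$^*$ continuous in the lower levels by Lemma~\ref{lem: weak star convergence tensor product}, so continuity passes to the limit.

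\textbf{What each buys.} Your argument is shorter, self-contained and dimension-free. It needs only Chen's relation, the a priori bound $\|S^{n+1}(\mathbf{x})\|_{\mathcal{C}^\alpha}\le CR$, and $(n+1)\alpha>1$, and it makes transparent why the lift is $i^*$-continuous. The paper's route instead recycles the finite-dimensional theory, at the price of the projection construction and the compactness/metrizability argument.

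\textbf{Simplifications.} Your key estimate does not need an appeal to the proof of Lyons' extension theorem. Since $\mathbf{X}=S^{n+1}(\mathbf{x})$ is multiplicative and $\tilde{\mathbf{x}}=(S^n(\mathbf{x}),0)$ agrees with $\mathbf{X}$ below level $n+1$, expanding the products gives the exact identity
\[
\mathbf{X}^{(n+1)}_{s,t}-(\tilde{\mathbf{x}}^{D}_{s,t})^{(n+1)}=\sum_{[u,v]\in D}\mathbf{X}^{(n+1)}_{u,v},
\]
whose norm is at most $(CR)^{n+1}|D|^{(n+1)\alpha-1}(t-s)$. Because this error is controlled in norm, your $\epsilon/3$ argument works directly for every $y\in F^{\otimes(n+1)}$, so the reduction to simple tensors is unnecessary. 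Your ``main obstacle'' paragraph is just Lemma~\ref{lem: weak star convergence tensor product}, which you can cite.
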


A proof of Proposition~\ref{prop: continuity of Lyons lift} for the case that $E=\R^d$ ($d\in \N_{>0}$) can be found in~\cite{cuchiero_global_2023}, where they combine \cite[Theorem A.4]{cuchiero_global_2023} with \cite[Theorem 9.10]{friz_multidimensional_2010}. As it is not clear how to adapt that proof to the setting of $E$-valued paths, we instead demonstrate that it is possible to reduce from $E$-valued paths to the $\R^d$-valued setting. Key ingredients are Lemmas~\ref{lem:reduction of i* continuity} and~\ref{lem:reduction to finite dimensions} below. Moreover, for the readers' convenience, we also provide a streamlined proof of Proposition~\ref{prop: continuity of Lyons lift} for the case $E=\R^d$. I.e., we provide an adaptation of the proof in~\cite{cuchiero_global_2023} that is modified for our purposes (see p.\pageref{proof: fin dim continuity}). The proof of Proposition~\ref{prop: continuity of Lyons lift} can be found on page \pageref{proof: continuity of the Lyons lift}.

First of all, for the sake of notational brevity, we define the set
\begin{equation}
B_R(\mathcal{C}^{\alpha}(E))
:= \left\{ \mathbf{x} \in \mathcal{C}^{\alpha} ([0,T];E ) \mid \|\mathbf{x}\|_{\mathcal{C}^{\alpha}} \leq R\right\},
\end{equation}
where $R>0$. 
The sets $B_R(\mathcal{C}^{\alpha,n}(E))$, $B_R(\mathcal{C}^{\alpha}(\R^{d}))$, and $B_R(\mathcal{C}^{\alpha,n}(\R^d))$ , with $R>0$, $n\in \N \cap [\lfloor \nicefrac{1}{\alpha}\rfloor,\infty)$, and $d\in \N_{>0}$, are defined analogously.

\begin{lemma}\label{lem:reduction of i* continuity} Assume Setting~\ref{assumptions}, let $\alpha>0$ and let $n\in \N \cap [\lfloor \nicefrac{1}{\alpha}\rfloor, \infty)$. Suppose $S^{(n)}|_{B_{R}(\mathcal{C}^{\alpha}(E))}$ is $i^*$ to $i^*$ continuous for all $R>0$, and suppose
\begin{equation}
            B_R(\mathcal{C}^{\alpha,n}(E)) \ni \mathbf{x} \mapsto 
            \langle S^{n+1} (\mathbf{x}) , m_{0,s}^{y_1\otimes \ldots \otimes y_{n+1}} \rangle
\end{equation}
is $i^*$ continuous for all $y_1,\ldots,y_{n+1}\in F$, $s\in [0,T]$ and $R>0$, $m_{0,s}^{y_1\otimes \ldots \otimes y_{n+1}}$ is a molecule of the form~\eqref{eq: explicit molecule definition}, and the pairing $\langle \cdot , \cdot \rangle$ is given by~\eqref{eq: def pairing i}.
Then $S^{(n+1)}|_{B_R(\mathcal{C}^{\alpha}(E))} $ is $i^*$ to $i^*$ continuous for all $R>0$.
    \end{lemma}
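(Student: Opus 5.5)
Since the Lyons lift is continuous for $\varrho^{\text{hom}}_\alpha$ and satisfies $\|S^{n+1}(\mathbf{x})\|_{\mathcal{C}^\alpha}\le C\|\mathbf{x}\|_{\mathcal{C}^\alpha}$ (Theorem/Definition~\ref{thm: Lyons lift}), the image $S^{n+1}(B_R(\mathcal{C}^\alpha(E)))$ lies in $B_{CR}(\mathcal{C}^{\alpha,n+1}(E))$. Proposition~\ref{prop: inclusion of alpha rough into alpha holder} then shows that $i\circ S^{n+1}$ maps $B_R(\mathcal{C}^\alpha(E))$ into a norm-bounded ball of $C_0^\alpha([0,T];T^{(n+1)}(E))$. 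I will therefore work with nets. Take a net $\mathbf{x}_\lambda\to\mathbf{x}$ in the $i^*$ topology inside $B_R(\mathcal{C}^\alpha(E))$. By Theorem~\ref{thm: duality of gen holder space}, on norm-bounded sets weak-$^*$ convergence is equivalent to convergence against elementary molecules $m^y_{0,s}$, with $y$ in a norm-dense subset of the predual $T^{(n+1)}(F)=\R\oplus F\oplus\cdots\oplus F^{\otimes(n+1)}$. This reduction uses Lemma~\ref{lem: density implies generate predual}, together with the fact that the molecules built from such $y$ span a dense subspace of the Arens--Eells space. So it suffices to show that $\langle S^{n+1}(\mathbf{x}_\lambda),m^y_{0,s}\rangle\to\langle S^{n+1}(\mathbf{x}),m^y_{0,s}\rangle$ for $y$ in a dense set.

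I split $y$ by degree. If $y$ has components only in degrees $0,\dots,n$, the pairing only involves $\pi_n S^{n+1}(\mathbf{x})=S^n(\mathbf{x})$. Convergence then follows from the assumed $i^*$-to-$i^*$ continuity of $S^n$ on $B_R(\mathcal{C}^\alpha(E))$: the degree-$\le n$ part of the molecule defines an element of the predual of $C_0^\alpha([0,T];T^{(n)}(E))$. For the degree-$(n+1)$ component, Proposition~\ref{prop: density in tensor space higher orders} shows that $F^{\otimes_a(n+1)}$ is dense in $F^{\otimes(n+1)}$, so by linearity it suffices to treat simple tensors $y=y_1\otimes\cdots\otimes y_{n+1}$.

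For these simple tensors I use the second hypothesis. Write $\langle S^{n+1}(\mathbf{x}_\lambda),m^{y}_{0,s}\rangle$ as the composition of $\mathbf{x}\mapsto S^n(\mathbf{x})$ (from $B_R(\mathcal{C}^\alpha(E))$ into $B_{CR}(\mathcal{C}^{\alpha,n}(E))$) with $\mathbf{z}\mapsto\langle S^{n+1}(\mathbf{z}),m^y_{0,s}\rangle$ on $B_{CR}(\mathcal{C}^{\alpha,n}(E))$. Here I use Remark~\ref{rem: lyons lift same notation}, which gives $S^{n+1}(S^n(\mathbf{x}))=S^{n+1}(\mathbf{x})$. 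The first map is $i^*$ continuous by hypothesis, and the second by the other hypothesis with radius $CR$, so the composition is continuous.

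The main obstacle is the density and boundedness bookkeeping: I have to make sure that testing only against elementary molecules with $y$ from a dense algebraic set really suffices, and that the norm bound on the image is uniform in $\lambda$. Uniform boundedness is exactly what makes Lemma~\ref{lem: density implies generate predual} applicable. A subtle point is that the pairing $\langle\mathbf{x},m^y_{0,s}\rangle$ involves $\mathbf{x}_{0,s}-\mathbf{1}$. For degree $\ge1$ this makes no difference, and the degree-$0$ part is identically zero, so it causes no trouble.
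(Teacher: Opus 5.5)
Your proposal is correct and follows essentially the same route as the paper. You reduce, via norm-boundedness of the image and Lemma~\ref{lem: density implies generate predual}, to pairings with elementary molecules $m^y_{0,s}$, split $y$ into its degree-$\le n$ and degree-$(n+1)$ parts, reduce the latter to simple tensors by density, and compose with $S^n$ using $S^{n+1}\circ S^n = S^{n+1}$. The paper instead first reduces to continuity of $S^{n+1}$ on $B_{R'}(\mathcal{C}^{\alpha,n}(E))$, so its degree-$\le n$ part is trivially continuous, whereas you use the assumed continuity of $S^n$ there and compose only for the top degree — a purely cosmetic difference.
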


The proof of this lemma is postponed to the end of this section.

\begin{lemma}\label{lem:reduction to finite dimensions} Assume Setting~\ref{assumptions},
let $\alpha>0$, $R>0$, $n\in \N \cap [\lfloor \nicefrac{1}{\alpha}\rfloor, \infty)$, let $y_1,\ldots, y_{n+1}\in F$ and set $\mathbf{y} = y_1 \otimes \ldots\otimes y_{n+1}$. Then there exists an $e\in \R^{n+1}$ and an $i^*$ to $i^*$ continuous mapping
    \[
    \pi^{\mathbf{y}}\colon \mathcal{C}^{\alpha,n}([0,T];E) \to \mathcal{C}^{\alpha,n}([0,T];\R^{n+1})
    \]
    such that 
    \begin{equation}\label{eq: projection aux in prove i* continuity  of lyons lfit}
     \inprod{
     S^{n+1}(\mathbf{x}) 
     }{
        m_{0,t}^{\mathbf{y}}
    } =\inprod{S^{n+1}(\pi^{\mathbf{y}}(\mathbf{x})) }{m_{0,t}^e}
    \end{equation}
for all $t\in [0,T]$ and all $\mathbf{x}\in \mathcal{C}^{\alpha,n}([0,T];E)$ (note that the second instance of $S^{n+1}$ above denotes the Lyons lift of $\R^{n+1}$-valued paths).
Moreover, $\pi^{\mathbf{y}}$ maps norm-bounded sets in $\mathcal{C}^{\alpha,n}([0,T];E)$ to norm-bounded sets in $\mathcal{C}^{\alpha,n}([0,T];\R^{n+1})$.
\end{lemma}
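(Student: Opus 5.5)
The plan is to push $\mathbf{x}$ forward along the bounded linear map $P\colon E\to\R^{n+1}$, $P(x) = (\inprod{x}{y_1},\ldots,\inprod{x}{y_{n+1}})$, where $y_j\in F$ acts on $E=F^*$. Let $P^{\otimes k}\colon E^{\otimes k}\to(\R^{n+1})^{\otimes k}$ be the induced tensor maps; they are bounded because the crossnorms on $(\R^{n+1})^{\otimes k}$ are equivalent to any other norm (finite dimensions). Their boundedness from $E^{\otimes k}$ should follow from $F^{\otimes k}$ being the predual: each coordinate $\inprod{\cdot}{y_{j_1}\otimes\cdots\otimes y_{j_k}}$ is a bounded functional on $E^{\otimes k}$. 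Set $\hat P:=\bigoplus_{k\le n}P^{\otimes k}\colon T^{(n)}(E)\to T^{(n)}(\R^{n+1})$ and define
\[
\pi^{\mathbf{y}}(\mathbf{x})_{s,t}:=\hat P(\mathbf{x}_{s,t}).
\]
Finally take $e:=e_1\otimes\cdots\otimes e_{n+1}\in(\R^{n+1})^{\otimes(n+1)}$, the standard basis tensor. Strictly, $e$ lives in the tensor power rather than in $\R^{n+1}$, as the statement literally says, but this is clearly what is intended.

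\textbf{Step 1: well-definedness and boundedness.} $\hat P$ is an algebra homomorphism for $\hat\otimes$, since $P^{\otimes(a+b)}(u\otimes v)=P^{\otimes a}u\otimes P^{\otimes b}v$ on simple tensors and hence by density and continuity. So Chen's relation is preserved. Moreover $\|P^{\otimes k}\mathbf{x}^{(k)}_{s,t}\|\le C_k\|\mathbf{x}^{(k)}_{s,t}\|_k$, so $\|\pi^{\mathbf{y}}(\mathbf{x})\|_{\mathcal{C}^\alpha}\le C\|\mathbf{x}\|_{\mathcal{C}^\alpha}$. This gives the norm-boundedness claim.

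\textbf{Step 2: commutation with the Lyons lift.} The key claim is
\[
S^{n+1}(\pi^{\mathbf{y}}\mathbf{x})=\hat P_{n+1}(S^{n+1}\mathbf{x}),
\]
with $\hat P_{n+1}$ the analogous map on $T^{(n+1)}$. I would check this via the construction sketched after Theorem/Definition~\ref{thm: Lyons lift}. Because $\hat P_{n+1}$ is a continuous homomorphism, it maps the partition products $\tilde{\mathbf{x}}^D_{s,t}$ to the corresponding products for $\pi^{\mathbf{y}}\mathbf{x}$, and it commutes with the limit $|D|\to0$. Alternatively one can invoke uniqueness: $\hat P_{n+1}S^{n+1}\mathbf{x}$ is an $\alpha$-Hölder multiplicative functional extending $\pi^{\mathbf{y}}\mathbf{x}$. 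Then at level $n+1$ we pair against $e$. We have $\inprod{P^{\otimes(n+1)}z}{e}=\inprod{z}{y_1\otimes\cdots\otimes y_{n+1}}$, checked on simple tensors and extended by density and continuity. Applying this to $z=S^{n+1}(\mathbf{x})^{(n+1)}_{0,t}$ gives \eqref{eq: projection aux in prove i* continuity  of lyons lfit}. The $-\mathbf{1}$ shift in $i$ plays no role, since $e$ has degree $n+1\ge1$.

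\textbf{Step 3: $i^*$ to $i^*$ continuity.} This is the step needing most care, since nets are involved. $\pi^{\mathbf{y}}$ is the restriction of the bounded linear map
\[
\Pi\colon C_0^\alpha([0,T];T^{(n)}(E))\to C_0^\alpha([0,T];T^{(n)}(\R^{n+1})),\qquad \Pi(x)(t)=\hat P(x(t)),
\]
because $\hat P\mathbf{1}=\mathbf{1}$ gives $i(\pi^{\mathbf{y}}\mathbf{x})=\Pi(i(\mathbf{x}))$. It suffices to show $\Pi$ is weak-$^*$ to weak-$^*$ continuous, i.e.\ that it is the adjoint of a bounded map between the Arens--Eells preduals. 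Define this map on molecules by $m\mapsto\hat P_*\circ m$, where $\hat P_*\colon T^{(n)}(\R^{n+1})^*\to T^{(n)}(F)$ is the preadjoint sending $e_{j_1}\otimes\cdots\otimes e_{j_k}$ to $y_{j_1}\otimes\cdots\otimes y_{j_k}$. It maps elementary molecules to scalar multiples of elementary molecules with the same $|t-s|^\alpha$ weight, so it is bounded in $\|\cdot\|_{\textnormal{\AE},\alpha}$ and extends to the completion. The identity $\inprod{\Pi x}{m}=\inprod{x}{\hat P_* m}$ on molecules then gives weak-$^*$ continuity, with no boundedness needed.
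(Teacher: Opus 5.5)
Your proof is correct. Its construction and Lyons-lift step match the paper's, but the $i^*$-continuity step takes a different route.

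\textbf{What coincides with the paper.} Your $\hat P$ is exactly the paper's $\pi^{\mathbf{y}}$, which is defined coordinatewise by $\inprod{\pi^{\mathbf{y}}(\mathbf{x})}{e_{i_1}\otimes\cdots\otimes e_{i_j}}=\inprod{\mathbf{x}}{y_{i_1}\otimes\cdots\otimes y_{i_j}}$. Your $e=e_1\otimes\cdots\otimes e_{n+1}$ is the paper's choice too. The paper also compares the Lyons lifts by pushing the partition products through the projection and swapping the norm-continuous pairing with the limit $|D|\to 0$. Two minor differences: you obtain Chen's relation from $\hat P$ being a $\hat\otimes$-homomorphism rather than by explicit coordinate computation, and you prove the full identity $\hat P_{n+1}S^{n+1}(\mathbf{x})=S^{n+1}(\pi^{\mathbf{y}}\mathbf{x})$ rather than only its pairing with $e$.

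\textbf{Where you diverge.} The paper reduces $i^*$-continuity, "as explained in the proof of Lemma~\ref{lem:reduction of i* continuity}", to the coordinate maps $\mathbf{x}\mapsto\inprod{\pi^{\mathbf{y}}(\mathbf{x})_{s,t}}{e_{i_1}\otimes\cdots\otimes e_{i_j}}$, which are continuous essentially by definition. That reduction passes from molecules to all of $\textnormal{\AE}_\alpha$ via Lemma~\ref{lem: density implies generate predual}, which is stated for norm-bounded nets. So, strictly, the paper's argument covers only bounded nets. This suffices for the application, since $\pi^{\mathbf{y}}$ maps bounded sets to bounded sets.

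Your route writes $i\circ\pi^{\mathbf{y}}=\Pi\circ i$ and exhibits $\Pi$ as the adjoint of the bounded map $m\mapsto\hat P_*\circ m$ between Arens--Eells spaces. It costs a few more lines. In return it gives weak-$^*$ continuity of $\Pi$ with no boundedness assumption, and hence $i^*$-continuity of $\pi^{\mathbf{y}}$ on all of $\mathcal{C}^{\alpha,n}([0,T];E)$, which is what the lemma literally asserts.
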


The proof of this lemma is also postponed to the end of this section.

\begin{proof}[Proof of Proposition~\ref{prop: continuity of Lyons lift}]\label{proof: continuity of the Lyons lift}
The proof of Proposition~\ref{prop: continuity of Lyons lift} follows immediately from Lemmas~\ref{lem:reduction of i* continuity} and~\ref{lem:reduction to finite dimensions}, the fact that the proposition is true for $E=\R^d$ ($d\in \N$) (see below), and an induction argument.  
\end{proof}

\begin{proof}[Proof of Proposition~\ref{prop: continuity of Lyons lift} for the case $E=\R^d$]\label{proof: fin dim continuity}
 For $\alpha'< \alpha$, we have for $1\leq i \leq n$, $s,t\in [0,T]$ and $\mathbf{x} \in \mathcal{C}^{\alpha,n} ([0,T];\R^{d} )$
\[
\frac{\|\mathbf{x}^{(i)}_{s,t}\|_{i}}{(s-t)^{i \alpha'}} = \frac{(s-t)^{i \alpha}}{(s-t)^{i \alpha'}}\frac{\|\mathbf{x}^{(i)}_{s,t}\|_{i}}{(s-t)^{i \alpha}} \leq T^{i(\alpha-\alpha')} \frac{\|\mathbf{x}^{(i)}_{s,t}\|_{i}}{(s-t)^{i \alpha}}.
\]
Therefore for every $\alpha'< \alpha$, we have an inclusion $ \mathcal{C}^{\alpha,n} ([0,T];\R^{d} )\subseteq  \mathcal{C}^{\alpha',n} ([0,T];\R^{d} )$, and thus we can equip the space $ \mathcal{C}^{\alpha,n} ([0,T];\R^{d}) $ with the subspace topology generated by the $\varrho^{\text{hom}}_{\alpha'}$ metric (see Definition~\ref{def: homogeneous alpha holder}). Recall from \cite[Theorem 3.10]{lyons_differential_2007} (see also Section~\ref{sec: Lyons lift}) that the Lyons lift 
\[
S^{n} \colon  \mathcal{C}^{\alpha'} ([0,T];\R^{d} ) \to \mathcal{C}^{\alpha',n} ([0,T];\R^{d}) 
\]
is $\varrho^{\text{hom}}_{\alpha'}$ continuous for all $\alpha'>0$, $d\in \N_{>0}$, and all $n\in \N\cap  [\lfloor \nicefrac{1}{\alpha'} \rfloor, \infty)$. Therefore, it suffices to show that for any $n\in \N_{>0}\cap [ \lfloor \nicefrac{1}{\alpha'} \rfloor,\infty)$, $R\in\R$ and $\alpha'<\alpha$, the $i^*$ topology restricted to $B_R(\mathcal{C}^{\alpha,n}(\R^{d}))$ coincides with the $\varrho^{\text{hom}}_{\alpha'}$ metric topology restricted this set (note this is not the case when $\alpha'$ is replaced by $\alpha$). This is equivalent to proving that the identity map
\[
\begin{split}
\id \colon  \left( B_R(\mathcal{C}^{\alpha,n}(\R^{d}) )
, \varrho^{\text{hom}}_{\alpha'}
\right) 
\to 
\left( B_R(\mathcal{C}^{\alpha,n}(\R^{d})) 
, 
i^*\right),
\end{split}
\]
is a homeomorphism. Note that this map is not an homeomorphism when $\R^{d}$ is replaced by a infinite dimensional Banach space (the weak-$^*$ topology does not coincide with the norm topology, even when restricted to norm-bounded sets). 

We start with showing that the identity map is continuous. As we have a metric topology on the domain of the identity map, it is sufficient to show that whenever a sequence $\seq{\mathbf{x}}$ converges to $\mathbf{x}$ with respect to the $\varrho^{\text{hom}}_{\alpha'}$ metric, it also converges in the $i^*$ topology. An immediate consequence of convergence with respect to the $\varrho^{\text{hom}}_{\alpha}$ metric, is that $\{(\mathbf{x}_m)_{s,t}\}_{m\in \N}$ converges to $\mathbf{x}_{s,t}$ for all $s,t \in [0,T]^2$. By Theorem~\ref{thm: duality of gen holder space} and Remark \ref{rem: weak star topology on alpha rough} it thus follows that $\seq{\mathbf{x}}$ converges to $\mathbf{x}$ in the $i^*$ topology.

To show that the inverse of the identity map is also continuous, we apply the same strategy. As the $i^*$ topology restricted to $B_R(\mathcal{C}^{\alpha,n}(\R^d))$ is metrizable (see Remark \ref{rem: nets instead of sequences}), it suffices to show that whenever a sequence $\seq{\mathbf{x}}$ converges to $\mathbf{x}$ in the $i^*$ topology, it also converges with respect to the $\varrho^{\text{hom}}_{\alpha}$ metric. The same argument we used to prove Equation \eqref{eq: aux alpha rough compactness} in the proof of Proposition \ref{prop: compactness of rough alpha ball}, holds here, so that again
\begin{equation}\label{eq: aux proof lyons lift weak star convergence equation fin dim}
(\mathbf{x}_m)_{s,t}^{(i)} \to (\mathbf{x})_{s,t}^{(i)}
\end{equation}
for all $i \leq n$ and $s,t \in [0,T]^2$. We claim that with an Arzelà--Ascoli type argument we obtain
\begin{equation}\label{eq : aux arzela-ascoli type}
\sup_{s,t \in [0,T]} |(\mathbf{x}_m)_{s,t}^{(i)} - (\mathbf{x})_{s,t}^{(i)}| \to 0;
\end{equation}
we postpone proving this claim to the end of this proof. Assuming this claim to hold, we have, using that $\|\mathbf{x}_m\|_{\mathcal{C}^{\alpha,n}}, \|\mathbf{x}_m\|_{\mathcal{C}^{\alpha,n}} \leq R$ for all $m\in \N$
\[
\begin{split}
\sup _{s,t\in[0,T]}\left|  \frac{(\mathbf{x}_m)_{s,t}^{(i)} - (\mathbf{x})_{s,t}^{(i)}}{(t-s)^{i\alpha'}}\right| \leq&\sup _{s,t\in[0,T]}  \left|\frac{(\mathbf{x}_m)_{s,t}^{(i)} - (\mathbf{x})_{s,t}^{(i)}}{(t-s)^{i\alpha}} \right|^{\ \frac{\alpha'}{\alpha}}  \left| (\mathbf{x}_m)_{s,t}^{(i)} - (\mathbf{x})_{s,t}^{(i)}\right|^{1-\frac{\alpha'}{\alpha}} \\\leq &(2R)^\frac{\alpha'}{\alpha} \left(\sup_{s,t \in [0,T]} |(\mathbf{x}_m)_{s,t}^{(i)} - (\mathbf{x})_{s,t}^{(i)}|\right)^{1-\frac{\alpha'}{\alpha}},
\end{split}
\]
which tends to zero as $m\to \infty$. This shows the inverse of the identity mapping is continuous. 

To show Equation \eqref{eq : aux arzela-ascoli type}, we start with the observation that for any $s,s',t,t'$ and \newline$\mathbf{z} \in B_R(\mathcal{C}^{\alpha,n}(E))$, using Chen's relation (see Equation \eqref{eq: Chen final?}) we have
\[
\mathbf{z}_{s',t'} = \mathbf{z}_{s',s} \hat\otimes \mathbf{z}_{s,t}\hat \otimes \mathbf{z}_{t,t'},
\]
and so, 
\[
\mathbf{z}_{s',t'} ^{(i)}= \sum_{j=0}^i \sum_{k=0}^{i-j}\mathbf{z}_{s',s}^{(j)} \otimes \mathbf{z}^{(i-j-k)}_{s,t} \otimes \mathbf{z}_{t,t'}^{(k)}.
\]
The difference $\mathbf{z}_{s',t'} ^{(i)} - \mathbf{z}_{s,t} ^{(i)}$ then can be written as a sum whose terms have either a $\mathbf{z}_{s',s}$ or a $\mathbf{z}_{t,t'}$ component of order at least 1. We also note that we can bound
\[
\|\mathbf{z}^{(j)}_{s',s} \|_j \leq R (s-s')^{j \alpha}
\]
for any $1\leq j \leq i$, and similarly for $\mathbf{z}^{(i)}_{t,t'}$. Therefore, for all $\epsilon>0$ there exists a $\delta>0$, depending only on $R$, $n$ and $\epsilon$, so that whenever $|s-s'|, |t-t'| < \delta$, we have
\begin{equation}\label{eq: aux equicontuinity of rough paths}
\|\mathbf{z}_{s',t'} ^{(i)}-\mathbf{z}_{s,t} ^{(i)}\|_i < \tfrac{\epsilon}{3},
\end{equation}
From Equation \eqref{eq: aux proof lyons lift weak star convergence equation fin dim}, we get that for any finite partition $D$ and $\epsilon>0$, we can find $N$ such that for all $m \geq N$
\begin{equation}\label{eq: aux proof lyons in partition form}
\sup_{{s,t} \in D}\|(\mathbf{x}_m)_{s,t}^{(i)} - (\mathbf{x})_{s,t}^{(i)}\|_i < \tfrac{\epsilon}{3}.
\end{equation}
In order to prove the claim of Equation \eqref{eq : aux arzela-ascoli type}, we can use a three $\epsilon$ argument with Equations \eqref{eq: aux equicontuinity of rough paths} and \eqref{eq: aux proof lyons in partition form} using $ \epsilon' = \frac{\epsilon}{3}$, and a partition $D$ with mesh size smaller then $\delta$. Specifically, for any $s',t'\in [0,T]$, we can find $s,t \in D$ with $|s-s'|,|t-t'| < \delta$ and therefore, 
\[
\|(\mathbf{x}_m)_{s',t'}^{(i)} - \mathbf{x}_{s',t'}^{(i)}\|_i \leq \|(\mathbf{x}_m)_{s',t'}^{(i)} - (\mathbf{x}_m)_{s,t}^{(i)}\|_i  + \|(\mathbf{x}_m)_{s,t}^{(i)} - \mathbf{x}_{s,t}^{(i)}\|_i  + \|\mathbf{x}_{s,t}^{(i)} - \mathbf{x}_{s',t'}^{(i)}\|_i < \tfrac{\epsilon}{3}  + \tfrac{\epsilon}{3}  + \tfrac{\epsilon}{3}  = \epsilon.
\]
This proves Equation \eqref{eq : aux arzela-ascoli type}, and so the proof is finished.

\end{proof}

    \begin{proof}[Proof of Lemma~\ref{lem:reduction of i* continuity}]Note that for every $R>0$ there exists an $R'\geq R$ such that 
    \[S^{n}(B_R(\mathcal{C}^{\alpha}(E))) \subseteq B_{R'}(\mathcal{C}^{\alpha,n}(E)) \]
    (see Theorem \ref{thm: Lyons lift}, and Remark \ref{rem: lyons lift same notation}).
    In view of the fact that $S^{n}|_{B_R(\mathcal{C}^{\alpha}(E))}$ is $i^*$ to $i^*$ continuous for all $R>0$ by assumption, it thus suffices to prove that $S^{n+1}|_{B_R(\mathcal{C}^{\alpha,n}(E))}$ is $i^*$ to $i^*$ continuous for all $R>0$.
    
    Let $(X,\tau)$ be a topological space, and let $B$ be a Banach space.  Recall that a mapping $f \colon X \rightarrow B^*$ is $\tau$ to weak-$^*$ continuous if and only if $b \circ f$ is $\tau$ continuous for all $b\in B$, see for example \cite[Theorem 1.2]{voigt_course_2020}. In view of Theorem~\ref{thm: duality of gen holder space} and Definition~\ref{def: i* star topology} it thus suffices to show that the mapping $$B_R(\mathcal{C}^{\alpha,n}(E))\ni\mathbf{x} \mapsto \inprod{S^{n+1} (\mathbf{x})}{m}$$ is $i^*$ continuous
    for all $m\in\textnormal{\AE}_\alpha ([0,T];T^{(n+1)}(F))$ and all $R>0$.

    Recall from Definitions~\ref{def: molecules} and~\ref{def: Arens--Eells} that the set of molecules is (norm) dense in $\textnormal{\AE}_\alpha ([0,T];T^{(n+1)}(F))$. Thus by Lemma \ref{lem: density implies generate predual} it suffices to show that the mapping $$B_R(\mathcal{C}^{\alpha,n}(E))\ni\mathbf{x} \mapsto \inprod{S^{n+1}  (\mathbf{x})}{m}$$ is $i^*$-continuous for every molecule $m$. As molecules are linear combinations of elementary molecules, and as linear combinations of continuous functions are continuous, it in fact suffices to show that the mapping $$B_R(\mathcal{C}^{\alpha,n}(E))\ni\mathbf{x} \mapsto \inprod{S^{n+1} (\mathbf{x})}{m_{0,s}^y}$$ is $i^*$-continuous for every elementary molecule $m^y_{0,s}$ and all $R>0$, where $y\in T^{(n+1)}(F)$ and $s\in [0,T]$.
    
    Finally, given $y\in T^{(n+1)}(F)$ we write $y = y' +\tilde{y}$, with $y'\in F^{\otimes(n+1)}$ and $\tilde y\in T^{(n)}(F)$. This means that $m^y_{0,s} = m^{y'}_{0,s}+m^{\tilde y}_{0,s}$. We can interpret $m^{\tilde y}_{0,s}$ as an element of $\textnormal{\AE}_\alpha ([0,T];T^{(n)}(F))$ and thus we have that 
    \[ \inprod{S^{n+1} (\mathbf{x})}{m^{\tilde y}_{0,s}} = \inprod{\mathbf{x}}{m^{\tilde y}_{0,s}},\] 
    which is continuous on $B_R(\mathcal{C}^{\alpha,n}(E))$ by definition of the $i^*$ topology. Thus, it suffices to prove that the mapping
    \[
     B_R(\mathcal{C}^{\alpha,n}(E))\ni\mathbf{x} \mapsto\inprod{S^{n+1} (\mathbf{x})}{m^{ y'}_{0,s}},
    \]
    is $i^*$ continuous for all $y'\in F^{\otimes n+1}$, all $s\in [0,T]$,  and all $R>0$. 
    
    Finally, note that 
    \[\|m^{y_1}_{0,s} - m^{y_2}_{0,s}\|_{\textnormal{\AE},\alpha} = \|m^{y_1-y_2}_{0,s} \|_{\textnormal{\AE},\alpha} \leq \|y_1-y_2\|_{n+1}^* s^\alpha, 
    \]
    for any $y_1,y_2 \in T^{(n+1)}(F)$. Additionally, we know that the linear span of elementary tensors is dense in $F^{\otimes(n+1)}$. Therefore, we can conclude that it suffices to prove that the mapping
    \[
     B_R(\mathcal{C}^{\alpha,n}(E))\ni\mathbf{x} \mapsto\inprod{S^{n+1} (\mathbf{x})}{m^{y_1\otimes \ldots \otimes y_{n+1}}_{0,s}},
    \]
    is $i^*$ continuous for all $y_1,\ldots,y_n\in F$, all $s\in [0,T]$, and all $R>0$. 
    \end{proof}

\begin{proof}[Proof of Lemma~\ref{lem:reduction to finite dimensions}]
Let $\{e_1,\ldots,e_{n+1}\}$ be the standard basis for $\R^{n+1}$, where $\R^{n+1}$ and its tensor powers are equipped with some admissible family of tensor norms, also denoted by $\{\| \cdot \|_n\}_{n\in \N}$ for simplicity (note that the choice of norms is irrelevant as all finite dimensional norms are equivalent).
For $\mathbf{x}\in T^{(n)}(E)$, define $\pi^{\mathbf{y}}(\mathbf{x})$ by setting $(\pi^{\mathbf{y}}(\mathbf{x}))^{(0)}=\mathbf{x}^{(0)}$ and
    \[
    \inprod{\pi^\mathbf{y}(\mathbf{x})}{e_{i_1}\otimes\ldots \otimes e_{i_j}} \coloneqq \inprod{\mathbf{x}}{y_{i_1}\otimes\ldots \otimes y_{i_j}} 
    \]
for all $1\leq j \leq n$ and all $i_1,\ldots, i_j\in \{1,\ldots,n+1\}$ (recall that $\mathbf{y} = y_1 \otimes \ldots \otimes y_{n+1}$, for $y_i \in F$ for all $1\leq i \leq n+1$). We claim that by pointwise application, $\pi^{\mathbf{y}}$ extends to a mapping from $\mathcal{C}^{\alpha,n}([0,T];E)$ to $\mathcal{C}^{\alpha,n}([0,T];\R^{n+1})$. Indeed, let $\mathbf{x}\in \mathcal{C}^{\alpha,n}([0,T];E)$. We first show that Chen's relation is satisfied for $\pi(\mathbf{x})$: 
    \begin{equation}\label{eq: rough property of projection}
    \begin{split}
        \inprod{\pi^\mathbf{y}(\mathbf{x})_{s,t}}{e_{i_1}\otimes\ldots \otimes e_{i_j}} & =\inprod{\mathbf{x}_{s,t}}{y_{i_1}\otimes\ldots \otimes y_{i_j}} = \inprod{\mathbf{x}_{s,u} \hat\otimes\mathbf{x}_{u,t}}{y_{i_1}\otimes\ldots \otimes y_{i_j}} \\&= \sum_{k=0}^j  \inprod{\mathbf{x}_{s,u}}{y_{i_1}\otimes\ldots \otimes y_{i_k}}\inprod{\mathbf{x}_{u,t}}{y_{i_{k+1}}\otimes\ldots \otimes y_{i_j}}\\& = \sum_{k=0}^j  \inprod{\pi^\mathbf{y}(\mathbf{x})_{s,u}}{e_{i_1}\otimes\ldots \otimes e_{i_k}}\inprod{\pi^\mathbf{y}(\mathbf{x})_{u,t}}{e_{i_{k+1}}\otimes\ldots \otimes e_{i_j}}\\& = \inprod{\pi^\mathbf{y}(\mathbf{x})_{s,u}\hat\otimes\pi^\mathbf{y}(\mathbf{x})_{u,t}}{e_{i_1}\otimes\ldots \otimes e_{i_j}}, 
    \end{split}
    \end{equation}
    for any combination of basis elements $e_{i_1},\ldots,e_{i_j}$ with $1\leq j \leq n$ and all $s,u,t \in [0,T]^2$. Furthermore, as $T^{(n)}(\R^{n+1})$ is a finite dimensional space, we have for some $C>0$, depending on $n$ and $y_1,\ldots,y_n$
    \[
    \|\pi^\mathbf{y}(\mathbf{x})_{s,t}^{(j)}\|_j \leq  C \max_{i_1,\ldots,i_j} |\inprod{\pi^\mathbf{y}(\mathbf{x})_{s,t}}{e_{i_1}\otimes\ldots \otimes e_{i_j}}| \leq C' \|x_{s,t}^{(j)}\|_j,
    \]
    so that $\pi^\mathbf{y}(\mathbf{x})$ is indeed a $\alpha$-Hölder continuous rough path and we have $\|\pi^\mathbf{y}(\mathbf{x})\|_{\mathcal{C}^\alpha} \leq C'\|\mathbf{x}\|_{\mathcal{C}^\alpha}$. In particular, $\pi^{\mathbf{y}}$ maps norm-bounded sets in $\mathcal{C}^{\alpha,n}([0,T];E)$ to norm-bounded sets in $\mathcal{C}^{\alpha,n}([0,T];\R^{n+1})$.
    It remains to show that $\pi^\mathbf{y}$ is $i^*$ to $i^*$ continuous and that Equation \eqref{eq: projection aux in prove i* continuity  of lyons lfit} holds; we first prove the continuity statement. As explained in the proof of Lemma~\ref{lem:reduction of i* continuity}, it suffices to show that the maps 
    \[
    \mathbf{x} \mapsto  \inprod{\pi^\mathbf{y}(\mathbf{x})_{s,t}}{e_{i_1}\otimes\ldots \otimes e_{i_j}}
    \]
    are $i^*$ continuous for any combination of basis elements $e_{i_1},\ldots,e_{i_j}$, where $1 \leq j \leq n$. This follows immediately from the definition of $\pi^{\mathbf{y}}(\mathbf{x})$.
    
    The last thing to show is that indeed
    \[
    \inprod{S^{n+1}(\mathbf{x}) }{m_{0,t}^\mathbf{y}} =\inprod{S^{n+1}(\pi^{\mathbf{y}}(\mathbf{x})) }{m_{0,t}^\mathbf{e}},
    \]
    with $\mathbf{e} = e_1 \otimes \ldots \otimes e_{n+1}$. As mentioned in Section \ref{sec: Lyons lift} the Lyons lift $S^{n+1}$ is constructed as follows: one first considers the object $\hat {\mathbf{x}}$ defined by, for $s,t\in [0,T]$,
    \begin{equation}\label{def: xhat}
    \hat{\mathbf{x}}_{s,t} \coloneqq (1, x^1_{s,t}, x^2_{s,t},\ldots, x^n_{s,t},0).
    \end{equation}
    Note that in \cite{lyons_differential_2007}, such objects are called almost rough paths. The Lyons lift is then given by the unique limit in the $T^{n+1}(E)$ norm topology
    \[
    S^{n+1}({\mathbf{x}})_{s,t} = \lim_{|D| \to 0}  \hat{\mathbf{x}}_{s,t_1} \hat\otimes \hat{\mathbf{x}}_{t_1,t_2} \hat\otimes \ldots \hat\otimes \hat{\mathbf{x}}_{t_{k},t},
    \]
    where the limit is over all partitions $D=(s,t_1,\ldots,t_{k},t)$ with mesh size going to zero. A calculation similar to the one in Equation \eqref{eq: rough property of projection} shows that
    \begin{equation}\label{eq: rough projection works nice}
   \inprod{ \hat{\mathbf{x}}_{s,t_1} \hat\otimes \ldots \hat\otimes \hat{\mathbf{x}}_{t_{k},t}}{y_1 \otimes \ldots \otimes y_{n+1}   }= \inprod{    \pi^\mathbf{y}(\hat{\mathbf{x}}_{s,t_1}) \hat\otimes \ldots \hat\otimes \pi^\mathbf{y}(\hat{\mathbf{x}}_{t_{k},t})}{e_1 \otimes \ldots \otimes e_{n+1}}
    \end{equation}
    for any partition $D=(s,t_1,\ldots,t_{k},t)$.
    Furthermore, the maps $\mathbf{x} \mapsto \inprod{\mathbf{x}}{m_{s,t}^{\mathbf{y}}}$ and $\mathbf{x} \mapsto \inprod{\mathbf{x}}{m^\mathbf{e}_{s,t}}$ are continuous with respect to the norm topology and so these maps can be interchanged with the limit. This observation together with Equation \eqref{eq: rough projection works nice}, leads to 
    \[
    \begin{split}
         \inprod{S^{n+1}(\mathbf{x}) }{m_{0,t}^{\mathbf y} }&=  \inprod{S^{n+1}(\mathbf{x})_{0,t} }{\mathbf y }
         \\&
         = 
         \inprod{\lim_{|D|\to 0} \hat{\mathbf{x}}_{0,t_1} \hat\otimes \ldots \hat\otimes \hat{\mathbf{x}}_{t_{k},t}}{y_1 \otimes \ldots \otimes y_{n+1}   } \\&
         = \lim_{|D|\to 0}   \inprod{ \hat{\mathbf{x}}_{0,t_1} \hat\otimes \ldots \hat\otimes \hat{\mathbf{x}}_{t_{k},t}}{y_1 \otimes \ldots \otimes y_{n+1}   } \\ & 
         = \lim_{|D|\to 0}  \inprod{     \widehat{\pi^\mathbf{y}(\mathbf{x}_{0,t_1}}) \hat\otimes \ldots \hat\otimes \widehat {\pi^\mathbf{y}({\mathbf{x}}_{t_{k},t})}}{e_1 \otimes \ldots \otimes e_{n+1}} \\& 
         =  \inprod{\lim_{|D|\to 0}    \widehat{\pi^\mathbf{y}(\mathbf{x}_{0,t_1}}) \hat\otimes \ldots \hat\otimes \widehat{\pi^\mathbf{y}({\mathbf{x}}_{t_{k},t})}}{e_1 \otimes \ldots \otimes e_{n+1}}
         \\& 
         = \inprod{S^{n+1}(\pi^{\mathbf{y}}(\mathbf{x}))_{0,t} }{\mathbf{e} }  \\&
         =\inprod{S^{n+1}(\pi^{\mathbf{y}}(\mathbf{x})) }{m_{0,t}^\mathbf{e}}.
    \end{split}
    \]
For the forth equality above it helps to bear in mind the definition of $\hat{\mathbf{x}}$, see~\eqref{def: xhat} (and note that $\widehat{\pi^{\mathbf{y}}(\mathbf{x}_{s,t})}$ is defined analogously), and the fact that for $\mathbf{x}_1,\ldots,\mathbf{x}_k\in T^{(n+1)}(E)$ one has
\begin{equation}
    (\mathbf{x}_1\hat{\otimes} \ldots \hat{\otimes} \mathbf{x}_k)^{(n+1)}
    =
    \sum_{a\in \N^{k},\, a_1+\ldots+a_k=n+1}
    \mathbf{x}_1^{(a_1)} \hat{\otimes} \ldots \hat{\otimes} \mathbf{x}_k^{(a_k)}.
\end{equation}
This concludes the proof of Lemma~\ref{lem:reduction to finite dimensions}.
\end{proof}

\newpage
\appendix
\section{Tensor spaces}\label{sec: tensor spaces and norms}

Recall from the introduction of Section~\ref{sec: holder norms and rough paths} that iterated integrals of vector-valued paths naturally take values in a tensor space (and thus tensor spaces are the building blocks for rough paths and signatures). The key issue in the infinite-dimensional setting is that there is no tensor norm that renders the algebraic tensor space complete, in particular, there is no canonical choice for a tensor norm: different (in themselves natural) choices lead to different spaces. \par 
In this section we summarize the theory on infinite dimensional tensor spaces needed to understand rough paths and signatures on Banach spaces, mainly based on \cite{hackbusch_tensor_2019}. In particular, in Section~\ref{ssec: algebraic tensor spaces} we introduce the algebraic tensor space. We then list some desirable properties of a tensor norm in Section~\ref{ssec: crossnorms}. In Sections~\ref{ssec: projective and injective} and~\ref{ssec: Hilbert tensors} we discuss three commonly used norms: the projective, injective and Hilbert tensor norms. In the final Section~\ref{ssec: preduals of tensor spaces} we discuss the predual of tensor spaces.

\subsection{Algebraic tensor spaces}\label{ssec: algebraic tensor spaces}
Given two real vector spaces $V$ and $W$, the algebraic tensor space $V\otimes_a W$ is essentially the vector space spanned by pairs $v \otimes w$ (with $v\in V$ and $w \in W$), with the rule that $\otimes$ is a bilinear product, i.e.,
\begin{equation}\label{eq: tensor_bilinearity_product}
    \alpha(v\otimes w) = (\alpha v) \otimes w = v \otimes (\alpha w),
\end{equation}
\begin{equation}\label{eq: tensor_bilinearity_addition}
    (v_1 + v_2) \otimes w = v_1 \otimes w + v_2 \otimes w \quad  \text{ and }\quad v \otimes (w_1 + w_2) = v \otimes w_1 + v \otimes w_2,
\end{equation}
for all $v, v_1, v_2 \in V$, $w, w_1, w_2 \in W$, and $\alpha\in \mathbb{\R}$.
More formally, the algebraic tensor space is defined to be the free vector space over $V \times W$ modulo equivalences arising from~\eqref{eq: tensor_bilinearity_addition} and~\eqref{eq: tensor_bilinearity_product}. Thus, to provide the formal definition we first recall the definition of a free vector space:

The free vector space\footnote{See section 3.1.2 in \cite{hackbusch_tensor_2019} for a more precise definition.} $\mathcal{V}_{\text{free}} (S)$ over a set $S$ consists of the formal linear combinations $\sum_{i=1}^{n} \alpha_i s_i$, with $n\in \N_{>0}$, $\alpha_1,\ldots, \alpha_n \in S$, and distinct $s_1,\ldots,s_n \in S$. In particular, elements of $\mathcal{V}_{\text{free}} (V \times W)$ are the form $\sum_{i=1}^n \alpha_i (v_i, w_i)$, with $n\in \N_{>0}$, $\alpha_1,\ldots,\alpha_n\in \R$, and distinct $(v_1,w_1),\ldots, (v_n,w_n)\in V\times W$ (note however that if $(v_1,w_1),\ldots,(v_n,w_n)$ are \emph{not} distinct, then $\sum_{i=1}^n \alpha_i (v_i, w_i)$ can be identified with an element of $\mathcal{V}_{\text{free}}$ in a canonical manner). 

The subset of $\mathcal{V}_{\text{free}} (V \times W)$ that represents the equivalence class of $0$ with respect to equations~\ref{eq: tensor_bilinearity_addition} and~\eqref{eq: tensor_bilinearity_product} is given by
\begin{equation}\label{eq: definition of zero set in free vector space}
\begin{split}
N \coloneqq \text{span} \bigg\{& \sum_{i=1}^m \sum_{j=1}^n \alpha_i \beta_j (v_i,w_j) - \bigg( \sum_{i=1}^m \alpha_i v_i ,  \sum_{j=1}^n  \beta_j w_j \bigg) \\&\text{ for } m,n\in \N_{>0}, \alpha_i,\beta_j \in \mathbb{R}, v_i \in V \text{ and } w_j\in W \bigg\}.
\end{split}
\end{equation}
This gives rise to the definition of the algebraic tensor space:
\begin{definition}[Algebraic tensor space, see Equation (3.9) in \cite{hackbusch_tensor_2019}] \label{def: algebraic tensor space}
    Let $V,W$ be real vector spaces. The \emph{algebraic tensor space} $V\otimes_a W$ is defined as the quotient vector space
    \[
    V\otimes_a W \coloneqq \mathcal{V}_{\text{free}} (V \times W) / N,
    \]
    where $N$ is defined as in Equation \eqref{eq: definition of zero set in free vector space}. The tensor product of $v\in V$ and $w\in W$, denoted by $v \otimes w$, is defined as the equivalence class of $(v,w)$. Elements of this form are called \emph{elementary tensors}.
\end{definition}

It is immediate from the definition that elementary tensors span $V\otimes_a W$. Having defined the algebraic tensor space $V\otimes_a W$, we can define $\prescript{n}{i=1}{\bigotimes}_{a} V_i$ by induction:
    \[
    \prescript{n}{i=1}{\bigotimes}_{a} V_i \coloneqq \prescript{n-1}{i=1}{\bigotimes}_{a} V_{i} \otimes_a V_n.
    \]
For brevity, we frequently use the notation $V^{\otimes_a n} := \prescript{n}{i=1}{\bigotimes}_{a} V$.
By the following proposition, the order in which we inductively define our multiple product tensor space is irrelevant.
    \begin{proposition}[Associativity of the algebraic tensor space, see Lemma 3.21 in \cite{hackbusch_tensor_2019}]
        Let $V_1,V_2,V_3$ be vector spaces. Then, we have a natural isomorphism
        \[
        V_1 \otimes_a (V_2 \otimes_a V_3) \to (V_1 \otimes_a V_2) \otimes_a V_3,
        \]
        such that
        \[
        v_1 \otimes_a (v_2 \otimes_a v_3) \mapsto (v_1 \otimes_a v_2) \otimes_a v_3.
        \]
    \end{proposition}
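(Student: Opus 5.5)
We will construct the isomorphism from the universal property of the algebraic tensor product rather than working directly with the quotient of the free vector space. The key fact is this. Whenever $V$, $W$, $U$ are vector spaces and $\beta\colon V\times W\to U$ is bilinear, there is a unique linear map $\tilde\beta\colon V\otimes_a W\to U$ with $\tilde\beta(v\otimes w)=\beta(v,w)$. This follows from Definition~\ref{def: algebraic tensor space}. First extend $\beta$ linearly to $\mathcal{V}_{\text{free}}(V\times W)$. Bilinearity of $\beta$ shows that each generator $\sum_{i,j}\alpha_i\beta_j(v_i,w_j)-(\sum_i\alpha_i v_i,\sum_j\beta_j w_j)$ of $N$ is mapped to $0$, so the map descends to the quotient. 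Uniqueness holds because elementary tensors span $V\otimes_a W$.

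\textbf{Step 1: the forward map.} Fix $v_1\in V_1$. The map $(v_2,v_3)\mapsto (v_1\otimes v_2)\otimes v_3$ is bilinear $V_2\times V_3\to (V_1\otimes_a V_2)\otimes_a V_3$. The universal property therefore gives a linear map $\phi_{v_1}\colon V_2\otimes_a V_3\to (V_1\otimes_a V_2)\otimes_a V_3$ with $\phi_{v_1}(v_2\otimes v_3)=(v_1\otimes v_2)\otimes v_3$. The assignment $v_1\mapsto\phi_{v_1}$ is linear: by uniqueness, it suffices to compare $\phi_{\alpha v_1+v_1'}$ and $\alpha\phi_{v_1}+\phi_{v_1'}$ on elementary tensors, where they agree by bilinearity of $\otimes$. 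Hence $(v_1,z)\mapsto\phi_{v_1}(z)$ is bilinear on $V_1\times(V_2\otimes_a V_3)$. Applying the universal property once more yields a linear map $\Phi\colon V_1\otimes_a(V_2\otimes_a V_3)\to (V_1\otimes_a V_2)\otimes_a V_3$ with $\Phi(v_1\otimes(v_2\otimes v_3))=(v_1\otimes v_2)\otimes v_3$.

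\textbf{Step 2: the inverse.} Build $\Psi$ in the opposite direction by the symmetric construction. Fix $v_3$, obtain a map defined on $V_1\otimes_a V_2$, and then extend in the remaining variable. This gives $\Psi((v_1\otimes v_2)\otimes v_3)=v_1\otimes(v_2\otimes v_3)$.

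\textbf{Step 3: mutual inverses.} The compositions $\Psi\circ\Phi$ and $\Phi\circ\Psi$ are linear and fix every tensor of the form $v_1\otimes(v_2\otimes v_3)$, respectively $(v_1\otimes v_2)\otimes v_3$. These tensors span the respective spaces. Indeed, $V_2\otimes_a V_3$ is spanned by elementary tensors, and $v_1\otimes\sum_k z_k=\sum_k v_1\otimes z_k$. Hence both compositions are the identity, so $\Phi$ is an isomorphism. It is natural because it is defined purely from the tensor structure, with no choices involved.

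The main point needing care is the second application of the universal property in Step 1. The argument relies on the fact that $V_2\otimes_a V_3$ is not spanned by tensors $v_2\otimes v_3$ in a unique way. This is why $\phi_{v_1}$ must be defined through the universal property, and not by a formula on representatives: only then is $(v_1,z)\mapsto \phi_{v_1}(z)$ well defined and bilinear.
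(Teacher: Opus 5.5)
Your proof is correct. The paper gives no argument of its own for this statement; it simply defers to Lemma~3.21 of \cite{hackbusch_tensor_2019}. Your proposal is therefore best read as a self-contained replacement for that citation rather than as a variant of an in-paper proof.

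You work directly from the paper's quotient construction (Definition~\ref{def: algebraic tensor space}). First you extract the two-factor universal property: the linear extension of a bilinear map kills the generators of $N$, and uniqueness holds because elementary tensors span. You then apply it twice in each direction, and finally check that both composites are the identity on spanning sets. What this buys over the bare citation is that the argument is intrinsic and basis-free. Unlike a proof via bases of the $V_j$, it needs no Hamel bases of these (typically infinite-dimensional) spaces. It also uses only the case $n=2$ of universality, which you re-derive rather than quoting Proposition~\ref{prop: universality of tensor space d geq 3}. The citation, in turn, buys brevity.

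There are two small points to tidy up. First, your justification of naturality (``no choices involved'') is informal. If naturality is meant functorially, add the check that $\Phi_W\circ\bigl(f_1\otimes(f_2\otimes f_3)\bigr)=\bigl((f_1\otimes f_2)\otimes f_3\bigr)\circ\Phi_V$ for linear $f_j\colon V_j\to W_j$. This holds because both sides are linear and agree on the spanning tensors $v_1\otimes(v_2\otimes v_3)$. Second, in your last paragraph the point you intend is that elements of $V_2\otimes_a V_3$ have non-unique representations as sums of elementary tensors. As written, it reads as if the space were not spanned by them.
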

Equivalently, one can define $\prescript{n}{i=1}{\bigotimes}_{a} V_i$ as the free vector space over $(V_1 \times\ldots\times V_n)$ modulo equivalences given by the multilinear product.

    A key feature of the tensor space is its universality with respect to linear mappings (in fact, the algebraic tensor space is the only space, up to isomorphisms, that satisfies this universality):
\begin{proposition}[Universality of the tensor space, see Proposition 3.23 in \cite{hackbusch_tensor_2019}]\label{prop: universality of tensor space d geq 3}
   Let $\{V_i\}_{1\leq i \leq n}$ and $U$ be vector spaces. Then, for any multilinear mapping $\phi\colon(V_1 \times\ldots\times V_n) \to U$, there exists a unique linear mapping $\Phi\colon \prescript{n}{i=1}{\bigotimes}_{a} V_i\to U$ such that
   \[
   \phi ((v_1,\ldots, v_n)) = \Phi(v_1 \otimes\ldots\otimes v_n),
   \]
   for any $(v_1,\ldots, v_n)\in (V_1 \times\ldots\times V_n)$. 
\end{proposition}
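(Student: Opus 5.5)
The plan is to derive the statement directly from the universal property of the quotient defining the algebraic tensor space (Definition~\ref{def: algebraic tensor space}), working first with $\mathcal{V}_{\text{free}}(V_1\times\ldots\times V_n)$ and then passing to the quotient. I would use the description mentioned just before the proposition: $\prescript{n}{i=1}{\bigotimes}_{a} V_i$ is the free vector space over $V_1\times\ldots\times V_n$ modulo the subspace $N_n$ spanned by the multilinearity relations (the $n$-fold analogue of \eqref{eq: definition of zero set in free vector space}). The case $n=2$ is then no different from general $n$. If one prefers the inductive definition, one can instead do induction on $n$ using the associativity proposition, but the direct route seems cleaner.

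\emph{Existence.} A free vector space has the universal property that any set map $S\to U$ extends uniquely to a linear map $\mathcal{V}_{\text{free}}(S)\to U$. Apply this to $\phi$ on $S=V_1\times\ldots\times V_n$ to obtain a linear map $\tilde\Phi\colon\mathcal{V}_{\text{free}}(V_1\times\ldots\times V_n)\to U$ with $\tilde\Phi((v_1,\ldots,v_n))=\phi(v_1,\ldots,v_n)$. Next check that $\tilde\Phi$ vanishes on $N_n$. It is enough to check this on the spanning generators, which are of the form $\sum \alpha^{(1)}_{j_1}\cdots\alpha^{(n)}_{j_n}(v^{(1)}_{j_1},\ldots,v^{(n)}_{j_n}) - (\sum_{j}\alpha^{(1)}_{j}v^{(1)}_{j},\ldots,\sum_j\alpha^{(n)}_jv^{(n)}_j)$. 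Applying $\tilde\Phi$ to such a generator gives zero precisely because $\phi$ is multilinear: expanding $\phi$ of the sums yields the same multi-sum. Hence $\tilde\Phi$ descends to a linear map $\Phi$ on the quotient, and by the definition of the elementary tensor as an equivalence class, $\Phi(v_1\otimes\ldots\otimes v_n)=\phi(v_1,\ldots,v_n)$.

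\emph{Uniqueness.} The elementary tensors span $\prescript{n}{i=1}{\bigotimes}_{a} V_i$, because the classes of the basis elements $(v_1,\ldots,v_n)$ span the free space. Two linear maps that agree on a spanning set coincide, so $\Phi$ is unique.

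The only point needing care, and so the main obstacle, is the bookkeeping around the free vector space. The representation in \eqref{eq: definition of zero set in free vector space} permits non-distinct pairs, so one must identify them canonically and confirm that $\tilde\Phi$ respects this identification. This holds by linearity of $\tilde\Phi$, since repeated entries just add their coefficients. If one works with the inductive definition instead, one must also verify that the iterated quotient coincides with the one-step $n$-fold quotient; this would follow by the same universal-property argument applied twice, together with the associativity proposition.
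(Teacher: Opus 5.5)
Your proof is correct and is the standard argument: extend $\phi$ linearly to the free vector space, check that the extension vanishes on the multilinearity relations, descend to the quotient, and get uniqueness because elementary tensors span. The paper gives no proof of its own and simply cites this result from Hackbusch, which rests on the same argument. Working directly with the one-step $n$-fold quotient is legitimate, since the paper itself states that this is an equivalent definition of the $n$-fold algebraic tensor space.
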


\subsection{Topological tensor spaces}\label{ssec: crossnorms}

As mentioned in the introduction of this section, there is no canoncial way to equip an infinite-dimensional algebraic tensor space $\prescript{n}{j=1}{\bigotimes}_{a} V_j$ with a norm: multiple choices are possible, each giving rise to distinct \emph{topological tensor spaces}. We begin by introducing the concept of a topological tensor space and discuss some desirable properties of the associated norm. We follow the approach in \cite{hackbusch_tensor_2019}.

\begin{definition}[topological tensor space]\label{def: topological tensor space}
     Let $V_j$, $1\leq j \leq n$, be real vector spaces and let $\|\cdot\|$ be a norm on $\prescript{n}{j=1}{\bigotimes}_{a} V_j$. The \emph{topological tensor space} $\prescript{n}{j=1}{\bigotimes}_{\|\cdot \|} V_j$ is defined as the $\|\cdot\|$-closure of the algebraic tensor space:
     \[
     \prescript{n}{j=1}{\bigotimes}_{\|\cdot \|} V_j = \overline{\prescript{n}{j=1}{\bigotimes}_{a} V_j}^{\|\cdot\|}.
     \]
\end{definition}

When considering a norm on an algebraic tensor space of Banach spaces, it may be desirable for the norm on the product to somehow align with the norms on the individual spaces. The \emph{crossnorm} and \emph{reasonable crossnorm} properties are ways to describe such alignment:

\begin{definition}[Crossnorm, see Equation (4.42) in \cite{hackbusch_tensor_2019}]\label{def: crossnorm d geq 3}
    Let $(V_j,\|\cdot \|_{V_j})$, $1\leq j \leq n$, be real Banach spaces. A norm $\|\cdot \|$ on $\prescript{n}{j=1}{\bigotimes}_{a} V_j$ is called a \emph{crossnorm} if 
    \[
    \|v_1 \otimes \ldots \otimes v_n \| = \prod_{j=1}^n \|v_j\|_{V_j},\quad 
    \text{for all }v_1\in V_1,\ldots,v_n\in V_n.
    \] 
\end{definition}

Let $(V_j,\|\cdot \|_{V_j})$, $1\leq j \leq n$, be real Banach spaces and let $\phi_j^*\in V_j^*$, $1\leq j \leq n$. By Proposition~\ref{prop: universality of tensor space d geq 3} we can identify $\phi_1^*\otimes \ldots \otimes \phi_n^*$ with a linear functional on $\prescript{n}{i=1}{\bigotimes}_{a} V_i$ satisfying 
\begin{equation} 
(\phi_1^*\otimes \ldots \otimes \phi_n^*)(v_1 \otimes \ldots \otimes v_n) = \prod_{j=1}^{n} \phi_j^*(v_j),\quad v_1\in V_1,\ldots,v_n\in V_n.
\end{equation} 
Given a norm $\|\cdot \|$ on $\prescript{n}{j=1}{\bigotimes}_{a} V_j$, we denote the (topological) dual of $(\prescript{n}{j=1}{\bigotimes}_{\|\cdot \|} V_j , \| \cdot \|)$ by $((\prescript{n}{j=1}{\bigotimes}_{\|\cdot \|} V_j)^* , \| \cdot \|^*)$. We can now provide the definition of a reasonable crossnorm:

\begin{definition}[Reasonable crossnorm, see Equation (4.44) in \cite{hackbusch_tensor_2019}]\label{def: reasonable crossnorm d geq 3}
     Let $(V_j,\|\cdot \|_{V_j})$, $1\leq j \leq n$, be real Banach spaces. We call crossnorm $\|\cdot \|$ on $\prescript{n}{j=1}{\bigotimes}_{a} V_j$ \emph{reasonable} if it satisfies 
    \begin{equation}\label{eq: reasonable crossnorm def}
    \|\phi_1^*\otimes \ldots \otimes\phi_n^* \|^* = \prod_{j=1}^n \|\phi_j^*\|_{V_j^*}, \quad \text{for all } \phi_1^*\in V_1^*, \ldots,\phi_n^*\in V_n^*.
    \end{equation}
\end{definition}

In the context of signatures, the crossnorm property does not suffice to capture all desired behaviour of the tensor norm. Indeed, one aspect of tensor algebras (see Definition~\ref{def: tensor algebra}) is that it must be possible to tensor an $x_1 \in \prescript{m}{j=1}{\bigotimes}_{\| \cdot \|_m} E $ with an $x_2 \in \prescript{n}{j=1}{\bigotimes}_{\| \cdot \|_n} E$ ($n,m\in \N_{>0}$) to obtain an $x_1 \otimes x_2 \in \prescript{m+n}{j=1}{\bigotimes}_{\| \cdot \|_{m+n}}  E $ (i.e., there exists an $x\in \prescript{m+n}{j=1}{\bigotimes}_{\| \cdot \|_{m+n}}  E$ such that for any sequence $\{ x_{1,i}\}_{i\in \N}$ in $\prescript{m}{j=1}{\bigotimes}_{a} E$ that approximates $x_1$ in $\| \cdot \|_{m}$ and any sequence $\{ x_{2,i}\}_{i\in \N}$ in $\prescript{n}{j=1}{\bigotimes}_{a} E$ that approximates $x_2$ in $\| \cdot \|_n$ one has that $\{ x_{1,n} \otimes x_{2,n} \}_{n\in \N}$ approximates $x$ in $\| \cdot \|_{n+m}$, and we set $x_1 \otimes x_2=x$). This is achieved if we assume the following generalization of the crossnorm property:
\begin{equation}\label{eq: strong crossnorm property}
\| x_1 \otimes x_2 \|_{m+n} = \| x_1 \|_{m}\, \| x_2\|_{n}.
\end{equation}

These desired properties are captured by the following definition:

\begin{definition}[Strong crossnorm family, see e.g.\ Equation (4.48c) in \cite{hackbusch_tensor_2019}]\label{def: strong crossnorm}
    Let $E$ be a real Banach spaces and for $n\in \N_{>0}$ let $\|\cdot \|_n$ be a norm on $ \prescript{n}{j=1}{\bigotimes}_{a}^{} E$. Set $E^{\otimes n} = \prescript{n}{j=1}{\bigotimes}_{\| \cdot \|_n}^{} E$. We call $\{\| \cdot \|_n\}_{n\in \N_{>0}}$ a \emph{strong crossnorm family (on $\{E^{\otimes n}\}_{n\in \N_{>0}}$)} if for all $n,m\in \N_{>0}$, $x_1\in E^{\otimes m}$, $x_2\in E^{\otimes n}$ we have $x_1 \otimes x_2\in E^{\otimes (n+m)} $
    and
    \begin{equation}\label{eq:strong crossnorm property}
    \|x_1 \otimes x_2\|_{m+n} = \| x_1 \|_n \| x_2\|_{m}.
    \end{equation}
\end{definition}

\begin{remark}\label{rem: strong crossnorm family}
The definition in~\cite{hackbusch_tensor_2019} does not concern an infinite family of norms. When one is only interested in norms on $ \prescript{n}{j=1}{\bigotimes}_{a}^{} E$ for $n\in \{1,\ldots,N\}$, then the norm on $ \prescript{N}{j=1}{\bigotimes}_{a}^{} E$ naturally induces norms on the tensor spaces of lower order, see \cite[Theorem 4.107]{hackbusch_tensor_2019}.
\end{remark}

\begin{remark}
Let $E$ be a real Banach space and let $\{\| \cdot \|_n\}_{n\in \N_{>0}}$ be a strong crossnorm family on $\{E^{\otimes n}\}_{n\in \N_{>0}}$.  For all $n,m\in \N_{>0}$ the norm $\| \cdot \|_{n+m}$ induces a natural norm on $E^{\otimes m} \otimes_a E^{\otimes n}$ that we again denote by $\| \cdot \|_{n+m}$. The strong crossnorm property implies that
\begin{equation}\label{eq: strong crossnorm property for the spaces}
E^{\otimes m} \otimes_{\|\cdot\|_{n+m}} E^{\otimes n} = E^{\otimes (n+m)}.
\end{equation}
\end{remark}
\begin{remark}\label{rem: note on lyons definition of admissible}
    In the definition of an admissible family of tensor norms (see \ref{def: admissible crossnorms}), we require that the tensor norms are strong crossnorms. Instead of requiring an equality in \eqref{eq: strong crossnorm property}, one can demand that the left-hand side is only \emph{less or equal} to the right-hand side, as is done in e.g. \cite[Definition 1.25]{lyons_differential_2007}. However, we have chosen to align the definition with the literature on topological tensor spaces. We believe this choice to be harmless. Indeed, the weaker definition of admissible tensor norms of \cite[Definition 1.25]{lyons_differential_2007}, together with e.g. the assumptions of Proposition \ref{prop: when there is a time component then unique signature} or Setting \ref{assumptions} ensure that the strong crossnorm property is satisfied and thus that both definitions of admissible tensor norms align. Similarly, if one defines strongly uniform crossnorms using the weaker assumptions of \cite[Definition 1.25]{lyons_differential_2007}, see Definition \ref{def:strong-unif-crossnorm}, one ends up only with scalar multiples of strong crossnorms. Nevertheless, for the results that \emph{only} require admissible tensor norms, the weaker definition of \cite[Definition 1.25]{lyons_differential_2007} can be used.    
\end{remark}

A useful property of strong crossnorms is that the algebraic tensor product of dense subspaces is dense in the tensor space:
\begin{proposition}\label{prop: density in tensor space higher orders}
    Let $E$ be a real Banach space and let 
    $\{\| \cdot \|_n\}_{n\in \N_{>0}}$ be a strong crossnorm family on 
    $\{E^{\otimes n}\}_{n\in \N_{>0}}$. 
    Let $E'\subseteq E$ be a dense subspace of $E$. Then 
    $\prescript{n}{j=1}{\bigotimes}_{a} E'$ 
    is dense in $E^{\otimes n} $.
\end{proposition}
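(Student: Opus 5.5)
The plan is a two-step density argument: first elementary tensors with factors in $E'$ approximate arbitrary elementary tensors, and then linearity and the definition of $E^{\otimes n}$ as a closure finish the job. By Definition~\ref{def: topological tensor space}, $E^{\otimes n}$ is the $\|\cdot\|_n$-closure of $E^{\otimes_a n}$, and elementary tensors span $E^{\otimes_a n}$. It therefore suffices to show that every elementary tensor $v_1\otimes\cdots\otimes v_n$ with $v_j\in E$ lies in the closure of $\prescript{n}{j=1}{\bigotimes}_{a} E'$. Indeed, the closure of a subspace is a closed subspace. If it contains all elementary tensors, it contains $E^{\otimes_a n}$, hence also its closure $E^{\otimes n}$.

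For the elementary tensor, pick sequences $v_j^{(k)}\in E'$ with $v_j^{(k)}\to v_j$ in $E$, which is possible by density of $E'$. Then telescope:
\[
v_1\otimes\cdots\otimes v_n - v^{(k)}_1\otimes\cdots\otimes v^{(k)}_n = \sum_{j=1}^n v^{(k)}_1\otimes\cdots\otimes v^{(k)}_{j-1}\otimes (v_j - v_j^{(k)})\otimes v_{j+1}\otimes\cdots\otimes v_n .
\]
Each summand is an elementary tensor of elements of $E$. Applying the strong crossnorm property~\eqref{eq:strong crossnorm property} repeatedly (or simply the crossnorm property obtained by iterating it from $\|\cdot\|_1=\|\cdot\|_E$) gives the bound $\prod_{i<j}\|v_i^{(k)}\|\,\|v_j-v_j^{(k)}\|\,\prod_{i>j}\|v_i\|$ for the $j$-th summand. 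The factors $\|v_i^{(k)}\|$ are uniformly bounded in $k$, so each summand tends to $0$. By the triangle inequality, $v^{(k)}_1\otimes\cdots\otimes v^{(k)}_n\to v_1\otimes\cdots\otimes v_n$ in $\|\cdot\|_n$.

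The only step needing a little care is justifying that $\|\cdot\|_n$ is a genuine crossnorm on elementary tensors of $n$ factors. This follows by induction on $n$ from Definition~\ref{def: strong crossnorm}, writing $v_1\otimes\cdots\otimes v_n=(v_1\otimes\cdots\otimes v_{n-1})\otimes v_n$ and using $\|x_1\otimes x_2\|_{m+1}=\|x_1\|_m\|x_2\|_1$. The identification of $\|\cdot\|_1$ with the norm of $E$ is implicit in $E^{\otimes 1}=E$. Beyond this, no real obstacle is expected.
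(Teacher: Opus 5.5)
Your proof is correct, but it follows a slightly different and more self-contained route than the paper's.

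\textbf{The paper's route.} The paper argues by induction on $n$. It uses the strong crossnorm property in the form $E^{\otimes(n+1)} = E^{\otimes n}\otimes_{\|\cdot\|_{n+1}} E$ (see \eqref{eq: strong crossnorm property for the spaces}) to reduce to a two-factor tensor product. The induction hypothesis gives a dense subspace of the first factor, and $E'$ is dense in the second. It then cites \cite[Lemma 4.40]{hackbusch_tensor_2019} for the density of algebraic tensor products of dense subspaces.

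\textbf{Your route.} You handle all $n$ factors at once. Your telescoping estimate is exactly the continuity of the multilinear map $(v_1,\ldots,v_n)\mapsto v_1\otimes\cdots\otimes v_n$, which is what underlies the cited lemma. The span-and-closure step then finishes the argument.

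\textbf{Trade-off.} Your version only uses the crossnorm identity on algebraic elementary tensors; in fact a bound $\|v_1\otimes\cdots\otimes v_n\|_n\le C\prod_j\|v_j\|$ would suffice. So it does not need to identify $E^{\otimes(n+1)}$ with the completed product $E^{\otimes n}\otimes_{\|\cdot\|_{n+1}} E$, which requires the strong crossnorm property for non-algebraic elements $x_1\in E^{\otimes n}$. The paper's version is shorter because it outsources the approximation step to the literature.

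Both arguments rely on $\|\cdot\|_1$ being the norm of $E$. The paper's ``base case by definition'' uses this just as you do.
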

\begin{proof}
    We prove the statement by induction. The base case $n=1$ is true by definition. Assume it is true for some $n$. By Equation \eqref{eq: strong crossnorm property for the spaces}, we can write
    \[
    E^{\otimes (n+1)} = \left(\prescript{n}{j=1}{\bigotimes}_{\|\cdot\|_{n}} E \right) 
    \otimes_{\|\cdot\|_{n+1}} E.
    \]
    We can now apply~\cite[Lemma 4.40]{hackbusch_tensor_2019} to conclude that $\prescript{n+1}{j=1}{\bigotimes}_{a} E'$ is dense in $E^{\otimes n}$.
\end{proof}

Another natural assumption in the context of signatures is that of symmetry. To introduce this concept, we first define the permutation operators on $\prescript{n}{j=1}{\bigotimes}_{a} V$ (with $V$ a vector space): given a permutation $\sigma$ on $\{1,\ldots,n\}$, we can define a multilinear map $P_{\sigma} \colon V \times \ldots \times V \rightarrow \prescript{n}{j=1}{\bigotimes}_{a} V$, $P_{\sigma}(v_1,\ldots,v_n)=v_{\sigma{(1)}}\otimes \ldots \otimes v_{\sigma(n)}$. By the universality of the tensor space (Proposition \ref{prop: universality of tensor space d geq 3}), we can extend $P_{\sigma}$ to a map from $\prescript{n}{j=1}{\bigotimes}_{a} V$ to itself, which we denote by $P_\sigma$:

\begin{definition}\label{def: permutations on alg tensor space}
    Let $V$ be a real Banach  space, let $n\in\N_{>0}$ and let $\sigma $ be a permutation of $n$. We define the \emph{permutation map} $P_\sigma\colon \prescript{n}{j=1}{\bigotimes}_{a} V \rightarrow \prescript{n}{j=1}{\bigotimes}_{a} V$ to be the unique linear map such that 
    \[
    P_\sigma (v_1 \otimes \ldots\otimes  v_n) = v_{\sigma(1)} \otimes \ldots \otimes v_{\sigma(n)}, \quad v_1,\ldots,v_n\in V.
    \]
\end{definition}

\begin{definition}\label{def: symmetric norms}
    Let $V$ be a real Banach  space. A tensor norm $\| \cdot \|$ on $\prescript{n}{j=1}{\bigotimes}_{a} V$ is said to be \emph{symmetric} if 
    $
    \|P_\sigma (v) \| = \|v\|
    $
    for all $v\in \prescript{n}{j=1}{\bigotimes}_{a} V$ and every permutation $\sigma$ of $\{1, \ldots, n\}$, where $P_{\sigma}$ is the permutation map.
\end{definition}

In the context of signatures, it generally suffices to assume that one has a strong crossnorm family of reasonable and symmetric crossnorms. However, to obtain~\ref{prop: the dual tensor algebra can actually separate points}, we in fact need \emph{strongly uniform crossnorms}:
\begin{definition}[Strongly uniform crossnorm, see e.g.\ Definition 4.110 in \cite{hackbusch_tensor_2019}]\label{def:strong-unif-crossnorm}
    Let $E$ be a real Banach space and let 
    $\{\| \cdot \|_n\}_{n\in \N_{>0}}$ be a strong crossnorm family on 
    $\{E^{\otimes n}\}_{n\in \N_{>0}}$.  We call  $\{\| \cdot \|_n\}_{n\in \N_{>0}}$ a \emph{strongly uniform crossnorm family (on $\{E^{\otimes n}\}_{n\in \N_{>0}}$)} if for all $n,m \in \N_{>0}$ and all $A_1\in \mathcal{L}(E^{\otimes m})$, $A_2 \in\mathcal{L}(E^{\otimes n}) $ we have 
    \[
    \| A_1 \otimes A_2 \|_{\mathcal{L}(E^{\otimes (m+n)})} = \|A_1 \|_{\mathcal{L}(E^{\otimes m})} \, \|A_2\|_{\mathcal{L}(E^{\otimes n})}.
    \]
\end{definition}

\begin{remark}\label{remark: strongly uniform means reasonable}
    Let $E$ be a real Banach space and let 
    $\{\| \cdot \|_n\}_{n\in \N_{>0}}$ be a strongly uniform crossnorm family on $\{E^{\otimes n}\}_{n\in \N_{>0}}$. Then $\|\cdot \|_{n}$ is a reasonable crossnorm for all $n\in \N_{>0}$. See also \cite[Lemma 4.95a]{hackbusch_tensor_2019}.
\end{remark}

\subsection{Projective and injective tensor norms}\label{ssec: projective and injective}
The crossnorm and reasonable crossnorm properties say something about elementary tensors, but as a consequence they also enforce a bound on general elements of the tensor space.

To show this, take a crossnorm (in the sense of Definition \ref{def: crossnorm d geq 3}) $\|\cdot\|$ on $\prescript{j=1}{n}{\bigotimes}_{\|\cdot\|} V_j$, and fix an element $\mathbf{v}$ in the algebraic tensor space. Whenever $\mathbf{v}$ can be written as a sum of elementary tensors, as in 
\[
\mathbf{v} = \sum^k_{i=1} v_{i_1} \otimes \ldots \otimes v_{i_n},
\]
we can combine the crossnorm property together with the triangle inequality to find that
\[
\| \mathbf{v}\| \leq \sum^k_{i=1} \|v_{i_1}\| \ldots \|v_{i_n}\|. 
\]
In other words, we have for any $v\in \prescript{n}{j=1}{\bigotimes}_{a} V_j$ that 
\[
    \| \mathbf{v}\| \leq \inf \left\{  \sum^k_{i=1} \|v_{i_1}\|  \ldots  \|v_{i_n}\| \quad \Big | \quad \mathbf{v} = \sum^k_{i=1} v_{i_1} \otimes \ldots \otimes v_{i_n}, \,k\in \N_{> 0}\right \} .
\]
When $\|\cdot\|$ is in addition a reasonable crossnorm, we can make another observation: whenever $\phi_j\in V_j^*$ with $\|\phi_j\| =1$, we have 
\[
\|(\phi_1 \otimes \ldots \otimes \phi_n) \mathbf{v} \| \leq \|\mathbf{v}\|.
\]
In other words, we have
\begin{equation}\label{eq: injective tensor norm predef}
\|\mathbf{v}\| \geq \sup \left \{ \|(\phi_1 \otimes \ldots \otimes \phi_n) \mathbf{v} \| \quad \big | \quad \phi_j \in V_j^* \text{ for all } 1\leq j \leq n \text{ and } \|\phi_j\| =1 \right\}.
\end{equation}
These two observations provide the intuition behind the projective and injective tensor norms, which are respectively the `largest' and the `smallest' reasonable crossnorms.
\begin{definition}[Projective tensor norm, see Remark 4.96 in \cite{hackbusch_tensor_2019}]\label{def: projective tensor norm}
   Let $\{V_j\}_{1\leq j \leq n}$ be real Banach spaces. The \emph{projective tensor norm} $\|\cdot\|_\pi$ on $\prescript{n}{j=1}{\bigotimes}_{a} V_j$ (also denoted by $\|\cdot\|_\wedge$ in the literature) is defined by 
   \[
    \| \mathbf{v}\|_\pi \coloneqq\inf \left\{  \sum^k_{i=1} \|v_{i_1}\|  \ldots  \|v_{i_n}\| \quad \Big | \quad \mathbf{v} = \sum^k_{i=1} v_{i_1} \otimes \ldots \otimes  v_{i_n}, \,k\in \N_{>0}\right \}.
   \]
   The topological tensor space $\prescript{n}{j=1}{\bigotimes}_{\|\cdot\|_{\pi}} V_j$ is called the projective tensor space and is also denoted by $\prescript{n}{j=1}{\bigotimes}_{\pi} V_j$ .
\end{definition}
\begin{definition}[Injective tensor norm, see Remark 4.98 in \cite{hackbusch_tensor_2019}]\label{def: injective tensor norm}
    Let $\{V_j\}_{1\leq j \leq n}$ be Banach spaces. The \emph{injective tensor norm} $\|\cdot\|_\epsilon$  on $\prescript{n}{j=1}{\bigotimes}_{a} V_j$ (also denoted by $\|\cdot\|_\vee$ in the literature) is defined by 
    \[
        \|\mathbf{v}\|_{\epsilon} \coloneqq \sup \left \{ \|(\phi_1 \otimes \ldots \otimes \phi_n) \mathbf{v} \| \quad \big | \quad \phi_j \in V_j^* \text{ for all } 1\leq j \leq n \text{ and } \|\phi_j\| =1 \right\}.
    \]
    The topological tensor space $\prescript{n}{j=1}{\bigotimes}_{\|\cdot\|_{\epsilon}} V_j$ is called the injective tensor space and is also denoted by $\prescript{n}{j=1}{\bigotimes}_{\epsilon} V_j$.  
\end{definition}
\begin{proposition}\label{prop: properties of projective and injective}
    Let $E$ be a real Banach space and let $\| \cdot \|_n$ be the projective tensor norm on $\prescript{n}{j=1}{\bigotimes}_{a}E$, $n\in \N_{>0}$. Then $\{ \| \cdot  \|_{n} \}_{n\in \N_{>0}}$ is a symmetric strongly uniform crossnorm family. The analogous statement holds for the injective tensor norm.
\end{proposition}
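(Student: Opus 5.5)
We need to prove that the projective (and injective) tensor norms form a symmetric strongly uniform crossnorm family. We work through the properties in the order: crossnorm, symmetry, strong crossnorm family, strong uniformity. For the projective norm $\|\cdot\|_\pi$ on $E^{\otimes_a n}$, the crossnorm property $\|v_1\otimes\cdots\otimes v_n\|_\pi=\prod\|v_j\|$ is standard. The bound $\le$ is immediate from the definition. For $\ge$ we use Hahn--Banach: choose $\phi_j\in E^*$ of norm one with $\phi_j(v_j)=\|v_j\|$. Then for any representation $\sum_i v_{i_1}\otimes\cdots\otimes v_{i_n}$ we have $\prod\|v_j\|=|(\phi_1\otimes\cdots\otimes\phi_n)(\mathbf v)|\le\sum_i\prod_j\|v_{i_j}\|$, so the infimum is attained at the product. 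The same functional argument shows $\|\mathbf v\|_\epsilon\le\|\mathbf v\|_\pi$, and the injective crossnorm property follows directly from its definition combined with Hahn--Banach. Symmetry is clear for both norms: $P_\sigma$ permutes the factors in every representation, and it permutes the factors of $\phi_1\otimes\cdots\otimes\phi_n$ in the supremum, so both the infimum and the supremum are unchanged.

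For the strong crossnorm family property, the key step is to identify $\|\cdot\|_{\pi,m+n}$ on $E^{\otimes_a m}\otimes_a E^{\otimes_a n}$ with the binary projective norm of $(E^{\otimes m}_\pi,E^{\otimes n}_\pi)$, i.e.\ associativity of $\pi$. The inequality $\le$ holds because splitting a representation of $x_1$ and one of $x_2$ into elementary tensors yields an $(m+n)$-fold representation whose cost is the product of the two costs. The inequality $\ge$ holds because an $(m+n)$-fold elementary tensor is a binary elementary tensor of the same cost. Given this, $\|x_1\otimes x_2\|_{m+n}=\|x_1\|_m\|x_2\|_n$ on algebraic tensors follows from the binary crossnorm property proved in the first step. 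By density and continuity, $x_1\otimes x_2$ then extends to the completions with the same identity. For the injective norm we argue analogously. The supremum over $\phi_1\otimes\cdots\otimes\phi_{m+n}$ equals the supremum over $\psi_1\otimes\psi_2$ with $\psi_1,\psi_2$ in the unit balls of $(E^{\otimes m}_\epsilon)^*$ and $(E^{\otimes n}_\epsilon)^*$, because the unit ball of $(E^{\otimes m}_\epsilon)^*$ is the weak-$^*$ closed convex hull of elementary tensors of unit functionals (bipolar theorem). Injective associativity follows, and the crossnorm identity then comes from the reasonable property of $\epsilon$.

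The main obstacle is strong uniformity: we need $\|A_1\otimes A_2\|=\|A_1\|\|A_2\|$ for $A_1,A_2$ bounded on the completed spaces. We split this into two inequalities. For $\ge$, test on elementary tensors $x_1\otimes x_2$ and use the strong crossnorm identity already proved. For $\le$ with $\pi$, apply $A_1\otimes A_2$ to a representation $\sum_i a_i\otimes b_i$ with $a_i\in E^{\otimes_a m}$ and $b_i\in E^{\otimes_a n}$; by binary associativity the cost is multiplied by at most $\|A_1\|\|A_2\|$. Taking the infimum and extending by density gives the bound. For $\le$ with $\epsilon$, use duality: $\langle(A_1\otimes A_2)\mathbf v,\psi_1\otimes\psi_2\rangle=\langle\mathbf v,A_1^*\psi_1\otimes A_2^*\psi_2\rangle$, and $\|A_k^*\psi_k\|\le\|A_k\|$. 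By the injective associativity just established, the supremum over such $\psi_1\otimes\psi_2$ computes the norm. Alternatively, both facts may simply be quoted from \cite[Section 4.2]{hackbusch_tensor_2019} (uniform crossnorms, Proposition 4.127-type results), where $\pi$ and $\epsilon$ are shown to be uniform.
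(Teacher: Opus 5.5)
Your proposal is correct, but it proves from first principles what the paper obtains by citation. The paper's proof is two lines. Everything except symmetry is quoted from \cite[Theorem 4.111]{hackbusch_tensor_2019}, with one caveat: Hackbusch works with the norms that the top-order norm induces on lower-order tensor spaces, not with an a priori given family. The paper therefore adds that these induced norms are again projective (respectively injective), and reads symmetry off the definitions.

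Your two associativity identities are precisely the fact needed to reconcile the two formulations. The first says that the $(m+n)$-fold projective norm on $E^{\otimes_a m}\otimes_a E^{\otimes_a n}$ equals the binary projective norm built from the $m$- and $n$-fold projective norms. The second is the analogous statement for $\epsilon$, which you obtain from the bipolar description of the dual unit ball. Once these are in hand, the crossnorm, strong crossnorm and two-sided uniformity statements follow from the elementary estimates you give. Your route is longer but self-contained, and it makes transparent why the a priori family coincides with Hackbusch's induced one. The paper's route is shorter but leaves the reader to check that Hackbusch's setting matches.

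Two small points if you write it out. First, in the injective uniformity step the vector $(A_1\otimes A_2)\mathbf v=\sum_k A_1a_k\otimes A_2b_k$ has factors in the completed spaces $E^{\otimes m}$ and $E^{\otimes n}$. So you need the identity $\|\mathbf u\|_{m+n}=\sup_{\psi_1,\psi_2}|(\psi_1\otimes\psi_2)(\mathbf u)|$ on all of $E^{\otimes(m+n)}$, not just on algebraic tensors. This extension is immediate: both sides are $1$-Lipschitz and agree on a dense set. Second, if you prefer to cite rather than prove, the relevant result is \cite[Theorem 4.111]{hackbusch_tensor_2019}, not a result in Section 4.2, and it must be combined with the induced-norm remark above.
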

\begin{proof}
    Apart from the symmetry property, this is \cite[Theorem 4.111]{hackbusch_tensor_2019}, with the exception that they consider the norms on the spaces $\prescript{n}{j=1}{\bigotimes}_{a}E$ to be the induced norms, rather a norm that was set a priori (see also Remark~\ref{rem: strong crossnorm family}). However, in~\cite{hackbusch_tensor_2019} it is shown that all induced norms on $\prescript{n}{j=1}{\bigotimes}_{a}E$, $n\in \N_{>0}$, are projective [respectively, injective] tensor norms, so that results still hold with the adapted definition. The symmetry property follows by the fact the definition of the projective [respectively, injective] tensor norm is symmetric.
\end{proof}
\subsection{Hilbert tensor spaces}\label{ssec: Hilbert tensors}
When we are working with tensor spaces over Hilbert spaces, we can define the so-called Hilbert tensor norm. This norm carries the inner product of the Hilbert spaces over to the tensor space, so that it too can be made into a Hilbert space. 

Let $H_j$ be real Hilbert spaces for $1 \leq j \leq n$. We define a product $\inprod{.}{.} $ on the elementary tensors of $\prescript{n}{j=1}{\bigotimes}_{a} H_j$ by setting \begin{equation}\label{eq: def hilb prod}
\inprod{v_1\otimes \ldots \otimes v_n}{w_1 \otimes \ldots \otimes w_n} = \prod_{j=1}^n \inprod{v_j}{w_j},
\end{equation}
with $v_j,w_j \in H_j$, for all $1 \leq j \leq n$. This product is a sesquilinear form, and so we can extend it to the entirety of $\prescript{n}{j=1}{\bigotimes}_{a} H_j$. One can also show that this product is an inner product:
\begin{proposition}[See Lemma 4.147 if \cite{hackbusch_tensor_2019}]\label{prop: Hilb is inner prod}
    Let $\{H_j\}_{1\leq i \leq n}$ be real Hilbert spaces. The unique sesquilinear map $\prescript{n}{j=1}{\bigotimes}_{a} H_j \times \prescript{n}{j=1}{\bigotimes}_{a} H_j \to \mathbb{K}$ coming from the extension of Equation \eqref{eq: def hilb prod} is an inner product.
\end{proposition}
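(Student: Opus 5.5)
The aim is to show that the bilinear form obtained by extending \eqref{eq: def hilb prod} is symmetric, positive semidefinite, and definite. The plan is to reduce everything to finite-dimensional subspaces, where orthonormal bases are available.

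First, well-definedness. For each fixed $w=w_1\otimes\ldots\otimes w_n$, the map $(v_1,\ldots,v_n)\mapsto\prod_j\inprod{v_j}{w_j}$ is multilinear, so Proposition~\ref{prop: universality of tensor space d geq 3} gives a linear functional on $\prescript{n}{j=1}{\bigotimes}_{a} H_j$. This functional depends multilinearly on $(w_1,\ldots,w_n)$. Applying universality once more, with values in the algebraic dual, yields a well-defined bilinear form on the algebraic tensor space.

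Symmetry holds on elementary tensors because each $\inprod{\cdot}{\cdot}_{H_j}$ is symmetric, and it then extends to all elements by bilinearity.

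For positive definiteness, fix $\mathbf{v}=\sum_{i=1}^k v_{i,1}\otimes\ldots\otimes v_{i,n}$. Let $U_j\subseteq H_j$ be the finite-dimensional span of $\{v_{i,j}\}_{i=1}^k$, and choose an orthonormal basis $\{e^{(j)}_l\}_{l=1}^{d_j}$ of $U_j$ by Gram--Schmidt. Expanding each $v_{i,j}$ in this basis and using multilinearity gives
\[
\mathbf{v}=\sum_{l_1,\ldots,l_n} c_{l_1\ldots l_n}\, e^{(1)}_{l_1}\otimes\ldots\otimes e^{(n)}_{l_n}.
\]
By \eqref{eq: def hilb prod} the products $e^{(1)}_{l_1}\otimes\ldots\otimes e^{(n)}_{l_n}$ are orthonormal for the form, so $\inprod{\mathbf{v}}{\mathbf{v}}=\sum|c_{l_1\ldots l_n}|^2\ge 0$.

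The main obstacle is definiteness. If the sum above vanishes, then all coefficients $c$ are zero, and we must conclude that $\mathbf{v}=0$ in the algebraic tensor space, not merely that its coordinates vanish. To handle this, we use that the elements $e^{(1)}_{l_1}\otimes\ldots\otimes e^{(n)}_{l_n}$ are linearly independent in $\prescript{n}{j=1}{\bigotimes}_{a} H_j$. One way to see this is to take the linear functionals $\inprod{\cdot}{e^{(1)}_{m_1}}\otimes\ldots\otimes\inprod{\cdot}{e^{(n)}_{m_n}}$, which are defined via universality. Each of these picks out the coefficient $c_{m_1\ldots m_n}$. So the representation has vanishing coefficients exactly when $\mathbf{v}=0$, which gives $\inprod{\mathbf{v}}{\mathbf{v}}=0\Rightarrow\mathbf{v}=0$.
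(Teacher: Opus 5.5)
Your proof is correct and is essentially the standard argument behind the cited Lemma 4.147 of Hackbusch, on which the paper relies instead of giving its own proof: you define the form via universality, pass to finite-dimensional subspaces $U_j$ with orthonormal bases, and obtain $\langle \mathbf{v},\mathbf{v}\rangle=\sum c_{l_1\ldots l_n}^2$. The step you flag as the main obstacle is immediate, because the expansion $\mathbf{v}=\sum c_{l_1\ldots l_n}\, e^{(1)}_{l_1}\otimes\cdots\otimes e^{(n)}_{l_n}$ is already an identity in the algebraic tensor space, so vanishing coefficients give $\mathbf{v}=0$ directly; linear independence would only be needed for the converse direction.
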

\begin{definition}[Hilbert tensor space]
    Let $\{H_j\}_{1\leq i \leq n}$ be real Hilbert spaces. The \emph{Hilbert tensor space} $\prescript{n}{j=1}{\bigotimes}_{2} H_j$ is the topological vector space under the norm generated by the inner product of Proposition \ref{prop: Hilb is inner prod}. In particular, the Hilbert tensor space is itself another real Hilbert space.
\end{definition}
There is a natural construction for an orthonormal basis on a Hilbert tensor space given orthonormal bases on the individual Hilbert spaces:
\begin{proposition}[See Remark 4.148 in \cite{hackbusch_tensor_2019}]\label{prop: hilbert tensor onb}
    Let $\{H_i\}_{1\leq i \leq n}$ be real Hilbert spaces, and let $\{e_{i,j}\}_{j\in I_i}$ be an orthonormal basis for $H_i$, $1\leq i \leq n$. Then, 
    \[
    \{ e_{1,j_1} \otimes \ldots \otimes e_{n,j_n} \mid j_i\in I_i \text{ for } 1\leq i \leq n \} 
    \]
    is an orthonormal basis for the Hilbert tensor space $\prescript{n}{i=1}{\bigotimes}_{2} H_i$.
\end{proposition}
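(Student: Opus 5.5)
The statement just above is Proposition~\ref{prop: hilbert tensor onb}: the elementary tensors $e_{1,j_1}\otimes\ldots\otimes e_{n,j_n}$ form an orthonormal basis of $\prescript{n}{i=1}{\bigotimes}_{2} H_i$. I would split the proof into two parts, orthonormality and totality, and handle both with the inner product \eqref{eq: def hilb prod} and the density of the algebraic tensor space in its completion.

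\textbf{Orthonormality.} By \eqref{eq: def hilb prod},
\[
\inprod{e_{1,j_1}\otimes\ldots\otimes e_{n,j_n}}{e_{1,k_1}\otimes\ldots\otimes e_{n,k_n}}=\prod_{i=1}^n \inprod{e_{i,j_i}}{e_{i,k_i}}=\prod_{i=1}^n\delta_{j_ik_i}.
\]
This equals $1$ if $(j_1,\ldots,j_n)=(k_1,\ldots,k_n)$ and $0$ otherwise. No limits are involved, so this is immediate.

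\textbf{Totality.} I would show that the closed linear span $\mathcal{S}$ of these tensors is the whole space. By Definition~\ref{def: topological tensor space}, $\prescript{n}{i=1}{\bigotimes}_{2} H_i$ is the closure of the algebraic tensor space. Elementary tensors span the algebraic tensor space. So it suffices to show that every elementary tensor $v_1\otimes\ldots\otimes v_n$ lies in $\mathcal{S}$.

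Fix such a tensor and $\epsilon>0$. Since $\{e_{i,j}\}_{j\in I_i}$ is an orthonormal basis of $H_i$, there are vectors $w_i$, each a finite linear combination of the $e_{i,j}$, with $\|v_i-w_i\|$ as small as we like. By multilinearity, $w_1\otimes\ldots\otimes w_n$ is a finite linear combination of the tensors $e_{1,j_1}\otimes\ldots\otimes e_{n,j_n}$. We then telescope:
\[
v_1\otimes\ldots\otimes v_n-w_1\otimes\ldots\otimes w_n=\sum_{k=1}^n w_1\otimes\ldots\otimes w_{k-1}\otimes(v_k-w_k)\otimes v_{k+1}\otimes\ldots\otimes v_n .
\]
The Hilbert tensor norm is a crossnorm, since by \eqref{eq: def hilb prod} the squared norm of an elementary tensor is $\prod_i\|v_i\|^2$. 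Hence each summand has norm at most $\|v_k-w_k\|\prod_{i\neq k}\max(\|v_i\|,\|w_i\|)$. This is below $\epsilon/n$ once the approximations are fine enough, using $\|w_i\|\le\|v_i\|+1$. Therefore $v_1\otimes\ldots\otimes v_n\in\mathcal{S}$, and the family is total.

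An orthonormal family whose closed span is the whole Hilbert space is an orthonormal basis, which proves the proposition. The only step needing care is the telescoping estimate, and the crossnorm identity makes it routine. I do not expect a genuine obstacle; the care needed is only to keep the approximants $w_i$ uniformly bounded.
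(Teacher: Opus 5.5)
Your proof is correct. Orthonormality follows directly from the product formula \eqref{eq: def hilb prod}, and totality follows from the density of the algebraic tensor space in its completion together with your telescoping estimate, which is valid because the Hilbert tensor norm is a crossnorm. The paper gives no argument of its own and only cites Remark 4.148 in \cite{hackbusch_tensor_2019}; your write-up is the standard argument behind that remark, so it simply makes the statement self-contained.
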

The Hilbert tensor space has all the properties we like to see from our topological tensor spaces:
\begin{proposition}\label{prop: properties hilbert norm}
    The Hilbert tensor norm is strongly uniform crossnorm and is symmetric.
\end{proposition}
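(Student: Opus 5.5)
The plan is to verify the four ingredients separately: crossnorm, reasonableness, strong crossnorm family, strong uniformity, and then symmetry. The basic tool is Proposition~\ref{prop: hilbert tensor onb}: for any orthonormal basis $\{e_j\}_{j\in I}$ of a Hilbert space $H$, the elementary tensors $e_{j_1}\otimes\cdots\otimes e_{j_n}$ form an orthonormal basis of $H^{\otimes_2 n}$.

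The crossnorm property is immediate from \eqref{eq: def hilb prod}, since $\|v_1\otimes\cdots\otimes v_n\|^2=\prod_j\langle v_j,v_j\rangle$.

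For the strong crossnorm property, identify $H^{\otimes_2 m}\otimes_2 H^{\otimes_2 n}$ with $H^{\otimes_2(m+n)}$. The bilinear map $(x_1,x_2)\mapsto x_1\otimes x_2$, defined on algebraic tensors, satisfies
\[
\langle x_1\otimes x_2,x_1'\otimes x_2'\rangle_{m+n}=\langle x_1,x_1'\rangle_m\langle x_2,x_2'\rangle_n .
\]
It suffices to check this on elementary tensors, where it follows from \eqref{eq: def hilb prod}, and then extend by bilinearity. Taking $x_1'=x_1$ and $x_2'=x_2$ gives $\|x_1\otimes x_2\|_{m+n}=\|x_1\|_m\|x_2\|_n$ on algebraic tensors. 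By bilinearity and this norm identity, Cauchy sequences are mapped to Cauchy sequences, so the map extends to the completions with the same identity.

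For strong uniformity, given $A_1\in\mathcal L(H^{\otimes m})$ and $A_2\in\mathcal L(H^{\otimes n})$, write $A_1\otimes A_2=(A_1\otimes I)(I\otimes A_2)$. This reduces the upper bound to $\|A\otimes I\|\le\|A\|$. To prove it, fix an orthonormal basis $\{f_k\}$ of the second factor. Every $z$ in the algebraic tensor space can be written as $z=\sum_k z_k\otimes f_k$ with $\sum_k\|z_k\|^2=\|z\|^2$ by orthogonality. Then
\[
\|(A\otimes I)z\|^2=\sum_k\|Az_k\|^2\le\|A\|^2\|z\|^2,
\]
and the bound extends by density. The lower bound follows by testing on elementary tensors: $\|(A_1\otimes A_2)(u\otimes v)\|=\|A_1u\|\,\|A_2v\|$, and taking the supremum over unit $u$ and unit $v$ gives $\|A_1\otimes A_2\|\ge\|A_1\|\,\|A_2\|$. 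Reasonableness then follows from Remark~\ref{remark: strongly uniform means reasonable}.

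Symmetry holds because $P_\sigma$ preserves the inner product on elementary tensors, as the product in \eqref{eq: def hilb prod} is permutation invariant. Hence $P_\sigma$ is an isometry on the algebraic tensor space.

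The main obstacle is the density and extension bookkeeping in the strong crossnorm step, namely identifying $H^{\otimes m}\otimes_2H^{\otimes n}$ with $H^{\otimes(m+n)}$. The approach I would take is to match the orthonormal bases from Proposition~\ref{prop: hilbert tensor onb} on both sides. This gives a unitary identification directly and makes all the required extensions automatic.
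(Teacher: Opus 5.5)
Your proof is correct, but it follows a different route from the paper. The paper simply cites Hackbusch \cite[Proposition 4.150]{hackbusch_tensor_2019}, together with step (v) of its proof, for the strongly uniform crossnorm property, and says symmetry is immediate from the definition. You instead verify the paper's definitions directly from the multiplicativity of the inner product \eqref{eq: def hilb prod}:
\begin{itemize}
\item the identity $\langle x_1\otimes x_2,x_1'\otimes x_2'\rangle_{m+n}=\langle x_1,x_1'\rangle_m\langle x_2,x_2'\rangle_n$ gives the crossnorm and strong crossnorm properties at once;
\item the factorisation $A_1\otimes A_2=(A_1\otimes I)(I\otimes A_2)$, combined with an orthogonal expansion in one factor, gives the upper operator bound;
\item elementary tensors give the lower bound;
\item permutation invariance of the product of inner products gives symmetry.
\end{itemize}
The citation buys brevity, but it leaves the reader to match Hackbusch's framework (norms on a fixed finite-order space, with induced norms on lower orders; cf.\ Remark~\ref{rem: strong crossnorm family}) to the definitions used here. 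Your argument checks exactly the paper's notion: a family indexed by all $n$, with operators on $E^{\otimes m}$ and $E^{\otimes n}$. It also makes explicit the identification $H^{\otimes m}\otimes_2 H^{\otimes n}\cong H^{\otimes(m+n)}$ that is otherwise implicit.

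Two small bookkeeping points. First, in the upper bound, orthonormalise (Gram--Schmidt) the finitely many second-factor components of $z$ rather than fixing a full basis. Then $z=\sum_k z_k\otimes f_k$ is a finite sum, and you do not presuppose continuity of $A\otimes I$ when applying it termwise. Second, you also need $\|I\otimes A\|\le\|A\|$. This follows from the same argument applied in the first factor, or by conjugating with the unitary block swap.
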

\begin{proof}
    See \cite[Proposition 4.150]{hackbusch_tensor_2019}, in combination with the observation made in step (v) of the proof. The symmetry of the tensor norm follows by definition.
\end{proof}

\begin{remark}
Note that one can also define the injective and projective tensor norms on Hilbert tensor products, neither of which correspond with the Hilbert tensor norm.
\end{remark}
\subsection{Predual of tensor spaces}\label{ssec: preduals of tensor spaces}

Let $E$ be a real Banach space with predual $F$, and let $\|\cdot\|$ be a reasonable crossnorm on $\prescript{n}{j=1}{\bigotimes}_{a} E$. We have a natural embedding $F\subset F^{**} = E^*$, and so whenever $y_i \in F$ for all $1\leq i \leq n$, we can interpret $y_1 \otimes \ldots \otimes y_n$ as an element in $\left(\prescript{n}{j=1}{\bigotimes}_{\|\cdot\|} E\right)^*$.

To prove our universal approximation theorem, we have to endow the topological tensor space $E^{\otimes n}$ with another topology than the norm topology such that norm bounded subsets become compact, but that linear evaluations of the form $y_1 \otimes \ldots \otimes y_n$ with $y_i \in F$ are still continuous. We do this by taking a weak-$^*$ topology. This requires the tensor spaces we are working with to have a predual. 

In the Hilbert tensor space setting the existence of a predual is straightforward by the Riesz representation theorem, as Hilbert tensor spaces are themselves Hilbert spaces. For the projective tensor norm, it is a bit more complicated -- in fact, Proposition~\ref{prop: no good topology} shows that what we seek is not entirely trivial. However, if a Banach space  $E$ has the approximation property, the Radon Nikodým property, and a predual $F$ (see Definitions~\ref{def: approximation property} and~\ref{def: Radon-nikodym} below), then $(F \otimes_\epsilon F)^* = E\otimes_\pi E$, see Proposition~\ref{prop: Radon--Nikodym} below.
\begin{definition}[Approximation property, see Proposition 4.1 in \cite{ryan_introduction_2002}]\label{def: approximation property}
    Let $E$ be a real Banach space. Then, $E$ has\emph{ the approximation property} if for every compact $K\subset X$ and every $\epsilon > 0$, there exists a finite rank operator $S\colon X \to X$ such that for every $x\in K$
    \[
    \|x - Sx\| < \epsilon.
    \]
\end{definition}

\begin{definition}[Radon--Nikodým property, see Section 5.1 of \cite{ryan_introduction_2002}]\label{def: Radon-nikodym}
    Let $E$ be a real Banach space. Then, $E$ has the \emph{Radon--Nikodým property} if for every finite measure $\mu$ and every bounded linear operator $T: L_1(\Omega,\mu)\to E$ there exists a bounded $\mu$-measurable function $g\colon \Omega \to E$ such that
    \[
    Tf = \int fg d\mu,\quad f\in L_1(\Omega,\mu).
    \]
\end{definition}
Most of the commonly used Banach spaces satisfy the approximation property; the first counterexample was found only in 1972 by Per Enflo. The Radon--Nikodÿm property is less common, as for example $L^1$ spaces do not have this property. However, there is still a relatively large class of spaces that do meet this requirement, as is illustrated by the following proposition.    
\begin{proposition}\label{prop: when Radon--Niko}[See Corollary 5.42 in \cite{ryan_introduction_2002}]
    Every separable space with a predual has the Radon--Nikodým property.
\end{proposition}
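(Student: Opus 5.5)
The plan is to reduce to the classical Dunford--Pettis argument that separable dual spaces have the Radon--Nikodým property, constructing the density $g$ by hand. Write $E = F^*$ with $E$ separable. First I would note that $F$ is then separable as well: $F$ embeds isometrically into $F^{**} = E^*$, and a Banach space whose dual is separable is itself separable. Fix a countable $\mathbb{Q}$-linear subspace $D\subseteq F$ that is dense in $F$. Let $(\Omega,\mu)$ be a finite measure space and $T\colon L_1(\Omega,\mu)\to E$ bounded. Define the vector measure $G(A) := T(\mathbf{1}_A)$. It is countably additive and satisfies $\|G(A)\|\le \|T\|\,\mu(A)$, so it is $\mu$-continuous with bounded variation.

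For each $y\in D$, the scalar measure $A\mapsto \langle G(A), y\rangle$ is absolutely continuous with respect to $\mu$ and bounded by $\|T\|\,\|y\|\,\mu(A)$. The scalar Radon--Nikodým theorem therefore gives a density $g_y$ with $|g_y|\le \|T\|\,\|y\|$ almost everywhere. Since $D$ is countable, I can discard a single null set $N$ outside of which two things hold simultaneously:
\begin{itemize}
\item $y\mapsto g_y(\omega)$ is $\mathbb{Q}$-linear on $D$;
\item $|g_y(\omega)|\le \|T\|\,\|y\|$ for all $y\in D$.
\end{itemize}
These are countably many almost-everywhere identities, and uniqueness of densities gives $g_{qy+q'y'} = q g_y + q' g_{y'}$ a.e. For $\omega\notin N$ this functional extends uniquely to a bounded linear functional on $F$, that is, to an element $g(\omega)\in F^* = E$ with $\|g(\omega)\|\le\|T\|$. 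On $N$ I set $g:=0$. By construction $\omega\mapsto \langle g(\omega), y\rangle$ is measurable for each $y\in D$, so $g$ is weak$^*$-measurable and bounded.

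The step I expect to be the main obstacle is upgrading weak$^*$-measurability to $\mu$-measurability (strong measurability), since $E$ is in general not separable as a dual of a separable space. This is exactly where separability of $E$ is used. The first ingredient is that for any $e\in E$,
\[
\|g(\omega)-e\| = \sup_{y\in D,\,\|y\|\le 1}|\langle g(\omega)-e,y\rangle|.
\]
This holds because $D$ is dense in $F$ and $F$ norms $F^*$, so the map is a countable supremum of measurable functions and hence measurable. The second ingredient is to take a countable dense set $\{e_k\}\subseteq E$ and define $g_n(\omega)$ to be the first $e_k$, $k\le n$, minimizing $\|g(\omega)-e_k\|$. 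This gives countably-valued measurable (in fact simple) functions converging pointwise to $g$, so $g$ is strongly measurable and, being bounded, Bochner integrable.

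Finally I would verify $Tf=\int f g\,\D\mu$. For $f=\mathbf{1}_A$ and $y\in D$,
\[
\Big\langle \int_A g\,\D\mu, y\Big\rangle = \int_A g_y\,\D\mu = \langle G(A),y\rangle = \langle T\mathbf{1}_A, y\rangle.
\]
Since $D$ separates points of $E=F^*$, this gives $T\mathbf{1}_A=\int_A g\,\D\mu$. Linearity extends the identity to simple functions. Density of simple functions in $L_1(\mu)$, together with the bounds $\|Tf\|\le\|T\|\|f\|_1$ and $\|\int fg\,\D\mu\|\le \|T\|\|f\|_1$, extends it to all $f\in L_1(\Omega,\mu)$.
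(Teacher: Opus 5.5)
Your proof is correct. The paper gives no argument of its own here; it simply cites Ryan, Corollary 5.42. What you have written is a self-contained version of the classical Dunford--Pettis argument behind that citation, and you check it directly against the operator-representability definition of the Radon--Nikod\'ym property that the paper uses.

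The citation buys brevity. Your version makes explicit where each hypothesis enters.
\begin{itemize}
\item Separability of the predual $F$ is automatic, since $F^*=E$ is separable; the embedding $F\hookrightarrow F^{**}=E^*$ you mention plays no role. It yields the countable norming $\mathbb{Q}$-subspace $D$, hence a single exceptional null set and a bounded weak$^*$-measurable density $g$.
\item Separability of $E$ itself is used only in your Pettis-type step. There you upgrade weak$^*$-measurability to strong measurability via the countable supremum $\|g(\omega)-e\|=\sup_{y\in D,\,\|y\|\le 1}|\langle g(\omega)-e,y\rangle|$ and the nearest-point approximants $g_n$.
\end{itemize}

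That second step is exactly where the hypothesis cannot be weakened. The space $\ell^\infty=(\ell^1)^*$ has a separable predual, so your construction still produces a bounded weak$^*$-measurable $g$. However, $\ell^\infty$ contains $c_0$ and therefore fails the Radon--Nikod\'ym property, so $g$ cannot in general be made strongly measurable there.
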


\begin{proposition}\label{prop: Radon--Nikodym}
    Let $E$ be a real Banach space with the Radon--Nikodým property and the approximation property, assume $E$ has a predual $F$, and let $n\in \N_{>0}$. Then, 
    \[
    \prescript{n}{j=1}{\bigotimes}_{\pi} E= \left(\prescript{n}{j=1}{\bigotimes}_{\epsilon} F\right)^{*}.
    \]
    In particular, linear functionals of the form $y_1 \otimes \ldots \otimes y_n$, with $y_i\in F $ for all $1\leq i \leq n$, are continuous with respect to the weak$^*$ topology of $\prescript{n}{j=1}{\bigotimes}_{\pi} E$.
\end{proposition}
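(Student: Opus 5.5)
The plan is to reduce to the classical two-fold duality $(F\otimes_\epsilon F)^*\cong E\otimes_\pi E$ and then induct on $n$, using associativity of the projective and injective tensor products. The base case $n=1$ is the assumption that $F$ is a predual of $E$.

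The first step is to recall the general description of the dual of an injective tensor product. For Banach spaces $X,Y$, the dual $(X\otimes_\epsilon Y)^*$ is the space of integral bilinear forms on $X\times Y$, or equivalently the integral operators $X\to Y^*$. There is always a canonical contractive map $J\colon X^*\otimes_\pi Y^*\to (X\otimes_\epsilon Y)^*$.

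The second step is to show that $J$ is an isometric isomorphism onto $(X\otimes_\epsilon Y)^*$ when $X^*$ (or $Y^*$) has the Radon--Nikod\'ym property and the approximation property. This is the standard theorem (Grothendieck; see Ryan's book, the results around Theorem 5.33/Corollary 5.37). Its two ingredients play separate roles:
\begin{itemize}
\item the Radon--Nikod\'ym property makes every integral operator into a dual space nuclear;
\item the approximation property makes the nuclear norm coincide with the projective norm, so that $J$ is injective and isometric.
\end{itemize}
I will quote this result, applied with $X=F^{\otimes_\epsilon (n-1)}$ and $Y=F$, where $X^*=E^{\otimes_\pi(n-1)}$ by the induction hypothesis and $Y^*=E$.

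This brings out the main obstacle: the two properties are needed for the $(n-1)$-fold space $E^{\otimes_\pi(n-1)}$, not just for $E$. There are two ways around it.
\begin{itemize}
\item \emph{Put the hypotheses on the $E$ factor.} The version of the theorem I quote only needs them on one factor, so I choose the dual factor carrying them to be $Y^*=E$. Then the identity $F^{\otimes_\epsilon n}=F^{\otimes_\epsilon(n-1)}\otimes_\epsilon F$ together with associativity of $\otimes_\epsilon$ and $\otimes_\pi$ (both are uniform crossnorm families, Proposition~\ref{prop: properties of projective and injective}) yields
\[
\bigl(F^{\otimes_\epsilon n}\bigr)^*\cong E^{\otimes_\pi(n-1)}\otimes_\pi E = E^{\otimes_\pi n}.
\]
\item \emph{Propagate the properties.} Alternatively, I can use that projective tensor products of spaces with the approximation property have the approximation property. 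For the Radon--Nikod\'ym property I can use that the induction hypothesis already exhibits $E^{\otimes_\pi(n-1)}$ as a dual space; if $E$ is separable, it is a separable dual, and Proposition~\ref{prop: when Radon--Niko} applies.
\end{itemize}
I expect verifying this precise form of the one-sided hypothesis to be the delicate point.

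Finally, once $E^{\otimes_\pi n}=(F^{\otimes_\epsilon n})^*$ holds, each simple tensor $y_1\otimes\cdots\otimes y_n$ with $y_i\in F$ lies in $F^{\otimes_\epsilon n}$, the predual. Evaluation against such an element is therefore weak-$^*$ continuous by the definition of the weak-$^*$ topology. I also need to check that this pairing agrees with the natural action $\prod_j\langle x_j,y_j\rangle$ on elementary tensors of $E$, which follows from how $J$ is defined.
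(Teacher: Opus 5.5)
Your first route is the paper's proof. The paper inducts on $n$ using Ryan's Theorem~5.33, $(X\otimes_\epsilon Y)^* = X^*\otimes_\pi Y^*$ when $X^*$ has the Radon--Nikod\'ym property and $X^*$ or $Y^*$ has the approximation property, with $X=F$ and $Y=F^{\otimes_\epsilon(n-1)}$; both hypotheses are thus only needed on $E$, and the induction hypothesis identifies $Y^*$ with $E^{\otimes_\pi(n-1)}$. The "delicate point" you flag is settled by the symmetry of $\otimes_\epsilon$ and $\otimes_\pi$, which lets you put the single factor $F$ in the first slot. Your second route is therefore unnecessary, and as you note it only covers separable $E$.
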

\begin{proof}
    This follows by inductively applying \cite[Theorem 5.33]{ryan_introduction_2002}, which states that $(X\otimes_\epsilon Y)^* = X^* \otimes_\pi Y^*$ if $X^*$ has the Radon--Nikodým property and either $X^*$ or $Y^*$ has the approximation property. For the base step we can directly apply \cite[Theorem 5.33]{ryan_introduction_2002} to get
    \[
    E \otimes_\pi E = \left( F \otimes_\epsilon F\right)^*.
    \]
    Then, assuming for $i\in \N_{>0}$ we have $ \prescript{i}{j=1}{\bigotimes}_{\pi} E= \left(\prescript{n}{j=1}{\bigotimes}_{\epsilon} F\right)^{*}$, we get
    \[
     \prescript{i+1}{j=1}{\bigotimes}_{\pi} E=  E \otimes_\pi \prescript{i}{j=1}{\bigotimes}_{\pi} E  =\left(F \otimes_\epsilon \prescript{i}{j=1}{\bigotimes}_{\epsilon} F \right)^* =\left(\prescript{i+1}{j=1}{\bigotimes}_{\epsilon} F\right)^{*},
    \]
    where in the middle equality, we could apply this Theorem, because $F^*=E$ has both the Radon--Nikodým property and the approximation property.
\end{proof}
As announced above, we now show that what we seek cannot always be obtained:
\begin{proposition}\label{prop: no good topology}
    Let $H$ be a real Hilbert space, let $\{e_i\}_{i\in\N}$ be an orthonormal basis for $H$. Then, there is no topology on $H\otimes_\epsilon H$ for which the norm unit ball is relatively compact and such that the linear operators $e_i\otimes e_j$ are continuous for all $i,j\in \N$.
\end{proposition}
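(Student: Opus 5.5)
We argue by contradiction. Suppose $\tau$ is a topology on $H\otimes_\epsilon H$ such that the closed unit ball $B$ is relatively $\tau$-compact and every functional $e_i\otimes e_j$ is $\tau$-continuous. Note that these functionals are bounded on $H\otimes_\epsilon H$, by the reasonable crossnorm property of the injective norm. The plan is to find a net in $B$ with a $\tau$-cluster point $z\in H\otimes_\epsilon H$ whose coordinates $\inprod{z}{e_i\otimes e_j}$ are incompatible with $z$ lying in the injective tensor space.

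The key observation is that $H\otimes_\epsilon H$ embeds isometrically into the compact operators $\mathcal{K}(H)$ with the operator norm, via $u\otimes v\mapsto \inprod{\cdot}{u}v$. This embedding is injective on the completion because $H$ has the approximation property. Under this identification, $\inprod{z}{e_i\otimes e_j}$ is the matrix entry $\inprod{z e_i}{e_j}$.

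For the net we take the finite-rank identity truncations $z_N=\sum_{i=1}^N e_i\otimes e_i$. Each $z_N$ corresponds to an orthogonal projection, so $\|z_N\|_\epsilon=1$ and $z_N\in B$. By relative compactness, the sequence $(z_N)_N$ has a subnet converging in $\tau$ to some $z\in H\otimes_\epsilon H$; here we use that relative compactness means the $\tau$-closure of $B$ is compact inside the space. By continuity of $e_i\otimes e_j$, and since $\inprod{z_N}{e_i\otimes e_j}=\delta_{ij}$ for $N\ge \max(i,j)$, we obtain $\inprod{z}{e_i\otimes e_j}=\delta_{ij}$ for all $i,j$.

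It remains to show that no compact operator $K$ has these matrix entries. The entries give $\inprod{Ke_i}{e_j}=\delta_{ij}$, so $Ke_i=e_i$ for every $i$, since the $e_j$ form an orthonormal basis. Then $K$ agrees with the identity on a dense set, and by boundedness $K=\id$. But $\id$ is not compact in infinite dimensions, because $Ke_i=e_i$ has no convergent subsequence. This contradiction proves the proposition.

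The main obstacle is justifying that $H\otimes_\epsilon H\to\mathcal{K}(H)$ is an injective isometry, so that an element of the completion is determined by its matrix coefficients. We would cite the standard result (e.g.\ Ryan) that for Hilbert spaces $H\otimes_\epsilon H$ is isometric to $\mathcal{K}(H)$. Alternatively we would argue directly: for elements of the algebraic tensor product, the injective norm equals the operator norm, so the map extends to an isometry into operator-norm limits of finite-rank operators, which are compact. The subtle point of using $\tau$-relative compactness to obtain a cluster point inside the space (and not only in some closure) is handled by the definition of relative compactness.
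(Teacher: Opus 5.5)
Your proof is correct, but it reaches the contradiction by a different route from the paper.

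Both arguments use the norm-one elements $z_N=\sum_{i\le N}e_i\otimes e_i$ and obtain a limit $z\in H\otimes_\epsilon H$ with $\inprod{z}{e_i\otimes e_j}=\delta_{ij}$.

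The paper then stays entirely within tensor-norm language. The span of $\{e_i\otimes e_j\}$ is dense in $H\otimes_\epsilon H$, yet every $y$ in that span satisfies $\inprod{y}{e_i\otimes e_i}=0$ for some $i$. Since $e_i\otimes e_i$ has dual norm one, this gives $\|z-y\|_\epsilon\ge|\inprod{z-y}{e_i\otimes e_i}|=1$, contradicting density.

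You instead map $H\otimes_\epsilon H$ isometrically into the operator-norm closure of the finite-rank operators, which sits inside $\mathcal{K}(H)$, and identify $z$ with $\id$, which is not compact. The paper's route is more elementary. Yours explains conceptually what fails: the natural limit of $z_N$ is the identity, which lies in $\mathcal{L}(H)=\mathcal{K}(H)^{**}$ but not in $\mathcal{K}(H)$.

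Your use of subnets is in fact more careful than the paper's. The paper speaks of a convergent subsequence, although compactness in an arbitrary topology does not give sequential compactness.

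The step you flag as the main obstacle is easier than you suggest. Injectivity is automatic, since the extension is an isometry. The approximation property only matters for surjectivity onto $\mathcal{K}(H)$, which you never use. What you actually need is just $\inprod{z}{e_i\otimes e_j}=\inprod{J(z)e_i}{e_j}$ on the completion, with $J$ the extended embedding. This holds because both sides are continuous in $z$ and agree on algebraic tensors.
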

\begin{proof}
    Consider the sequence $\{x_n\}_{n\in\N}$ in $H\otimes_a H$ given by
    \[
    x_n = \sum_{i=1}^n e_i \otimes e_i .
    \]
    By H\"older's inequality, we have for $h_1,h_2\in H$ with $\|h_1\|=\|h_2\|=1$ that
    \[
   | (h_1 \otimes h_2)\, (x_n)| = |\sum_{i=1}^n \inprod{e_i}{h_1} \inprod{e_i}{h_2}| \leq  \left(\sum_{i=1}^n |\inprod{e_i}{h_1}|^2\right) ^{\frac{1}{2}} \left(\sum_{i=1}^n |\inprod{e_i}{h_2}|^2\right) ^{\frac{1}{2}} \leq \| h_1 \| \, \|h_2 \| = 1.
    \]
    Therefore, by definition of the injective tensor norm and the fact that $(e_1\otimes e_1)x_n=1$, we have $\|x_n\|_\epsilon =1$.
    
    Assume now, that there exists a topology, such that $\{ x_n \}_{n\in \N}$ has a subsequence converging to some $x\in H\otimes_\epsilon H$, and that the linear operators $e_i\otimes e_j$ are continuous. Because the $e_i\otimes e_j$ are continuous, we obtain that $e_i \otimes e_j (x) = \delta_{ij}$ for any $i,j\in \N$.
    
    By \cite[Lemma 4.40]{weaver_lipschitz_2018}, the span of $\{e_i\otimes e_j\}_{i,j\in\N}$ is dense in $H\otimes_\epsilon H$. However, for any element $y$ in this span, there exists $i\in \N$ such that $(e_i \otimes e_i) \, (y) = 0$, and so \[
    (e_i \otimes e_i) (x-y) = 1.
    \]
    This implies that $\|x-y\|_\varepsilon \geq 1$ for any $y$ in the span of $\{e_i\otimes e_j\}_{i,j\in\N}$, so that $x$ cannot be in $H\otimes_\epsilon H$. This is a contradiction, and so the proposition is proven.
\end{proof}

\section{The predual}
\begin{lemma}\label{lem: lower semicontinouity of norm}
    Let $E$ be a real Banach space with predual $F$. Let $\{x_\lambda\}_{\lambda \in I}$ be a net in $E$ that converges to $x\in E$ in the weak$^*$ topology. Then $\|x\| \leq \liminf_\lambda \|x_\lambda\|$.
\end{lemma}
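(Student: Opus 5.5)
The plan is to express the norm of $x$ as a supremum of weak$^*$-continuous functionals and use that a supremum of continuous functions is lower semicontinuous. Concretely, since $F$ is a predual of $E$, the canonical pairing identifies $E$ isometrically with $F^*$. Hence for every $x\in E$
\[
\|x\| = \sup\{ |\inprod{x}{y}| \colon y\in F,\ \|y\|^*\leq 1\}.
\]
This is simply the definition of the dual norm on $F^*$. No Hahn--Banach argument is needed here, because the supremum is taken over the predual side.

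Next, fix $y\in F$ with $\|y\|^*\leq 1$. By the definition of the weak$^*$ topology (the initial topology generated by the evaluations against elements of $F$), $\inprod{x_\lambda}{y}\to\inprod{x}{y}$. Therefore
\[
|\inprod{x}{y}| = \lim_\lambda |\inprod{x_\lambda}{y}| \leq \liminf_\lambda \|x_\lambda\|\,\|y\|^* \leq \liminf_\lambda \|x_\lambda\|.
\]
Taking the supremum over all such $y$ on the left-hand side then gives $\|x\|\leq \liminf_\lambda \|x_\lambda\|$. This holds with the convention that the right-hand side may be $+\infty$, since the net need not be bounded.

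There is no real obstacle. The only point to check is that the isomorphism $E\cong F^*$ in the setting of the paper is isometric, so that the dual-norm formula reproduces $\|\cdot\|$ exactly. If it were only an isomorphism, the same argument would give the inequality up to the equivalence constant. In the applications (Setting~\ref{assumptions}) the preduals are isometric, so I would state this assumption explicitly.
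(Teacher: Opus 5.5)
Your proof is correct and takes essentially the same route as the paper. Both write $\|x\|$ as a supremum of $|\inprod{x}{y}|$ over the unit ball of $F$ (the paper phrases this as $F$ being norming for $E$), pass to the limit for each fixed $y$, bound each term by $\liminf_\lambda \|x_\lambda\|$, and take the supremum. Your caveat that the identification $E\cong F^*$ must be isometric is exactly the assumption implicit in the paper's appeal to $F$ being norming.
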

\begin{proof}

As $F$ is norming for $E$ and due to the weak-$^*$ convergence, we have
    \[
    \|x\| =  \sup_{y\in F, ||y||^*=1} \inprod{x}{y} = \sup_{y\in , ||y||^*=1F} \lim_{\lambda \in I}\inprod{x_\lambda}{y} \leq \liminf_{\lambda \in I} \sup_{y\in F, ||y||^*=1} \inprod{x_\lambda}{y} = \liminf_{\lambda \in I} \|x_\lambda\|.
    \]
\end{proof}
\begin{lemma}\label{lem: density implies generate predual}
    Let $E$ be a real Banach space with predual $F$. Let $\{x_\lambda\}_{\lambda \in I}$ be a norm bounded net in $E$, and let $x\in X$. Additionally, assume there exists a (norm) dense set $F'\subset F$, such that for all $y \in F'$
    \[
    \inprod{x_\lambda}{y} \to \inprod{x}{y}.
    \]
    Then $x_\lambda \wkst x$. 
\end{lemma}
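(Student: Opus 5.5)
We prove Lemma~\ref{lem: density implies generate predual} with a standard $\epsilon/3$ argument, using only the norm boundedness of the net and the density of $F'$ in $F$. Set $M := \sup_{\lambda\in I}\|x_\lambda\| < \infty$. By definition of the weak-$^*$ topology on $E=F^*$, it suffices to show that $\inprod{x_\lambda}{y}\to\inprod{x}{y}$ for every $y\in F$, not merely for $y\in F'$.

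Fix $y\in F$ and $\epsilon>0$.

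First, by density of $F'$ in $F$, we choose $y'\in F'$ with
\[
\|y-y'\|^* < \frac{\epsilon}{3(M+\|x\|+1)}.
\]

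Second, by hypothesis there is an index $\lambda_0\in I$ such that $|\inprod{x_\lambda}{y'}-\inprod{x}{y'}|<\epsilon/3$ for all $\lambda\succeq\lambda_0$.

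Third, we split
\[
|\inprod{x_\lambda}{y}-\inprod{x}{y}| \leq |\inprod{x_\lambda}{y-y'}| + |\inprod{x_\lambda}{y'}-\inprod{x}{y'}| + |\inprod{x}{y'-y}|.
\]
We bound the first and third terms by $\|x_\lambda\|\,\|y-y'\|^*$ and $\|x\|\,\|y-y'\|^*$, respectively. This uses that the duality pairing between $E$ and its predual $F$ is bounded by the product of the norms. Each of these two terms is then below $\epsilon/3$ by the choice of $y'$ and the bound $\|x_\lambda\|\le M$. The middle term is below $\epsilon/3$ for $\lambda\succeq\lambda_0$. Hence the whole expression is below $\epsilon$ for all $\lambda\succeq\lambda_0$, which gives the claim.

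There is no real obstacle; the only point requiring care is the role of norm boundedness. Without a uniform bound on $\|x_\lambda\|$ the first term cannot be controlled, since a net need not be bounded even when it converges weakly-$^*$. The uniform bound is exactly what makes the density of $F'$ sufficient. We also read the statement's ``$x\in X$'' as $x\in E$.
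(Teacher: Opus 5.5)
Your proof is correct and is essentially the paper's own $\epsilon/3$ argument: approximate $y\in F$ by some $y'\in F'$, use the assumed convergence at $y'$, and control the two remaining error terms via norm boundedness. Your choice $\|y-y'\|^*<\epsilon/(3(M+\|x\|+1))$ is slightly cleaner than the paper's $\epsilon/(3M)$. The paper's choice forces it to bound $\|x\|$ via Lemma~\ref{lem: lower semicontinouity of norm}, even though that lemma presupposes the weak-$^*$ convergence being proved, and it also tacitly needs $M>0$.
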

\begin{proof}
    Take an arbitrary element $y\in F$. We want to show
    \[
    \lim_{\lambda \in I} \inprod{x_\lambda}{y} = \inprod{x}{y}.
    \]
    Let $\epsilon>0$. By assumption, there exists $y' \in F'$ such that $\|y- y'\| < \frac{\epsilon}{3M}$, with $M\coloneqq \sup_\lambda \|x_\lambda\|$. Take $\lambda'$ such that for all $\lambda \geq  \lambda'$ we have $|\inprod{x_\lambda}{y'} - \inprod{x}{y'}| < \frac{\epsilon}{3}$. Then, for any $\lambda \geq \lambda '$, we have
    \[
    \begin{split}
        |\inprod{x_\lambda}{y} - \inprod{x}{y}| \leq &|\inprod{x_\lambda}{y}-\inprod{x_\lambda}{y'}| + |\inprod{x_\lambda}{y'} - \inprod{x_}{y'}| + |\inprod{x}{y'} - \inprod{x_\lambda}{y}| \\ \leq & \|y-y'\|^*\, \|x_\lambda\| + \frac{\epsilon}{3} + \|y-y'\|^*\, \|x\| \leq \epsilon,
    \end{split}
    \]
    where in the last step we used Lemma \ref{lem: lower semicontinouity of norm}. As $\epsilon$ was arbitrary, this shows, that
    \[
    \lim_{\lambda \in I} \inprod{x_\lambda}{y} = \inprod{x}{y},
    \]
    and so we are done.
\end{proof}
\begin{lemma}\label{lem: weak star convergence tensor product}
    Assume Setting \ref{assumptions}. Let $\{x_\lambda\}_{\lambda \in I}$ be a norm bounded net in $E^{\otimes i}$ converging weak-$^*$ to $x$, and $\{x'_\lambda\}_{\lambda \in I}$ be a norm bounded net in $E^{\otimes j}$ converging to $x'$ in the weak-$^*$ topology, for some $i,j\in \N_{>0}$. Then $x_\lambda \otimes x'_\lambda \wkst x \otimes x_\lambda$. 
\end{lemma}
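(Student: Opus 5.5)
The statement has a typo: the limit should be $x\otimes x'$, not $x\otimes x_\lambda$. The plan is to reduce the claim to elementary tensors of the predual via Lemma~\ref{lem: density implies generate predual}, and then to factor the pairing.

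\textbf{Step 1: norm-boundedness.} By Setting~\ref{assumptions}, $E^{\otimes(i+j)}$ is the dual of $F^{\otimes(i+j)}$. The net $\{x_\lambda\otimes x'_\lambda\}_{\lambda\in I}$ is norm bounded, because the family of tensor norms is a strong crossnorm family:
\[
\|x_\lambda\otimes x'_\lambda\|_{i+j}=\|x_\lambda\|_i\,\|x'_\lambda\|_j .
\]
So it suffices to check convergence against a norm-dense subset of $F^{\otimes(i+j)}$.

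\textbf{Step 2: choice of dense set.} For $F'$ I take the algebraic span of elementary tensors $y\otimes y'$ with $y\in F^{\otimes_a i}$ and $y'\in F^{\otimes_a j}$. This is exactly $F^{\otimes_a(i+j)}$, which is dense in $F^{\otimes(i+j)}$ by Definition~\ref{def: topological tensor space}. By linearity it is enough to test against $y\otimes y'$.

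\textbf{Step 3: factorization of the pairing.} The key identity needed is
\[
\langle x_\lambda\otimes x'_\lambda,\ y\otimes y'\rangle=\langle x_\lambda,y\rangle\,\langle x'_\lambda,y'\rangle .
\]
For algebraic $x_\lambda$ and $x'_\lambda$ this holds by multilinearity and the universality in Proposition~\ref{prop: universality of tensor space d geq 3}.

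For general elements I approximate $x_\lambda$ and $x'_\lambda$ in norm by algebraic tensors. By the strong crossnorm property the products then converge in $E^{\otimes(i+j)}$. Both sides of the identity are continuous in this approximation, since $y\otimes y'$ is a bounded functional and pairing is continuous.

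\textbf{Step 4: conclusion.} By weak-$^*$ convergence of each factor, the right-hand side converges to $\langle x,y\rangle\langle x',y'\rangle=\langle x\otimes x',y\otimes y'\rangle$. Lemma~\ref{lem: density implies generate predual} then gives $x_\lambda\otimes x'_\lambda\wkst x\otimes x'$.

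\textbf{Main obstacle.} The delicate point is the compatibility of pairings. One must know that $y\otimes y'$, viewed in $F^{\otimes(i+j)}$, acts on $E^{\otimes(i+j)}$ through the duality of Setting~\ref{assumptions} in the same way it acts elementwise on simple tensors. The setting only asserts that $F^{\otimes n}$ is the predual of $E^{\otimes n}$, so I would make explicit, as the natural reading of Setting~\ref{assumptions}, that this duality extends the canonical pairing on elementary tensors $\langle e_1\otimes\cdots\otimes e_n,\,f_1\otimes\cdots\otimes f_n\rangle=\prod_k\langle e_k,f_k\rangle$. This holds in both Examples~\ref{ex:2} and~\ref{ex:1}.
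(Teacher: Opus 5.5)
Your proposal is correct and follows the paper's proof: you reduce via Lemma~\ref{lem: density implies generate predual} to testing against products $y\otimes y'$, then factor the pairing as $\langle x_\lambda,y\rangle\langle x'_\lambda,y'\rangle$. You are somewhat more careful than the paper in four places:
\begin{itemize}
\item you read the limit correctly as $x\otimes x'$;
\item you use the smaller dense set $F^{\otimes_a(i+j)}$ rather than $F^{\otimes i}\otimes_a F^{\otimes j}$;
\item you check that $x_\lambda\otimes x'_\lambda$ is norm bounded via the strong crossnorm property, which that lemma requires;
\item you state explicitly that the duality in Setting~\ref{assumptions} must extend the canonical pairing on elementary tensors, which the paper takes as ``essentially by assumption''.
\end{itemize}
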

\begin{proof}
    By the assumptions of the setting, the predual of $E^{\otimes (i+j)}$ is $F^{\otimes (i+j)}$, of which $F^{\otimes i} \otimes F^{\otimes j}$ is a dense subspace. Therefore, by Lemma \ref{lem: density implies generate predual} it is sufficient to show that 
    \[
    \inprod{x_\lambda \otimes x'_\lambda}{y\otimes y'} \to \inprod{x \otimes x'}{y\otimes y'},
    \]
    for any $y\in F^{\otimes i}$ and $y' \in F^{\otimes j}$. However, this is essentially by assumption as
    \[
     \inprod{x_\lambda \otimes x'_\lambda}{y\otimes y'} = \inprod{x_\lambda}{y} \inprod{x'_\lambda}{y'} \to  \inprod{x}{y} \inprod{x'}{y'} =  \inprod{x \otimes x'}{y\otimes y'}.
    \]
\end{proof}

\section{Point separation by the tensor dual}
The Stone--Weierstrass theorem is the foundation for the universal approximation theorems proven in Section~\ref{sec: UATs}. A crucial assumption in the Stone--Weierstrass theorem is that the set of approximating functions separates points, which, in the context of our universal approximation theorems, results in the requirement that a certain set of linear functionals separates points of $T((E))$. In this section we collect some relevant results on point separation in this context.

\begin{lemma}\label{lem : point separating dense is enough}
    Let $E$ be a real Banach space, let $F \subset E^*$ separate points in $E$, and let $G$ be dense in $F$. Then $G$ separates points in $E$. 
\end{lemma}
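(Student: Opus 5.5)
Fix $x,x'\in E$ with $x\neq x'$ and set $z:=x-x'\neq 0$. By linearity of the pairing, it suffices to find $g\in G$ with $\inprod{z}{g}\neq 0$. The plan is to obtain a separating element of $F$ first and then approximate it by an element of $G$, keeping control of the error through the norm of $z$.

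\textbf{Step 1.} Since $F$ separates points of $E$, there is $f\in F$ with $\inprod{z}{f}\neq 0$. Put $\delta:=|\inprod{z}{f}|>0$.

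\textbf{Step 2.} By density of $G$ in $F$ (with respect to the dual norm $\|\cdot\|_{E^*}$, as $F\subset E^*$), choose $g\in G$ with $\|f-g\|_{E^*}<\delta/(2\|z\|)$.

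\textbf{Step 3.} Estimate
\[
|\inprod{z}{g}| \geq |\inprod{z}{f}| - |\inprod{z}{f-g}| \geq \delta - \|z\|\,\|f-g\|_{E^*} > \delta/2 > 0 .
\]
Hence $\inprod{x}{g}\neq\inprod{x'}{g}$, so $G$ separates points of $E$.

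The only point needing care is which topology ``dense'' refers to. The argument uses the dual norm on $E^*$, which makes $y\mapsto\inprod{z}{y}$ continuous. This is also what is needed in the paper's applications, since there $G$ is norm-dense in $F$ or in $E^*$, with $E$ replaced by the relevant tensor space and $F$ by the corresponding space of functionals.

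Beyond this, no real obstacle is expected. The one thing to check is that the duality pairing used to define separation is the same as the one in which the density is measured.
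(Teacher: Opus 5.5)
Your proof is correct and follows essentially the paper's argument. You pick $f\in F$ separating $x$ and $x'$, approximate it in the dual norm by $g\in G$ within $\delta/(2\|x-x'\|)$, and the triangle inequality shows $|\langle x-x', g\rangle|>\delta/2>0$; your observation that density must be taken in the norm of $E^*$ is also the paper's implicit convention.
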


\begin{proof}
    Take any two points $x,x' \in E$. By assumption there exists $y \in F$ such that $y(x) \neq y(x')$, so that in particular $|y(x) - y(x')| = | y (x - x') | \geq C$ for some $C >0$. As $G$ is dense in $F$, we can find $y' \in G$ so that $\| y -y'\|^* \leq \frac{C}{2 \|x-x'\|}$. Then $|y'(x-x')|  \geq | y (x-x')| - |(y-y') (x- x')| \geq \frac{1}{2}C $, so that $y'$ separates $x$ and $x'$, which shows $G$ is point separating.
\end{proof}

The following lemma not only ensures that $(E^*)^{\otimes_a n}$ separates points in the injective tensor space $E^{\otimes_{\epsilon} n}$ (see Definition~\ref{def: injective tensor norm}), but also provides the corner stone for a more general statement -- see Corollary~\ref{cor: point separating small} below.
    
\begin{lemma}\label{lem: point separating of injective tensor space}
    Let $E$ be a real Banach space and let $n \in \N_{>0}$. Then for all $u\in E^{\otimes_{\epsilon} n}\setminus \{ 0\}$ there exist $\phi_i \in E^*$ with $\|\phi\|^*=1$, for $1\leq i \leq n$, such that
    $
    | (\phi_1\otimes\ldots \otimes\phi_n) (u) | > 0.
    $
\end{lemma}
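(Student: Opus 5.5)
This is essentially the definition of the injective norm together with the fact that the injective norm on the algebraic tensor space extends to its completion. The plan is to first treat algebraic elements, then pass to a general $u\in E^{\otimes_\epsilon n}$ by a density argument, and finally normalise.

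Fix $u\in E^{\otimes_\epsilon n}\setminus\{0\}$ and set $\delta:=\|u\|_\epsilon>0$. By Definition~\ref{def: topological tensor space} the space $E^{\otimes_\epsilon n}$ is the $\|\cdot\|_\epsilon$-closure of $E^{\otimes_a n}$. Hence there is $v\in E^{\otimes_a n}$ with $\|u-v\|_\epsilon<\delta/3$, and the triangle inequality gives $\|v\|_\epsilon>2\delta/3$.

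By Definition~\ref{def: injective tensor norm}, $\|v\|_\epsilon$ is a supremum of $|(\phi_1\otimes\cdots\otimes\phi_n)(v)|$ over unit-norm functionals $\phi_i\in E^*$. So we may pick unit functionals $\phi_1,\ldots,\phi_n$ with $|(\phi_1\otimes\cdots\otimes\phi_n)(v)|>2\delta/3$.

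Next, the functional $\Phi:=\phi_1\otimes\cdots\otimes\phi_n$, defined on $E^{\otimes_a n}$ through Proposition~\ref{prop: universality of tensor space d geq 3}, satisfies $|\Phi(w)|\le\|w\|_\epsilon$ for all algebraic $w$. This follows directly from the definition of $\|\cdot\|_\epsilon$ as a supremum over exactly such products. Therefore $\Phi$ has norm at most $1$ and extends uniquely by continuity to $E^{\otimes_\epsilon n}$. This extension is what $(\phi_1\otimes\cdots\otimes\phi_n)(u)$ means, and it still satisfies $|\Phi(w)|\le\|w\|_\epsilon$ for every $w\in E^{\otimes_\epsilon n}$.

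We then conclude:
\[
|\Phi(u)|\ \ge\ |\Phi(v)|-|\Phi(u-v)|\ >\ \tfrac{2\delta}{3}-\tfrac{\delta}{3}\ >\ 0 .
\]

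The only point requiring care is the meaning of $(\phi_1\otimes\cdots\otimes\phi_n)(u)$ for non-algebraic $u$, together with the bound $\|\Phi\|\le 1$ on the completion. Both follow immediately from the definition, so no real obstacle is expected. Normalisation is harmless: the supremum already ranges over unit functionals, so no rescaling is needed.
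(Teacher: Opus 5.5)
Your proof is correct and follows essentially the same route as the paper: approximate $u$ by an algebraic tensor, choose unit functionals that nearly attain the injective norm of that approximant, and use that $\phi_1\otimes\cdots\otimes\phi_n$ is a contraction for $\|\cdot\|_\epsilon$ to carry the lower bound back to $u$. The differences are cosmetic: you work with $\delta/3$ where the paper uses $\tfrac14\|u\|_\epsilon$, and you derive $\|\Phi\|\le 1$ directly from the definition of $\|\cdot\|_\epsilon$ instead of quoting $\|\phi_1\otimes\cdots\otimes\phi_n\|^*_\epsilon=1$.
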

\begin{proof}
Let $u\in E^{\otimes_{\epsilon} n}\setminus \{ 0\}$ and let $u'\in E^{\otimes_a n}$ be such that $\|u - u'\|_\epsilon < \frac{1}{4}\|u\|_\epsilon$. By the triangle inequality, we have $\|u'\|_\epsilon > \frac{3}{4} \|u\|_\epsilon$, and so by definition of the injective tensor norm there exist  $\phi_i \in E^*$, $1\leq i \leq n$, such that $\|\phi_i\|^* = 1$ for all $1\leq i\leq n$ and that
    \[
    | (\phi_1\otimes\ldots \otimes\phi_n) (u')| > \tfrac{1}{2}\|u\|_\epsilon.
    \]
    As $\|\phi_i \|^* = 1$ for all $i$, we have that $\|\phi_1\otimes\ldots \otimes\phi_n \|_\epsilon^* = 1$ and so $|(\phi_1\otimes\ldots \otimes\phi_n) (u - u')| \leq \|u-u'\|_\epsilon$, so that
    \[
    | (\phi_1\otimes\ldots \otimes\phi_n)  (u) | > \tfrac{1}{4} \|u\|_\epsilon > 0.
    \]
This completes the proof.
\end{proof}

The injective tensor norm is special, as it is the smallest possible reasonable crossnorm. Therefore, whenever some sequence is Cauchy in some other reasonable crossnorm, it is also Cauchy in the injective tensor norm. It is therefore tempting to conclude that the injective tensor space is a larger space than any other tensor space of a reasonable crossnorm. If that were the case, one could essentially directly generalize the previous lemma. However, it is not directly guaranteed that two Cauchy sequences that converge to different objects with respect to a reasonable crossnorm, also converge to different objects in the injective tensor norm. However, we can show that this is guaranteed if the underlying Banach space has the approximation property (see Definition~\ref{def: approximation property}) and the crossnorm is strongly uniform (see Definition~\ref{def:strong-unif-crossnorm}). 

In the proposition below, $E^{\otimes_{\epsilon} n}$ denotes the topological tensor space $\prescript{n}{j=1}{\bigotimes_{\epsilon}}E$ obtained by taking the injective tensor norm.
\begin{proposition}\label{prop: natural embedding injective tensor space is injective}
Let $E$ be a real Banach space with the approximation property, let \mbox{$\{\|\cdot \|_n\}_{n\in \N_{>0}}$} be a strongly uniform crossnorm family on $\{E^{\otimes n}\}_{n\in \N_{>0}}$, and let $J_n\colon E^{\otimes_{a} n} \rightarrow E^{\otimes_\epsilon n}$, $n\in \N_{>0}$, be the canonical embedding. Then $J_n$ extends to a bounded injective map $\tilde{J}_n\colon  E^{\otimes n} \rightarrow E^{\otimes_\epsilon n}$ for all $n\in \N_{>0}$.
\end{proposition}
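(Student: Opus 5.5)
Boundedness is the easy half: strong uniform crossnorms are reasonable (Remark~\ref{remark: strongly uniform means reasonable}), and by \eqref{eq: injective tensor norm predef} every reasonable crossnorm dominates the injective one on $E^{\otimes_a n}$. Hence $\|J_n u\|_\epsilon \le \|u\|_n$, and $J_n$ extends by density to a contraction $\tilde J_n\colon E^{\otimes n}\to E^{\otimes_\epsilon n}$. All the work is in proving that $\tilde J_n$ is injective.

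\textbf{Reduction to functionals.} I will first reduce injectivity to a statement about elementary functionals. Let $u\in E^{\otimes n}$ with $\tilde J_n u=0$. Then for all $\phi_1,\ldots,\phi_n\in E^*$ we have $(\phi_1\otimes\cdots\otimes\phi_n)(u)=0$, since these functionals are continuous on both spaces and agree on the dense subspace $E^{\otimes_a n}$. So it suffices to show that $(E^*)^{\otimes_a n}$ separates points of $E^{\otimes n}$, i.e.\ to deduce $u=0$ from the vanishing of all these pairings.

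\textbf{Approximation by finite-rank operators.} Fix $\eta>0$ and choose $u'\in E^{\otimes_a n}$ with $\|u-u'\|_n<\eta$. Write $u'$ as a finite sum of elementary tensors, and let $K\subset E$ be the finite (hence compact) set of all vectors occurring in it. The approximation property gives a finite-rank $S\in\mathcal L(E)$ with $\|x-Sx\|<\delta$ for $x\in K$. Take $A=S^{\otimes n}$, viewed as an operator on $E^{\otimes n}$; it is well defined and bounded because the family is strongly uniform, with $\|S^{\otimes n}\|\le\|S\|^n$.

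\textbf{Main obstacle.} The difficulty is that the approximation property gives no control over $\|S\|$, so I cannot directly bound $\|S^{\otimes n}(u-u')\|$. I therefore invoke the standard bounded reformulation on compacts, in the form of \cite[Proposition 4.1]{ryan_introduction_2002}: $E$ has the approximation property iff the identity lies in the closure of the finite-rank operators for the topology of uniform convergence on compacts. To make the argument work I also need the version in which $\|S\|$ is bounded independently of $K$. If this is not available, I will argue through the dual pairing instead: $u\mapsto \langle u,\Phi\rangle$ for $\Phi\in (E^{\otimes n})^*$.

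Granting this, the estimate runs as follows. By telescoping, $\|u'-S^{\otimes n}u'\|_n\le C(K,\|S\|)\,\delta$. Then $S^{\otimes n}u$ is close to $u$, and $S^{\otimes n}u$ lies in the finite-dimensional space $(SE)^{\otimes_a n}$. On that space, all norms, including the restricted $\|\cdot\|_n$ and $\|\cdot\|_\epsilon$, are equivalent, and elementary functionals separate points. Moreover $\phi_1\circ S\otimes\cdots\otimes\phi_n\circ S$ are again elementary functionals, so all pairings of $S^{\otimes n}u$ vanish, which gives $S^{\otimes n}u=0$. Letting $\eta,\delta\to0$ yields $\|u\|_n\le\|u-S^{\otimes n}u\|_n\to0$, so $u=0$.

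If the uniform bound on $\|S\|$ fails, my fallback is induction on $n$ via $E^{\otimes(n+1)}=E^{\otimes n}\otimes_{\|\cdot\|}E$ (see \eqref{eq: strong crossnorm property for the spaces}). I would treat $E^{\otimes(n+1)}$ as a space of operators $E^*\to E^{\otimes n}$, using reasonableness of the norm, and apply the approximation property only to the compact set of one-variable slices.
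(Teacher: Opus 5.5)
Your boundedness argument, your reduction of injectivity to point separation by $(E^*)^{\otimes_a n}$, and your finite-dimensional endgame are all fine. The proof breaks exactly at the step you flag, and this is a genuine gap, not a technicality.

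Since $S$ is chosen after $u'$, and hence after $\eta$, the term $\|S^{\otimes n}(u-u')\|_n\le\|S\|^n\eta$ is not controlled. A bound on $\|S\|$ independent of $K$ is the \emph{bounded} approximation property, which is strictly stronger than the AP: Figiel and Johnson constructed a space with the AP but without the BAP. The characterisation you quote from Ryan's Proposition 4.1 is only a restatement of the AP and carries no norm bound. So, as written, your argument proves the proposition only under the BAP.

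Neither fallback repairs this. The ``dual pairing'' route is not an argument. Deducing $\langle u,\Phi\rangle=0$ for all $\Phi\in(E^{\otimes n})^*$ from the vanishing of the elementary pairings is precisely the weak$^*$-density of $(E^*)^{\otimes_a n}$ in $(E^{\otimes n})^*$, which is what is at stake.

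The slice route is circular. Regarding $u\in E^{\otimes n}\otimes_{\|\cdot\|_{n+1}}E$ as an operator $E^*\to E^{\otimes n}$ is only faithful if $E^{\otimes n}\otimes_{\|\cdot\|_{n+1}}E\to E^{\otimes n}\otimes_\epsilon E$ is injective, and that is the one-step version of the claim. Moreover, uniform approximation on the compact set of slices only makes $\|u-(I\otimes S)u\|_\epsilon$ small, not $\|u-(I\otimes S)u\|_{n+1}$. Control in the injective norm is useless here, because $\|\cdot\|_{n+1}$ may be much larger.

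The missing idea is that the compact set must be extracted from $u$ itself, not from an algebraic approximant. Then approximation on it controls $\|u-(S\otimes I)u\|_n$ with no bound on $\|S\|$. For the projective norm this is Grothendieck's device:
\begin{itemize}
\item write $u=\sum_k\lambda_k x_k\otimes y_k$ with $\sum_k|\lambda_k|<\infty$, $x_k\to0$ in $E$, and $\|y_k\|_{n-1}\le1$;
\item put $K=\{0\}\cup\{x_k\colon k\in\N\}$;
\item obtain $\|u-(S\otimes I)u\|_n\le\sum_k|\lambda_k|\,\|x_k-Sx_k\|$.
\end{itemize}
Since $(S\otimes I)u\in SE\otimes_a E^{\otimes(n-1)}$, your endgame together with induction on $n$ then gives $u=0$.

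For a general strongly uniform family no such series representation is available, and the paper does not argue by hand. It takes the two-factor statement from Defant--Floret (\S21.7, Prop.~1): for $E$ with the AP, $E\otimes_{\|\cdot\|_n}E^{\otimes(n-1)}\to E\otimes_\epsilon E^{\otimes(n-1)}$ is injective. It then inducts via $\tilde J_n=(\id\otimes\tilde J_{n-1})\circ\tilde J_n'$. Here $\id\otimes\tilde J_{n-1}$ is injective because the injective tensor product preserves injective operators (Defant--Floret, \S4.3, Prop.~2).
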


\begin{proof}
As the injective tensor norm is the smallest reasonable crossnorm (see Equation \eqref{eq: injective tensor norm predef}, Definition~\ref{def: injective tensor norm}, and Proposition~\ref{prop: properties of projective and injective} combined with the fact that a uniform crossnorm is a reasonable crossnorm), the maps $J_n$, $n\in \N_{>0}$, are contractions and can therefore be extended continuously to maps $\tilde J_n \colon E^{\otimes n} \to E^{\otimes_\epsilon n}$, $n\in \N_{>0}$.  
It remains to prove that the maps $\tilde{J}_n$, $n\in \N_{>0}$, are injective. For $J_2$ this follows e.g.\ from \cite[Proposition 1 in Section 21.7]{defant_tensor_1993}. With some additional effort this proposition also provides injectivity for $J_n$, $n\in \N_{>0}$.

We prove the injectivity of $\tilde J_n$ by induction, recall that we have already established that $\tilde J_2$ is injective. Assume $\tilde J_{n-1}$ is injective for some $n\geq 3$. Define the map $J'_n \colon E\otimes_a E^{\otimes n-1 } \to E\otimes_{\epsilon} E^{\otimes n-1 } $, and $\tilde J'_n$ as its continuous extension $\tilde J_n' \colon E\otimes_{\|\cdot \|_{n}} E^{\otimes n-1} \to E\otimes_\epsilon E^{\otimes n-1 }$. As $E$ has the approximation property, by \cite[Proposition 1 in Section 21.7]{defant_tensor_1993}, the extension $\tilde J_n'$ is injective (note that the definition of tensor norms in~\cite{defant_tensor_1993} corresponds to our definition of uniform crossnorms). We can decompose $\tilde J_n$ as follows (by the fact that the left-hand side below coincides with the right-hand side on  $E^{\otimes_a n}$)
    \[
     \tilde J_n = (Id \otimes \tilde J_{n-1}) \circ\tilde J_n'.
    \]
By \cite[Proposition 2 in Section 4.3]{defant_tensor_1993} the map $(Id \otimes \tilde J_{n-1})  \colon E \otimes_\epsilon E^{\otimes n -1} \to E \otimes_\epsilon E^{\otimes_\epsilon n -1}$ is injective. Thus, $\tilde J_n$ is the composition of two injective maps and therefore injective itself. Finally, we recall that the strong crossnorm property implies that $E^{\otimes n} = E\otimes_{\| \cdot \|_{n}} E^{\otimes n-1 }$, see~\eqref{eq: strong crossnorm property for the spaces}. As the injective tensor norm is also a strong crossnorm (see Proposition~\ref{prop: properties of projective and injective}), this completes the proof.
\end{proof}

\begin{corollary}\label{cor: point separating small}
Let $E$ be a real Banach space with the approximation property, let $\{\|\cdot \|_n\}_{n\in \N_{>0}}$ be a strongly uniform crossnorm family on $\{E^{\otimes n}\}_{n\in \N_{>0}}$. Then for $n\in \N_{>0}$ and $u\in E^{\otimes n}\setminus \{0\}$, there there exists $\phi_i \in E^*$ with $\|\phi\|^*=1$, for $1\leq i \leq n$, such that
    \[
    | (\phi_1\otimes\ldots \otimes\phi_n)(u) | > 0.
    \]    
\end{corollary}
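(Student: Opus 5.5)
The plan is to derive the statement from Proposition~\ref{prop: natural embedding injective tensor space is injective} combined with Lemma~\ref{lem: point separating of injective tensor space}.

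Fix $n\in\N_{>0}$ and $u\in E^{\otimes n}\setminus\{0\}$. By Proposition~\ref{prop: natural embedding injective tensor space is injective}, the canonical embedding $J_n\colon E^{\otimes_a n}\to E^{\otimes_\epsilon n}$ extends to a bounded injective map $\tilde J_n\colon E^{\otimes n}\to E^{\otimes_\epsilon n}$. This step uses the approximation property and the strong uniformity of the crossnorms. Injectivity gives $\tilde J_n u\neq 0$. Applying Lemma~\ref{lem: point separating of injective tensor space} to $\tilde J_n u$, we obtain $\phi_1,\ldots,\phi_n\in E^*$ of unit norm such that
\[
\bigl|(\phi_1\otimes\ldots\otimes\phi_n)(\tilde J_n u)\bigr|>0 .
\]

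It remains to transfer this back to $u$, which requires the identity
\[
(\phi_1\otimes\ldots\otimes\phi_n)(u)=(\phi_1\otimes\ldots\otimes\phi_n)(\tilde J_n u).
\]
I would argue by density. On $E^{\otimes_a n}$ both sides agree, since $\tilde J_n$ restricts to $J_n$ there and the functional is defined by the same multilinear formula on simple tensors. Both sides are also continuous in $u$ with respect to $\|\cdot\|_n$:

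\begin{itemize}
\item The left side is continuous because, by Remark~\ref{remark: strongly uniform means reasonable}, $\|\cdot\|_n$ is a reasonable crossnorm. Hence $\phi_1\otimes\ldots\otimes\phi_n$ is a bounded functional on $E^{\otimes n}$ of norm $\prod\|\phi_i\|=1$.
\item The right side is continuous because it is the composition of the bounded map $\tilde J_n$ with a functional of norm one on the injective tensor space.
\end{itemize}

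Since $E^{\otimes_a n}$ is dense in $E^{\otimes n}$ by definition of the topological tensor space, the two sides coincide on all of $E^{\otimes n}$, which finishes the argument.

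The main obstacle is the injectivity of $\tilde J_n$, which is precisely where the approximation property enters. That step is already supplied by Proposition~\ref{prop: natural embedding injective tensor space is injective}. The only remaining care is to make sure the dual pairing used in the statement is the continuous extension of the elementary-tensor functional, which is the point of the density argument above.
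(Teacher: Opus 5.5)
Your proof is correct and follows the paper's route: injectivity of $\tilde J_n$ from Proposition~\ref{prop: natural embedding injective tensor space is injective}, then Lemma~\ref{lem: point separating of injective tensor space} applied to $\tilde J_n u$, then a transfer back to $u$ using density of $E^{\otimes_a n}$, boundedness of $\tilde J_n$, and reasonableness of $\|\cdot\|_n$. The only difference is in the transfer step: the paper uses a perturbation estimate with an algebraic approximant $u'$, whereas you prove the exact identity $(\phi_1\otimes\ldots\otimes\phi_n)(u)=(\phi_1\otimes\ldots\otimes\phi_n)(\tilde J_n u)$, which is cleaner and avoids the paper's constant bookkeeping.
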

\begin{proof}
Let $n\in \N_{>0}$, $u \in E^{\otimes n} \setminus \{ 0 \}$, and let $u'\in E^{\otimes_a n}$ be such that $\|u-u'\|_n < \frac{1}{4}\|u\|_n$. As $\tilde J_n$ is injective by Proposition~\ref{prop: natural embedding injective tensor space is injective}, we can apply Lemma \ref{lem: point separating of injective tensor space} to find $\phi_i \in E^*$, $1\leq i \leq n$, such that
    \[
    | (\phi_1\otimes\ldots \otimes\phi_n )(\tilde J_n u) | > C,
    \]
    for some $C>0$. As $\tilde J_n$ is a contraction, we have that $\|\tilde J_n u - \tilde J_n u' \|_n < \frac{1}{4} $ and so
    \[
     | (\phi_1\otimes\ldots \otimes\phi_n) ( u') | =| \phi_1\otimes\ldots \otimes\phi_n (\tilde J_n u') | >\tfrac{3}{4} C.
    \]
    As $\|\cdot\|_n$ is a reasonable crossnorm, we have that $\|\phi_1\otimes\ldots \otimes\phi_n \|_n^*  = 1$ and so by the above and by the fact that $\|u-u'\|_n < \frac{1}{4}\|u\|_n$, we have
    \[
     | (\phi_1\otimes\ldots \otimes\phi_n) ( u) | >\tfrac{1}{2} C.
    \]
\end{proof}
Corollary~\ref{cor: point separating small} implies that the algebraic tensor algebra $T_a(E^*)$ separates points in the tensor algebra $T((E))$ (see Definitions~\ref{def: tensor algebra} and~\ref{def: algebraic tensor algebra}): 
\begin{proposition}\label{prop: the dual tensor algebra can actually separate points}
Let $E$ be a real Banach space with the approximation property, let \mbox{$\{\|\cdot \|_n\}_{n\in \N_{>0}}$} be a strongly uniform crossnorm family on $\{E^{\otimes n}\}_{n\in \N_{>0}}$, and let $T((E))$ be the associated tensor algebra. Then, $T_a(E^*)$ separates points in $T((E))$.
\end{proposition}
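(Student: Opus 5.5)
The statement reduces to the level-wise result of Corollary~\ref{cor: point separating small}. Elements of $T((E))$ are sequences whose components live in the separate graded pieces $E^{\otimes n}$, and an element of $T_a(E^*)$ supported in a single degree $n$ only sees the $n$-th component. The plan is therefore to find a degree in which two given elements differ and separate them there using an elementary tensor of functionals.

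Concretely, take $\mathbf{a}\neq \mathbf{b}$ in $T((E))$ and fix $n\in\N$ with $\mathbf{a}^{(n)}\neq\mathbf{b}^{(n)}$.

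\textbf{Case $n=0$.} Then $\mathbf{a}^{(0)},\mathbf{b}^{(0)}\in\R$ differ, and $\mathbf{1}\in T_a(E^*)$ satisfies
\[
\inprod{\mathbf{a}}{\mathbf{1}}=\mathbf{a}^{(0)}\neq\mathbf{b}^{(0)}=\inprod{\mathbf{b}}{\mathbf{1}},
\]
so $\mathbf{1}$ separates them.

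\textbf{Case $n\geq1$.} Set $u:=\mathbf{a}^{(n)}-\mathbf{b}^{(n)}\in E^{\otimes n}\setminus\{0\}$. Corollary~\ref{cor: point separating small} applies, since $E$ has the approximation property and the family of norms is strongly uniform. It provides $\phi_1,\ldots,\phi_n\in E^*$ with $(\phi_1\otimes\cdots\otimes\phi_n)(u)\neq0$. Now regard $l:=\phi_1\otimes\cdots\otimes\phi_n$ as an element of $(E^*)^{\otimes_a n}\subseteq T_a(E^*)$. By the pairing convention defined after Definition~\ref{def: algebraic tensor algebra}, $\inprod{\mathbf{x}}{l}=\inprod{\mathbf{x}^{(n)}}{l}$. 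By linearity,
\[
\inprod{\mathbf{a}}{l}-\inprod{\mathbf{b}}{l}=l(u)\neq0 .
\]

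The one point that needs care is that the pairing $\inprod{\mathbf{x}^{(n)}}{l}$ for a general (non-algebraic) $\mathbf{x}^{(n)}\in E^{\otimes n}$ is the continuous extension of $l$ from $E^{\otimes_a n}$. This extension exists because a strongly uniform crossnorm is reasonable (Remark~\ref{remark: strongly uniform means reasonable}), so $\|l\|_n^*=\prod_j\|\phi_j\|<\infty$. This is the same functional used in Corollary~\ref{cor: point separating small}, so its conclusion applies directly to $u$. With that consistency checked, the argument is complete; there is no serious obstacle beyond the already-established corollary.
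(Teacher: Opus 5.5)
Your proposal is correct and matches the paper's proof: the paper likewise reduces to showing, degree by degree, that $(E^*)^{\otimes_a n}$ separates points of $E^{\otimes n}$, treats the lowest degrees as evident, and invokes Corollary~\ref{cor: point separating small} for the rest. Your explicit check that pairing with $\phi_1\otimes\cdots\otimes\phi_n$ means its continuous extension (which exists because the crossnorm is reasonable) is a useful detail that the paper leaves implicit.
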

\begin{proof}
    To show that $T_a(E^*)$ separates points $T((E))$ it is sufficient to show that $(E^*)^{\otimes_a i}$ separates points in $E^{\otimes i}$ for any $i\in \N_{\geq 2}$ (the cases $i=0$ and $i=1$ are apparent). This however follows directly from Corollary \ref{cor: point separating small}
\end{proof}

\section{Proof of Proposition~\ref{prop: equality of group like definitions}}\label{sec: group-like elements}

The goal of this section is to provide a proof of Proposition~\ref{prop: equality of group like definitions}, which states that  the two notions of group-like multiplicative functionals provided in Section~\ref{sec: weakly geometric} coincide in Banach spaces that have the approximation property and whose tensor norms satisfy the strongly uniform crossnorm property. 

Our proof is based on the following result, which is a consequence of~\cite[Theorem 3.5]{geng_introduction_nodate} (recall that $\text{Sh}(m,l)$ denotes the set of all $(m,l)$-shuffles, see Definition~\ref{def: shuffle}):

\begin{theorem}\label{thm: group like iff xi geng group like}
    Let $T((E))$ be the tensor algebra for some Banach space $E$. Then, a multiplicative functional $\mathbf{x} \in T((E))$ is group-like if and only if for all $m,l\in \N$
    \begin{equation}\label{eq: group-like property of xi geng}
        \mathbf{x}^{(m)} \otimes \mathbf{x}^{(l)} = \sum_{\sigma \in \text{Sh}(m,l)} P_\sigma\left( \mathbf{x}^{(m+l)}\right).
    \end{equation}
\end{theorem}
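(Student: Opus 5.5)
The plan is to deduce the theorem from \cite[Theorem 3.5]{geng_introduction_nodate}, which characterises the image of $\text{Lie}(E)$ under $\exp$ as the set of $\mathbf{x}\in T_1((E))$ satisfying the coproduct identity $\Delta\mathbf{x}=\mathbf{x}\otimes\mathbf{x}$. Here $\Delta$ is the shuffle coproduct, determined on simple tensors by
\[
\Delta(v_1\otimes\cdots\otimes v_n)=\sum_{m+l=n}\sum_{\sigma\in\text{Sh}(m,l)}P_\sigma(v_1\otimes\cdots\otimes v_n),
\]
with the degree-$(m,l)$ component read in $E^{\otimes m}\otimes E^{\otimes l}$. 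The main work is to check that $\Delta$ is well defined on the topological tensor spaces, and that its $(m,l)$-graded component is exactly the right-hand side of~\eqref{eq: group-like property of xi geng}. First, because the norms are symmetric (Definition~\ref{def: symmetric norms}), each $P_\sigma$ is an isometry of $E^{\otimes_a(m+l)}$ and so extends to $E^{\otimes(m+l)}$. Second, the strong crossnorm property~\eqref{eq: strong crossnorm property for the spaces} identifies $E^{\otimes(m+l)}$ isometrically with $E^{\otimes m}\otimes_{\|\cdot\|_{m+l}}E^{\otimes l}$. Together these show that the degree-$(m,l)$ part of $\Delta$ is the bounded map $\sum_{\sigma\in\text{Sh}(m,l)}P_\sigma$ on $E^{\otimes(m+l)}$.

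With $\Delta$ in place, the identity $\Delta\mathbf{x}=\mathbf{x}\otimes\mathbf{x}$ splits by bidegree. Its $(m,l)$ component is precisely~\eqref{eq: group-like property of xi geng}. The component with $m=l=0$ forces $\mathbf{x}^{(0)}=1$, or $\mathbf{x}=0$, which is excluded since $\mathbf{x}$ is a multiplicative functional with $\mathbf{x}^{(0)}=1$. So~\eqref{eq: group-like property of xi geng} for all $m,l$ is equivalent to Geng's condition, and Geng's theorem gives the equivalence with $\mathbf{x}\in\exp(\text{Lie}(E))=G(E)$.

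One direction can also be checked directly, as a sanity check. If $X\in\text{Lie}(E)$, then $X$ is primitive, i.e. $\Delta X=X\otimes\mathbf 1+\mathbf 1\otimes X$. This holds for brackets of elements of $E$ and passes to closures by continuity of $\Delta$. Since $\Delta$ is an algebra morphism on each finite truncation, $\Delta\exp X=\exp(X\otimes\mathbf1)\exp(\mathbf1\otimes X)=\exp X\otimes\exp X$.

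The main obstacle is the converse, that group-like in the coproduct sense implies $\log\mathbf{x}\in\text{Lie}(E)$. Here Geng's argument has to work in our topological setting. I would first apply $\log$ degree-wise: each degree involves only finitely many terms, so $\log\mathbf{x}$ is primitive. I would then show that primitive elements of degree $n$ lie in $\mathcal{M}^{(n)}(E)$ using the Dynkin projector $D(v_1\otimes\cdots\otimes v_n)=\tfrac1n[v_1,[\ldots,v_n]]$. This projector is a bounded combination of the $P_\sigma$, hence continuous, and it fixes primitive elements and maps into the closed span $\mathcal{M}^{(n)}(E)$. That the Dynkin–Specht–Wever identity survives the passage to closures is the point I would verify carefully, by density of $E^{\otimes_a n}$ together with continuity of $D$ and of $\Delta$.
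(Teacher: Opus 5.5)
Your proposal is correct and follows the paper's route. The paper simply invokes \cite[Theorem 3.5]{geng_introduction_nodate}, noting that there group-like elements are \emph{defined} by the shuffle identity~\eqref{eq: group-like property of xi geng}, so your bidegree translation from $\Delta\mathbf{x}=\mathbf{x}\otimes\mathbf{x}$ is a harmless extra step; it also flags symmetry of the crossnorms as the crucial implicit hypothesis, which is exactly what you use to extend the $P_\sigma$. Your additional sketch via primitivity of $\log\mathbf{x}$ and the continuous Dynkin projector goes beyond what the paper provides but is sound, with density of $E^{\otimes_a n}$ plus continuity of the permutation-built operators being the right way to transfer the algebraic identities.
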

\begin{proof}
Theorem 3.5 in \cite{geng_introduction_nodate} states that $ \log \mathbf{x}$ is a Lie series, if and only if $\mathbf{x}$ satisfies Equation \eqref{eq: group-like property of xi geng}. Note that in~\cite{geng_introduction_nodate}, group-like elements are \emph{defined} to be elements satisfying Equation \eqref{eq: group-like property of xi geng}. Also note that~\cite[Theorem 3.5]{geng_introduction_nodate} contains implicit assumptions on the crossnorms involved, which can be found on~\cite[p.\ 10]{geng_introduction_nodate}; in particular, it is assumed (and crucial to the proof of~\cite[Theorem 3.5]{geng_introduction_nodate}) that the crossnorms are symmetric in the sense of Definition \ref{def: symmetric norms}. 
\end{proof}
With Theorem \ref{thm: group like iff xi geng group like}, we can show the equivalence of the two definitions of group-like elements.
\begin{proof}[Proof of Proposition \ref{prop: equality of group like definitions}]
    To start, take any $\mathbf{x}\in T((E))$ and fix $m,l\in \N$. For any $\mathbf{y} \in (E^*)^{\otimes_a m}, \mathbf{y}' \in (E^*)^{\otimes_a l}$, we note that we have
    \begin{equation}\label{eq: aux algebraic grouplike equivalence 1}
    \begin{split}
        \inprod{\mathbf{x}}{\mathbf{y} \shuffle \mathbf{y}'}  = \inprod{\mathbf{x}^{(m+l)}}{\sum_{\sigma\in \text{Sh}(m,l)} P_{\sigma^{-1}} \left( \mathbf{y} \otimes \mathbf{y}'\right)} \\
        = \inprod{\sum_{\sigma\in \text{Sh}(m,l)} P_{\sigma} \left( \mathbf{x}^{(m+l)}\right)}{\mathbf{y} \otimes \mathbf{y}'},
    \end{split}
    \end{equation}
    and that
    \begin{equation}\label{eq: aux algebraic grouplike equivalence 2}
     \inprod{\mathbf{x}}{\mathbf{y}}\inprod{\mathbf{x}}{\mathbf{y}'}= \inprod{\mathbf{x}^{(m)} \otimes \mathbf{x}^{(l)}}{\mathbf{y}\otimes \mathbf{y}'} .
    \end{equation}
    If we assume that $\mathbf{x}$ is group-like, we can thus using Theorem \ref{thm: group like iff xi geng group like}, conclude that
    \[
     \inprod{\mathbf{x}}{\mathbf{y}} \inprod{\mathbf{x}}{\mathbf{y}'} = \inprod{\mathbf{x}}{\mathbf{y}\shuffle \mathbf{y}'}
    \]
    holds for all $\mathbf{y}\in (E^*)^{\otimes_a m}$, $\mathbf{y}'\in (E^*)^{\otimes_a l}$, and therefore that $\mathbf{x}$ is weakly group-like.

    If on the other hand we assume that $\mathbf{x}$ is weakly group-like we have using Equations \eqref{eq: aux algebraic grouplike equivalence 1} and \eqref{eq: aux algebraic grouplike equivalence 2} that for all $\mathbf{y}\in (E^*)^{\otimes_a m}$ and $\mathbf{y}'\in (E^*)^{\otimes_a l}$ we have
    \[
    \inprod{\sum_{\sigma\in \text{Sh}(m,l)} P_{\sigma} \left( \mathbf{x}^{(m+l)}\right)}{\mathbf{y} \otimes \mathbf{y}'} = \inprod{\mathbf{x}^{(m)} \otimes \mathbf{x}^{(l)}}{\mathbf{y}\otimes \mathbf{y}'},
    \]
    or equivalently
    \begin{equation}\label{eq: almost grouplike equivalence formulation}
    \inprod{\mathbf{x}^{(m)} \otimes \mathbf{x}^{(l)} -\sum_{\sigma\in \text{Sh}(m,l)} P_{\sigma} \left( \mathbf{x}^{(m+l)}\right)}{\mathbf{y} \otimes \mathbf{y}'} = 0.
    \end{equation}
    Therefore, if we assume that $E$ satisfies the approximation property and that we have a strongly uniform crossnorm family, we can apply Corollary \ref{cor: point separating small} to conclude that
    \[
    \mathbf{x}^{(m)} \otimes \mathbf{x}^{(l)} =\sum_{\sigma\in \text{Sh}(m,l)} P_{\sigma} \left( \mathbf{x}^{(m+l)}\right)
    \]
    This implies that Equation \eqref{eq: group-like property of xi geng} holds, and so that $\mathbf{x}$ is group-like by Theorem \ref{thm: group like iff xi geng group like}.

    To show the analogous statements involving $G^{(n)}(E)$ and $ G_w^{(n)}(E)$, note that if we define $\pi_n \colon T((E)) \to T^{(n)}(E)$ to be the canonical projection map, we have
    \[
    \pi_n (\text{Lie}(E)) = \text{Lie}_n(E), 
    \]
    and therefore, by the fact for any $\mathbf{x} \in T((E))$, $(\exp{\mathbf{x}})^{(i)}$ only depends on the terms of $\mathbf{x}$ of order less or equal to $i$, we also have 
    \[
    \pi_n (G(E)) = G^{(n)}(E).
    \]
    Additionally, it also holds that
    \[
    \pi_n (G_w(E)) = G^{(n)}_w(E).
    \]
    As a consequence, the statements regarding $G^{(n)}(E)$ and $G_w^{(n)}(E)$ follow from what we have proven for $G(E)$ and $G_w(E)$.
\end{proof}

\section{Tensor norms of the time extended Banach space}\label{app: tensor spaces}
\subsection{Uniformity of crossnorms is not automatically preserved}
\label{sec: time extension1 of Banach spaces}
To define the time extended rough paths of Section \ref{ssec: time-extended rough paths}, we assume (see Propositions~\ref{prop: time extension} and~\ref{prop: when there is a time component then unique signature}) that there exist admissible families of tensor norms on $E$ and $\R \oplus E$ that are compatible in the sense of Assumption~\ref{ass: time-extended}. One may wonder whether it is possible to start with a family of admissible tensor norms on $E$, and construct a compatible family of admissible on $\R \oplus E$ from there. However, there does not seem to be a canonical way to do so while preserving the strong uniform crossnorm property (which is not needed for admissibility, but \emph{is} needed in e.g.\ in Theorem~\ref{thm: norm-compact uat}). The goal of this section is to illustrate how problems may arise. 

Let $(e_i)_{i=1}^{n}$ denote the canonical orthonormal basis of $\R^n$, $n\in \N$; for $1\leq i,j\leq n$ we define $e_{ij}=e_i \otimes e_j \in \R^n \otimes \R^n$. We identify $\R^n\otimes \R^n$ with $\R^{n\times n}$ by the mapping 
\begin{equation*}
    \R^n \otimes \R^n \ni A \rightarrow (\langle A, e_{ij} \rangle)_{1\leq i,j \leq n} =\colon (A_{ij})_{1\leq i,j\leq n}.
\end{equation*}
We take $E = \R^2$ endowed with the norm $\| (x,y) \|=\max(|x|,|y|)$, $(x,y)\in E$. In addition, we endow $E\otimes E\simeq \R^{2\times 2}$ with the norm 
\begin{equation*}
\| A \|_{2} = \max_{1\leq i,j\leq 2}(|A_{ij}|).
\end{equation*}
One readily verifies that $\| \cdot \|_2$ is the injective tensor norm on $E\otimes E$, in particular, $(\| \cdot \|, \| \cdot \|_2 )$ are strongly uniform symmetric crossnorms by Proposition~\ref{prop: properties of projective and injective} (thus in particular admissible).\par 
Next, we endow $\R\oplus E\simeq \R^{3}$ with the norm
\begin{equation*}
\varnorm{ (t,x,y) }= |t| + \max(|x|,|y|),\quad (t,x,y)\in \R\oplus E,
\end{equation*}
and we endow $(\R\oplus E)\otimes (\R\oplus E)\simeq \R^{3\times 3}$ with the norm 
\begin{equation*}
\varnorm{ A }_2 = |A_{11}| + \max(|A_{12}|,|A_{13}|) + \max(|A_{21}|,|A_{31}|) + \max(|A_{22}|,|A_{23}|,|A_{32}|,|A_{33}|),
\end{equation*}
$A \in (\R\oplus E)\otimes (\R\oplus E)$.
Note that the canonical embedding 
$J \colon (\R \oplus E)\otimes (\R \oplus E) \rightarrow \R \oplus E \oplus E \oplus (E\otimes E)$ is a isomorphism, i.e., Assumption~\ref{ass: time-extended} is satisfied and thus Proposition~\ref{prop: time extension} is applicable. However, $(\varnorm{\,\cdot\,}, \varnorm{\,\cdot\,}_2) $ is \emph{not} strongly uniform, as we will now demonstrate.

Indeed, let $\phi \in \mathcal{L}(\R \oplus E)$ be given by  $\phi ( t,x,y) = (x+y , x -y , x- y)$, $(t,x,y)\in \R\oplus E$. Note that 
\[
\varnorm{\phi(t,x,y)} = |x+y| + \max(|x-y|,|x-y|) \leq 2 \max(|x|,|y|),    \quad (t,x,y)\in \R\oplus E.
\]
As $\phi((0,1,1))=2$ we conclude that $\|\phi\|_{\mathcal{L}(\R\oplus E)} = 2$. 

However, we claim that $\|\phi \otimes \phi \|_{\mathcal{L}((\R\oplus E)\otimes (\R\oplus E))} \geq 8 > \|\phi \|_{\mathcal{L}(\R\oplus E)}^2$, implying that $(\varnorm{\,\cdot\,},\varnorm{\,\cdot\,}_2)$ are not strongly uniform crossnorms. To see that $\|\phi \otimes \phi \|_{\mathcal{L}((\R\oplus E)\otimes (\R\oplus E))} \geq 8$, 
observe that \[
A \coloneqq e_{22}+ e_{23} + e_{32} - e_{33} \in (\R \oplus E)\otimes (\R \oplus E) \] satisfies $\varnorm{A}_2 = 1$. On the other hand, we have that 
\begin{align*}
(\phi\otimes \phi) e_{22} &= \phi(e_2)\otimes \phi(e_2) = (1,1,1)^T (1,1,1) \in \R^{3\times 3},
\\
(\phi\otimes \phi) e_{23} &= \phi(e_2)\otimes \phi(e_3) = (1,1,1)^T (1,-1,-1) \in \R^{3\times 3},
\\
(\phi\otimes \phi) e_{32} &= \phi(e_3)\otimes \phi(e_2) = (1,-1,-1)^T (1,1,1) \in \R^{3\times 3},
\\
(\phi\otimes \phi) e_{33} &= \phi(e_3)\otimes \phi(e_3) = (1,-1,-1)^T (1,-1,-1) \in \R^{3\times 3} .
\end{align*}
Thus, 
\[
(\phi\otimes \phi) (A) = \begin{pmatrix}
2 & 2 & 2\\
2 & -2 & -2 \\
2 & -2 & -2
\end{pmatrix} 
\]
so that $\varnorm{(\phi \otimes \phi) (A)}_2 = 8$ and thus $\|\phi \otimes \phi \|_{\mathcal{L}((\R\oplus E)\otimes (\R\oplus E))} \geq 8$ (with some additional effort one can show that $\|\phi\otimes \phi\|_{\mathcal{L}((\R\oplus E)\otimes (\R\oplus E))} =8$).

\subsection{Proof of Lemma \ref{lem : standard tensor norms give compatible norms}} \label{sec: proof of standard tensor norms give compatible norms}
We start with showing it holds for the Hilbert tensor norm. If $\{e_1, e_2,\ldots\}$ is an ONB for $E$, we can define an ONB $\{e_0, e_1, \ldots\}$ for $\R \oplus E$ by setting $e_0 \coloneqq (t,0)$. It thus follows that, under the identification $j\colon (\R \oplus E) \otimes_a (\R \oplus E) \rightarrow \R \oplus E \oplus E \oplus (E \otimes_a E)$, the Hilbert tensor norm is given by (for example by using Proposition \ref{prop: hilbert tensor onb})
\[
\varnorm{(t, x,y ,\mathbf{z})}_2 = \sqrt{|t|^2 + \| x \|^2 + \| y \|^2 + \| \mathbf{z} \|_2^2},
\]
which in particular shows that the extension of $j$ to a map $j\colon (\R \oplus E) \otimes (\R \oplus E) \rightarrow \R \oplus E \oplus E \oplus (E \otimes E)$ exists and is a homeomorphism.

For the projective tensor norm, we first note that the norm defined in Equation \eqref{eq: R plus E norm} is equivalent to the norm
\begin{equation}\label{eq: R plus E norm proj}
\varnorm{(t,x)} \coloneqq |t| + \| x\|,
\end{equation}
and as a consequence so are the projective tensor norms with respect to these two norms. Therefore we will prove the statement for the norm taken as in Equation \eqref{eq: R plus E norm proj} as in this case the projective tensor norm has a convenient explicit form: under the identification of $j\colon (\R \oplus E) \otimes_a (\R \oplus E) \rightarrow \R \oplus E \oplus E \oplus (E \otimes_a E)$ the projective norm on $(\R \oplus E)\otimes_a (\R \oplus E)$ is given by
\begin{equation}\label{eq: proj tensor norm identification}
\varnorm{(t, x,y ,\mathbf{z})}_\pi = |t| + \| x \| + \| y \| + \| \mathbf{z} \|_\pi,
\end{equation}
from which we can conclude the statement. To show that this is indeed true, we can directly adapt the proof of \cite[Example 4.53]{hackbusch_tensor_2019}, which relies on the fact that the projective crossnorm is the largest crossnorm (see e.g. \cite[Proposition 4.52]{hackbusch_tensor_2019}. Indeed, one can verify that Equation \eqref{eq: proj tensor norm identification} defines a crossnorm. Furthermore, for any other crossnorm $\varnorm{.}_1$ one finds that
\begin{equation}\label{eq: component wise inequality proj}
\varnorm{(0, 0,0 ,\mathbf{x} \otimes\mathbf{y})}_1  = \varnorm{ (0,\mathbf{x}) } \varnorm{ (0,\mathbf{y})}  = \| x\| \| y\| .
\end{equation}
In particular, one can use this norm to induce a crossnorm on $E \otimes_a E$ by
\[
\| \mathbf{z} \|_1 \coloneqq \varnorm{(0, 0,0 ,\mathbf{z})}_1.
\]
However, the projective crossnorm property is the largest crossnorm, and thus
\[
 \varnorm{(0, 0,0 ,\mathbf{z})}_1 = \| \mathbf{z} \|_1 \leq \| \mathbf{z} \|_\pi.
\]
Additionally, one gets similar equalities as in Equations \eqref{eq: component wise inequality proj}, for the other components, so that by the triangle inequality
\[
\varnorm{(t, x,y ,\mathbf{z})}_1 \leq |t| + \| x \| + \| y \| + \| \mathbf{z} \|_\pi.
\]
As a consequence, the crossnorm defined in Equation \eqref{eq: proj tensor norm identification} is the largest crossnorm, and as such must be equal to the projective tensor norm.  

For the injective tensor norm, the idea is the same as for the projective tensor norm, except that the injective tensor norm is the \emph{smallest} reasonable crossnorm (see \cite[Proposition 4.74]{hackbusch_tensor_2019}). Take as the equivalent norm on $(\R \oplus E)$ 
\[
\varnorm{(t,x)} \coloneqq \max (|t|,\| x\|).
\]
The injective tensor norm is then given by
\begin{equation}\label{eq: inj tensor norm identification}
\varnorm{(t, x,y ,\mathbf{z})}_\epsilon = \max(|t|, \| x \| , \| y \| , \| \mathbf{z} \|_\epsilon).
\end{equation}
To see that the norm given on the right-hand side is indeed a reasonable crossnorm, note that elements of the dual of $\R \oplus E$ can be written as $(\phi_1, \phi_2)$, with $\phi_1 \in \R^*$, and $\phi_2 \in E^*$ with $\varnorm {\phi_1,\phi_2}^* = |\phi_1| + \|\phi_2\|^*$. One can verify, analogously to the case of the projective tensor norm,  that for any other crossnorm $\varnorm{.}_1$ on $(\R \oplus E) \otimes_a (\R \oplus E)$, one has
\[
\varnorm{(t, x,y ,\mathbf{z})}_\epsilon = \max(|t|, \| x \| , \| y \| , \| \mathbf{z} \|_\epsilon)\leq \varnorm{(t, x,y ,\mathbf{z})}_1,
\]
which concludes the proof.

\section*{Acknowledgments}
The authors thank Marten Wortel for help on tensor spaces, in particular, for pointing out Proposition~\ref{prop: natural embedding injective tensor space is injective}. 

\bibliographystyle{alphaurl}
\bibliography{eerstebibtex}
\end{document}